\documentclass[11pt, a4paper, headings=standardclasses, openany, bibliography=totocnumbered]{scrbook}

\usepackage{MA_Titlepage}
\usepackage[automark, autooneside=false]{scrlayer-scrpage}  
\usepackage[usenames,dvipsnames]{xcolor}
\usepackage[T1]{fontenc}
\usepackage[utf8]{inputenc}
\usepackage{lmodern}
\usepackage[UKenglish]{babel}
\usepackage{amsmath, amssymb, amsfonts, amsthm, mathtools}
\usepackage{extarrows}
\usepackage{mathrsfs}
\usepackage{pgfplots}
\usetikzlibrary{arrows}
\usepackage{tikz-cd}
\usepackage{graphicx}
\usepackage{faktor}
\usepackage{young}
\usepackage[centertableaux]{ytableau}
\usepackage{verbatim} 
\usepackage{comment}
\usepackage{wasysym}
\usepackage{enumitem}
\usepackage{microtype}
\usepackage[style=alphabetic, backend=biber]{biblatex}
\addbibresource{literaturverzeichnis.bib}
\usepackage{csquotes}
\usepackage{cancel}
\usepackage{color}      
\usepackage{xparse}
\usepackage{longtable}
\usepackage{hyperref}
\usepackage{cleveref}	
\usepackage{braids}
\usepackage{adjustbox}
\usepackage{forest} 
\usepackage{dynkin-diagrams}
\usepackage{rank-2-roots}
\definecolor{light-gray}{gray}{0.85}

\binoppenalty=\maxdimen         
\relpenalty=\maxdimen           

\newcommand{\opn}[1]{\operatorname{#1}}


\newcommand\abstractname{Abstract}  
\makeatletter
\if@titlepage
\newenvironment{abstract}{%
	\titlepage
	\null\vfil
	\@beginparpenalty\@lowpenalty
	\begin{center}%
		\bfseries \abstractname
		\@endparpenalty\@M
\end{center}}%
{\par\vfil\null\endtitlepage}
\else
\newenvironment{abstract}{%
	\if@twocolumn
	\section*{\abstractname}%
	\else
	\small
	\begin{center}%
		{\bfseries \abstractname\vspace{-.5em}\vspace{\z@}}%
	\end{center}%
	\quotation
	\fi}
{\if@twocolumn\else\endquotation\fi}
\fi
\makeatother

\newtheorem{theo}{Theorem}
\newtheorem{lemm}[theo]{Lemma}

\newtheorem{koro}[theo]{Corollary}
\newtheorem*{thmA}{Theorem A}
\newtheorem*{thmB}{Theorem B}
\newtheorem*{thmC}{Theorem C}
\theoremstyle{definition}
\newtheorem{defi}[theo]{Definition}
\newtheorem{beis}[theo]{Example}
\newtheorem{rema}[theo]{Remark}
\newtheorem{obse}[theo]{Observation}

\newtheorem{warn}[theo]{Warning}
\newtheorem{fact}[theo]{Fact}

\DeclareMathOperator{\Hom}{Hom}
\DeclareMathOperator{\End}{End}

\numberwithin{theo}{chapter} 
\DeclareMathOperator{\C}{\mathbb{C}}

\DeclareMathOperator{\sgn}{sgn}
\DeclareMathOperator{\id}{id}

\newcommand{\bp}{b_{n,{+}}}
\newcommand{\bn}{b_{n,{-}}}

\DeclareMathOperator{\Sp}{Sp}

\DeclareMathOperator{\SO}{SO}
\DeclareMathOperator{\SL}{SL}

\DeclareMathOperator{\U}{\mathbf{U}}
            
\DeclareMathOperator{\modd}{\mathbf{mod}}

\DeclareMathOperator{\Rep}{\mathbf{Rep}}

\DeclareMathOperator{\TL}{TL}
\DeclareMathOperator{\ATL}{ATL}

\newcommand{\dotTL}{\opn{TL}(B_n)}
\DeclareMathOperator{\Ug}{U}
\DeclareMathOperator{\sl2}{\mathfrak{sl}_2}
\DeclareMathOperator{\sln}{\mathfrak{sl}_n}

\DeclareMathOperator{\h}{\mathfrak{h}}
\DeclareMathOperator{\Kar}{Kar}
\DeclareMathOperator{\Add}{Add}
\DeclareMathOperator{\kVect}{\mathbf{Vect}}

\DeclareMathOperator{\nice}{ss}
\DeclareMathOperator{\lie}{\mathfrak{g}}
\DeclareMathOperator{\gl}{\mathfrak{gl}}

\newcommand{\Uq}{\Ug_q(\sl2)}
\newcommand{\Uzwei}{\Ug_q(\gl_{2m})}
\newcommand{\Vq}{\Ug_q'(\gl_1\times \gl_1)}
\DeclareMathOperator{\M}{M}

\newcommand{\RES}{\opn{res}}
\newcommand{\IND}{\opn{ind}}




\DeclareMathOperator{\GL}{GL}
\DeclareMathOperator{\op}{op}






\DeclareMathOperator{\mZ}{\mathbb{Z}}
\DeclareMathOperator{\mQ}{\mathbb{Q}}
\newcommand{\mZn}{\mathbb{Z}/n\mathbb{Z}}

\DeclareMathOperator{\mN}{\mathbb{N}_0}
\DeclareMathOperator{\mU}{\mathbf{1}}

\DeclareMathOperator{\mR}{\mathbb{R}}

\newcommand{\Hecke}{\mathcal{H}}

\newcommand{\shift}[1]{\langle#1\rangle} 

\DeclareMathOperator*{\tlcap}{{\,\pretlcap\,}}
\DeclareMathOperator*{\tlcup}{{\,\pretlcup\,}}
\DeclareMathOperator*{\dtldcap}{{\,\dpretldcap\,}}

\DeclareMathOperator*{\tlcupcap}{\,\pretlcupcap\,}

\DeclareMathOperator*{\tlline}{\pretlline}
 
\newcommand{\cO}{{\mathcal O}} 
\newcommand{\Ko}{\opn{K}_0} 

\newcommand{\wo}{w_o} 
\DeclareMathOperator{\so}{\mathfrak{so}} 
\DeclareMathOperator{\spp}{\mathfrak{sp}} 
\newcommand{\blank}{{-}}

\DeclareMathOperator{\Br}{Br}

\DeclareFieldFormat[article,unpublished]{citetitle}{\mkbibitalic{#1}\isdot}
\DeclareFieldFormat[article,unpublished]{title}{\mkbibitalic{#1}\isdot}
\DeclareFieldFormat[article,book]{number}{\mkbibbold{#1}}
\DeclareFieldFormat[article]{volume}{\mkbibbold{#1}}

\usetikzlibrary{calc}
\usetikzlibrary{tqft}
\newcommand{\cbox}[1]{ \vcenter{ \hbox{#1} } }
\usetikzlibrary{patterns,decorations.pathreplacing}
\tikzset{
	tldiagram/.style={thick, scale=0.35}
}
\newcommand{\tlcoord}[2]
{
	(2*#2 , 3*#1)
}
\newcommand{\makecdots}[2]
{
	\node at (2*#2, 3*#1 + 1.5) {$\cdots$};
}

\newcommand{\lineup}{-- ++(0,3)}
\newcommand{\halflineup}{-- ++(0,1.5)}

\newcommand{\linedown}{-- ++(0,-3)}

\newcommand{\smalllineup}{-- ++(0,2)}
\newcommand{\smalllinedown}{-- ++(0,-2)}
\newcommand{\linewave}[2]{
	.. controls +(0,1.5*#1) and +(0,1.5*-#1) .. ++(2*#2, 3*#1)
}
\newcommand{\dlinewave}[2]{
	.. controls +(0,1.5*#1) and +(0,1.5*-#1) .. ++(2*#2, 3*#1)  \onedot
}


\newcommand{\dlineup}{\halflineup \onedot \halflineup}

\newcommand{\capright}{arc (180:0:1)}
\newcommand{\capleft}{arc (0:180:1)}
\newcommand{\cupright}{arc (180:360:1)}
\newcommand{\cupleft}{arc (360:180:1)}

\newcommand{\onedot}{node {$\bullet$}}

\newcommand{\dcapright}{arc (180:90:1) node {$\bullet$} arc (90:0:1)}

\newcommand{\dcupright}{arc (180:270:1) node {$\bullet$} arc (270:360:1)}

\newcommand{\dcupleft}{arc (360:270:1) node {$\bullet$} [fill=none] arc (270:180:1)}

\newcommand{\dcaprightsmall}{arc (180:90:1) node {\scalebox{0.7}{\textbullet}} arc (90:0:1)}

\newcommand{\dcuprightsmall}{arc (180:270:1) node {\scalebox{0.7}{\textbullet}} arc (270:360:1)}

\newcommand{\xcapright}[1]{arc (180:0:#1)}
\newcommand{\xcapleft}[1]{arc (0:180:#1)}
\newcommand{\xcupright}[1]{arc (180:360:#1)}
\newcommand{\xcupleft}[1]{arc (360:180:#1)}

\newcommand{\biggerdrawdots}[3]
{
	\foreach \i in {#2, ..., #3}
	{
		\draw[fill] (2*\i, 3*#1) circle (0.4em);
	}
}
\newcommand{\maketlboxnormal}[2]{
	++(-1,-0.75)
	+(-0.5, -0.3)
	[fill=blue!20!white, rounded corners]
	rectangle
	+(#1*2 + 0.5, 1.5 + 0.3)
	node at ++(#1,0.75) {#2}
}
\newcommand{\maketlboxred}[2]{  
	++(-1,-0.75)
	+(-0.5, -0.3)
	[fill=red!20!white, rounded corners]
	rectangle
	+(#1*2 + 0.5, 1.5 + 0.3)
	node at ++(#1,0.75) {#2}
}
\newcommand{\maketlboxgreen}[2]{
	++(-1,-0.75)
	+(-0.5, -0.3)
	[fill=ForestGreen!20!white, rounded corners]
	rectangle
	+(#1*2 + 0.5, 1.5 + 0.3)
	node at ++(#1,0.75) {#2}
}
\newcommand{\maketlboxblue}[2]{ 
	++(-1,-0.75)
	+(-0.5, -0.3)
	[fill=cyan!20!white, rounded corners]
	rectangle
	+(#1*2 + 0.5, 1.5 + 0.3)
	node at ++(#1,0.75) {#2}
}

\newcommand{\maketlellipse}[2]{
	++ (#1 - 1,0)
	[fill=green!20!white]
	ellipse (#1 + 0.5 and 1.2)
	node {#2}
}


\newcommand{\poscrossing}[2]{
	\draw \tlcoord{#1}{#2+2} \linewave{1}{-1} ;
	\draw[white, double=black] \tlcoord{#1}{#2} \linewave{1}{1};
}
\newcommand{\negcrossing}[2]{
\draw \tlcoord{#1}{#2} \linewave{1}{1};
\draw[white, double=black] \tlcoord{#1}{#2+2} \linewave{1}{-1} ;
	}
\newcommand{\dnegcrossing}[2]{
	\draw \tlcoord{#1}{#2} \linewave{1}{1};
	\draw[white, double=black] \tlcoord{#1}{#2+2} \linewave{1}{-1};
	\draw \tlcoord{#1+3}{#2} \onedot;
	\draw \tlcoord{#1}{#2} \onedot; 
}
\newcommand{\uppernegcrossing}[2]{
	\draw \tlcoord{#1}{#2} \linewave{1}{1};
	\draw[white, double=black] \tlcoord{#1}{#2+2} \linewave{1}{-1};
	\draw \tlcoord{#1+3}{#2} \onedot;
}
\newcommand{\lowernegcrossing}[2]{
	\draw \tlcoord{#1}{#2}\linewave{1}{1};
	\draw[white, double=black] \tlcoord{#1}{#2+2} \linewave{1}{-1};
	\draw \tlcoord{#1}{#2} \onedot; 
}

\newcommand{\negtwista}[2]{
	\draw [ultra thick] \tlcoord{0+3*#1}{0+2*#2} -- \tlcoord{0.45+3*#1}{0.4+2*#2};
	\draw [ultra thick] \tlcoord{0.55+3*#1}{0.6+2*#2} -- \tlcoord{1+3*#1}{1+2*#2};
	\draw  \tlcoord{0+3*#1}{1+2*#2} -- \tlcoord{1+3*#1}{0+2*#2};
}

\newcommand{\negtwist}[2]{
	\negtwista{#1}{#2}
	\negtwist{1+#1}{#2}
}

\newcommand{\dpretldcap}
{
	\begin{tikzpicture}[xscale=0.092,yscale=0.12]
		\draw (0,0) -- +(0,1) \dcaprightsmall -- +(0,-1);
	\end{tikzpicture}
}

\newcommand{\pretlcap}
{
	\begin{tikzpicture}[xscale=0.092,yscale=0.12]
		\draw (0,0) -- +(0,1) \capright -- +(0,-1);
	\end{tikzpicture}
}
\newcommand{\pretlcup}
{
	\begin{tikzpicture}[xscale=0.092,yscale=0.12]
		\draw (0,0) -- +(0,-1) \cupright -- +(0,1);
	\end{tikzpicture}
}
\newcommand{\pretlcupcap}
{
	\begin{tikzpicture}[scale=0.092]
		\draw \tlcoord{1}{0} \cupright;
		\draw \tlcoord{0}{0} \capright;
	\end{tikzpicture}
}
\newcommand{\predtlcupcap} 
{
	\begin{tikzpicture}[scale=0.092]
		\draw \tlcoord{1}{0} \dcuprightsmall;
		\draw \tlcoord{0}{0} \dcaprightsmall;
	\end{tikzpicture}
}
\newcommand{\tlcircle}
{
	\begin{tikzpicture}[scale=0.11]
		\draw (0,0) circle (1);
	\end{tikzpicture}
}
\newcommand{\pretlline}
{
	\vert
}

\setlist[enumerate]{label= \roman*)}

\setlist{
	listparindent=\parindent,
	parsep=0pt
}

\authornew{Zbigniew Wojciechowski}
\geburtsdatum{14th August 1997}
\geburtsort{Miko\l \'ow, Poland}
\date{25th May 2022}

\betreuer{Advisor: Prof.\ Dr.\ Catharina Stroppel}
\zweitgutachter{Second Advisor: Prof.\ Dr.\ Paul Wedrich}

\institut{Mathematisches Institut}
\title{Categorified Jones--Wenzl Projectors and Generalizations}
\ausarbeitungstyp{Master's Thesis  Mathematics}

\begin{document}
	\frontmatter
	
	\maketitle
	
	
		\begin{abstract}
		This preprint comprises the first four out of five chapters of the Master's thesis I wrote 2022 under the supervision of Catharina Stroppel and Paul Wedrich at the University of Bonn titled "Categorified Jones--Wenzl projectors and Generalizations". It can serve as introductory literature for researchers or graduate students familiar with Schur--Weyl duality between the symmetric group and the general linear Lie algebra or their quantum counter parts, and who are seeking to learn a type $B$/$D$ analogue of the famous type $A$ story. Chapter 1 serves as an introduction to the type $B$ combinatorics and blob-diagrammatics. Chapters 2,3, and 4 contain proofs of three main theorem each - the first one is an explicit/computational proof of the easiest case of quantum coideal Schur--Weyl duality involving non-trivial quantizations of weight spaces, the second one shows the generalization of Jones--Wenzl projectors to the type $B$ Temperley--Lieb algebra and the third one uses  generalized Reidemeister moves and a functional analytic argument to prove a version of the fact that powers of the type $D$ full-twist converge towards the type $D$ Jones--Wenzl projector. 
	\end{abstract}
	\tableofcontents

	\chapter{Introduction}

\section*{Motivation}
Symmetry is everywhere! One of the easiest and most prominent examples is the permutation symmetry of a finite set $\{1, \ldots, n \}$ consisting of $n$ elements. Concretely this is the set of $n!$ many symmetries, which are the possible ways to permute the elements, and has the structure of a group denoted $S_n=S(\{1, \ldots, n \})$. Every permutation has a geometric interpretation as a reflection or rotation of the $n$ vertices of the $(n-1)$-simplex $\Delta^{n-1}$, which can be seen as the $(n-1)$-dimensional version of a equilateral triangle from a symmetries point of view. The $(n-1)$-simplex has two friends, which also exist for all $n$: the hypercube and the hyperoctahedron (also known as cross-polytope) which have $2^n$ (respectively $2n$) vertices and both generalize a square to $n$ dimensions. These two convex regular polytopes are dual to each other and hence have the same symmetries. Each of their symmetries can be expressed as a unique permutation $\sigma\in S(\{\pm1, \ldots, \pm n\})$, which satisfies $\sigma(-i)=-\sigma(i)$, giving a total of $2^n n!$ symmetries of the hypercube respectively the hyperoctahedron. As a special example the square has $8=2^2\cdot 2!$ symmetries, which are compositions of the two reflections $s_0\coloneqq(1,-1)$ and $s_1\coloneqq(1,2)(-1,-2)$, see figure \eqref{pictures for geometric symmetry}. The four symmetries with an even sign (viewed as permutations of the $4$ vertices) form an index $2$ subgroup, which is generated by $s_0'\coloneqq s_0s_1s_0$ and $s_1$.
\begin{equation} \label{pictures for geometric symmetry} 
	W(B_2) \quad  \cbox{
	\begin{tikzpicture}
		\draw (1,0) -- (0,1) -- (-1,0) -- (0,-1) -- (1,0);
		\draw[blue, dashed] (0,1) -- (0,-1);
		\draw[red, dashed] (0.5,-0.5) -- (-0.5,0.5) node[anchor=south east] {\textcolor{red}{$s_1$}};
		\draw (1,0) node[anchor=west] {$1$};
		\draw (0,1) node[anchor=south] {$2$};
		\draw (-1,0) node[anchor=east] {$-1$};
		\draw (0,-1) node[anchor=north] {$-2$};
		\draw (0,0.3) node[anchor=west] {\textcolor{blue}{$s_0$}};
	\end{tikzpicture}
} \, , \quad W(D_2) \quad  \cbox{
	\begin{tikzpicture}
		\draw (1,0) -- (0,1) -- (-1,0) -- (0,-1) -- (1,0);
		\draw[black!50!green, dashed] (-0.5,-0.5) -- (0.5,0.5) node[anchor=south west] {\textcolor{black!50!green}{$s_0'$}};
		\draw[red, dashed] (0.5,-0.5) -- (-0.5,0.5) node[anchor=south east] {\textcolor{red}{$s_1$}};
		\draw (1,0) node[anchor=west] {$1$};
		\draw (0,1) node[anchor=south] {$2$};
		\draw (-1,0) node[anchor=east] {$-1$};
		\draw (0,-1) node[anchor=north] {$-2$};
	\end{tikzpicture}
}
\end{equation}
All these groups appear naturally in Lie theory as Weyl groups of semisimple algebraic groups. The symmetric group $S_n$ is the Weyl group $W(A_{n-1})$ of $\SL_n(\C)$ of Dynkin type $A_{n-1}$. The full group of hyperoctahedral symmetries is the Weyl group $W(B_n)$ of $\SO_{2n+1}(\C)$ of Dynkin type $B_n$ and agrees with the Weyl group $W(C_n)$ of $\Sp_{2n}(\C)$ of Dynkin type $C_n$, which is a shadow of Langlands duality for their Lie algebras $\so_{2n+1}$ and $\spp_{2n}$. The even sign subgroup of the hyperoctahedral group is the Weyl group $W(D_n)$ of $\SO_{2n}(\C)$ of Dynkin type $D_n$.
There are various questions concerning $W(B_n)$ motivated by the well-understood (representation) theory of the symmetric group $S_n=W(A_{n-1})$:
\begin{enumerate}
	\item The isomorphism classes of irreducible representations of $S_n$ correspond one-to-one with the set of partitions $\{\lambda\dashv n\}$ of $n$. Is there a similar picture for $W(B_n)$? \label{question 1}
	\item By Schur--Weyl duality the group algebra $\C\!S_n$ has as a quotient the Temperley--Lieb algebra $\TL_n=\End_{\sl2}(L(1)^{\otimes n})$, where $L(1)=\C^2$ is the standard representation of $\sl2(\C)$. Can this quotient be generalized to give a type $B$ Temperley--Lieb algebra $\TL(B_n)$? If yes, does this algebra come with a diagrammatic description extending the type $A$ diagrammatics? \label{question 2}
	\item The Temperley--Lieb algebra $\TL_n$ contains the Jones--Wenzl projector $a_{n-1}$, which projects onto the irreducible summand $L(n)$ of $L(1)^{\otimes n}$ containing the vector of maximal weight. The element $a_{n-1}$ can be described inherently in the Temperley--Lieb algebra as the unique non-zero idempotent annihilating all generators of $\TL_n$. If question \ref{question 2} has a positive answer, can a type $B$ Jones--Wenzl projector $b_n\in \TL(B_n)$ be constructed and if so does it have a representation theoretic interpretation? \label{question 3}
\end{enumerate}
In the classical (non-quantized) setting these questions have a long history with many results spread throughout the literature. Remarkably however, there is  no source which treats these questions all in one framework. The first goal of this thesis is to provide such a common setting and in particular bring together algebraic, combinatorial and diagrammatic approaches. We try to give a self-contained treatment of the mentioned subjects. Several of the provided proofs are new and developed to fit into the setup, although the results might be well-known. Based on this setup we treat the quantized setting. Hereby results from the last ten years are incorporated. Most prominently we discuss the new representation theoretic connection of the type $B$ Temperley--Lieb algebra to the coideal subalgebra $\Ug'_q(\gl_1\times\gl_1)$ from \cite{stroppel2018}. As a consequence we deduce new results on the (surprisingly rich!) representation theory of these coideal subalgebras and develop a theory similar to the very important and influential theory of $\Ug_q(\sl2)$. 
\section*{Overview and main results} 

We start the thesis off with a presentation of the combinatorics of the type $B$ and $D$ Coxeter groups in \Cref{Chapter 1 section 1}. There we prove first the existence of an embedding $W(D_n)\hookrightarrow W(B_n)$ as an index $2$ subgroup, which allows us to treat type $B$ and $D$ simultaneously throughout the thesis. This treatment is also very convenient later in the quantized version. Next we discuss a diagrammatic description for $W(B_n)$ in terms of signed permutations, which we draw as string diagrams decorated with dots. This diagrammatic description of $W(B_n)$ gives an isomorphism to the semidirect product $(\mZ/2\!\mZ)^n\rtimes S_n$ obtained through gluing $S_n$ with $(\mZ/2\!\mZ)^n$ along the permutation action of the symmetric group on $(\mZ/2\!\mZ)^n$, and realizes the group $W(B_n)$ as a wreath product of $S_n$ with the cyclic group $\mZ/2\!\mZ$. In this way we combine insightful diagrammatics with known combinatorics for these groups. We use our description to describe the longest elements of $W(D_n)$ and $W(B_n)$ diagrammatically. The longest elements are obtained as products of multiplicative type $B$ Jucys--Murphy elements, which were treated in e.g.\ \cite{ogievetsky2012jucysmurphy} and also appear prominently in our proofs. We loosely follow \cite[\S10]{stroppel2018} and \cite{ogievetsky2012jucysmurphy}, moreover we use the language and general techniques for Coxeter groups, see e.g.\ \cite{humphreys1990}. In \Cref{Chapter 1 section 2} we present an answer to question \ref{question 1} by giving a concrete bijection between bipartitions $(\lambda,\mu)$ of $n$ and irreducible Specht modules denoted $S(\lambda, \mu)$ defined via idempotents in the group algebra $\C\! W(B_n)$. This bijection is well-known and follows for instance by the realiziation of $W(B_n)$ as the wreath product via the Wigner-Mackey little group argument from \cite[8.2]{serre77} from the fact that partitions label the irreducible representations of $S_n$. Our approach takes a different route and does not argue with classical character theory nor Schur--Weyl duality, which are the two classical approaches to the representation theory of $W(B_n)$. As a benefit we obtain very explicit and visually appealing descriptions of the irreducible representations. Just like in the mentioned proof of the bijection, the realization of $W(B_n)$ as wreath product is the key step in the classical approaches. The first approach is through the theory of irreducible characters for wreath products and can be found e.g.\ in \cite{geck00}, where it was applied to $W(B_n)$. The second approach -- Schur--Weyl duality for wreath products, see e.g. \cite[Theorem 9]{mazorczuk2015} -- gives a different way to classify the irreducible representations. In this version of Schur--Weyl duality the groups $W(B_n)$ and $\GL_a(\C)\times \GL_b(\C)$ act on tensor powers $V^{\otimes n}$ of $V=\C^a \oplus \C^b$. Since their actions commute and centralize each other, the double centralizer theorem gives a decomposition
\begin{equation} \label{Schur--Weyl decomp}
	V^{\otimes n}=\bigoplus_{(\lambda,\mu)}L(\lambda, \mu)\boxtimes S(\lambda, \mu)
\end{equation}
into bimodules, where $(\lambda, \mu)$ runs over pairs of partitions $(\lambda, \mu)$ such that $\lambda$ has at most $a$ rows, $\mu$ has at most $b$ rows and such that $n=|\lambda|+|\mu|$. Here $L(\lambda, \mu)$ respectively $S(\lambda, \mu)$ are irreducible finite dimensional representations of $\GL_a(\C)\times \GL_b(\C)$, respectively $W(B_n)$. This approach yields an abstract labeling of irreducible representations of $W(B_n)$ by pairs of partitions. However in contrast to $L(\lambda, \mu)$, which is just the tensor product $L(\lambda)\boxtimes L(\mu)$ of the irreducible Schur modules $L(\lambda)$ of $\GL_a(\C)$ and $L(\mu)$ of $\GL_b(\C)$, this procedure does not give a concrete description of $S(\lambda, \mu)$ in contrast to our more explicit approach.
We conclude the discussion of irreducible representations of $W(B_n)$ in \Cref{Chapter 1 section 3}. There we consider induction and restriction functors relating the representation categories $\Rep(W(B_n))$ for varied $n\in \mN$ and explain the type $B$ branching graph, which encodes a great deal of information about the Specht modules $S(\lambda,\mu)$ -- most notably their dimensions. Our treatment enhances the results in \cite{ogievetsky2012jucysmurphy} by a diagrammatic description in terms of idempotents.

\Cref{Chapter 1 section 4} gives an answer to question \ref{question 2} through a methodical review of \cite{green1998}, which historically marks a starting point of a substantial treatment of Temperley--Lieb algebras for classical types. There a family of type $B$ and $D$ Temperley--Lieb algebras has been studied. The type $B$ algebras have dimension ${{2n}\choose {n}}$, contain a type $A_{n-1}$ Temperley--Lieb algebra as a subalgebra, and most importantly come with a diagrammatic description in terms of decorated Temperley--Lieb diagrams. The type $D$ Temperley--Lieb algebras are smaller, however there are two different notions of such algebras. Our preferred definition is by design a subalgebra of $\TL(B_n)$, which is $\frac{1}{2}{{2n}\choose {n}}$-dimensional, while the notion from \cite{green1998} is larger and $\left(\frac{n+3}{2(n+1)} {{2n} \choose n}-1\right)$-dimensional.
The difficulty when working with type $B$ and $D$ Temperley--Lieb algebras is their dimension, which allows only for a few small examples: $\TL(B_2), \TL(D_2), \TL(D_3)$. 

\Cref{Chapter 2} will be a change in flavor and return to representation theory of $W(B_n)$ via a quantized version of the Schur--Weyl duality \eqref{Schur--Weyl decomp}. However instead of working with the group $\GL_n(\C)\times \GL_n(\C)$ or the Lie algebra $\gl_n \times \gl_n$, we work with the quantum symmetric pair 
\[
	(\Ug_q(\gl_{2m}), \Ug_q'(\gl_m \times \gl_m)).
\] 
Loosely speaking, a quantum symmetric pair $(\Ug_q(\lie), \Ug_q^{\iota})$ consists of a quantum group $\Ug_q(\lie)$ and a subalgebra $\Ug_q^{\iota}\subseteq\Ug_q(\lie)$, which is a $q$-deformation of the universal enveloping algebra $U(\lie^{\iota})$ of a fixed-point subalgebra $\lie^{\iota}\subseteq \lie$ taken with respect to an involution $\iota$ of $\lie$.  The most important difficulty of working with the subalgebra $\Ug_q^{\iota}$ is that it is not a Hopf subalgebra of $\Ug_q(\lie)$, but instead only a (right) coideal, i.e.\ the comultiplication $\Delta\colon \Ug_q(\lie) \rightarrow \Ug_q(\lie) \otimes \Ug_q(\lie)$ satisfies
\[
	\Delta(\Ug_q^{\iota})\subseteq \Ug_q^{\iota} \otimes \Ug_q(\lie) \quad \text{instead of} \quad  \Delta(\Ug_q^{\iota})\subseteq \Ug_q^{\iota} \otimes \Ug_q^{\iota}.
\] 
This turns off many techniques usually applicable for Hopf algebras, since the category $\Rep(\Ug_q^{\iota})$ is not monoidal. Instead it is only possible to tensor a module of $\Ug_q^{\iota}$ with a representation of the entire quantum group $\Ug_q(\lie)$ from the right. The subalgebra $\Ug_q^{\iota}\coloneqq\Ug_q'(\gl_m \times \gl_m)$ we are considering was treated in detail in \cite{stroppel2018} and \cite{watanabe2021}. A substantial treatment of coideal subalgebras, including a classification was given in \cite{letzter02}. Despite many recent developments in connection with reflection equations generalizing Yang--Baxter equations, their representation theory is still poorly understood. This thesis should be seen as a contribution to the most basic case: the representations of $\Vq)$. One might expect that the representation theory becomes easy in such a small example, but surprisingly it is already highly non-trivial to find explicit decompositions of representations of the coideal. The first of our main results gives such a explicit decomposition.
\begin{thmA}[\Cref{Theorem decomposition of tensor power of V}]
	Let $V$ be the standard quantized version of the vector representation $\C^2$, viewed as module over $\Ug_q(\gl_2)$. The $n$-th tensor power $V^{\otimes n}$ admits a non-trivial decomposition into $1$-dimensional subrepresentations, when viewed as a module over the coideal subalgebra $\Vq$. Let $B=q^{-1}EK^{-1}+F\in \Vq$. The decomposition is given by
	\begin{equation} \label{super cute decomposition}
		V^{\otimes n}=\bigoplus_{k=0}^n {n \choose k}L([n-2k]),
	\end{equation}
	where $L([k])$ denotes a one-dimensional subrepresentation of $V^{\otimes n}$ on which $B$ acts by the quantum integer $[k]=\frac{q^k-q^{-k}}{q-q^{-1}}$. All direct summands in the decomposition \eqref{super cute decomposition} can be described explicitly. 
\end{thmA}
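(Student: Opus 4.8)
The plan is to reduce the statement to understanding how the single element $B$ acts on $V^{\otimes n}$ and to organise this as an induction on $n$. First I would observe that, since $\Vq$ is a right coideal of $\Ug_q(\gl_2)$, the tensor power $V^{\otimes n}$ is a $\Vq$-module on which $B$ acts by the $n$-fold iterated coproduct applied to $B$, and that a $1$-dimensional subrepresentation is precisely a $B$-eigenvector: the remaining group-like/central generators of the coideal act by scalars that are constant on all of $V^{\otimes n}$, so they impose no further condition and only fix the label $[k]$. Thus the decomposition \eqref{super cute decomposition} is equivalent to the assertion that $B$ is diagonalizable on $V^{\otimes n}$ with $[n-2k]$ occurring as an eigenvalue of multiplicity $\binom{n}{k}$. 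I would also note at the outset that this is genuinely non-trivial: as a $\Ug_q(\gl_2)$-module $V^{\otimes n}$ decomposes into summands of growing dimension, and it is only after restriction to the coideal that everything becomes one-dimensional.

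The computational heart is the $\Vq$-module structure of $L([j])\otimes V$ for a one-dimensional module $L([j])$, which makes sense because one may tensor a coideal module with a genuine $\Ug_q(\gl_2)$-module $V$ on the right. A direct computation with the coproduct gives $\Delta(B)=B\otimes K^{-1}+1\otimes B$; combining this with the weight basis $v_{+},v_{-}$ of $V$ (on which $K^{-1}v_{\pm}=q^{\mp1}v_{\pm}$ and $Bv_{+}=v_{-}$, $Bv_{-}=v_{+}$), the operator $B$ on $L([j])\otimes V$ is represented by the matrix $\left(\begin{smallmatrix} q^{-1}[j]&1\\ 1&q[j]\end{smallmatrix}\right)$. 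Short quantum-integer identities give trace $(q+q^{-1})[j]=[j+1]+[j-1]$ and determinant $[j]^{2}-1=[j+1][j-1]$, so the eigenvalues are $[j+1]$ and $[j-1]$; since $[j+1]-[j-1]=q^{j}+q^{-j}\neq0$, the matrix is diagonalizable, with eigenvectors $\ell\otimes(v_{+}+q^{j}v_{-})$ and $\ell\otimes(v_{+}-q^{-j}v_{-})$, where $\ell$ spans $L([j])$. Hence $L([j])\otimes V\cong L([j+1])\oplus L([j-1])$ as $\Vq$-modules.

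The theorem then follows by induction on $n$, the base case being $V^{\otimes0}=L([0])$ (equivalently $n=1$: $V=L([1])\oplus L([-1])$). Assuming $V^{\otimes n}=\bigoplus_{k=0}^{n}\binom{n}{k}L([n-2k])$, I tensor with $V$ on the right and apply the local computation to each summand, turning $L([n-2k])$ into $L([n+1-2k])\oplus L([n+1-2(k+1)])$; collecting like terms and using Pascal's rule $\binom{n}{k}+\binom{n}{k-1}=\binom{n+1}{k}$ yields $V^{\otimes(n+1)}=\bigoplus_{k=0}^{n+1}\binom{n+1}{k}L([n+1-2k])$. For the final clause that all summands admit an explicit description, I would iterate the eigenvector formula of the local computation along a binary tree, producing an eigenbasis of $V^{\otimes n}$ indexed by subsets $S\subseteq\{1,\dots,n\}$, with the vectors for $|S|=k$ spanning the $L([n-2k])$-isotypic component, and record the resulting product formula for each basis vector.

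The step I expect to cause the most trouble is not the combinatorics but (i) pinning down conventions for $\Delta$, for $E,F,K$ and for the coideal so that the identity $\Delta(B)=B\otimes K^{-1}+1\otimes B$ holds on the nose, and (ii) the diagonalizability: one must check $q^{j}+q^{-j}\neq0$ for every relevant $j$, which is exactly where the genericity of $q$ (implicit in ``the standard quantized version'') is used — at a root of unity the $2\times2$ matrix above can degenerate to a Jordan block and the statement fails.
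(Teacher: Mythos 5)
Your proposal is correct and follows essentially the same route as the paper: the key step in both is the computation, via $\Delta(B)=B\otimes K^{-1}+1\otimes B$, that $L([j])\otimes V\cong L([j+1])\oplus L([j-1])$ with explicit eigenvectors $\ell\otimes(x+q^{j}y)$ and $\ell\otimes(x-q^{-j}y)$, followed by induction and Pascal's rule. The only cosmetic difference is that you extract the eigenvalues from the trace and determinant of the $2\times 2$ matrix, while the paper verifies the eigenvector equations directly using the quantum-integer identities $q^{-1}[j]+q^{j}=[j+1]$ and $q^{j+1}[j]+1=q^{j}[j+1]$.
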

Using this theorem we will show the Schur--Weyl duality \eqref{Schur--Weyl decomp} for $\Vq$ including the isomorphism
\[
\TL(B_n)\cong \End_{\Vq}(V^{\otimes n})
\]
of $\C(q)$-algebras, which gives a representation theoretic interpretation of the type $B$ Temperley--Lieb algebra. The decomposition \eqref{super cute decomposition} gives an answer to question \ref{question 3}, which is surprising coming from the type $A$ story. In type $A$ there is one Jones--Wenzl projector $a_{n-1}\in \TL_n\cong \End_{\Ug_q(\sl2)}(V^{\otimes n})$, which projects onto the unique $(n+1)$-dimensional irreducible submodule $L(q^n)$, see e.g.\ \cite{flath95} and the references therein. 
In contrast the algebra $\TL(B_n)$ has two (and not just one!) Jones--Wenzl projectors $\bp$ and $\bn$ projecting onto the subrepresentations $L([n])$ respectively $L(-[n])$ occurring with multiplicity $1$.
Their definition, their algebraic and diagrammatic properties, as well as their representation theoretic interpretation is the second main result.
\begin{thmB}[\Cref{Theorem on Existence of JW projectors of type B}, \Cref{characterizing properties of type B projector} and \Cref{type D projector recursive description}]
	There exist two special idempotents $\bp$ and $\bn$ in $\TL(B_n)$, which project onto $L([n])$ respectively $L(-[n])$. They are uniquely characterized by the property that they annihilate all cup-cap generators of $\TL(B_n)$ and satisfy
	\[
		\bp s_0 = \bp = s_0 \bp \quad \text{respectively} \quad \bn s_0 = (-1)\cdot\bn = s_0 \bn
	\]
	where $s_0\in\TL(B_n)$ is the dotted strand. Writing these idempotents as
	\[
		\bp = \cbox{
			\begin{tikzpicture}[tldiagram, yscale=1/2]
				\draw \tlcoord{-2.5}{0} \lineup \lineup \lineup \lineup \lineup;
				\draw \tlcoord{-2.5}{1} \lineup \lineup \lineup \lineup \lineup;
				\draw \tlcoord{-2.5}{2} \lineup \lineup \lineup \lineup \lineup;
				\draw \tlcoord{0}{0} \maketlboxred{3}{$n, {+}$};
				=
			\end{tikzpicture}
		} \, , \quad \bn =\cbox{
		\begin{tikzpicture}[tldiagram, yscale=1/2]
			\draw \tlcoord{-2.5}{0} \lineup \lineup \lineup \lineup \lineup;
			\draw \tlcoord{-2.5}{1} \lineup \lineup \lineup \lineup \lineup;
			\draw \tlcoord{-2.5}{2} \lineup \lineup \lineup \lineup \lineup;
			\draw \tlcoord{0}{0} \maketlboxblue{3}{$n, {-}$};
		\end{tikzpicture}
	} \, \, , \quad \bp +\bn = \cbox{
	\begin{tikzpicture}[tldiagram, yscale=1/2]
		\draw \tlcoord{-2.5}{0} \lineup \lineup \lineup \lineup \lineup;
		\draw \tlcoord{-2.5}{1} \lineup \lineup \lineup \lineup \lineup;
		\draw \tlcoord{-2.5}{2} \lineup \lineup \lineup \lineup \lineup;
		\draw \tlcoord{0}{0} \maketlboxgreen{3}{$n$};
	\end{tikzpicture}
} 
	\] 
	gives a diagrammatic interpretation of their algebraic properties. The sum $d_n\coloneqq \bp + \bn$ is the unique non-zero idempotent contained in $\TL(D_n)$ annihilating all cup-cap generators of $\TL(D_n)$. It can be expressed by a recursive formula and all its coefficients are defined over $\mZ[[q]][q^{-1}]$.
\end{thmB}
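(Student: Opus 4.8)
\emph{Proof sketch.}
The plan is to read everything off from the Schur--Weyl isomorphism $\TL(B_n)\cong\End_{\Vq}(V^{\otimes n})$ that will be established from Theorem~A, rather than from the diagrammatics directly. By Theorem~A, $V^{\otimes n}=\bigoplus_{k=0}^{n}\binom nk L([n-2k])$ as a $\Vq$-module, and for generic $q$ the equation $[n-2k]=\pm[n]$ forces $k\in\{0,n\}$; hence $L([n])$ and $L(-[n])=L([-n])$ occur with multiplicity $1$ and are non-isomorphic to one another and to every other summand. Since this module is semisimple, $\End_{\Vq}(V^{\otimes n})$ has two one-dimensional blocks, and I let $\bp,\bn\in\TL(B_n)$ be the corresponding block idempotents; by construction they project onto $L([n])$ and $L(-[n])$.

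For the characterizing properties, each cup-cap generator of $\TL(B_n)$ is realized under Schur--Weyl as a $\Vq$-endomorphism of $V^{\otimes n}$ that factors through $V^{\otimes(n-2)}=\bigoplus_j\binom{n-2}{j}L([n-2-2j])$, in which neither $[n]$ nor $[-n]$ appears; as $L(\pm[n])$ is one-dimensional, any $\Vq$-map between it and a sub- or quotient module of $V^{\otimes(n-2)}$ is zero, so every cup-cap generator annihilates $\bp$ and $\bn$ on both sides. The only input genuinely special to type $B$ is the $s_0$-eigenvalue: inserting the explicit spanning vector of $L([n])$, respectively $L(-[n])$, from the proof of Theorem~A into the operator representing the dotted strand $s_0$ shows that $s_0$ acts by $+1$, respectively $-1$, which gives $\bp s_0=\bp=s_0\bp$ and $\bn s_0=-\bn=s_0\bn$.

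Uniqueness is then a short module argument. One first checks, exactly as for the type $A$ Jones--Wenzl projector, that the common annihilator of the cup-cap generators of $\TL(B_n)$ in $V^{\otimes n}$, intersected with the $s_0$-invariants, is precisely the two-dimensional $\Vq$-submodule $L([n])\oplus L(-[n])$: the undecorated cup-caps already cut $V^{\otimes n}$ down to the $(n+1)$-dimensional highest weight $\Ug_q(\gl_2)$-summand, which over $\Vq$ is $\bigoplus_{j=0}^{n}L([n-2j])$, and the decorated boundary generator together with the $s_0$-relation removes the intermediate summands. So if $p\neq0$ is an idempotent annihilated by all cup-cap generators with $s_0p=\varepsilon p$, $\varepsilon\in\{\pm1\}$, then $\im p\subseteq L([n])\oplus L(-[n])$; since $\TL(B_n)=\End_{\Vq}(V^{\otimes n})$ acts on this module through $\End_{\Vq}(L([n])\oplus L(-[n]))\cong\C\times\C$ (diagonal matrices, $s_0$ acting as $\mathrm{diag}(1,-1)$), the relation $s_0p=\varepsilon p$ forces $p|_{L([n])\oplus L(-[n])}$ to be $\mathrm{diag}(1,0)$ or $\mathrm{diag}(0,1)$; together with $\im p\subseteq L([n])\oplus L(-[n])$ and the uniqueness of the $\Vq$-complement of a multiplicity-one summand, this gives $p=\bp$ or $p=\bn$.

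For type $D$, let $\tau$ be the automorphism of $\TL(B_n)$ negating every dot, so that $\TL(D_n)=\TL(B_n)^{\tau}$ is the span of the even-dot diagrams. Since $\tau$ fixes the undecorated cup-caps, rescales the decorated ones by $\pm1$, and sends $s_0$ to $-s_0$, the idempotent $\tau(\bp)$ again annihilates all cup-cap generators and satisfies $s_0\tau(\bp)=-\tau(\bp)$, whence $\tau(\bp)=\bn$ by the uniqueness just proved; thus $d_n=\bp+\bn$ is $\tau$-invariant and lies in $\TL(D_n)$, it is idempotent because $\bp$ and $\bn$ are orthogonal, and it annihilates every cup-cap generator of $\TL(D_n)$ since each still factors through $V^{\otimes(n-2)}$. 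Re-running the module argument inside $\TL(D_n)$: a non-zero idempotent $q$ annihilated by all cup-cap generators of $\TL(D_n)$ again has $\im q\subseteq L([n])\oplus L(-[n])$, but now $\TL(D_n)$ acts on this module only through the $\tau$-fixed part of $\C\times\C$, namely the scalars $\C\cdot\mathrm{id}$ (precisely because $\bp,\bn\notin\TL(D_n)$), so the only non-zero idempotent available there is the identity and $q=d_n$. Finally the recursive formula is obtained by adding one strand at a time in the style of Wenzl's type $A$ recursion, with the boundary generator $s_0$ adjoined; an induction shows the correction coefficients are ratios of quantum integers with no $[0]$ ever entering, and since $[k]=q^{1-k}(1+q^2+\dots+q^{2k-2})$ is $q^{1-k}$ times a power series with constant term $1$, hence a unit in $\mZ[[q]][q^{-1}]$, all coefficients already lie in $\mZ[[q]][q^{-1}]$. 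I expect the real work to sit in two places: pinning the common annihilator down to $L([n])\oplus L(-[n])$, where the precise diagrammatic relations of $\TL(B_n)$ are needed, and, above all, writing out the recursion and verifying its $\mZ[[q]][q^{-1}]$-integrality; reading off the $s_0$-eigenvalues from Theorem~A is routine.
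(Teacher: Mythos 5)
Your route is genuinely different from the paper's: you work entirely on the representation side of Schur--Weyl, taking $\bp,\bn$ to be the block idempotents of the multiplicity-one summands $L(\pm[n])$, whereas the paper defines them by explicit Wenzl-type recursions, proves idempotency, the cup-cap killing and the $s_0$-eigenvalue by a purely diagrammatic induction, and only then identifies them with the projections as a corollary of uniqueness. Several of your steps are sound: existence of the block idempotents, cup-cap killing by eigenvalue separation (since $\pm[n]$ does not occur in $V^{\otimes(n-2)}$), the $s_0$-eigenvalues read off the explicit vectors $v_0\otimes\cdots$ and $w_0\otimes\cdots$, the dot-negating involution exchanging $\bp$ and $\bn$, and the observation that the relevant denominators are units of $\mZ[[q]][q^{-1}]$.

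However, two genuine gaps remain, both of which you flag yourself. First, your uniqueness argument rests on the claim that the common annihilator of the cup-caps, together with the $s_0$-eigenvalue condition, is exactly $L([n])\oplus L(-[n])$ -- equivalently, that $\TL(B_n)$ hits the two one-dimensional blocks only through the span of $1$ and $s_0$. This is not ``exactly as in type $A$'': $s_0$ does not commute with the evaluation map on the first two strands, so $s_0$ does not preserve the common kernel of the undotted caps (the $(n{+}1)$-dimensional $\Ug_q(\gl_2)$-summand), and identifying its intersection with an $s_0$-eigenspace requires a genuine argument; likewise your parenthesis ``(precisely because $\bp,\bn\notin\TL(D_n)$)'' does not prove that $\TL(D_n)$ acts only by scalars on $L([n])\oplus L(-[n])$. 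The paper gets precisely this for free from the diagram basis -- every basis diagram other than $1$ and $s_0$ (other than $1$ in $\TL(D_n)$) contains a cup-cap, so $\TL(B_n)=\langle 1,s_0\rangle\oplus(U_1,\ldots,U_{n-1})$ -- and then concludes by a short centrality computation; some substitute for this codimension statement is indispensable in your setup too. Second, the recursive formula is an explicit assertion of Theorem B and you never produce or verify it: one must establish $b_{n+1,\pm}=b_{n,\pm}-\tfrac{q^{n-1}+q^{-(n-1)}}{q^{n}+q^{-n}}\,b_{n,\pm}U_nb_{n,\pm}$ (the paper does this by a simultaneous induction including the identity $(U_nb_n)^2=\tfrac{q^{n}+q^{-n}}{q^{n-1}+q^{-(n-1)}}U_nb_n$), and for $d_n=\bp+\bn$ one additionally needs the cross terms $\bp U_n\bn=\bn U_n\bp=0$, obtained in the paper from box absorption and red--blue orthogonality; only with the recursion in hand does your integrality remark (the coefficient equals $q\cdot\tfrac{1+q^{2(n-1)}}{1+q^{2n}}\in\mZ[[q]][q^{-1}]$) finish the last claim. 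A minor further point: for the type I representation one has $\End_{\Vq}(V^{\otimes n})\cong\TL(B_n)$ with circle value $-[2]$, so matching your block idempotents with the recursively defined elements needs the sign adjustment from the paper's warning.
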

\Cref{Chapter 3} is all about these Jones--Wenzl projectors and their properties. 
In the light of this theorem, purely algebraic idempotent formulas from \cite[\S3]{tomdieck1998} turn into idempotents with a representation theoretic meaning. Our description in terms of red, blue, and green boxes should then be seen as an interesting generalization of the graphical calculus for ordinary Jones--Wenzl projectors (see e.g.\ \cite{flath95} or \cite{kauffman1994}). It turns out that our formulas quantize the extremal projectors from \cite[\S2.8]{wedrich2018}, where the important subtleties for the quantization come from the coideal. The connection to \cite[\S2.8]{wedrich2018} will become more clear, when we explain in \Cref{affine section} how to view the Temperley--Lieb algebra $\TL(B_n)$ as a quotient of the extended affine Temperley--Lieb algebra used in their paper. 

Behind this thesis is the ultimate goal to find categorifications of the described structure. In particular we like to provide the categorical basement here for a categorification of these Jones--Wenzl projectors. In type $A$ three independent approaches were developed to categorify Jones--Wenzl projectors. One, \cite{cooper10}, using a cobordism category based on Temperley--Lieb combinatorics in type $A$, one using Lie theory and based on (categorified) Hecke algebra actions in \cite{stroppel2012} and finally one based on a categorified realization of the Jones--Wenzl projector as an iterated full-twist in \cite{rozansky2010}. The results presented so far in this thesis should be viewed as the combinatorial framework of a possible generalization of the first two approaches. In \Cref{Chapter on Infinite Braids} we propose a generalization for the combinatorics underlying \cite{rozansky2010} in types $B$ and $D$ by defining type $B$ and $D$ full-twists. The most important result of \Cref{Chapter on Infinite Braids} is the following theorem.
\begin{thmC}[\Cref{infinite type D braid}]
	Let $[\delta_n]\in\TL(D_n)$ be the image of the type $D$ full twist $\delta_n$ under the quotient map $\mZ[[q]][q^{-1}]B(D_n)\twoheadrightarrow \TL(D_n)$. Powers of $[\delta_n]$ converge in the $q$-adic norm on $\TL(D_n)$ towards the Jones--Wenzl projector $d_n$:
	\begin{equation} \label{super duper convergence}
	\lim_{m\to\infty}[\delta_n]^m= d_n = \quad \cbox{
		\begin{tikzpicture}[tldiagram, yscale=2/3]
			\draw \tlcoord{-1.5}{0} \lineup \lineup \lineup;
			\draw \tlcoord{-1.5}{1} \lineup \lineup \lineup;
			\draw \tlcoord{-1.5}{2} \lineup  \lineup \lineup;
			\draw \tlcoord{0}{0} \maketlboxgreen{3}{$n$};
		\end{tikzpicture}
	}.
	\end{equation}
\end{thmC}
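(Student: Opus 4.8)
The plan is to mimic the type $A$ argument of Rozansky, adapted to the coideal/type $D$ setting, in two stages: first establish that the sequence $[\delta_n]^m$ is $q$-adically Cauchy (hence convergent, since $\TL(D_n)$ is a finitely generated module over the complete ring $\mZ[[q]][q^{-1}]$ and thus $q$-adically complete), and second identify the limit with $d_n$ via the characterizing property from Theorem B. First I would set $L \coloneqq \lim_{m\to\infty}[\delta_n]^m$, assuming for the moment the limit exists; since $[\delta_n]$ commutes with the generators $s_0'$ and $s_1,\dots,s_{n-1}$ of $\TL(D_n)$ up to the relations satisfied by the full twist (the full twist is central in the braid group $B(D_n)$, hence its image is central in $\TL(D_n)$), any limit of its powers is central. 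Then $[\delta_n]\cdot L = L$ by construction, and I would show that $L$ annihilates every cup-cap generator: each cup-cap generator $e$ is a quasi-idempotent, and one computes that $[\delta_n] e$ differs from a multiple of $e$ by something of positive $q$-order (this is where the precise eigenvalue of the full twist on the ``small'' pieces of $V^{\otimes n}$ enters — all eigenvalues except the one on $L([n])\oplus L(-[n])$ are of the form $q^N\cdot(\text{unit})$ with $N>0$), so $[\delta_n]^m e \to 0$. By the uniqueness clause in Theorem B (the green box $d_n$ is the unique nonzero idempotent in $\TL(D_n)$ killing all cup-cap generators), it then suffices to check $L \neq 0$ and $L^2 = L$; idempotency is immediate from centrality and $[\delta_n]L=L$ once one knows $L$ lies in the span of $\bp,\bn$, and non-vanishing follows by tracking a single matrix coefficient (e.g. the coefficient of the identity diagram, which stabilizes to $1$).

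The technical heart is the Cauchy estimate. I would use the Schur--Weyl identification $\TL(B_n)\cong\End_{\Vq}(V^{\otimes n})$ from Theorem A together with its type $D$ refinement, decomposing $V^{\otimes n}=\bigoplus_{k=0}^n\binom{n}{k}L([n-2k])$. The full twist $\delta_n$ acts on $V^{\otimes n}$ by an explicit scalar on each isotypic-type block — computable as a product of the ``ribbon''/double-braiding eigenvalues, which in the coideal setting are governed by the action of the type $B$ Jucys--Murphy elements discussed in Chapter 1. The key numerical input is that, after normalizing so that $[\delta_n]$ acts by $1$ on the top pieces $L(\pm[n])$, it acts on every other summand $L([n-2k])$, $0<k<n$, by a scalar lying in $1+q\,\mZ[[q]]$ with the leading correction of \emph{strictly positive} $q$-order; raising to the $m$-th power sends these to $1 + O(q^{m})$ only if the base is $\equiv 1$, so more carefully one needs the scalars on the non-top blocks to be of the form $q^{c_k}\cdot u_k$ with $c_k>0$, $u_k$ a unit, forcing those blocks to die in the limit and the normalized twist to converge to the projector onto $L([n])\oplus L(-[n])$, which is exactly $d_n$. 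I would extract these eigenvalue $q$-orders from the generalized Reidemeister moves and the functional-analytic setup promised in the introduction, reducing the whole claim to: (i) $q$-order positivity of the twist eigenvalues on non-extremal blocks, and (ii) completeness of $\TL(D_n)$ in the $q$-adic norm.

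The main obstacle I anticipate is step (i) — pinning down that \emph{all} non-extremal eigenvalues of $[\delta_n]$ have strictly positive $q$-valuation after the correct normalization. In type $A$ this is classical and follows from the well-known eigenvalues of the full twist on $L(1)^{\otimes n}$ in terms of quadratic Casimirs; in types $B$/$D$ the subtlety flagged throughout the introduction is that the coideal $\Vq$ is not a Hopf algebra, so there is no honest ribbon element and the ``twist eigenvalue'' must be defined and computed by hand via the reflection equation / the braid relation for $s_0$ (or $s_0'$) together with the type $A$ part. Concretely I expect to need a careful bookkeeping of how the dotted strand $s_0$ (resp. the type $D$ generator $s_0'$) contributes to the full twist, using the wreath-product diagrammatics of Chapter 1 to write $\delta_n$ as a product of multiplicative Jucys--Murphy elements and then evaluating each on the $1$-dimensional summands $L([n-2k])$, where $B=q^{-1}EK^{-1}+F$ acts by $[n-2k]$. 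Once that eigenvalue table is in hand with the positivity of $q$-orders verified, the convergence and the identification with $d_n$ are comparatively formal.
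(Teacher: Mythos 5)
Your overall architecture is sound and runs parallel to the paper's: both arguments reduce the theorem to showing that composing $[\delta_n]$ with any cup-cap generator produces an element of strictly positive $q$-valuation, and then conclude by completeness of $\TL(D_n)$ together with the uniqueness characterization of $d_n$ from \Cref{characterizing properties of type B projector}. Where you diverge is in how you propose to obtain that key quantitative input: the paper proves a purely diagrammatic identity (\Cref{most crucial lemma}), namely $\tlcap_i\cdot[\delta_n]=q^{4(n-1)}[\delta_{n-2}]\cdot\tlcap_i$ and its dotted analogue, by induction using the type $A$ and type $B$ Reidemeister moves of \Cref{section on Reidemeister moves}; since every non-identity diagram factors through some cap, this immediately gives $[\delta_n]^m\lambda\in q^m\mZ[[q]]\{\lambda_1,\dots,\lambda_{r-1}\}$ for every non-identity basis diagram $\lambda$, and the convergence is then read off from the regular representation as a matrix statement (the paper does not pass through the coideal decomposition at all for this step).

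The genuine gap is that your step (i) -- the eigenvalue table for $[\delta_n]$ on the summands $L([n-2k])$ with the required positivity of $q$-orders -- is exactly the technical heart, and you leave it as an anticipated obstacle rather than proving it. You correctly identify why it is delicate (no ribbon element for the coideal $\Vq$), but you do not supply a substitute computation; the Reidemeister-move route you gesture at is precisely what the paper's \Cref{most crucial lemma} carries out, so as written your proposal presupposes the one lemma that does all the work. There is also a secondary issue you gloss over: for $0<k<n$ the summand $L([n-2k])$ occurs with multiplicity $\binom{n}{k}>1$, so $[\delta_n]$ acts on that isotypic block by a matrix, not automatically by a scalar. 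Centrality of $[\delta_n]$ in $\TL(D_n)$ does not by itself force scalar action on these blocks, since the simple $\TL(B_n)$-blocks are permuted in pairs by the involution $\Phi$ of \Cref{Type B automorphism s_o flips sign} and only their $\Phi$-stable combination sees the center of $\TL(D_n)$; you would therefore need either to prove centrality in the larger algebra $\End_{\Vq}(V^{\otimes n})\cong\TL(B_n)$, or to argue via the characteristic polynomial (Cayley--Hamilton over $\mZ[[q]]$) that positive valuation of all eigenvalues in an algebraic closure still forces $A^m\to 0$. Neither point is fatal to the strategy, but both must be filled in before the argument closes.
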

Since the Jones--Wenzl projectors are not defined over $\mZ$ the involved combinatorics in all approaches to the categorified projectors is non-trivial. Most formulas have to be interpreted in some kind of completion involving (infinite) Laurent series. In \Cref{chapter 5} we describe this phenomenon in detail illustrated by examples. The combinatorial difficulties are set in parallel to the homological constructions from \cite{rozansky2010}. As a byproduct we give more detailed and self-contained proofs of some results from \cite{rozansky2010} and write everything in such a way to fit within the framework of completed Grothendieck groups  from \cite{achar2011}. This allows to replace the missing integrality properties with a quite powerful theory involving semi-infinite complexes. 

\section*{Acknowledgments}

The Master's thesis as printed here would never have been possible without the help and encouragement of my family, friends, colleagues, and many faculty members of the University of Bonn. First, thanks to my family who has given me moral support throughout the years of my studies. Thanks to Thomas Perron--Frobenius--Häßel, Laura Khaze, Simon Korswird, Elbrus Mayer, Benjamin Nettesheim, Marena Richter, Jessica Schega, Nicolas Schmitt, Jonathan Wiebusch, Lilian Witters and all my other uncountable friends from the student council, which played a huge role throughout my studies and gave me emotional support. Then, thanks to my fellow students and now PhD students Lukas Bonfert, Jonas Nehme, Timm Peerenboom, and Liao Wang for their numerous notes, comments, and discussions on the topics of the thesis. I hope that each of us will successfully complete the PhD and that we will continue to work together in the future. Thanks to Jacob Matherne for comments on my English and an epic online seminar on Cluster Algebras in spring 2021 in Oregon. Thanks to Joanna Meinel for fun walks, discussions on the branching graph which inspired one section of the thesis, and many critical comments. 
Thanks to Bart Vlaar for a few extremely helpful discussions on quantum symmetric pairs and providing me with intuition for these mysterious mathematical objects. Thanks to Paul Wedrich for a great seminar on link homology, including many discussions on our favorite (categorified) idempotents, and for proofreading this thesis. Lastly, thanks to Catharina Stroppel for many years of informative and enjoyable representation theory classes, interesting topic suggestions for bachelor's and master's theses and supervising them, and finally the energy with which she brings the representation theory research group together.
%
%
%


	\mainmatter

	\chapter{Type \texorpdfstring{$B$}{B} and \texorpdfstring{$D$}{D} combinatorics and diagrammatics} \label{Chapter 1}

In this chapter we collect facts about Coxeter groups, braid groups and Temperley--Lieb algebras of type $B$ and $D$, collected from many different sources throughout the literature.
While we assume that the reader knows the abstract theory of semisimple Lie algebras (see e.g.\ \cite{humphreys2012}), Coxeter groups (see e.g.\ \cite{humphreys1990}) and understands the type $A$ case very well, we do not require any special knowledge about types $B$ and $D$ or their combinatorics. We strive for this chapter to be as self-contained as possible and to provide the reader, who intends to learn the theory, with many visual examples coming from our diagrammatic treatment of the subject.
\Cref{Chapter 1 section 1} is all about the interplay between the Coxeter groups of types $A$, $B$ and $D$ and their compatible diagrammatics. In \Cref{Chapter 1 section 2} we focus on the representation theory of type $B$ Coxeter groups, which, while parallel to the representation theory of symmetric groups, is more difficult to work out explicitly since the number of irreducible representations grows quickly with $n$.
There we define the type $B$ Specht modules from idempotents in the group algebra $\C \! W(B_n)$ and give a sketch of the proof of the classification theorem. Afterwards we discuss induction and restriction functors in \Cref{Chapter 1 section 3} and explain their behavior when applied to the Specht modules from the second section via the branching rule.
In the final \Cref{Chapter 1 section 4} we present the type $B$ and $D$ Temperley--Lieb algebras, which are quotients of the Hecke algebras of their respective Coxeter groups and come with their own intriguing diagrammatics. 


We fix a nonnegative integer $n\geq0$. Our notation for the type $A,B,D$ Dynkin diagrams is
\begin{equation} \label{Dynkin diagrams}
	\begin{tikzcd}
		A_{n-1} &  & s_1 \arrow[r, no head]             & s_2 \arrow[r, no head] & \ldots \arrow[r, no head]                    & s_{n-1}, &  & n\geq1, \\
		&  & s_0 \arrow[d, no head, Leftarrow] &                      &                                              &     &  &        \\
		B_n     &  & s_1 \arrow[r, no head]             & s_2                    & \ldots \arrow[l, no head] \arrow[r, no head] & s_{n-1}, &  & n\geq0, \\
		&  & s_0' \arrow[rd, no head]            &                      &                                              &     &  &        \\
		D_n     &  & s_1 \arrow[r, no head]             & s_2 \arrow[r, no head] & \ldots \arrow[r, no head]                    & s_{n-1}, &  & n\geq2,
	\end{tikzcd}
\end{equation}
where $A_0$ and $B_0$ are the empty diagrams and $D_2=A_1\times A_1$ consists of two vertices without edges. We denote by $W(A_{n-1}), W(B_n)$, and $W(D_n)$ their corresponding Coxeter groups with generators as indicated by the labels of the vertices in their respective Dynkin diagrams. More precisely,
$W(A_{n-1})$ is the group given by generators $\{s_i\mid 1\leq i \leq n-1\}$ subject to the type $A$ relations
\begin{enumerate}[label = {(A\arabic*)}, align=left]
	\item $s_i^2=e$
	\item $s_is_{i+1}s_i=s_{i+1}s_is_{i+1}$ for all $1\leq i \leq n-2$
	\item $s_is_j=s_js_i$ whenever $|i-j|\geq2$.
\end{enumerate}
The Coxeter group $W(B_n)$ has the same generators as $W(A_{n-1})$ and one additional generator $s_0$ subject to the type $A$ relations and the additional type $B$ relations
 \begin{enumerate}[label = {(B\arabic*)}, align=left]
 	\item $s_0^2=e$
 	\item $s_1s_0s_1s_0=s_0s_1s_0s_1$
 	\item $s_0s_i=s_is_0$ whenever $i\geq2$.
 \end{enumerate}
Lastly the group $W(D_n)$ has the same generators as $W(A_{n-1})$ and one additional generator $s_0'$ subject to the type $A$ relations and the additional type $D$ relations
\begin{enumerate}[label = {(D\arabic*)}, align=left]
	\item $(s_0')^2=e$
	\item $s_0's_2s_0'=s_2s_0's_2$ \label{Weyl group D ii}
	\item $s_0's_i=s_is_0'$ for all $i\neq2$. \label{Weyl group D iii}
\end{enumerate}

We denote by $B(A_{n-1})$, $B(B_n)$ and $B(D_n)$ the braid groups of their respective type with generators $\{\sigma_1, \ldots, \sigma_{n-1}\}$, $\{\sigma_0, \sigma_1, \ldots, \sigma_{n-1}\}$ and $\{\sigma_0', \sigma_1, \ldots, \sigma_{n-1}\}$, respectively, subject to the same relations as in their respective Coxeter counterparts with the exception of the quadratic relation $\sigma_i^2=e$, which is left out for all generators.

\section{Coxeter and braid groups of types \texorpdfstring{$B$}{B} and \texorpdfstring{$D$}{D}} \label{Three families of groups}
\label{Chapter 1 section 1}

This section is a gentle introduction to the Coxeter groups of type $B$ and $D$ and their respective braid groups. 
Some Goals of this section are to prove the existence of inclusions
\[
	W(A_{n-1})\hookrightarrow W(D_n)\hookrightarrow W(B_n),
\] 
to show the correctness of the diagrammatics for $W(B_n)$ in terms of dotted/signed permutations and to define the multiplicative Jucys--Murphy elements $J_1,\ldots,J_n\in W(B_n)$. We start with giving the easiest examples for the Coxeter groups $W(B_n)$ and $W(D_n)$. 
\begin{beis}
	It is well-known that the Coxeter group $W(A_{n-1})$ is isomorphic to $S_n$, the group of permutations of $\{1,\ldots, n\}$ via mapping $s_i$ to the simple transposition $(i, i+1)$ for $1\leq i \leq n-1$ (see e.g.\ \cite{turaev2008} for an elementary proof).
	As degenerate special cases we have $W(B_0)=W(A_0)=\{e\}$, $W(A_1)=\{e,s_1\}\cong S_2$, and $W(B_1)=\{e,s_0\}\cong S_2$. The group $W(A_2)$ (respectively $W(B_2)$) is a dihedral group with $6$ (respectively $8$) elements. The smallest possible Coxeter group of type $D_n$ is $W(D_2)$, which is generated by the self-inverse elements $s_0'$ and $s_1$ which commute with each other, and hence isomorphic to $W(A_1)\times W(A_1)\cong S_2 \times S_2$. The next bigger example is $W(D_3)$, which is isomorphic to $W(A_3)\cong S_4$.
\end{beis} 
	As we will see the cardinalities of $W(B_n)$ and $W(D_n)$ are given by
	\[
	|W(A_{n-1})|=n! \, , \quad |W(D_n)|=2^{n-1}n! \, , \quad |W(B_n)|=2^n n! 
	\] 
	and can be already observed in these small examples. This will ultimately follow from \Cref{lemma inclusion of d in b} and \Cref{Weyl group of B_n is dotted group}.
	The proof of \Cref{lemma inclusion of d in b} will use the realization of the above Coxeter groups as Weyl groups of their associated root systems, i.e.\ the root system of the complex semisimple Lie algebras associated to their respective Dynkin type
	\[
		\lie(A_{n-1})\coloneqq \sln \, , \quad \lie(D_n)\coloneqq \so_{2n} \, , \quad \lie(B_n)\coloneqq \so_{2n+1}
	\]
	as given e.g.\ in \cite{bourbaki02}.
	This realization is proven for abstract root systems for instance in \cite{hofmann2007}. The lemma gives a way to relate the groups $W(A_{n-1})$, $W(D_n)$ and $W(B_n)$ via inclusion morphisms. It can be found for example in \cite[Lemma 10.2]{stroppel2018}.
	\begin{lemm} \label{lemma inclusion of d in b}
		Let $n\geq 2$. We have inclusions of groups
		\[
			\begin{tikzcd}[row sep=tiny]
				\phantom{s_i}W(A_{n-1}) \arrow[r, hook]   & W(D_n) \arrow[r, hook]        & W(B_n)\phantom{s_i}    \\
				\phantom{W(A_{n-1})}s_i \arrow[r, mapsto] & s_i \arrow[r, mapsto]  & s_i\phantom{W(A_{n-1})}       \\
				& s_0'  \arrow[r, mapsto] & s_0s_1s_0.
			\end{tikzcd}
		\]
	The second map identifies $W(D_n)$ with a normal index $2$ subgroup of $W(B_n)$.
	\end{lemm}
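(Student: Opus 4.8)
The plan is to verify the three stated maps separately. First I would check that $s_i\mapsto s_i$ extends to a homomorphism $W(A_{n-1})\to W(D_n)$ and that $s_i\mapsto s_i$, $s_0'\mapsto s_0s_1s_0$ extends to a homomorphism $W(D_n)\to W(B_n)$; for each it suffices to check that the images satisfy the defining Coxeter relations. The type $A$ relations are automatic in both cases. For the second map, the only thing to verify is relations (D1)--(D3) with $s_0'$ replaced by $s_0s_1s_0$: that $(s_0s_1s_0)^2=e$ (immediate from $s_0^2=s_1^2=e$), that $s_0s_1s_0$ commutes with $s_i$ for $i\neq 2$ (for $i=1$ this is the braid relation (B2) rewritten, for $i\geq 3$ it follows from (B3) and the type $A$ far-commutation), and the braid relation $(s_0s_1s_0)s_2(s_0s_1s_0)=s_2(s_0s_1s_0)s_2$, which is the main computational step — I expect to prove it by pushing the $s_0$'s past $s_2$ using (B3) and then invoking the type $A$ braid relation among $s_1,s_2$. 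That the composite $W(A_{n-1})\to W(B_n)$ is the obvious inclusion is then clear.

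Next I would argue injectivity. The cleanest route, as the surrounding text suggests, is to use the realization of these Coxeter groups as Weyl groups acting on the ambient Euclidean space $\mathbb{R}^n$ with orthonormal basis $\varepsilon_1,\dots,\varepsilon_n$: $W(B_n)$ is generated by the sign changes and coordinate permutations, $W(D_n)$ by the even sign changes and coordinate permutations, and $W(A_{n-1})$ by the coordinate permutations. Under the standard choice of simple roots, $s_i$ acts as the transposition $(\varepsilon_i\,\varepsilon_{i+1})$, $s_0$ as $\varepsilon_1\mapsto -\varepsilon_1$, and $s_0'$ as $\varepsilon_1\mapsto -\varepsilon_2,\ \varepsilon_2\mapsto -\varepsilon_1$; then $s_0s_1s_0$ indeed acts as $s_0'$, confirming that the abstract maps agree with the geometric inclusions. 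Since a Coxeter group acts faithfully on its reflection representation (standard, cf.\ \cite{humphreys1990}), the maps are injective, and we get the chain $W(A_{n-1})\hookrightarrow W(D_n)\hookrightarrow W(B_n)$.

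Finally, for the index and normality claim: from the geometric description, $W(B_n)$ is the semidirect product $(\mathbb{Z}/2\mathbb{Z})^n\rtimes S_n$ of the sign-change subgroup with the permutation subgroup, and $W(D_n)$ is the preimage of $0$ under the total-sign homomorphism $\varphi\colon W(B_n)\to \mathbb{Z}/2\mathbb{Z}$ (well-defined because conjugation by a permutation preserves the total number of sign changes mod $2$ and $S_n$ maps to $0$). Concretely $\varphi$ sends $s_0\mapsto 1$ and all $s_i\mapsto 0$; checking that these values are consistent with all the defining relations of $W(B_n)$ shows $\varphi$ is a well-defined surjective homomorphism, and $\ker\varphi$ is exactly the subgroup generated by the $s_i$ together with $s_0s_1s_0$, i.e.\ the image of $W(D_n)$. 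Hence $W(D_n)\trianglelefteq W(B_n)$ with index $[W(B_n):W(D_n)]=|\mathbb{Z}/2\mathbb{Z}|=2$. The main obstacle is purely bookkeeping: making the identification of the abstract generators with their geometric counterparts precise enough that faithfulness of the reflection representation can be invoked, and verifying the braid relation for $s_0s_1s_0$ and $s_2$; everything else is routine.
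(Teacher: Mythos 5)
Your proposal is correct and follows essentially the same route as the paper: verify the Coxeter relations for the images of the generators, then deduce injectivity from faithfulness of the reflection representation after matching the abstract generators with the standard geometric realization (the paper phrases this via an explicit intertwiner $\h^{*}(D_n)\cong\h^{*}(B_n)$ in the simple-root bases $\alpha_0',\alpha_i$ and $\alpha_0,\alpha_i$ rather than in $\varepsilon$-coordinates, but it is the same argument). The only divergence is at the end: where you construct the sign homomorphism $\varphi\colon W(B_n)\to\mathbb{Z}/2\mathbb{Z}$ and identify $W(D_n)$ with its kernel, the paper checks normality by conjugation-stability and invokes Matsumoto's theorem to see that the parity of $s_0$'s in a word is well defined --- your version is marginally more self-contained, since well-definedness of $\varphi$ is just a direct check on the defining relations, though you should still spell out that $\iota(W(D_n))\cup\iota(W(D_n))s_0=W(B_n)$ to conclude the kernel is not strictly larger than $\iota(W(D_n))$.
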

	\begin{proof}
	The first map is well-defined and maps the Coxeter generators of $W(A_{n-1})$ to the generators of a parabolic subgroup of $W(D_n)$ isomorphic to $W(A_{n-1})$, hence it is an isomorphism onto its image. 
	
	The second map, which we denote by $\iota$, is well-defined, since
	\[
		\iota(s_0')\iota(s_1)= s_0s_1s_0s_1=s_1s_0s_1s_0=\iota(s_1)\iota(s_0')
	\]
	and
	\begin{align*}
		\iota(s_0')\iota(s_2)\iota(s_0')&= s_0s_1s_0s_2s_0s_1s_0 \\
		&=s_0s_1s_2s_0s_0s_1s_0=s_0s_1s_2s_1s_0=s_0s_2s_1s_2s_0=s_2s_0s_1s_0s_2 =\iota(s_2)\iota(s_0')\iota(s_2).
	\end{align*}
	Also clearly the element $s_0s_1s_0$ commutes with all $s_i$ for $i\geq3$ and is self-inverse, since $s_0$ and $s_1$ are. All relations for the Coxeter generators other than $s_0'$ hold by the inclusion $W(A_{n-1})\subseteq W(B_n)$, $s_i\mapsto s_i$ for $1\leq n-1$, which is proven the same way as the inclusion $W(A_{n-1})\subseteq W(D_n)$. 
	
	For the injectivity of $\iota$ we argue using the commutative square
	\[
		\begin{tikzcd}
			W(D_n) \arrow[r, hook] \arrow[d, "\iota"] & \GL(\h^{*}(D_n)) \arrow[d, dashed, "\cong"] \\
			W(B_n) \arrow[r, hook]           & \GL(\h^{*}(B_n)),                   
		\end{tikzcd}
	\]
	where $\h^{*}(D_n)$ respectively $\h^{*}(B_n)$ denote the dual of a Cartan subalgebra $\h$ of $\lie(D_n)=\so_{2n}$ respectively $\lie(B_n)=\so_{2n+1}$. Both vector spaces are $n$-dimensional and come with bases $\alpha_0',\alpha_1,\ldots,\alpha_{n-1}$ and $\alpha_0,\alpha_1,\ldots,\alpha_{n-1}$ consisting of simple roots (c.f.\ \Cref{remark about roots for b and d}).
	The action of a simple reflection on $\h^{*}$ is defined in terms of the corresponding Cartan matrices. Concretely the action of $W(B_n)$ on $\h^{*}(B_n)$ is given by 
	\[
		s_i(\alpha_j)= \begin{cases}
			-\alpha_j, & \text{for $i=j$,} \\  
			2 \alpha_0 + \alpha_1 ,& \text{for $i=0$ and $j=1$,} \\
			\alpha_i + \alpha_j, & \text{for $i\neq 0$ and $j=i\pm 1$,} \\
			\alpha_j, &\text{otherwise.}  
		\end{cases}
	\]
	for $0\leq i,j\leq n-1$. The action of $W(D_n)$ on $\h^{*}(D_n)$ is given by
	\begin{align*}
		s_i(\alpha_j)&= \begin{cases}
			-\alpha_j, & \text{for $i=j$,} \\
			\alpha_i + \alpha_j, & \text{for $j=i\pm 1$,} \\
			\alpha_j, &\text{otherwise,}  
		\end{cases} \\
		s_0'(\alpha_j)&= \begin{cases}
			\alpha_0' + \alpha_2, & \text{for $j=2$,} \\
			\alpha_j, &\text{otherwise,}  
		\end{cases} \\
		s_i(\alpha_0')&= \begin{cases}
			\alpha_0' + \alpha_2 & \text{for $i=2$,} \\
			\alpha_0', &\text{otherwise,}  
		\end{cases} \\
		s_0'(\alpha_0')&= -\alpha_0'
	\end{align*}
	for $1\leq i,j \leq n-1$. 
	Sending $\alpha_0'\mapsto 2\alpha_0+\alpha_1$ and $\alpha_i\mapsto \alpha_i$ for $i\geq1$ gives a vector space isomorphism $\h^{*}(D_n)\cong\h^{*}(B_n)$, which is invariant under the action of $W(D_n)$. More precisely this isomorphism translates the above action of $W(D_n)$ on $\h^{*}(D_n)$ into the action of $W(D_n)$ on $\h^{*}(B_n)$ via restriction $W(D_n)\rightarrow W(B_n) \rightarrow \GL(\h^{*}(B_n))$. 
	Hence by conjugating with this isomorphism we obtain the vertical isomorphism
	\[
		\GL(\h^{*}(D_n))\xrightarrow{\cong} \GL(\h^{*}(B_n))
	\]
	making the diagram commute. We conclude that $\iota$ is injective from the commutativity of the diagram. 
	
	The group $W(D_n)$ is a normal subgroup of $W(B_n)$, since it is stable under conjugation with the Coxeter generators of $W(B_n)$. The quotient $W(B_n)/W(D_n)$ is generated by $s_0$, hence has order either $1$ or $2$. However the number of $s_0$'s occurring in any (reduced) expression of $w\in W(B_n)$ is constant modulo $2$ by Matsumoto's theorem, which shows that the latter is the case.
	\end{proof}
	\begin{warn}
		The inclusion of $W(D_n)$ into $W(B_n)$ is very useful for group theoretic arguments, but not for arguments about Coxeter groups. For instance this inclusion does not preserve the length of group elements.
	\end{warn}
	\begin{beis}
		We visualize the inclusions from \Cref{lemma inclusion of d in b} for $n=2$ via the (weak) Bruhat graphs/Cayley graphs 
		\begin{equation*}
			\begin{array}{c c c}
				W(A_1)& W(D_2)& W(B_2)\\
				\begin{tikzcd}
					s_1 \arrow[d, no head, red] & \\
					e            
				\end{tikzcd}  & \begin{tikzcd}
				& \wo                     &                \\
				s_0s_1s_0 \arrow[ru, no head, red] &                         & s_1 \arrow[lu, no head, green] \\
				& e \arrow[ru, no head, red] \arrow[lu, no head, green] &               
			\end{tikzcd}  &
		\begin{tikzcd}
			& \wo \arrow[rd, no head, blue] &                              \\
			s_0s_1s_0 \arrow[d, no head, blue] \arrow[ru, no head, red] &                         & s_1s_0s_1 \arrow[d, no head, red] \\
			s_0s_1 \arrow[d, no head, red]                        &                         & s_1s_0 \arrow[d, no head, blue]    \\
			s_0 \arrow[rd, no head, blue]                          &                         & s_1                          \\
			& e \arrow[ru, no head, red]   &                             
		\end{tikzcd}
			\end{array}
		\end{equation*}
		of their respective Coxeter groups. The element $\wo$ is the longest element of both $W(B_2)$ and $W(D_2)$ and is given by $s_0's_1=s_0s_1s_0s_1=s_0s_1s_0s_1=s_1s_0'$. 
		One should note that the longest element $\wo\in W(B_n)$ is not always contained in $W(D_n)$. We will see in \Cref{longest element in type B} a concrete diagrammatic description of the longest element and that it is already contained in $W(D_n)$ if and only if $n$ is even in \Cref{longest element in type D}.
	\end{beis}
	\begin{rema} \label{remark about roots for b and d}
		The isomorphism $\h^{*}(D_n)\cong \h^{*}(B_n)$ which we used in the proof of \Cref{lemma inclusion of d in b} identifies the roots of $\so_{2n}(\C)$ with the long roots of $\so_{2n+1}(\C)$. For instance if $n=2$ the long roots of the $B_2$ root system
		\begin{equation} \label{B2 root system}
			\begin{tikzpicture}
				\draw [->,line width=1pt] (0,0) -- (1,1);
				\draw [->,line width=1pt] (0,0) -- (1,0);
				\draw [->,line width=1pt] (0,0) -- (0,1);
				\draw [->,line width=1pt] (0,0) -- (-1,1);
				\draw [->,line width=1pt] (0,0) -- (-1,0);
				\draw [->,line width=1pt] (0,0) -- (-1,-1);
				\draw [->,line width=1pt] (0,0) -- (0,-1);
				\draw [->,line width=1pt] (0,0) -- (1,-1);
				\draw (2,1.3) node{$2\alpha_0+\alpha_1$};
				\draw (1.5,0) node{$\alpha_0$};
				\draw (0,1.3) node{$\alpha_0+\alpha_1$};
				\draw (-1.5,1.3) node{$\alpha_1$};
			\end{tikzpicture}
		\end{equation}
		give the $D_2=A_1\times A_1$ root system. For the general root systems of type $B_n$ and $D_n$ see \eqref{full root system type B} and \eqref{full root system type D}.
	\end{rema}
	\begin{rema} \label{braid group of type d inside type b}
		Note that the embedding from \Cref{lemma inclusion of d in b} cannot be directly generalized to an embedding of the corresponding braid groups $\overline{\iota}\colon B(D_n)\hookrightarrow B(B_n)$.
		Recall that $B(D_n)$ and $B(B_n)$ are generated by $\sigma_0', \sigma_1, \ldots, \sigma_{n-1}$ and $\sigma_0, \sigma_1, \ldots, \sigma_{n-1}$, respectively. Both obvious candidates for an image of $\sigma_0'$
		\begin{equation*}
			\overline{\iota}(\sigma_0')\coloneqq\sigma_0 \sigma_1 \sigma_0 \quad \text{and} \quad \overline{\iota}(\sigma_0')\coloneqq\sigma_0 \sigma_1 \sigma_0^{-1} \,
		\end{equation*}
		do not give a well-defined map of groups. However the first choice satisfies the type $D$ relation $\sigma_0'\sigma_1=\sigma_1\sigma_0'$ from \ref{Weyl group D iii}, while the second choice satisfies the braid relation $\sigma_0'\sigma_2\sigma_0'=\sigma_2\sigma_0'\sigma_2$ from \ref{Weyl group D ii}.
		This leads to the conclusion that the braid group $B(D_n)$ of type $D_n$ should be mapped into the quotient $B_1(B_n)$ of $B(B_n)$ obtained by enforcing the additional quadratic relation $s_0^2\coloneqq\sigma_0^2=e$. We summarize this picture in a diagram, which explains which groups live where and where they map:
		\begin{equation*}
			\begin{tikzcd}
				B(B_n) \arrow[r, two heads] & B_1(B_n) \arrow[r, two heads]                          & W(B_n)                     \\
				& B(D_n) \arrow[u] \arrow[r, two heads]                       & W(D_n) \arrow[u, hook]     \\
				& B(A_{n-1}) \arrow[u] \arrow[r, two heads] \arrow[luu] & W(A_{n-1}) \arrow[u, hook]
			\end{tikzcd}
		\end{equation*}
		We expect that the map $B(D_n)\rightarrow B_1(B_n)$ is injective, however we have no proof of this.
	\end{rema}
\begin{rema}[Big picture]
		For every $n$ we have have an inclusion of each ``$n$-version'' of the ``$(n+1)$-version'' of that group as a parabolic subgroup. 
		So we have the following three-dimensional commutative diagram: 
	\begin{equation*}
		\begin{tikzcd}[row sep = small]
			&                                                                                & B(B_{n+1}) \arrow[r, two heads]                   & B_1(B_{n+1}) \arrow[r, two heads]                      & W(B_{n+1})                  \\
			&                                                                                & \phantom{A}                                       &                                                        &                             \\
			&                                                                                &                                                   & B(D_{n+1}) \arrow[uu] \arrow[r, two heads]             & W(D_{n+1}) \arrow[uu, hook] \\
			B(B_{n}) \arrow[r, two heads] \arrow[rruuu, dashed] & B_1(B_{n}) \arrow[r, two heads] \arrow[rruuu, dashed]                          & W(B_n) \arrow[rruuu, dashed]                      &                                                        &                             \\
			&                                                                                &                                                   & B(A_{n}) \arrow[uu] \arrow[luuuu] \arrow[r, two heads] & W(A_{n}) \arrow[uu, hook]   \\
			& B(D_n) \arrow[uu] \arrow[r, two heads] \arrow[rruuu, dashed]                   & W(D_n) \arrow[uu, hook] \arrow[rruuu, dashed]     &                                                        &                             \\
			&                                                                                &                                                   & \phantom{A}                                            &                             \\
			& B(A_{n-1}) \arrow[uu] \arrow[r, two heads] \arrow[luuuu] \arrow[rruuu, dashed] & W(A_{n-1}) \arrow[uu, hook] \arrow[rruuu, dashed] &                                                        &                            
		\end{tikzcd}
	\end{equation*}
	In \Cref{theorem induction and restriction behaviour for type B} we will study the behavior of induction and restriction functors applied to irreducible representations of $W(B_n)$ coming from the inclusion $W(B_n)\subseteq W(B_{n+1})$ of parabolic subgroup, which is the top right diagonal arrow in the diagram. Observe that while there is only one way to view $W(B_n)$ as a parabolic subgroup of $W(B_m)$ for $n\leq m$, there are many non-parabolic subgroups $U\subseteq W(B_m)$ isomorphic to $W(B_n)$. For instance one can conjugate the obvious parabolic subgroup with some element $g\in W(A_{n-1})$. The diagrammatics for $W(B_n)$ which we introduce later will make this phenomenon very visible and show many non-trivially embedded, non-parabolic subgroups of $W(B_m)$ isomorphic to $W(B_m)$.
\end{rema}
We take a break from the abstract theory and discuss a well-known map from the braid group $B(B_n)$ to a group of diagrams. Notably the proof includes the diagrammatic identity \eqref{picture type B relation}, which gives an interpretation of the type $B$ relation
\begin{equation} \label{type B relation formula}
	\sigma_1\sigma_0\sigma_1\sigma_0 = \sigma_0\sigma_1\sigma_0\sigma_1.
\end{equation}
\begin{lemm} \label{embedding bn into an}
	There is a group homomorphism $\iota\colon B(B_n)\rightarrow B(A_n)$ defined on the generators via
	\begin{equation*}
		\sigma_0 \mapsto \sigma_1^2\, , \quad \sigma_i \mapsto \sigma_{i+1} \quad \text{ for } i=1\ldots,n-1.
	\end{equation*}
	Diagrammatically this homomorphism maps the generators $\sigma_1, \ldots, \sigma_{n-1}$ of $B(B_n)$ to standard type $A$ generators, while the additional generator $\sigma_0$ is mapped to a twist around an additional fixed strand on the left.
\end{lemm}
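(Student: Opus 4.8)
The plan is to use the universal property of a group presentation: $B(B_n)$ is generated by $\sigma_0,\sigma_1,\ldots,\sigma_{n-1}$ subject to the type $A$ braid relations among $\sigma_1,\ldots,\sigma_{n-1}$ together with the type $B$ braid relations (B2) $\sigma_1\sigma_0\sigma_1\sigma_0=\sigma_0\sigma_1\sigma_0\sigma_1$ and (B3) $\sigma_0\sigma_i=\sigma_i\sigma_0$ for $i\ge 2$ (no quadratic relations, since we are in the braid group). Hence it suffices to verify that the proposed images $\sigma_0\mapsto\sigma_1^2$, $\sigma_i\mapsto\sigma_{i+1}$ satisfy each of these relations in $B(A_n)$; then $\iota$ extends uniquely to a group homomorphism. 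For $n\le 1$ there are no relations to check, so we may assume $n\ge 2$.

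First I would dispatch the routine relations. The images $\sigma_2,\ldots,\sigma_n$ of $\sigma_1,\ldots,\sigma_{n-1}$ are consecutive standard generators of $B(A_n)$, so the type $A$ braid relations among them form a subfamily of the defining relations of $B(A_n)$ and hold automatically. For (B3) with $i\ge 2$ the image reads $\sigma_1^2\sigma_{i+1}=\sigma_{i+1}\sigma_1^2$; since $i+1\ge 3$ we have $|1-(i+1)|\ge 2$, so $\sigma_1$ commutes with $\sigma_{i+1}$ in $B(A_n)$, and therefore so does $\sigma_1^2$.

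The one relation with genuine content is (B2), whose image is the identity $\sigma_2\sigma_1^2\sigma_2\sigma_1^2=\sigma_1^2\sigma_2\sigma_1^2\sigma_2$ in $B(A_n)$. I would verify this using only the braid relation $\sigma_1\sigma_2\sigma_1=\sigma_2\sigma_1\sigma_2$: writing $a=\sigma_1$, $b=\sigma_2$ and $\Delta:=aba=bab$, one computes
\[
a^2ba^2b=a\,(aba)\,(ab)=a\,(bab)\,(ab)=(ab)^3=(aba)(bab)=\Delta^2,\qquad ba^2ba^2=(ba)\,(aba)\,a=(ba)\,(bab)\,a=(ba)^3=(bab)(aba)=\Delta^2,
\]
so both sides of the (B2)-image equal $\Delta^2$ and thus coincide (equivalently, $\Delta^2$ is the central full twist of the rank-$2$ parabolic $\langle\sigma_1,\sigma_2\rangle\cong B(A_2)$). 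This is really the only non-formal point, and it is exactly the algebraic reason why $\langle\sigma_1^2,\sigma_2\rangle$ satisfies the type $B_2$ braid relation: the two distinct spellings $aba$ and $bab$ of $\Delta$ force $(ab)^3=(ba)^3$. With all defining relations respected, $\iota$ is a well-defined group homomorphism.

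Finally, for the diagrammatic assertion it suffices to observe that $\sigma_1^2\in B(A_n)$ is pictured as a right-handed full twist of the two leftmost strands — equivalently, the second strand looping once around the leftmost "fixed" strand — while $\sigma_i\mapsto\sigma_{i+1}$ merely reindexes the remaining strands one step to the right; the computation above is then precisely the algebraic shadow of the pictorial identity interpreting the type $B$ relation \eqref{type B relation formula}, which I would record as the promised diagram \eqref{picture type B relation}. The main obstacle, as indicated, is nothing more than the (B2)-check, and even that collapses to the elementary observation that $(\sigma_1\sigma_2)^3=\Delta^2$ admits the symmetric description $(\sigma_2\sigma_1)^3$.
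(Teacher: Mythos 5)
Your proof is correct and takes essentially the same route as the paper: both reduce everything to the single nontrivial relation (B2) and verify it by showing each side equals $(\sigma_1\sigma_2)^3=(\sigma_2\sigma_1)^3$ via the braid relation $\sigma_1\sigma_2\sigma_1=\sigma_2\sigma_1\sigma_2$. Your packaging of this as $\Delta^2$ for the half-twist $\Delta=\sigma_1\sigma_2\sigma_1$, and your explicit dispatch of the routine relations, are merely cleaner presentations of the paper's computation.
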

\begin{proof}
	We have to show that the map is well-defined. The only non-trivial relation to check is the type $B$ relation \eqref{type B relation formula}, which follows from the braid identity
	\begin{equation} \label{picture type B relation}
		\iota(\sigma_1\sigma_0\sigma_1\sigma_0) \quad = \quad \cbox{
			\begin{tikzpicture}[tldiagram, yscale=2/3]
				\draw \tlcoord{0}{0.5} \lineup;
				\draw[white, double=black] \tlcoord{0}{1} \linewave{1}{-1} \linewave{2}{2} \lineup \lineup;
				\draw[white, double=black] \tlcoord{1}{0.5} \lineup \lineup \lineup;
				\draw[white, double=black] \tlcoord{0}{2} \lineup \lineup \linewave{2}{-2} \linewave{2}{2};
				\draw[white, double=black] \tlcoord{4}{0.5} \lineup \lineup;
				\draw[white, double=black] \tlcoord{5}{2} \linewave{1}{-1};
			\end{tikzpicture}
		} \quad = \quad
		\cbox{
			\begin{tikzpicture}[tldiagram, yscale=2/3]
				\draw \tlcoord{-1}{0.5} \lineup \lineup;
				\draw \tlcoord{-1}{1} \linewave{1}{1};
				\draw[white, double=black] \tlcoord{-1}{2} \linewave{2}{-2} \linewave{2}{2} \lineup \lineup;
				\draw[white, double=black] \tlcoord{1}{0.5} \lineup \lineup \lineup;
				\draw[white, double=black] \tlcoord{0}{2} \lineup \lineup \linewave{2}{-2} \linewave{1}{1};
				\draw[white, double=black] \tlcoord{4}{0.5} \lineup;
			\end{tikzpicture}
		} \quad =\quad \iota(\sigma_1\sigma_0\sigma_1\sigma_0) \, .
	\end{equation}
	Diagrammatically the right braid is obtained from the left one by moving one of the loops going around the leftmost strand through the other. In formulas we have
	\[
		\iota(\sigma_1\sigma_0\sigma_1\sigma_0)=\sigma_2\sigma_1^2\sigma_2\sigma_1^2=\sigma_2\sigma_1 \sigma_2\sigma_1\sigma_2\sigma_1
	\]
	and 
	\[
		\iota(\sigma_0\sigma_1\sigma_0\sigma_1)=\sigma_1^2\sigma_2\sigma_1^2\sigma_2=\sigma_1\sigma_2 \sigma_1\sigma_2\sigma_1\sigma_2,
	\]
	where we used the braid identity $\sigma_1\sigma_2\sigma_1=\sigma_2\sigma_1\sigma_2$. By applying the braid identity twice more we see
	\[
		\sigma_2\sigma_1 \sigma_2\sigma_1\sigma_2\sigma_1 = \sigma_1 \sigma_2\sigma_1^2\sigma_2\sigma_1=\sigma_1\sigma_2 \sigma_1\sigma_2\sigma_1\sigma_2 . \qedhere
	\]
\end{proof}
\Cref{embedding bn into an} is not just a solitary statement. Historically it opened the door to studying links in the solid torus, as we outline in the next remark. 
\begin{rema}
	According to \cite{lambropoulou93} the group homomorphism $\iota$ is an inclusion and identifies the braid group of type $B_n$ with all braids on $n$ strands, which can possibly twist around an additional left strand. Going around the left strand can be interpreted as going upwards around the ``hole'' in the solid torus $S^1 \times [0,1]^2$. Framed links $L$ in the solid torus can be obtained by closing up such a type $B$ braid, makes it possible to define link invariants via defining them on $\bigcup_{n\geq 0} B(B_n)$ and checking Markov moves. This point of view has been successfully applied in \cite{Lam94}, where a Ocneanu trace was defined and used to obtain an link invariant. However historically this topology has not yet been connected with a representation-theoretic picture. \Cref{Chapter 2} is a step towards developing such a picture, where the connection between topology and representation theory is made by considering intertwiners between tensor powers of the standard representation for the coideal subalgebra $\Vq$ of $\Ug_q(\gl_2)$. 
	For further reference on the purely topological point of view see also \cite{kimphd}, \cite{lambropoulou2004knot} and the references therein.
\end{rema}

Motivated by the representation theory from the second chapter, we introduce a diagrammatic notation for elements in the quotient braid group $B_1(B_n)$ of $B(B_n)$, c.f.\ \eqref{braid group of type d inside type b}. Since $\sigma_0$ becomes self-inverse in the quotient, we will not draw the fixed strand on the left like in \Cref{embedding bn into an}, but instead draw $s_0\coloneqq[\sigma_0]\in B_1(B_n)$ diagrammatically as a dot on the first strand. The relation \eqref{type B relation formula} becomes then
\begin{equation*}
	\cbox{
		\begin{tikzpicture}[tldiagram, yscale=2/3]
			\lowernegcrossing{0}{0}
			\lowernegcrossing{1}{0}
		\end{tikzpicture}
	} \quad = \quad \cbox{
		\begin{tikzpicture}[tldiagram,yscale=2/3]
			\uppernegcrossing{0}{0}
			\uppernegcrossing{1}{0}
		\end{tikzpicture}
	},
\end{equation*}
while the additional relation $s_0^2=e$ is drawn as two consecutive dots canceling each other:
\begin{equation} \label{two dots cancel each other}
	\cbox{
		\begin{tikzpicture}[tldiagram, yscale=2/3]
			
			\draw \tlcoord{0}{0} \dlineup \dlineup;
		\end{tikzpicture}
	} \quad = \quad \cbox{
		\begin{tikzpicture}[tldiagram,yscale=2/3]
			\draw \tlcoord{0}{0} \lineup \lineup;
		\end{tikzpicture}
	} \, .
\end{equation}
This diagrammatics motivates the next definition of the group of dotted permutations, which is a version of the symmetric group with many dots everywhere. We will prove that this group is isomorphic to the group $W(B_n)$ in the subsequent lemma.

\begin{defi}
	The \textbf{group of dotted permutations} $G_n$ is the set of all tuples $x=(\sigma, \varepsilon)$, where $\sigma\in S_n$ and $\varepsilon\in\{0,1\}^n$. We call $\varepsilon$ a \textbf{dot-configuration}.
	Each element $x=(\sigma, \varepsilon)$ of $G_n$ is drawn as the string diagram $\sigma$ on $n$ strands, where every strand is decorated with at most one dot. The placement of dots is specified by the dot-configuration $\varepsilon$, where each $1$ corresponds to at dot placed on the bottom of the string diagram. The multiplication is given by stacking string diagrams on top of each other, with two dots on the same strand canceling each other as in \eqref{two dots cancel each other}.
\end{defi}
\begin{beis}
	The element $x= (s_1s_2, (0,1,1))\in G_3$ is given by the diagrams
	\[
	x \quad = \quad \cbox{
		\begin{tikzpicture}[tldiagram, yscale=2/3]
			\draw \tlcoord{0}{0} \linewave{2}{1};
			\draw \tlcoord{0}{1} \onedot \linewave{2}{1};
			\draw \tlcoord{0}{2} \onedot \linewave{2}{-2};
		\end{tikzpicture}
	} \quad = \quad \cbox{
		\begin{tikzpicture}[tldiagram, yscale=2/3]
			\draw \tlcoord{0}{0} \linewave{2}{1};
			\draw \tlcoord{0}{1} \dlinewave{2}{1};
			\draw \tlcoord{0}{2} \dlinewave{2}{-2};
		\end{tikzpicture}
	}.
	\]
	As visualized dots can be moved arbitrarily along the strands. 
	For example in $G_3$ the product of the elements
	\[
	x\quad = \quad \cbox{
		\begin{tikzpicture}[tldiagram, yscale=2/3]
			\draw \tlcoord{0}{0} \linewave{2}{1};
			\draw \tlcoord{0}{1} \onedot \linewave{2}{1};
			\draw \tlcoord{0}{2} \onedot \linewave{2}{-2};
		\end{tikzpicture}
	} \quad \text{and} \quad y\quad = \quad \cbox{
		\begin{tikzpicture}[tldiagram, yscale=2/3]
			\draw \tlcoord{0}{0} \linewave{2}{2};
			\draw \tlcoord{0}{1} \onedot \linewave{2}{-1};
			\draw \tlcoord{0}{2} \onedot \linewave{2}{-1};
		\end{tikzpicture}
	}
	\]
	is given by
	\[
	x \cdot y \quad = \quad \cbox{
		\begin{tikzpicture}[tldiagram, yscale=2/3]
			\draw \tlcoord{0}{0} \linewave{2}{2} \onedot \linewave{2}{-2};
			\draw \tlcoord{0}{1} \onedot \linewave{2}{-1} \linewave{2}{1};
			\draw \tlcoord{0}{2} \onedot \linewave{2}{-1} \onedot \linewave{2}{1};
		\end{tikzpicture}
	} \quad = \quad \cbox{
		\begin{tikzpicture}[tldiagram, yscale=4/3]
			\draw \tlcoord{0}{0} \lineup \onedot \lineup;
			\draw \tlcoord{0}{1} \lineup \onedot \lineup;
			\draw \tlcoord{0}{2} \lineup \lineup;
		\end{tikzpicture}
	} \quad = \quad (e, (1,1,0)) \, .
	\]
\end{beis}
\begin{rema}
	Observe that the group $G_n$ has by definition cardinality $2^n n!$, which will turn out to be a lower bound for the cardinality of $W(B_n)$ and an important step in proving that $W(B_n)$ and $G_n$ are isomorphic. 
\end{rema}

Although the group $G_n$ might look artificial at first sight, it appears in our daily life as the next ``application'' shows, which is due to G.\ D.\ James. and S.\ Lambropoulou from \cite{Lam94}.
\begin{rema} 
	Consider a shoe cabinet consisting of $n$ ordered shelves $\{1,\ldots, n\}$ filled with $n$ different pairs of shoes labelled by $\{1, 2, \ldots, n\}$, where $i_l$ denotes the left and $i_r$ the right shoe of the $i$th pair $\{i_l, i_r\}$. However the pairs of shoes might not be in their correct shelf, also the left and right shoe might be swapped. For instance
	\begin{equation*}
		\ytableausetup{boxsize=3.7em}
		 \begin{ytableau}
			2_r, 2_l & 1_l, 1_r & 3_r, 3_l & 5_l, 5_r & 4_l, 4_r 
		\end{ytableau} 
	\end{equation*}
	would be such a constellation. (This is not the shoe cabinet of the author, since he neither owns a shoe cabinet nor $5$ pairs of shoes. Also numbers are not shoes but rather decategorifications of sets.) 
	The group $G_n$ acts faithfully and transitively on such constellations of shoes by permuting the pairs, where a dot on the $i$-th strand (see \eqref{image of Jucys-Murphy element in Type B Weyl Group}) swaps the right and left shoe on the $i$-th shelf. The movement of a dot on a string just corresponds to the fact that it does not matter whether we first move the pair of shoes and then swap the left and right shoe or the other way around.
\end{rema}
\ytableausetup{smalltableaux}
\begin{lemm} \label{Weyl group of B_n is dotted group}
	The assignments 
	\begin{align*}
		\Phi\colon W(B_n) &\longrightarrow G_n \\
		s_0 &\longmapsto \cbox{
		\begin{tikzpicture}[tldiagram]
			\draw \tlcoord{0}{0} \dlineup;
			\draw \tlcoord{0}{1} \lineup;
			\draw \tlcoord{0}{3} \lineup;
			\makecdots{0}{2}
			\node at \tlcoord{0}{0} [anchor = north] {$\scriptstyle 1\vphantom{k}$};
			\node at \tlcoord{0}{1} [anchor = north] {$\scriptstyle {2}$};
			\node at \tlcoord{0}{3} [anchor = north] {$\scriptstyle n\vphantom{k}$};
			\node at \tlcoord{1}{0} [anchor = south] {$\scriptstyle 1\vphantom{+}$};
			\node at \tlcoord{1}{1} [anchor = south] {$\scriptstyle 2$};
			\node at \tlcoord{1}{3} [anchor = south] {$\scriptstyle n\vphantom{+}$};
		\end{tikzpicture}
	}\\
		s_i &\longmapsto \cbox{
		\begin{tikzpicture}[tldiagram]
			\draw \tlcoord{0}{0} \lineup;
			\draw \tlcoord{0}{2} \lineup;
			\draw \tlcoord{0}{5} \lineup;
			\draw \tlcoord{0}{7} \lineup;
			\makecdots{0}{1}
			\makecdots{0}{6}
			\draw \tlcoord{0}{3} \linewave{1}{1};
			\draw \tlcoord{0}{4} \linewave{1}{-1};
			\node at \tlcoord{0}{0} [anchor = north] {$\scriptstyle 1\vphantom{k}$};
			\node at \tlcoord{0}{2} [anchor = north] {$\scriptstyle {i-1}$};
			\node at \tlcoord{0}{3} [anchor = north] {$\scriptstyle i$};
			\node at \tlcoord{0}{4} [anchor = north] {$\scriptstyle i+1$};
			\node at \tlcoord{0}{5} [anchor = north] {$\scriptstyle i+2$};
			\node at \tlcoord{0}{7} [anchor = north] {$\scriptstyle n\vphantom{k}$};
			\node at \tlcoord{1}{0} [anchor = south] {$\scriptstyle 1\vphantom{+}$};
			\node at \tlcoord{1}{2} [anchor = south] {$\scriptstyle i-1$};
			\node at \tlcoord{1}{3} [anchor = south] {$\scriptstyle i\vphantom{+}$};
			\node at \tlcoord{1}{4} [anchor = south] {$\scriptstyle i+1$};
			\node at \tlcoord{1}{5} [anchor = south] {$\scriptstyle i+2$};
			\node at \tlcoord{1}{7} [anchor = south] {$\scriptstyle n\vphantom{+}$};
		\end{tikzpicture}
	}
	\end{align*}
	define an isomorphism of groups between $W(B_n)$ and the group of dotted permutations. The image of the subgroup $W(D_n)\subseteq W(B_n)$ under $\Phi$ consists of all dotted permutations, which have an even number of dots.
\end{lemm}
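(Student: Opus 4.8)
\emph{Proof proposal.}
The plan is to verify in turn that $\Phi$ is a well-defined group homomorphism, that it is surjective, and that it admits a left inverse; the latter two together force $\Phi$ to be an isomorphism, and the statement about $W(D_n)$ then follows by a parity count.

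First I would check well-definedness by confirming that $\Phi(s_0),\Phi(s_1),\dots,\Phi(s_{n-1})\in G_n$ satisfy the defining relations of $W(B_n)$. The images $\Phi(s_1),\dots,\Phi(s_{n-1})$ are the dot-free simple transpositions, and stacking string diagrams realises exactly the standard presentation of $S_n\subseteq G_n$, so (A1)--(A3) hold; relation (B1) is precisely the dot-cancellation rule \eqref{two dots cancel each other}; (B3) holds because a dot on the first strand and the transposition of strands $i,i+1$ with $i\geq 2$ involve disjoint strands; and (B2) is the identity obtained by sliding the dot along its strand through a crossing, which, writing $G_n=(\mZ/2\!\mZ)^n\rtimes S_n$ with $\Phi(s_0)=(e_1,\id)$ and $\Phi(s_1)=(0,(1,2))$, reads $\Phi(s_1 s_0 s_1 s_0)=(e_1+e_2,\id)=\Phi(s_0 s_1 s_0 s_1)$. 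Surjectivity is then easy: the $\Phi(s_i)$ generate the dot-free copy of $S_n$, conjugating the single dot $\Phi(s_0)$ by them produces a dot on any prescribed strand, and such elements together generate all of $G_n$; in particular $\lvert W(B_n)\rvert\geq\lvert G_n\rvert=2^n n!$.

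For injectivity I would not count orders but instead build an explicit left inverse. For $1\leq i\leq n$ put $t_i\coloneqq s_{i-1}\cdots s_1 s_0 s_1\cdots s_{i-1}\in W(B_n)$, so that $t_1=s_0$ and, diagrammatically, $\Phi(t_i)$ is a single dot on the $i$-th strand. Using only the defining relations one checks that $t_i^2=e$, that the $t_i$ pairwise commute, and that $s_k t_i s_k=t_i$ for $k\notin\{i-1,i\}$ while $s_i t_i s_i=t_{i+1}$. These are exactly the relations required for the assignments ``$i$-th coordinate $\mapsto t_i$'' on $(\mZ/2\!\mZ)^n$ and ``$(i,i+1)\mapsto s_i$'' on $S_n$ to combine into a group homomorphism $\Psi\colon G_n\to W(B_n)$ (they encode, respectively, $e_i^2=0$, commutativity of the $e_i$, and the semidirect-product relation $\sigma\,e_i\,\sigma^{-1}=e_{\sigma(i)}$). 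Since $\Psi\circ\Phi$ fixes every Coxeter generator, $\Psi\circ\Phi=\id$, so $\Phi$ is injective, hence an isomorphism. I expect this collection of identities among the $s_k$ and $t_i$ to be the main obstacle: they are elementary but need several applications of the braid relations, and the commutativity $t_1 t_j=t_j t_1$ is exactly where the type $B$ relation (B2) enters. (Alternatively one could appeal to the faithful reflection representation of the finite Coxeter group $W(B_n)$ inside the group of $2^n n!$ signed permutation matrices to obtain $\lvert W(B_n)\rvert\leq 2^n n!$ and conclude from surjectivity alone, but the explicit section fits the diagrammatics and reuses the multiplicative Jucys--Murphy elements $t_i$.)

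For the last assertion, note that ``number of dots modulo $2$'' defines a homomorphism $G_n=(\mZ/2\!\mZ)^n\rtimes S_n\to\mZ/2\!\mZ$, $(v,\sigma)\mapsto\textstyle\sum_i v_i$, which is well defined precisely because a permutation of strands does not change the total dot count; for $n\geq 1$ its kernel is an index-$2$ subgroup $G_n^{\mathrm{ev}}$. Every generator $s_i$ ($i\geq 1$) of $W(D_n)$ maps under $\Phi$ to a dot-free element, and by \Cref{lemma inclusion of d in b} the generator $s_0'$ maps to $\Phi(s_0 s_1 s_0)=(e_1+e_2,(1,2))$, the transposition of strands $1$ and $2$ carrying one dot on each — hence an even number of dots. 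Thus $\Phi(W(D_n))\subseteq G_n^{\mathrm{ev}}$, and since $\Phi$ is an isomorphism and $[W(B_n):W(D_n)]=2=[G_n:G_n^{\mathrm{ev}}]$, this inclusion is an equality.
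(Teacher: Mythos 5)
Your proof is correct, and the well-definedness, surjectivity, and type $D$ parts coincide with the paper's argument (the paper likewise exhibits the dotted identity diagram as the image of $(s_{i-1}\cdots s_1)s_0(s_1\cdots s_{i-1})$, and settles the $W(D_n)$ claim by the same index-$2$ comparison). Where you genuinely diverge is injectivity. The paper proves the cardinality bound $\lvert W(B_n)\rvert\leq 2^n n!$ by induction, exhibiting $2n$ coset representatives for $W(B_n)/W(B_{n-1})$ and checking they are permuted under left multiplication by the generators (with the details delegated to a reference); combined with surjectivity this forces bijectivity. You instead build an explicit left inverse $\Psi\colon G_n\to W(B_n)$ from the universal property of the semidirect product, verifying that the elements $t_i=J_i$ are commuting involutions conjugated correctly by the $s_k$ — and you correctly identify that $t_1t_2=t_2t_1$ is precisely the type $B$ relation, while the remaining identities $s_kt_is_k=t_{s_k(i)}$ follow from the braid and commutation relations (these do need to be written out, e.g.\ $s_1J_3s_1=(s_2s_1s_2)s_0(s_2s_1s_2)=s_2s_1s_0s_1s_2$, but they are routine). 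Your route buys a self-contained argument that avoids the coset enumeration and reuses the Jucys--Murphy elements that the paper introduces immediately afterwards anyway; the paper's route is shorter on the page and is the standard Coxeter-theoretic parabolic-index computation, which it also wants for the order formulas $\lvert W(B_n)\rvert=2^nn!$. Both are complete proofs.
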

\begin{proof}
	First note the map is well-defined since the images satisfy the Coxeter relations. Notable is the type $B$ relation \eqref{type B relation formula} which translates to 
	\begin{equation*} 
				\Phi(s_0s_1s_0s_1)\quad = \quad \cbox{
					\begin{tikzpicture}[tldiagram]
						\draw \tlcoord{0}{0}   \linewave{1}{1} \linewave{1}{-1} \onedot;
						\draw \tlcoord{0}{1} \linewave{1}{-1} \onedot \linewave{1}{1};
					\end{tikzpicture}
				} \quad = \quad
				\cbox{
					\begin{tikzpicture}[tldiagram]
						\draw \tlcoord{-1}{0} \lineup \onedot \lineup;
						\draw \tlcoord{-1}{1} \lineup \onedot \lineup;
					\end{tikzpicture}
				}
				\quad 
				= 
				\quad 
				\cbox{
					\begin{tikzpicture}[tldiagram]
						\draw \tlcoord{0}{0} \onedot  \linewave{1}{1} \linewave{1}{-1} ;
						\draw \tlcoord{0}{1} \linewave{1}{-1}  \onedot \linewave{1}{1} ;
					\end{tikzpicture}
				}
				\quad =\quad \Phi(s_1s_0s_1s_0).
			\end{equation*}
		The relation $s_0s_i=s_is_0$ for $i\geq2$ translates to the fact that a dot on the left strand can be moved independently from a crossing not involving the left strand. 
		The only thing left to show is the bijectivity of $\Phi$. First we prove the surjectivity and then prove independently that $|W(B_n)|\leq 2^n n!$. Both steps combined give that $\Phi$ is one-to-one.
		For the first step note that every element $w=(\sigma, \varepsilon)$ of $G_n$ can be written as a product $w=(\sigma,\underline{0})(e,\varepsilon)$.
		Every undotted permutation $(\sigma,\underline{0})$ is in the image of $\Phi$, since the image contains the simple transpositions. Moreover any element $(e, \varepsilon)$ is contained in the image of $\Phi$, since
		the simply dotted identity diagram
		\begin{equation} \label{image of Jucys-Murphy element in Type B Weyl Group}
			\cbox{
				\begin{tikzpicture}[tldiagram]
					\draw \tlcoord{0}{0} \lineup;
					\draw \tlcoord{0}{2} \lineup;
					\draw \tlcoord{0}{4} \lineup;
					\draw \tlcoord{0}{6} \lineup;
					\makecdots{0}{1}
					\makecdots{0}{5}
					\draw \tlcoord{0}{3} \dlineup;
					\node at \tlcoord{0}{0} [anchor = north] {$\scriptstyle 1\vphantom{k}$};
					\node at \tlcoord{0}{2} [anchor = north] {$\scriptstyle {i-1}$};
					\node at \tlcoord{0}{3} [anchor = north] {$\scriptstyle i$};
					\node at \tlcoord{0}{4} [anchor = north] {$\scriptstyle i+1$};
					\node at \tlcoord{0}{6} [anchor = north] {$\scriptstyle n\vphantom{k}$};
					\node at \tlcoord{1}{0} [anchor = south] {$\scriptstyle 1\vphantom{+}$};
					\node at \tlcoord{1}{2} [anchor = south] {$\scriptstyle i-1$};
					\node at \tlcoord{1}{3} [anchor = south] {$\scriptstyle i\vphantom{+}$};
					\node at \tlcoord{1}{4} [anchor = south] {$\scriptstyle i+1$};
					\node at \tlcoord{1}{6} [anchor = south] {$\scriptstyle n\vphantom{+}$};
				\end{tikzpicture}
			}
		\end{equation}
		is the image of $(s_{i-1}\cdots s_2s_1)s_0(s_1s_2\cdots s_{i-1})$.
		
		The only thing left to show is that $|W(B_n)|\leq 2^nn!$. This fact follows inductively by proving
		\[
			|W(B_n)/W(B_{n-1})|\leq 2n
		\]
		and observing that $W(B_0)$ is the trivial group.
		A straightforward calculation shows that the right cosets of the $2n$ elements
		\begin{gather*}
			e\, , \quad s_{n-1}\, , \quad s_{n-2}s_{n-1}\, , \quad \ldots\, , \quad  s_{1}\cdots s_{n-1}, \\
			s_0(s_{1}\cdots s_{n-1}), \, s_1s_0(s_1\cdots s_{n-1}), \, (s_2s_1)s_0(s_1\cdots s_{n-1}), \, \ldots, \, (s_{n-1}\cdots s_1) s_0 (s_1 \cdots s_{n-1}).
		\end{gather*}
		are permuted by the left action with the Coxeter generators of $W(B_n)$. For the details see for instance \cite{dipper}. Hence we conclude $|W(B_n)/W(B_{n-1})|\leq 2n$. This shows that $|W(B_n)|\leq 2^n n!=|G_n|$. Together with the surjectivity this completes the proof of the isomorphism.
	
		All signed permutations with an even number of dots form an index $2$ subgroup and contain the image $\Phi(W(D_n))$, since they contain the elements $\Phi(s_0')=\Phi(s_0s_1s_0)$ and $\Phi(s_i)$ for $1\leq i\leq n$. This shows that $\Phi(W(D_n))$ is precisely this subgroup by cardinality reasons.
\end{proof}
We will use this lemma going forward and identify $W(B_n)$ with $G_n$ and $W(D_n)$ with its special index $2$ subgroup unless specified otherwise.

The elements \eqref{image of Jucys-Murphy element in Type B Weyl Group} play a special role in the representation theory of $W(B_n)$, so we define them properly in the next definition which is taken from \cite{ogievetsky2012jucysmurphy}.
\begin{defi} \label{defi mult jucys-murphys}
	The (multiplicative) \textbf{Jucys--Murphy elements} $J_1,\ldots,J_{n}$ are the recursively defined elements
	\[
		J_1=\sigma_0, \quad J_i=\sigma_{i-1}J_{i-1}\sigma_{i-1}
	\]
	in the braid group $B(B_n)$. Their images in the Coxeter group $W(B_n)$ are also called Jucys--Murphy elements.
\end{defi}
\begin{beis} \label{Pictures for n=3 Jucys-Murphys}
	The images of the Jucys--Murphy elements for $n=3$ under the map $\iota$ from \Cref{embedding bn into an} are given by 
	\[
		\iota(J_1) = \cbox{
		\begin{tikzpicture}[tldiagram, yscale=2/3]
				\draw \tlcoord{0}{0.5} \lineup;
				\draw[white, double=black] \tlcoord{0}{1} \linewave{1}{-1} \linewave{1}{1};
				\draw[white, double=black] \tlcoord{1}{0.5} \lineup;
				\draw \tlcoord{0}{2} \lineup \lineup;
				\draw \tlcoord{0}{3} \lineup \lineup;
		\end{tikzpicture}
	} \, , \quad 
\iota(J_2) = \cbox{
	\begin{tikzpicture}[tldiagram, yscale=2/3]
		\draw \tlcoord{0}{0.5} \lineup;
		\draw \tlcoord{0}{1} \lineup;
		\draw[white, double=black] \tlcoord{0}{2} \linewave{1}{-2} \linewave{1}{2};
		\draw[white, double=black] \tlcoord{1}{0.5} \lineup;
		\draw[white, double=black] \tlcoord{1}{1} \lineup;
		\draw \tlcoord{0}{3} \lineup \lineup;
	\end{tikzpicture}
} \, , \quad \iota(J_3) = \cbox{
\begin{tikzpicture}[tldiagram, yscale=2/3]
	\draw \tlcoord{0}{0.5} \lineup;
	\draw \tlcoord{0}{1} \lineup;
	\draw \tlcoord{0}{2} \lineup;
	\draw[white, double=black] \tlcoord{0}{3} \linewave{1}{-3} \linewave{1}{3};
	\draw[white, double=black] \tlcoord{1}{0.5} \lineup;
	\draw[white, double=black] \tlcoord{1}{1} \lineup;
	\draw[white, double=black] \tlcoord{1}{2} \lineup;
\end{tikzpicture}
} \, .
	\]
	In particular assuming the injectivity of $\iota\colon B(B_n)\rightarrow B(A_n)$ observe that the Jucys--Murphy elements commute pairwise and each $J_i$ commutes with all generators $\sigma_k\in B(B_n)$ except possibly $\sigma_{i-1}$ and $\sigma_{i}$.
\end{beis}
The multiplicative Jucys--Murphy elements $J_i\in W(B_n)$ are closely related to the longest element $\wo\in W(B_n)$. The next remark gives a concrete expression for $\wo$ utilizing these elements, which will even turn out to be reduced.
\begin{rema}[Longest element of $W(B_n)$] \label{longest element in type B} 
	The images of the Jucys--Murphy elements in the Coxeter group $W(B_n)$ are self-inverse and generate a normal subgroup isomorphic to $(\mZ/2\!\mZ)^{n}$. 
	By viewing $W(B_n)$ as the Weyl group of the $B_n$ root system one can show that $\prod_{i=1}^n J_i$ is the longest element $\wo$ of $W(B_n)$ and that this expression is already reduced. Indeed the type $B_n$ root system has $2n^2$ roots
	\begin{equation} \label{full root system type B}
		\{ \pm \varepsilon_i \mid i=1,\ldots, n\} \cup \{\pm \varepsilon_i \pm \varepsilon_j \mid 1\leq i < j \leq n\} \subseteq \h^{*}(\so_{2n+1})
	\end{equation}
	where the simple roots from the proof of \Cref{lemma inclusion of d in b} are (c.f.\ \cite{bourbaki02}) given by 
	\[
		\alpha_0=\varepsilon_n, \quad \alpha_1=\varepsilon_{n-1} -\varepsilon_{n}, \quad \ldots, \quad \alpha_2=\varepsilon_2- \varepsilon_3, \quad  \alpha_{n-1}=\varepsilon_1-\varepsilon_2 \, .
	\]
	The number of Coxeter generators in the definition of $\prod_{i=1}^n J_i$ is $\sum_{i=1}^{n} (2i-1)=n^2$, which is the number of positive roots. A tedious but straightforward calculation shows that the element $\prod_{i=1}^n J_i$ acts by $-\id$ on the simple roots and hence sends all the $n^2$ positive roots to negative roots. So indeed we have $\wo=\prod_{i=1}^n J_i$ and this expression is already reduced.
	The multiplicative Jucys--Murphy elements were used in \cite{ogievetsky2012jucysmurphy} to construct/classify all the irreducible representations of the $2$-parameter Hecke algebra $\Hecke_{v,q}(B_n)$ and will return when we discuss full twists in the final chapter. 
\end{rema}
The next natural question is: When does the longest element $\wo\in W(B_n)$ live already in the index two subgroup $W(D_n)$, and if so, is it already the longest element of $W(D_n)$? The next remark answers this question, utilizing both a diagrammatic approach and an argument via roots.
\begin{rema}[Longest element in $W(D_n)$] \label{longest element in type D}
	Under the isomorphism from \Cref{Weyl group of B_n is dotted group} the longest element $\wo\in W(B_n)$ corresponds  to the identity diagram with a dot on every strand:
	\[
		\wo \quad = \quad \prod_{i=1}^n J_i \quad = \quad \cbox{
			\begin{tikzpicture}[tldiagram]
				\draw \tlcoord{0}{0} \dlineup;
				\draw \tlcoord{0}{1} \dlineup;
				\draw \tlcoord{0}{3} \dlineup;
				\makecdots{0}{2}
			\end{tikzpicture}
		} \, .
	\]
	In particular, by \Cref{Weyl group of B_n is dotted group} this element is contained in $W(D_n)$ if and only if $n$ is even. Moreover it is then already the longest element of $W(D_n)$. Indeed the identification from \Cref{lemma inclusion of d in b} identifies the positive roots of $D_n$ with a subset of the positive roots for the $B_n$ root system (with respect to the chosen simple roots) an hence maps all positive type $D_n$ roots to negative roots. The $D_n$ root system consists of the $2n^2-2n$ roots
	\begin{equation} \label{full root system type D}
		\{ \pm \varepsilon_i \pm \varepsilon_j \mid 1\leq i < j \leq n\} \subseteq \h^{*}(\so_{2n})
	\end{equation}
	where the simple roots from the proof of \Cref{lemma inclusion of d in b} are given by 
	\[
	\alpha_0'=\varepsilon_{n-1}+\varepsilon_n, \quad \alpha_1=\varepsilon_{n-1} -\varepsilon_{n}, \quad \ldots, \quad \alpha_2=\varepsilon_2- \varepsilon_3, \quad  \alpha_{n-1}=\varepsilon_1-\varepsilon_2 \, ,
	\]
	see e.g.\ \cite{bourbaki02}.
	In particular the longest element for type $D_n$ has length $n^2-n$.  A similar reasoning as in \Cref{longest element in type B} shows that the longest element of $D_n$ for odd $n$ is given by
	\[
	\wo \quad = \quad \cbox{
		\begin{tikzpicture}[tldiagram]
			\draw \tlcoord{0}{-1} \lineup;
			\draw \tlcoord{0}{0} \dlineup;
			\draw \tlcoord{0}{1} \dlineup;
			\draw \tlcoord{0}{3} \dlineup;
			\makecdots{0}{2}
		\end{tikzpicture}
	}  \, .
	\]
	One can inductively show that an expression for the longest element $\wo$ of $W(D_n)$ is given by 
	\[
		\wo=(s_0's_1s_2\cdots s_{n-1})^{n-1} \, ,
	\]
	and this expression is independent of whether $n$ is even or odd. Moreover it is of length $n^2-n$, which is the number of positive type $D_n$ roots. Hence this expression is reduced. 
	In particular observe that in type $D_n$ for $n$ even, the longest element $\wo\in W(D_n)$ lives in the center of group $W(B_n)$ and in particular in the center of $W(D_n)$. If $n$ is odd however $\wo$ commutes with all generators, except $s_0'$ and $s_1$, which it conjugates to one another
	\begin{equation*}
		s_0'\wo=\wo s_1 \quad , \quad s_1 \wo = \wo s_0' \quad , \quad \wo s_0' \wo =s_1 \, .
	\end{equation*}
\end{rema}
Now that we established the longest elements, we can go back to discussing diagrammatic descriptions for $W(B_n)$. The next lemma describes $W(B_n)$ as a subgroup of $W(A_{n-1})$. This description will be helpful, when we count the conjugacy classes of $W(B_n)$. 

\begin{lemm} \label{sign permutation are dot permutations}
	Consider the group $S_{2n}$ of permutations of $\{-n,\ldots-1,1,\ldots,n\}$. The assignments
	\begin{align*}
		\Psi\colon W(B_n) &\longrightarrow S_{2n} \\
		s_0 &\longmapsto (1,-1) =\cbox{
			\begin{tikzpicture}[tldiagram, xscale=3/4, yscale=3/4]
				\draw \tlcoord{0}{0} \lineup;
				\draw \tlcoord{0}{2} \lineup;
				\draw \tlcoord{0}{5} \lineup;
				\draw \tlcoord{0}{7} \lineup;
				\makecdots{0}{1}
				\makecdots{0}{6}
				\draw \tlcoord{0}{3}  \linewave{1}{1};
				\draw \tlcoord{0}{4}  \linewave{1}{-1};
				\node at \tlcoord{0}{0} [anchor = north] {$\scriptstyle -n\vphantom{k}$};
				\node at \tlcoord{0}{2} [anchor = north] {$\scriptstyle {-2}$};
				\node at \tlcoord{0}{3} [anchor = north] {$\scriptstyle -1$};
				\node at \tlcoord{0}{4} [anchor = north] {$\scriptstyle 1$};
				\node at \tlcoord{0}{5} [anchor = north] {$\scriptstyle 2$};
				\node at \tlcoord{0}{7} [anchor = north] {$\scriptstyle n\vphantom{k}$};
				\node at \tlcoord{1}{0} [anchor = south] {$\scriptstyle -n\vphantom{+}$};
				\node at \tlcoord{1}{2} [anchor = south] {$\scriptstyle -2$};
				\node at \tlcoord{1}{3} [anchor = south] {$\scriptstyle -1\vphantom{+}$};
				\node at \tlcoord{1}{4} [anchor = south] {$\scriptstyle 1$};
				\node at \tlcoord{1}{5} [anchor = south] {$\scriptstyle 2$};
				\node at \tlcoord{1}{7} [anchor = south] {$\scriptstyle n\vphantom{+}$};
			\end{tikzpicture}
		}\\
		s_i &\longmapsto (i,i+1)(-i,-i+1)=\cbox{
			\begin{tikzpicture}[tldiagram, xscale=3/4, yscale=3/4]
				\draw \tlcoord{0}{0} \lineup;
				\draw \tlcoord{0}{2} \lineup;
				\draw \tlcoord{0}{5} \lineup;
				\draw \tlcoord{0}{7} \lineup;
				\draw \tlcoord{0}{10} \lineup;
				\draw \tlcoord{0}{12} \lineup;
				\makecdots{0}{1}
				\makecdots{0}{6}
				\makecdots{0}{11}
				\draw \tlcoord{0}{3} \linewave{1}{1};
				\draw \tlcoord{0}{4} \linewave{1}{-1};
				\draw \tlcoord{0}{8} \linewave{1}{1};
				\draw \tlcoord{0}{9} \linewave{1}{-1};
				\node at \tlcoord{0}{0} [anchor = north] {$\scriptstyle -n\vphantom{k}$};
				\node at \tlcoord{0}{3} [anchor = north] {$\scriptstyle -i-1$};
				\node at \tlcoord{0}{4} [anchor = north] {$\scriptstyle -i$};
				\node at \tlcoord{0}{8} [anchor = north] {$\scriptstyle i$};
				\node at \tlcoord{0}{9} [anchor = north] {$\scriptstyle i+1$};
				\node at \tlcoord{0}{12} [anchor = north] {$\scriptstyle n$};
				\node at \tlcoord{1}{0} [anchor = south] {$\scriptstyle -n\vphantom{+}$};
				\node at \tlcoord{1}{3} [anchor = south] {$\scriptstyle -i-1\vphantom{+}$};
				\node at \tlcoord{1}{4} [anchor = south] {$\scriptstyle -i$};
				\node at \tlcoord{1}{8} [anchor = south] {$\scriptstyle i\vphantom{+}$};
				\node at \tlcoord{1}{9} [anchor = south] {$\scriptstyle i+1$};
				\node at \tlcoord{1}{12} [anchor = south] {$\scriptstyle n\vphantom{+}$};
			\end{tikzpicture}
		}
	\end{align*}
define an injective group homomorphism $W(B_n)\rightarrow S_{2n}$. The image of $\Psi$ is the subgroup $S^{\mathrm{sign}}_n\subseteq S_{2n}$ of signed permutations. This subgroup consists of all elements $\sigma$ which preserve the sign, i.e.\ satisfy the condition $\sigma(-i)=-\sigma(i)$ for all $i=1,\ldots,n$.
\end{lemm}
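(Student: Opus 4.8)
The plan is to reuse the description of $W(B_n)$ as the group $G_n$ of dotted permutations from \Cref{Weyl group of B_n is dotted group} together with the obvious identification of $G_n$ with the hyperoctahedral group realized inside $S_{2n}$. First I would check well-definedness directly. The relations $s_0^2=e$, $s_i^2=e$, the braid relations $s_is_{i+1}s_i=s_{i+1}s_is_{i+1}$ and the far commutations $s_is_j=s_js_i$ ($|i-j|\ge 2$) among the $\Psi(s_i)=(i,i+1)(-i,-i+1)$ hold because $i\mapsto(i,i+1)(-i,-i+1)$ is the ``diagonal'' copy of $S_n$ inside $S_{2n}$, acting simultaneously on $\{1,\dots,n\}$ and on $\{-1,\dots,-n\}$, so these follow from the corresponding identities in $S_n$; the relations $s_0s_i=s_is_0$ for $i\ge 2$ hold because $\Psi(s_0)=(1,-1)$ has support $\{1,-1\}$ disjoint from $\mathrm{supp}\,\Psi(s_i)=\{i,i+1,-i,-i-1\}$; and the only relation requiring an honest (but tiny) computation is the type $B$ relation $s_1s_0s_1s_0=s_0s_1s_0s_1$, where both sides turn out to equal $(1,-1)(2,-2)\in S_{2n}$. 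This shows $\Psi$ is a well-defined group homomorphism.

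Since each generator maps into the subgroup $S^{\mathrm{sign}}_n\subseteq S_{2n}$ of sign-preserving permutations, so does all of $W(B_n)$. For the remaining claims --- that $\Psi$ is injective and that its image is all of $S^{\mathrm{sign}}_n$ --- I would introduce the evident map $\Theta\colon G_n\to S^{\mathrm{sign}}_n$ sending a dotted permutation $(\sigma,\varepsilon)$ to the signed permutation that carries the pair $\{i,-i\}$ to $\{\sigma(i),-\sigma(i)\}$, with a sign flip precisely when $\varepsilon_i=1$; unwinding definitions, $\Theta$ is nothing but the standard isomorphism $(\mZ/2\mZ)^n\rtimes S_n\xrightarrow{\ \sim\ }S^{\mathrm{sign}}_n\cong \mZ/2\mZ\wr S_n$, and the only point to verify is that it is multiplicative, i.e.\ that the diagrammatic product in $G_n$ (stacking strands, two dots on a strand cancelling) matches composition of signed permutations --- which is exactly the compatibility encoded by the semidirect product, ``sliding a dot through a crossing'' being the coordinate permutation $\varepsilon\mapsto\sigma\cdot\varepsilon$. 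Comparing values on the Coxeter generators shows $\Psi=\Theta\circ\Phi$, where $\Phi\colon W(B_n)\xrightarrow{\ \sim\ }G_n$ is the isomorphism of \Cref{Weyl group of B_n is dotted group}; hence $\Psi$ is an isomorphism of $W(B_n)$ onto $S^{\mathrm{sign}}_n$. Alternatively one can bypass $\Theta$ and argue as in the proof of \Cref{Weyl group of B_n is dotted group}: show $\Psi$ is surjective onto $S^{\mathrm{sign}}_n$ (the diagonal $S_n$ together with the $\Psi$-images of the $W(A_{n-1})$-conjugates of $s_0$, which realize all single sign changes $(i,-i)$, generate it) and then invoke $|W(B_n)|=2^n n!=|S^{\mathrm{sign}}_n|$ to obtain injectivity for free.

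I do not expect a genuine obstacle here; the entire lemma is bookkeeping. The two places that need a moment's care are the verification of the type $B$ relation inside $S_{2n}$, where the order of the factors matters and one must track the action on positive and negative indices simultaneously, and the multiplicativity of $\Theta$, i.e.\ the statement that the dot-slides of the diagram calculus for $G_n$ encode precisely the conjugation action in the wreath product $\mZ/2\mZ\wr S_n$. Once those are settled, the identification $\Psi=\Theta\circ\Phi$ --- and hence all assertions of the lemma --- is immediate from \Cref{Weyl group of B_n is dotted group}.
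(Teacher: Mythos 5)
Your proposal is correct and follows the paper's own route: verify the relations on generators (with the type $B$ relation reducing to both sides equalling $(1,-1)(2,-2)$), then factor $\Psi$ as $\Theta\circ\Phi$ through the dotted-permutation isomorphism of \Cref{Weyl group of B_n is dotted group}, where $\Theta$ is exactly the paper's vertical bijection $G_n\to S^{\mathrm{sign}}_n$. The only difference is cosmetic: you spell out the multiplicativity of $\Theta$ and offer a surjectivity-plus-counting alternative, whereas the paper asserts the commutativity of the triangle directly.
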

\begin{proof}
	First we have to check that the images $\Psi(s_i)$ satisfy the relations of $W(B_n)$. The only non-trivial relation is the type $B$ relation. It translates to the combinatorial identity
	\[
		\Psi(s_0s_1s_0s_1) \, = \,\cbox{
		\begin{tikzpicture}[tldiagram, xscale=2/3, yscale=2/3] 
			\draw \tlcoord{0}{0} \linewave{1}{1} \linewave{1}{1} \linewave{1}{1} \lineup;
			\draw \tlcoord{0}{1} \linewave{1}{-1} \lineup \linewave{1}{1} \linewave{1}{1};
			\draw \tlcoord{0}{2} \linewave{1}{1}  \lineup \linewave{1}{-1} \linewave{1}{-1};
			\draw \tlcoord{0}{3} \linewave{1}{-1} \linewave{1}{-1} \linewave{1}{-1} \lineup;
		\end{tikzpicture}
	} \, = \,(1,-1)(2,-2) \, = \, \cbox{
	\begin{tikzpicture}[tldiagram, xscale=2/3, yscale=2/3]
		\draw \tlcoord{0}{0} \lineup \linewave{1}{1} \linewave{1}{1} \linewave{1}{1};
		\draw \tlcoord{0}{1} \linewave{1}{1} \linewave{1}{1} \lineup \linewave{1}{-1};
		\draw \tlcoord{0}{3} \lineup \linewave{1}{-1} \linewave{1}{-1} \linewave{1}{-1};
		\draw \tlcoord{0}{2} \linewave{1}{-1} \linewave{1}{-1} \lineup \linewave{1}{1};
	\end{tikzpicture}
} \, = \,\Psi(s_1s_0s_1s_0) \, .
	\]
	Similarly one can check the other relations, giving that $\Psi$ is a well-defined group homomorphism.
	The bijectivity of $\Psi$ restricted to its image follows from the commutative triangle
	\[
		\begin{tikzcd}
			W(B_n) \arrow[r, "\Phi"] \arrow[rd, "\Psi"'] & G_n \arrow[d, "\cong"] \\
			& S^{\text{sign}}_n, 
		\end{tikzcd}
	\]
	where $\Phi$ is the isomorphism from \Cref{Weyl group of B_n is dotted group} and the vertical map identifies dotted permutations with signed permutations. Concretely we assign to $(\sigma, \varepsilon)\in G_n$ the unique signed permutation $\sigma'$ which satisfies
	\[
		\sigma'(i)=\begin{cases}
			\sigma(i), & \text{if $\varepsilon_i=0$} \\
			-\sigma(i), & \text{if $\varepsilon_i=1$,}
		\end{cases}
	\]
	 for all $i\in\{1,\ldots, n\}$. This assignment clearly gives a bijection making the diagram commute. This completes the proof, since $\Phi$ is a an isomorphism.
\end{proof}
\begin{beis}
	The Jucys--Murphy element $J_{i}\in W(B_n)$ corresponds under the isomorphism $\Psi\colon W(B_n)\rightarrow S^{\mathrm{sign}}_n$ to the signed permutation $(i,-i)$, while the longest element $\wo$ corresponds to $-\id$, the product of all those transpositions. In particular the Jucys--Murphy elements $J_i$ have the same cycle type in $S_{2n}$, which implies that they are all conjugates viewed as elements in $S_{2n}$. This is not surprising since they are already conjugate in $W(B_n)$ by their construction. 
\end{beis}
\section{Irreducible representations of $W(B_n)$} \label{Chapter 1 section 2}

In this section we present the representation theory of the Coxeter group $W(B_n)$ over the complex numbers. The results of the theorems are known for many years and can be drawn from the abstract theory of wreath products (see for instance \cite{jameskerber81}). However our goal is to mimic the well understood representation theory of the symmetric groups, which can be found in \cite{fulton2013} and point out the differences, whenever they appear.

First we study the conjugacy classes of $W(B_n)$. The combinatorial tool to describe those are pairs of partitions, which we call bipartitions.
\begin{defi}
	A \textbf{bipartition of} $n$ is an ordered pair $(\lambda, \mu)$ of partitions, such that $|\lambda|+|\mu|=n$. We often write bipartitions as tuples of the Young diagrams corresponding to the partitions.
\end{defi}
\begin{beis}
	The number of bipartitions $b(n)$ of $n$ is given by
	\begin{equation*}
		b(n)=\sum_{k=0}^n a(k)\cdot a(n-k),
	\end{equation*}
	where $a(l)$ is the number of partitions of $l\geq0$.
	For example we see that
	\begin{equation*}
		b(3)=3\cdot1 + 2\cdot 1 + 1\cdot 2 + 1 \cdot 3=10 
	\end{equation*} 
	Here is a side-by-side comparison of the two sequences $a(n)$ and $b(n)$ for $n=0,\ldots,10$:
	\begin{equation*}
		\begin{array}{c|ccccccccccc}
			a(n) & 1 & 1 & 2 & 3 & 5 & 7 & 11 &15 &22 &30 & 42\\
			\hline
			b(n) & 1 & 2 & 5 & 10 & 20 & 36 & 65 & 110 & 185& 300  &481
		\end{array}
	\end{equation*}
	Most notably the number of bipartitions grows much faster than the number of partitions. This is a problem, since small representation theoretic examples for $W(B_n)$ are already much bigger than for the symmetric group $S_n$. More interesting facts about the sequence $b(n)$ can be found under \cite{oeis}.
\end{beis}
The next theorem is the connection between bipartitions of $n$ and conjugacy classes (and hence irreducible complex representations) of $W(B_n)$. Throughout the proof we will identify $W(B_n)$ with its image under $\Psi$ from \Cref{sign permutation are dot permutations}, the subgroup of signed permutations on $I=\{-n,\ldots, -1,1,\ldots,n\}$. This is the natural realization for $W(B_n)$ when thinking about conjugacy classes, since those of the symmetric group $S_n$ are given by cycle types.

\begin{theo} \label{Conjugacy classes via bipartitions}
	There is a bijection
	\begin{align*}
		\left\{\text{bipartitions $(\lambda,\mu)$ of $n$} \right\}
		&\xleftrightarrow{\,\text{1:1}\,}
		\{\text{conjugacy classes of $W(B_n)$}\} \, .
	\end{align*}
	In particular the number of isomorphism classes of irreducible complex representations of $W(B_n)$ is precisely the number of bipartitions.
\end{theo}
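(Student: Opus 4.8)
The plan is to use the realization of $W(B_n)$ as the group $S^{\mathrm{sign}}_n$ of signed permutations of $I=\{\pm1,\ldots,\pm n\}$ from \Cref{sign permutation are dot permutations}, and to attach to each signed permutation a \emph{signed cycle type}, which will turn out to be precisely a bipartition of $n$. First I would analyze the orbits of $\langle\sigma\rangle$ acting on $I$. Since every $\sigma\in S^{\mathrm{sign}}_n$ satisfies $\sigma(-i)=-\sigma(i)$, negation permutes these orbits: an orbit $O$ either satisfies $O=-O$, or $\{O,-O\}$ is a pair of disjoint orbits interchanged by negation. Fixing a representative $i\in\{1,\ldots,n\}$ in an orbit, in the first case the $\sigma$-orbit of $i$ has even size $2\ell$ and first returns to $\pm i$ at $-i$ --- call this a \emph{negative cycle of length $\ell$}; in the second case the $\sigma$-orbit of $i$ has size $\ell$ and avoids $-i$ --- call this a \emph{positive cycle of length $\ell$}. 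Each index in $\{1,\ldots,n\}$ is, up to sign, used by exactly one such cycle, so collecting the lengths of the positive cycles into a partition $\lambda$ and of the negative cycles into a partition $\mu$ produces a bipartition $(\lambda,\mu)$ of $n$; this is the map $W(B_n)\to\{\text{bipartitions}\}$. Equivalently, writing $\sigma=(w,\varepsilon)$ via $\Phi$ from \Cref{Weyl group of B_n is dotted group}, a cycle of $w\in S_n$ is positive or negative according to whether it carries an even or odd number of dots, since travelling once around a length-$\ell$ cycle of $w$ multiplies $i$ by $(-1)^{(\text{number of dots on the cycle})}$.

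Next I would establish the two halves of the bijection. For well-definedness on conjugacy classes: conjugating $\sigma$ by $\tau\in S^{\mathrm{sign}}_n$ carries the $\sigma$-orbit of $i$ to the $(\tau\sigma\tau^{-1})$-orbit of $\tau(i)$ of the same size, and it preserves the sign of a cycle, because the sign is read off from whether the orbit of $i$ meets $-i$, a property clearly invariant under relabelling by the sign-preserving $\tau$; hence the signed cycle type is a conjugacy invariant, giving a well-defined map from conjugacy classes to bipartitions. For injectivity, I would mimic the symmetric-group argument: given $\sigma,\sigma'$ of the same signed cycle type, pair up their positive cycles of equal length and their negative cycles of equal length, and build $\tau$ by sending, for each positive cycle, a chosen period $i,\sigma(i),\ldots,\sigma^{\ell-1}(i)$ of $\sigma$ to the corresponding period of $\sigma'$ (and the negated period to the negated period), and for each negative cycle a full period $i,\sigma(i),\ldots,\sigma^{2\ell-1}(i)$ to the corresponding period of $\sigma'$; one checks that $\tau$ is a well-defined element of $S^{\mathrm{sign}}_n$ and that $\tau\sigma\tau^{-1}=\sigma'$. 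For surjectivity onto bipartitions, every $(\lambda,\mu)$ is realized by the evident block-diagonal representative assembled from standard positive cycles $(i_1\,i_2\,\cdots\,i_{\lambda_j})$ and standard negative cycles $(i_1\,i_2\,\cdots\,i_{\mu_j}\,{-}i_1\,{-}i_2\,\cdots\,{-}i_{\mu_j})$ on disjoint blocks of indices. This yields the claimed bijection, and combining it with the general fact that over $\C$ the number of irreducible representations of a finite group equals the number of its conjugacy classes gives the final assertion.

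The main obstacle I expect is purely bookkeeping rather than conceptual: making the notion of the sign of a signed cycle precise enough that its conjugation-invariance is manifest, and writing down the conjugating element $\tau$ in the injectivity step cleanly --- especially for negative cycles, where prescribing $\tau$ on a single period already forces its values on the rest of that orbit and on the negated orbit, and one must verify that this forced assignment is consistent and satisfies $\tau(-i)=-\tau(i)$. None of this is deep, but it is where a sloppy argument would slip. As a shortcut, one could instead invoke the wreath-product description $W(B_n)\cong(\mZ/2\mZ)\wr S_n$ from \Cref{Weyl group of B_n is dotted group} together with the standard combinatorics of conjugacy classes in wreath products (see e.g.\ \cite{jameskerber81}), but the direct signed-cycle argument has the advantage of being self-contained and of producing the explicit representatives used later.
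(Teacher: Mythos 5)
Your proposal is correct and follows essentially the same route as the paper: both realize $W(B_n)$ as signed permutations via \Cref{sign permutation are dot permutations}, split the cycles into those whose orbit is negation-symmetric (your negative cycles, the paper's cycles of the form $(i_1,\ldots,i_s,-i_1,\ldots,-i_s)$) and those coming in negation-swapped pairs (your positive cycles, the paper's double cycles), read off a bipartition from the lengths, and finish with the standard count of irreducibles by conjugacy classes. Your write-up is in fact more explicit than the paper's on the conjugation-invariance and the construction of the conjugating element, where the paper only gestures at conjugating by elements of $S_n$ and by $(i,-i)$.
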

\begin{proof}
	Let $w\in W(B_n)$ and consider its decomposition into disjoint cycles in $S_{2n}$. Let $\sigma=(i_1, i_2, \ldots, i_r)\in S_{2n}$ be a cycle in $w$ (not necessarily contained in $W(B_n)$). We denote by $|\sigma|=r$ the length of $\sigma$. For $i\in I$ we write $i\in\sigma$, if $i$ occurs in the cycle $\sigma$. First assume that $-i_1\in \sigma$. Then $\sigma$ is of the form
	\begin{equation} \label{nice type 1 form}
			\sigma=(i_1, i_2, i_3, \ldots, i_{s}, -i_1, -i_2, -i_3, \ldots -i_s).
	\end{equation}
	This follows from iteratively using the rule $w(-j)=-w(j)$ for all $j\in I$. Note that in particular $\sigma\in W(B_n)$ and $|\sigma|=2s$.
	
	It follows that any cycle $\sigma$ appearing in $w$ satisfies exactly one of the following two properties:
	\begin{enumerate}
		\item For all $i\in\sigma$, we have $-i\notin\sigma$. \label{cycle condition 2}
		\item The cycle $\sigma$ is of the form \eqref{nice type 1 form}. \label{cycle condition 1}
	\end{enumerate}
	We call the cycles satisfying the first condition type \ref{cycle condition 2} and the cycles satisfying the second condition type \ref{cycle condition 1} cycles. If $\sigma=(i_1, i_2,\ldots,i_r)$ is of type \ref{cycle condition 2}, then it is not contained in $W(B_n)$. However the cycle $\sigma'=(-i_1, -i_2,\ldots,-i_r)$ must also appear in $w$, by the rule $w(-j)=-w(j)$. The product $\bar{\sigma}=\sigma\sigma'$ is automatically contained in $W(B_n)$. We call every such element $\bar{\sigma}$ a type \ref{cycle condition 2} double cycle. 
	
	Now it follows that every $w\in W(B_n)$ factorizes into the product of two disjoint permutations 
	$w=w_1w_2$, where $w_1$ is a product of pairwise disjoint type \ref{cycle condition 2} double cycles and $w_2$ is a product of disjoint type \ref{cycle condition 1} cycles. In formulas we have
	\begin{equation} \label{expr}
		w=(\underbrace{\sigma_{2,1}\sigma_{2,2}\ldots, \sigma_{2,k}}_{=w_2})\cdot  (\underbrace{\bar{\sigma}_{1,1}\bar{\sigma}_{1,2}\ldots, \bar{\sigma}_{1,l}}_{=w_1})
	\end{equation}
	for certain type \ref{cycle condition 2} cycles $\sigma_{1,1}\sigma_{1,2}\ldots, \sigma_{1,l}$ and certain type \ref{cycle condition 1} cycles $\sigma_{2,1}\sigma_{2,2}\ldots, \sigma_{2,k}$, such that
	\begin{equation*}
		|\sigma_{1,1}|\geq|\sigma_{1,2}|\geq\ldots\geq |\sigma_{1,l}|\geq1
	\end{equation*}
	and
\begin{equation*}
	|\sigma_{2,1}|\geq|\sigma_{2,2}|\geq\ldots\geq|\sigma_{2,k}|\geq2
\end{equation*}
	and all occuring cycles are disjoint. In particular we have
	\[
	  \sum_{i=1}^l |\sigma_{1,i}| + 	\sum_{i=1}^k |\sigma_{2,i}|/2=n,
	\]
	by counting all cycles of length $1$ as factors of $w_1$. 
	This shows that 
	\[
		((|\sigma_{1,1}|, |\sigma_{1,2}|, \ldots, |\sigma_{1,l}|),(|\sigma_{2,1}|/2, \ldots, |\sigma_{2,k}|/2))
	\]
	is a bipartition of $n$. 
	By conjugating with $g\in S_n\subseteq W(B_n)$ or $g=(i, -i)\in W(B_n)$  one can see that the decomposition into type \ref{cycle condition 1} and type \ref{cycle condition 2} double cycles uniquely determines the conjugacy class of $w$. Hence the sizes of the cycles occurring in this decomposition uniquely determine a conjugacy class in $W(B_n)$.
	
	Summarizing we see that the assignment 
	\[
		[w] \mapsto ((|\sigma_{1,1}|, \ldots, |\sigma_{1,l}|),(|\sigma_{2,1}|/2, \ldots, |\sigma_{2,k}|/2))
	\]
	defines a bijection between the set of conjugacy classes of $W(B_n)$ and the set of bipartitions of $n$. It is a well-known consequence of the Artin--Wedderburn theorem and the computation of the center of the group algebra $kG$ for a finite group $G$, that the number of irreducible representations of $G$ over an algebraically closed field of characteristic zero is precisely the number of conjugacy classes.
\end{proof}
Next we give an example for the decomposition into type \ref{cycle condition 2}  and type \ref{cycle condition 1} cycles from the proof of the last theorem.
\begin{beis}
	Consider $W(B_{4})$ and the element
	\begin{equation*}
		s_0s_1s_2s_3J_2=\cbox{
		\begin{tikzpicture}[tldiagram,yscale=0.5]
			\draw \tlcoord{0}{0} \linewave{2}{1};
			\draw \tlcoord{0}{1} \onedot \linewave{2}{1};
			\draw \tlcoord{0}{2} \linewave{2}{1};
			\draw \tlcoord{0}{3} \linewave{2}{-3} \onedot;
		\end{tikzpicture}
	}
	\end{equation*}
	which corresponds to the signed permutation
	\begin{equation*}
		w = \quad \cbox{
			\begin{tikzpicture}[tldiagram]
				\draw[dotted] \tlcoord{0}{4} \lineup \lineup;
				\draw \tlcoord{0}{0} \linewave{2}{5};
				\draw \tlcoord{0}{1} \linewave{2}{-1};
				\draw \tlcoord{0}{2} \linewave{2}{5};
				\draw \tlcoord{0}{3} \linewave{2}{-1};
				\draw \tlcoord{0}{5} \linewave{2}{1};
				\draw \tlcoord{0}{6} \linewave{2}{-5};
				\draw \tlcoord{0}{7} \linewave{2}{1};
				\draw \tlcoord{0}{8} \linewave{2}{-5};
			\end{tikzpicture}
		},
	\end{equation*}
	which we read from bottom to top. The decomposition of this permutation of $\{-4,\ldots, 4\}$ into disjoint cycles is given by 
	$w=w_1=(1,2,-3,-4)\cdot (3,4,-1,-2)$. This is precisely a type \ref{cycle condition 2} double cycle and the corresponding conjugacy class corresponds to the bipartition $((4), \varnothing)$.
	For a different example consider the element
	\begin{equation*}
		s_1J_3J_4=\cbox{
			\begin{tikzpicture}[tldiagram]
				\draw \tlcoord{0}{0} \linewave{1}{1};
				\draw \tlcoord{0}{1} \linewave{1}{-1};
				\draw \tlcoord{0}{2} \dlineup;
				\draw \tlcoord{0}{3} \dlineup;
			\end{tikzpicture}
		}
	\end{equation*}
	which corresponds to the signed permutation
	\begin{equation*}
		w' = \quad \cbox{
			\begin{tikzpicture}[tldiagram]
				\draw[dotted] \tlcoord{0}{4} \lineup \lineup;
				\draw \tlcoord{0}{0} \linewave{2}{8};
				\draw \tlcoord{0}{1} \lineup\linewave{1}{6};
				\draw \tlcoord{0}{2} \linewave{2}{1};
				\draw \tlcoord{0}{3} \linewave{2}{-1};
				\draw \tlcoord{0}{5} \linewave{2}{1};
				\draw \tlcoord{0}{6} \linewave{2}{-1};
				\draw \tlcoord{0}{7} \lineup \linewave{1}{-6};
				\draw \tlcoord{0}{8} \linewave{2}{-8};
			\end{tikzpicture}
		}.
	\end{equation*}
	The decomposition into disjoint cycles of this element is given by 
	\begin{equation*}
		w'=\underbrace{(1,2)(-1,-2)}_{=w_1}\cdot \underbrace{(3,-3),(4,-4)}_{=w_2}.
	\end{equation*}
	Hence the conjugacy class of this element corresponds to the bipartition $((2),(1,1))$.
\end{beis}
\begin{koro} \label{longet element generates center}
	The center of $W(B_n)$ is equal to the subgroup $\{e, \wo\}$, where $\wo$ is the longest element.
\end{koro}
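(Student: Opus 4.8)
The plan is to work through the identification $W(B_n)\cong G_n$ of \Cref{Weyl group of B_n is dotted group}. Recall $G_n$ is the semidirect product $A\rtimes S_n$ with $A\coloneqq(\mZ/2\mZ)^n$ the subgroup of dot-configurations $(e,\varepsilon)$, $S_n$ the subgroup of undotted permutations, and $S_n$ acting on $A$ by permuting the coordinates. Recall also from \Cref{longest element in type B} that $\wo$ corresponds under this isomorphism to the dot-configuration $\varepsilon=(1,1,\ldots,1)$, i.e.\ to the unique nontrivial element of $A$ fixed by every coordinate permutation.

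First I would verify $\{e,\wo\}\subseteq Z(W(B_n))$: since $A$ is abelian, $\wo$ commutes with all of $A$, and since the all-ones configuration is $S_n$-invariant, $\sigma\wo\sigma^{-1}=\wo$ for every $\sigma\in S_n$; as $A$ and $S_n$ generate $G_n$, this shows $\wo$ is central, and $\wo\neq e$ for $n\geq1$. For the reverse inclusion I would take $z\in Z(W(B_n))$, write $z=\varepsilon\sigma$ with $\varepsilon\in A$, $\sigma\in S_n$, and compute conjugation in $A\rtimes S_n$ (additive notation in $A$). For $b\in A$ one gets $zbz^{-1}=\sigma(b)$, so centrality forces $\sigma(b)=b$ for all $b\in A$; since $S_n$ acts faithfully on $A$ by coordinate permutations (for $n\geq2$; the cases $n\leq1$ being immediate as $W(B_n)$ is then trivial or $\cong\mZ/2\mZ$), this gives $\sigma=e$, hence $z=\varepsilon\in A$. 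Then for $\tau\in S_n$ one gets $\varepsilon\tau\varepsilon^{-1}=(\varepsilon+\tau(\varepsilon))\tau$, so centrality forces $\tau(\varepsilon)=\varepsilon$ for all $\tau\in S_n$; the only coordinate-permutation-invariant elements of $(\mZ/2\mZ)^n$ are $(0,\ldots,0)$ and $(1,\ldots,1)$, whence $z\in\{e,\wo\}$.

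There is no serious obstacle; the only care needed is bookkeeping the semidirect-product conjugation formulas and disposing of the degenerate cases $n=0,1$. An alternative, essentially equivalent, route is to use the signed-permutation model $S_n^{\mathrm{sign}}$ of \Cref{sign permutation are dot permutations}: a central element must in particular commute with the transpositions $(i,i{+}1)(-i,-i{+}1)$ for $1\leq i\leq n-1$ and with $(1,-1)$, and a short direct check shows the only such signed permutations are $\id$ and $-\id=\wo$. As a byproduct, combining this with \Cref{Conjugacy classes via bipartitions} identifies $((1^n),\varnothing)$ and $(\varnothing,(1^n))$ as precisely the bipartitions labelling one-element conjugacy classes of $W(B_n)$.
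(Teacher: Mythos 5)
Your proof is correct, but it takes a different route from the paper. The paper argues via \Cref{Conjugacy classes via bipartitions}: the center consists of the elements forming singleton conjugacy classes, and an inspection of the cycle-type classification shows that only the bipartitions $((1,\ldots,1),\varnothing)$ and $(\varnothing,(1,\ldots,1))$ yield such classes, corresponding to $e$ and $\wo=(1,-1)(2,-2)\cdots(n,-n)$ respectively. You instead compute the center directly from the wreath-product structure $G_n=(\mZ/2\mZ)^n\rtimes S_n$ of \Cref{Weyl group of B_n is dotted group}, using the standard semidirect-product conjugation formulas; your bookkeeping ($zbz^{-1}=\sigma(b)$ forcing $\sigma=e$ by faithfulness of the permutation action for $n\geq2$, then $S_n$-invariance of $\varepsilon$ forcing a constant dot-configuration) is accurate, as is the identification of $\wo$ with the all-ones configuration from \Cref{longest element in type B}. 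Your approach is more self-contained, needing only the group-theoretic structure rather than the full conjugacy-class classification; the paper's approach is shorter given that \Cref{Conjugacy classes via bipartitions} has already been established, and your closing remark recovers exactly the observation the paper uses as its main step. Both arguments require the same care with the degenerate cases $n\leq 1$, which you address explicitly.
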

\begin{proof}
	The center consists of all elements which are their own conjugacy class. By the proof of \Cref{Conjugacy classes via bipartitions} we observe that there are only two bipartitions, which give such a conjugacy class, namely $((1,1,\ldots,1),\varnothing)$ and $(\varnothing, (1,\ldots, 1))$. This follows from the fact that for any other cycle type we can find two distinct entries $i,j\in I$ that give different elements, if replaced via conjugating. The first bipartition corresponds to $e=(1)(-1)(2)(-2)\cdots(n)(-n)$, and the second one to $\wo=(1,-1)(2,-2)\cdots(n,-n)$.
\end{proof}
Next we want to write down the irreducible representations of $W(B_n)$ concretely in terms of idempotents in the group algebra $\C \! W(B_n)$. For this we mimic the construction of Young symmetrizers for the symmetric group $S_n$. 
\begin{defi} \label{defi young symmetrizer}
	Let $(\lambda, \mu)$ a bipartition of $n$. A \textbf{numbering} is a bijection between the boxes of the tuple of Young diagrams corresponding to $(\lambda, \mu)$ with an $n$-element subset of the $2n$ numbers $\{\pm i\mid i\in\{1,\ldots, n\}\}$ such that for each $1\leq i\leq n$ only one of $i$ and $-i$ is used. We imagine a numbering as a filling of the boxes with the corresponding symbols.
	The \textbf{standard numbering} is the numbering with all positive numbers $1,\ldots, n$ by first numbering all the boxes of $\lambda$ and then all the boxes of $\mu$ in reading direction and ascending order. 
	The group $W(B_n)$ \textbf{acts} on the set of numberings $\mathcal{T}_{\lambda, \mu}$ of $(\lambda, \mu)$ as the group of signed permutations with respect to the isomorphism from \Cref{sign permutation are dot permutations}, i.e.\ by permuting the entries of the boxes and possibly flipping the sign. Recall that the Jucys--Murphy element $J_i\in W(B_n)$ flips the numbers $i$ and $-i$ and fixes the rest, while the subgroup $S_n$ permutes the absolute values and ignores the sign. Abstractly the action of $g=(\sigma, \varepsilon)\in G_n\cong W(B_n)$ on a numbering $T_{\lambda, \mu}\in \mathcal{T}_{\lambda, \mu}$ is given by
	\[
	(g\cdot T_{\lambda, \mu})(j)\coloneqq g(T_{\lambda, \mu}(j)) \,
	\]
	for a fixed box $j$ where $g(T_{\lambda, \mu}(j))$ is given by
	\[
		g(T_{\lambda, \mu}(j))= \begin{cases}
						\sigma(i), & \text{if $T_{\lambda, \mu}(j)=i$ and } \varepsilon_j=0, \\
						-\sigma(i), & \text{if $T_{\lambda, \mu}(j)=-i$ and } \varepsilon_j=0, \\
						-\sigma(i), & \text{if $T_{\lambda, \mu}(j)=i$ and } \varepsilon_j=1, \\
						\sigma(i), & \text{if $T_{\lambda, \mu}(j)=-i$ and } \varepsilon_j=1  \\
					\end{cases}
	\]
	for some $i\in\{1,\ldots, n\}$.
	We associate to a numbering $T_{\lambda, \mu}$ the subgroups
	\begin{gather*}
		\begin{aligned}
			R_{T_{\lambda, \mu}}&\coloneqq \left\{ g\in W(B_n) \mid g \text{ fixes all rows of } T_{\lambda, \mu} \text{ up to sign}\right\}\, , \\
			C_{T_{\lambda, \mu}}&\coloneqq \left\{ g\in W(B_n) \mid g \text{ fixes all columns of } T_{\lambda, \mu} \text{ up to sign} \right\}\, .
		\end{aligned}
		\shortintertext{of $W(B_n)$. We call the element}
		q_{T_{\lambda, \mu}}\coloneqq \frac{1}{2^n} r_{T_{\lambda, \mu}}\cdot c_{T_{\lambda, \mu}}
		\shortintertext{where}
		r_{T_{\lambda, \mu}}\coloneqq \sum_{g=(\sigma, \varepsilon)\in R_{T_{\lambda, \mu}}}(-1)^{|\varepsilon|_{\mu}}g \, , \qquad
		c_{T_{\lambda, \mu}}\coloneqq \sum_{g=(\sigma, \varepsilon)\in C_{T_{\lambda, \mu}}}(-1)^{|\varepsilon|_{\mu}}\sgn (\sigma)g
	\end{gather*}
	a type $B$ \textbf{Young symmetrizer} for $(\lambda, \mu)$. Here $|\varepsilon|_{\mu}$ denotes the number of dots, which appear on the bottom of the strands, whose labels appear as entries of the boxes of $\mu$ up to sign.
	We write $q_{\lambda, \mu}$ instead of $q_{T_{\lambda, \mu}}$, when $T_{\lambda, \mu}$ is the standard numbering on $(\lambda,\mu)$. 
	\end{defi}
	In order to understand these definitions consider the following two examples. The first visualizes the action of $W(B_n)$ on numberings and the second is an example of a Young symmetrizer.
	\begin{beis}
		The following two numberings of the bipartition $((2,1),(2))$ are obtained from one another by acting with a group element in $W(B_5)$:
		\begin{center}
			\ytableausetup{nosmalltableaux, boxsize=2em}
			\begingroup
			\setlength{\tabcolsep}{9pt}
			\begin{tabular}{ccc}
				$\left(\begin{ytableau}
					1 & 2 \\
					3
				\end{ytableau}\, , \, \begin{ytableau}
				4 & 5 
			\end{ytableau}\right)$
				&
				$\xrightarrow{\,\cbox{
					\begin{tikzpicture}[tldiagram, xscale=0.7, yscale=0.7]
						\draw \tlcoord{0}{0} \linewave{2}{4};
						\draw \tlcoord{0}{1} \lineup \lineup;
						\draw \tlcoord{0}{2} \onedot \linewave{2}{1};
						\draw \tlcoord{0}{3} \onedot \linewave{2}{-1};
						\draw \tlcoord{0}{4} \onedot \linewave{2}{-4};
					\end{tikzpicture}
				}\,}$
				&
				$\left(\begin{ytableau}
					5 & 2 \\
					-4
				\end{ytableau}\, , \, \begin{ytableau}
					-3 & -1
				\end{ytableau}\right)$
				\\[2em]
				$T_{\lambda, \mu}$
				&
				{}
				&
				$((1 \, 5) (3 \, 4), (0,0,1,1,1))  T_{\lambda, \mu}$
			\end{tabular}
			\endgroup
			\ytableausetup{smalltableaux}
		\end{center}
		The first numbering is the standard numbering. We have ${|\varepsilon|_\mu}=|(0,0,1,1,1)|_{\mu}$ and hence see that $(-1)^{|\varepsilon|_\mu}=(-1)^2=1$ for the standard numbering, since the strands $4$ and $5$, which are decorated with a dot, appear as entries of the right partition up to sign, while $3$ does not appear. In contrast we have $(-1)^{|\varepsilon|_\mu}=(-1)^1=-1$ for the second numbering, since only $3$appears as entry of a box of the right partition up to sign. 
	\end{beis}
	\begin{beis}
		Let $n=3$ and consider the bipartition $(\lambda, \mu)=((1,1),(1))$ with the standard numbering 
		\[
			T_{\lambda, \mu} = \left(\begin{ytableau}
				1 \\
				2
			\end{ytableau}\, , \, \begin{ytableau}
				3
			\end{ytableau}\right).
		\]
		The row symmetrizer $r_{\lambda, \mu}$ is given by
			\begin{align*}
			\cbox{
				\begin{tikzpicture}[tldiagram, xscale=1/2, yscale=1/2]
					\draw \tlcoord{0}{0} \lineup;
					\draw \tlcoord{0}{1} \lineup;
					\draw \tlcoord{0}{2} \lineup;
				\end{tikzpicture}
			} + \cbox{
				\begin{tikzpicture}[tldiagram, xscale=1/2, yscale=1/2]
					\draw \tlcoord{0}{0} \dlineup;
					\draw \tlcoord{0}{1} \lineup;
					\draw \tlcoord{0}{2} \lineup;
				\end{tikzpicture}
			}
			+ \cbox{
				\begin{tikzpicture}[tldiagram, xscale=1/2, yscale=1/2]
					\draw \tlcoord{0}{0} \lineup;
					\draw \tlcoord{0}{1} \dlineup;
					\draw \tlcoord{0}{2} \lineup;
				\end{tikzpicture}
			}
			- \cbox{
				\begin{tikzpicture}[tldiagram, xscale=1/2, yscale=1/2]
					\draw \tlcoord{0}{0} \lineup;
					\draw \tlcoord{0}{1} \lineup;
					\draw \tlcoord{0}{2} \dlineup;
				\end{tikzpicture}
			} 
			+ \cbox{
				\begin{tikzpicture}[tldiagram, xscale=1/2, yscale=1/2]
					\draw \tlcoord{0}{0} \dlineup;
					\draw \tlcoord{0}{1} \dlineup;
					\draw \tlcoord{0}{2} \lineup;
				\end{tikzpicture}
			} 
			- \cbox{
				\begin{tikzpicture}[tldiagram, xscale=1/2, yscale=1/2]
					\draw \tlcoord{0}{0} \dlineup;
					\draw \tlcoord{0}{1} \lineup;
					\draw \tlcoord{0}{2} \dlineup;
				\end{tikzpicture}
			} 
			- \cbox{
				\begin{tikzpicture}[tldiagram, xscale=1/2, yscale=1/2]
					\draw \tlcoord{0}{0} \lineup;
					\draw \tlcoord{0}{1} \dlineup;
					\draw \tlcoord{0}{2} \dlineup;
				\end{tikzpicture}
			} 
			- \cbox{
				\begin{tikzpicture}[tldiagram, xscale=1/2, yscale=1/2]
					\draw \tlcoord{0}{0} \dlineup;
					\draw \tlcoord{0}{1} \dlineup;
					\draw \tlcoord{0}{2} \dlineup;
				\end{tikzpicture}
			} ,
		\end{align*}
		the column symmetrizer $c_{\lambda, \mu}$ is given by
		\begin{align*}
			&\cbox{
				\begin{tikzpicture}[tldiagram, xscale=1/2, yscale=1/2]
					\draw \tlcoord{0}{0} \lineup;
					\draw \tlcoord{0}{1} \lineup;
					\draw \tlcoord{0}{2} \lineup;
				\end{tikzpicture}
			} + \cbox{
				\begin{tikzpicture}[tldiagram, xscale=1/2, yscale=1/2]
					\draw \tlcoord{0}{0} \dlineup;
					\draw \tlcoord{0}{1} \lineup;
					\draw \tlcoord{0}{2} \lineup;
				\end{tikzpicture}
			}
			+ \cbox{
				\begin{tikzpicture}[tldiagram, xscale=1/2, yscale=1/2]
					\draw \tlcoord{0}{0} \lineup;
					\draw \tlcoord{0}{1} \dlineup;
					\draw \tlcoord{0}{2} \lineup;
				\end{tikzpicture}
			}
			- \cbox{
				\begin{tikzpicture}[tldiagram, xscale=1/2, yscale=1/2]
					\draw \tlcoord{0}{0} \lineup;
					\draw \tlcoord{0}{1} \lineup;
					\draw \tlcoord{0}{2} \dlineup;
				\end{tikzpicture}
			} 
			+ \cbox{
				\begin{tikzpicture}[tldiagram, xscale=1/2, yscale=1/2]
					\draw \tlcoord{0}{0} \dlineup;
					\draw \tlcoord{0}{1} \dlineup;
					\draw \tlcoord{0}{2} \lineup;
				\end{tikzpicture}
			} 
			- \cbox{
				\begin{tikzpicture}[tldiagram, xscale=1/2, yscale=1/2]
					\draw \tlcoord{0}{0} \dlineup;
					\draw \tlcoord{0}{1} \lineup;
					\draw \tlcoord{0}{2} \dlineup;
				\end{tikzpicture}
			} 
			- \cbox{
				\begin{tikzpicture}[tldiagram, xscale=1/2, yscale=1/2]
					\draw \tlcoord{0}{0} \lineup;
					\draw \tlcoord{0}{1} \dlineup;
					\draw \tlcoord{0}{2} \dlineup;
				\end{tikzpicture}
			} 
			- \cbox{
				\begin{tikzpicture}[tldiagram, xscale=1/2, yscale=1/2]
					\draw \tlcoord{0}{0} \dlineup;
					\draw \tlcoord{0}{1} \dlineup;
					\draw \tlcoord{0}{2} \dlineup;
				\end{tikzpicture}
			} \\
			-
			&\cbox{
				\begin{tikzpicture}[tldiagram, xscale=1/2, yscale=1/2]
					\draw \tlcoord{0}{0} \linewave{1}{1};
					\draw \tlcoord{0}{1} \linewave{1}{-1};
					\draw \tlcoord{0}{2} \lineup;
				\end{tikzpicture}
			} - \cbox{
				\begin{tikzpicture}[tldiagram, xscale=1/2, yscale=1/2]
					\draw \tlcoord{0}{0} \onedot \linewave{1}{1};
					\draw \tlcoord{0}{1} \linewave{1}{-1};
					\draw \tlcoord{0}{2} \lineup;
				\end{tikzpicture}
			}
			- \cbox{
				\begin{tikzpicture}[tldiagram, xscale=1/2, yscale=1/2]
					\draw \tlcoord{0}{0} \linewave{1}{1};
					\draw \tlcoord{0}{1} \onedot \linewave{1}{-1};
					\draw \tlcoord{0}{2} \lineup;
				\end{tikzpicture}
			}
			- \cbox{
				\begin{tikzpicture}[tldiagram, xscale=1/2, yscale=1/2]
					\draw \tlcoord{0}{0} \onedot \linewave{1}{1};
					\draw \tlcoord{0}{1} \onedot \linewave{1}{-1};
					\draw \tlcoord{0}{2} \lineup;
				\end{tikzpicture}
			}
			+ \cbox{
				\begin{tikzpicture}[tldiagram, xscale=1/2, yscale=1/2]
					\draw \tlcoord{0}{0} \linewave{1}{1};
					\draw \tlcoord{0}{1} \linewave{1}{-1};
					\draw \tlcoord{0}{2} \dlineup;
				\end{tikzpicture}
			} + \cbox{
				\begin{tikzpicture}[tldiagram, xscale=1/2, yscale=1/2]
					\draw \tlcoord{0}{0} \onedot \linewave{1}{1};
					\draw \tlcoord{0}{1} \linewave{1}{-1};
					\draw \tlcoord{0}{2} \dlineup;
				\end{tikzpicture}
			}
			+ \cbox{
				\begin{tikzpicture}[tldiagram, xscale=1/2, yscale=1/2]
					\draw \tlcoord{0}{0} \linewave{1}{1};
					\draw \tlcoord{0}{1} \onedot \linewave{1}{-1};
					\draw \tlcoord{0}{2} \dlineup;
				\end{tikzpicture}
			}
			+ \cbox{
				\begin{tikzpicture}[tldiagram, xscale=1/2, yscale=1/2]
					\draw \tlcoord{0}{0} \onedot \linewave{1}{1};
					\draw \tlcoord{0}{1} \onedot \linewave{1}{-1};
					\draw \tlcoord{0}{2} \dlineup;
				\end{tikzpicture}
			} \, .
		\end{align*}
		Since in this case $r_{\lambda, \mu}c_{\lambda, \mu}=8c_{\lambda, \mu}$, the Young symmetrizer $q_{\lambda, \mu}$ agrees with the column symmetrizer $c_{\lambda, \mu}$. One can calculate that $q_{\lambda, \mu}^2=16 q_{\lambda, \mu}$ and that $\C \! W(B_3) q_{\lambda, \mu}$ is three-dimensional and spanned by $\{q_{\lambda, \mu}, s_2q_{\lambda, \mu}, s_1s_2 q_{\lambda, \mu} \}$.
	\end{beis}
We can now formulate the classification theorem of the irreducible representations of $W(B_n)$.
\begin{theo}[Classification of irreducible $W(B_n)$ representations]
	For each bipartition $(\lambda, \mu)$ of $n$ the \textbf{Specht module} module $S(\lambda, \mu)\coloneqq \C W(B_n) q_{\lambda, \mu}$ is irreducible. The element $q_{\lambda, \mu}$ is a quasi-idempotent, i.e.\ there exists a non-zero scalar $n_{\lambda, \mu}$ such that $q_{\lambda, \mu}^2=n_{\lambda, \mu}q_{\lambda, \mu}$.
	There is a one-to-one correspondence
	\begin{align*}
		\left\{\text{bipartitions $(\lambda,\mu)$ of $n$} \right\}
		&\xleftrightarrow{\,\text{1:1}\,}
		\{\text{irreducible complex representations of $W(B_n)$}\}/{\cong} \, ,
		\\
		\lambda
		&\xmapsto{\,\phantom{\text{1:1}}\,}
		S(\lambda,\mu) \,.
	\end{align*}
\end{theo}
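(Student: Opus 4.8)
\emph{Strategy.} The plan is to transplant the classical construction and analysis of Young symmetrizers for $S_n$ (as in \cite{fulton2013}) to the signed setting, the one genuinely new ingredient being careful bookkeeping of the twist $(-1)^{|\varepsilon|_\mu}$. Fix a numbering $T$ of $(\lambda,\mu)$ and abbreviate $q=q_T$, $r=r_T$, $c=c_T$. The key structural observation is that both $R_T$ and $C_T$ contain the subgroup $N\cong(\mZ/2\mZ)^n$ of pure sign changes, so $R_T=N\rtimes R_T^{\circ}$ and $C_T=N\rtimes C_T^{\circ}$ with $R_T^{\circ},C_T^{\circ}\subseteq S_n$ the ordinary row- and column-stabilizers of the underlying tableau of absolute values; moreover $R_T^{\circ}$ and $C_T^{\circ}$ permute the set of $\lambda$-labels and the set of $\mu$-labels separately. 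A short computation then gives the factorizations
\[
	r = 2^n\, e^{\lambda,\mu}_N\, a_T,\qquad c = 2^n\, e^{\lambda,\mu}_N\, b_T,\qquad q = 2^n\, e^{\lambda,\mu}_N\, c^A_{T^\lambda}\, c^A_{T^\mu},
\]
where $a_T=\sum_{\sigma\in R_T^{\circ}}\sigma$ and $b_T=\sum_{\sigma\in C_T^{\circ}}\sgn(\sigma)\sigma$ lie in $\C S_n\subseteq\C W(B_n)$, where $c^A_{T^\lambda},c^A_{T^\mu}$ are the genuine type $A$ Young symmetrizers of the subtableaux of $T$ on the $\lambda$- and $\mu$-labels, and where $e^{\lambda,\mu}_N\in\C N$ is the idempotent projecting onto the character of $N$ that is trivial on $\lambda$-strands and the sign character on $\mu$-strands; note that $e^{\lambda,\mu}_N$ is fixed by $R_T^{\circ}$ and $C_T^{\circ}$ and hence commutes with $a_T$ and $b_T$.

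\emph{Quasi-idempotency and irreducibility.} Exactly as in type $A$ one shows $r h=(-1)^{|\varepsilon|_\mu}r$ for $h=(\sigma,\varepsilon)\in R_T$, that $v c=(-1)^{|\delta|_\mu}\sgn(\pi)\,c$ for $v=(\pi,\delta)\in C_T$, and, using the von Neumann combinatorial lemma adapted to signed permutations --- for $g\in W(B_n)$ either $g\in R_T C_T$, or there is a signed transposition $t\in R_T$ with $g^{-1}tg\in C_T$ --- that $r\,x\,c=\alpha_T(x)\,rc$ for every $x\in\C W(B_n)$, with $\alpha_T(x)$ a scalar. In particular $q^2=n_{\lambda,\mu}\,q$, and taking the trace of right multiplication by $q$ on the regular representation gives $n_{\lambda,\mu}=|W(B_n)|/\dim S(\lambda,\mu)\neq 0$. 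For irreducibility of $S(\lambda,\mu)=\C W(B_n)\,q$: from $q x q=\alpha_T(x)q$ we get $q\,S(\lambda,\mu)=\C q$. If $M\subseteq S(\lambda,\mu)$ is a nonzero submodule then $qM$ equals $0$ or $\C q$; if $qM=\C q$ then $q\in M$ and $M=S(\lambda,\mu)$; if $qM=0$ then, writing $m=xq$ for $m\in M$, we compute $q m=\alpha_T(x)q=0$, forcing $\alpha_T(x)=0$, whence $m\,m'=\alpha_T(x')\,m=0$ for all $m=xq,\ m'=x'q$ in $M$, so $M$ is a nilpotent left ideal of the semisimple algebra $\C W(B_n)$ and $M=0$.

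\emph{Pairwise non-isomorphism and counting.} Writing $e:=q_T/n_{\lambda,\mu}$, $e':=q_{T'}/n_{\lambda',\mu'}$ (idempotents), we have $\Hom_{\C W(B_n)}\!\big(S(\lambda,\mu),S(\lambda',\mu')\big)\cong e\,\C W(B_n)\,e'$, which up to a nonzero scalar is $q_T\,\C W(B_n)\,q_{T'}$; so it suffices to show $q_T\,\C W(B_n)\,q_{T'}=0$ whenever $(\lambda,\mu)\neq(\lambda',\mu')$. If $(|\lambda|,|\mu|)\neq(|\lambda'|,|\mu'|)$, use the factorizations: conjugating $e^{\lambda',\mu'}_N$ by any element of $W(B_n)$ yields a character idempotent of $N$ whose character still has $|\mu'|$ sign changes, hence is orthogonal to $e^{\lambda,\mu}_N$, and commuting the $\C S_n$-parts past these idempotents annihilates the product. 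If instead $(|\lambda|,|\mu|)=(|\lambda'|,|\mu'|)$, after replacing $T'$ by $gT'$ for a suitable $g\in S_n$ (which changes $S(\lambda',\mu')$ only up to isomorphism) we may assume $e^{\lambda,\mu}_N=e^{\lambda',\mu'}_N=:e$; on the block $e\,\C W(B_n)\,e\cong\C[S_{|\lambda|}\times S_{|\mu|}]$ the element $q_T$ corresponds, up to a nonzero scalar, to $c^A_\lambda\otimes c^A_\mu$, so $q_T\,\C W(B_n)\,q_{T'}$ corresponds to $(c^A_\lambda\,\C S_{|\lambda|}\,c^A_{\lambda'})\otimes(c^A_\mu\,\C S_{|\mu|}\,c^A_{\mu'})$, which vanishes by the type $A$ classification since $\lambda\neq\lambda'$ or $\mu\neq\mu'$. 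Thus the $S(\lambda,\mu)$ are pairwise non-isomorphic irreducibles indexed by bipartitions of $n$; by \Cref{Conjugacy classes via bipartitions} the number of bipartitions of $n$ equals the number of conjugacy classes of $W(B_n)$, which over $\C$ equals the number of isomorphism classes of irreducible representations, so $(\lambda,\mu)\mapsto S(\lambda,\mu)$ is a bijection.

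\emph{Main obstacle.} The bulk of the work is the sign bookkeeping. The delicate point is checking that the von Neumann lemma survives the twist by $(-1)^{|\varepsilon|_\mu}$: when a ``bad'' $g$ produces a signed transposition $t\in R_T$ with $g^{-1}tg\in C_T$, one must verify that the combined sign contributed by $t$, by $g^{-1}tg$, and by the $S_n$-sign of its permutation part equals $-1$, which is not automatic since $t$ and $g^{-1}tg$ can each carry sign changes on $\mu$-strands. Closely related is justifying the factorizations of $r$ and $c$ through the character idempotent $e^{\lambda,\mu}_N$ and the reduction of the fixed-character block to $\C[S_{|\lambda|}\times S_{|\mu|}]$; both rest on the wreath product description of $W(B_n)$ from \Cref{Weyl group of B_n is dotted group} and on $R_T^{\circ},C_T^{\circ}$ preserving the partition of the labels into $\lambda$-labels and $\mu$-labels.
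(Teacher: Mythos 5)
Your proposal is correct, and its skeleton (Young symmetrizers, quasi-idempotency via a scalar identity $q\,x\,q=\alpha(x)q$, irreducibility via the nilpotent-ideal trick, pairwise non-isomorphism by showing $q_T\,\C W(B_n)\,q_{T'}=0$, and counting against the conjugacy classes) is the same as the paper's, which likewise adapts the type $A$ argument of Fulton. The two places where you genuinely diverge are worth noting. First, you encode the sign conditions by factoring $q_{\lambda,\mu}$ through the character idempotent $e^{\lambda,\mu}_N$ of the normal subgroup $N\cong(\mZ/2\mZ)^n$, whereas the paper phrases the same information as Jucys--Murphy eigenvalue equations $J_iq_{\lambda,\mu}=\pm q_{\lambda,\mu}$; these are equivalent, and both immediately dispose of the case $(|\lambda|,|\mu|)\neq(|\lambda'|,|\mu'|)$ (the paper by $qq'=(qJ_i)q'=q(J_iq')=-qq'$, you by orthogonality of $N$-characters). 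Second, for the case of equal sizes but different partitions, the paper redoes the row/column combinatorial lemma inside $W(B_n)$ (finding two labels in one row of $\lambda$ and one column of $\lambda'$), while you pass to the block $e\,\C W(B_n)\,e\cong\C[S_{|\lambda|}\times S_{|\mu|}]$ and quote the type $A$ classification for $c^A_\lambda\,\C S_{|\lambda|}\,c^A_{\lambda'}=0$. Your block reduction is cleaner and imports type $A$ wholesale; its cost is that you must justify the Clifford-theoretic isomorphism of the block, i.e.\ exactly the wreath-product structure that the paper's introduction says it deliberately avoids. One small remark: the ``main obstacle'' you flag -- re-verifying the von Neumann lemma with the $(-1)^{|\varepsilon|_\mu}$ twist -- is actually unnecessary in your own framework, since once you have $q=2^n\,e^{\lambda,\mu}_N\,c^A_{T^\lambda}c^A_{T^\mu}$ and the block isomorphism, the identity $q\,x\,q=\alpha(x)q$ follows directly from the type $A$ statement applied inside $\C[S_{|\lambda|}\times S_{|\mu|}]$; and when computing $n_{\lambda,\mu}$ by the trace argument you should note explicitly that the $\tfrac{1}{2^n}$ normalization makes the coefficient of $e$ in $q_{\lambda,\mu}$ equal to $1$ (since $R_T\cap C_T\supseteq N$), which is the subtlety the paper singles out.
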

A complete proof for complex reflection groups can be found in \cite[\S2]{Petrova2017}, where  $G(2,1,n)=W(B_n)$. We give a sketch of the proof.
\begin{proof}[Sketch of Proof]
We sketch how to adapt the proof for type $A$, which can be found in \cite[\S4]{fulton2013}, to type $B$. We additionally give references to \cite[Chapter 2]{wojciechowski19}, since the proofs there contain the proofs for the combinatorial facts used in the proofs in \cite{fulton2013}. 

First one shows an analogue unique characterizing property of $q_{\lambda, \mu}$ as in \cite[Lemma 4.21]{fulton2013}/\cite[Lemma 2.16]{wojciechowski19}. The analogue is the following statement. The Young symmetrizer $q_{\lambda,\mu}$ is the unique (up to taking scalar multiples) element in $\C \! W(B_n)$ which satisfies the properties:
\begin{enumerate}
	\item $x \cdot q_{\lambda,\mu} \cdot \sgn(y) y = q_{\lambda,\mu}$ for all $x\in R_{\lambda, \mu}, y\in C_{\lambda, \mu}$,
	\item $J_i q_{\lambda, \mu}=q_{\lambda, \mu}=q_{\lambda, \mu}J_i$ if $i$ is the entry of a box of $\lambda$ with respect to the chosen numbering,
	\item $J_i q_{\lambda, \mu}=-q_{\lambda, \mu}=q_{\lambda, \mu}J_i$ if $i$ is the entry of a box of $\mu$ with respect to the chosen numbering.
\end{enumerate}
Here $\sgn\colon W(B_n)\rightarrow \{\pm 1\}$ maps a dotted permutation to the sign of the underlying permutation ignoring any dots.  From this property one can conclude that there exists a scalar $n_{\lambda, \mu}$ such that $q_{\lambda, \mu}^2=n_{\lambda, \mu}q_{\lambda, \mu}$ as in \cite[Lemma 4.23]{fulton2013}/\cite[Corollary 2.17]{wojciechowski19}.
In order to conclude that $n_{\lambda, \mu}$ is non-zero, copy the proof of \cite[Lemma 4.26]{fulton2013}/\cite[Lemma 2.21]{wojciechowski19}. However there is one subtlety, namely that the coefficient of $e$ in $q_{\lambda,\mu}$ is really $1$, even though the groups $R_{\lambda, \mu}$ and $C_{\lambda, \mu}$ have non-trivial intersection (which is the subgroup isomorphic to $(\mZ / 2\! \mZ)^n$ generated by the Jucys--Murphy elements), since we normalized $q_{\lambda, \mu}$ in its definition by the factor $\frac{1}{2^n}$. Hence we obtain $n_{\lambda, \mu}=\frac{2^nn!}{\dim S(\lambda, \mu)}$. For versions of \cite[Lemma 4.23]{fulton2013}/\cite[Lemma 2.20]{wojciechowski19} and \cite[Lemma 4.25]{fulton2013}/\cite[Lemma 2.23]{wojciechowski19}, which show that the representations $\C \! W(B_n) q_{\lambda, \mu}$ and $\C \! W(B_n) q_{\lambda', \mu'}$ are not isomorphic one distinguishes two cases. The first case is that the sizes of the bipartitions differ, i.e.\ $|\lambda|\neq|\lambda'|$. Then by the pigeon hole principle there exists a number $i\in\{1,\ldots, n\}$, which occurs (up to sign) both as an entry in $\lambda$ and $\mu'$. In particular we have
	\[
		q_{\lambda, \mu}q_{\lambda', \mu'}=(q_{\lambda, \mu}J_i)q_{\lambda', \mu'}=q_{\lambda, \mu}(J_iq_{\lambda', \mu'})=q_{\lambda, \mu}(-q_{\lambda', \mu'}),
	\]
	which shows that $q_{\lambda, \mu}q_{\lambda', \mu'}=0$. Note that this argument does not depend of the chosen numbering of $(\lambda', \mu')$. In particular $q_{\lambda, \mu}\C\!W(B_n)q_{\lambda', \mu'}=0$, since for any $g\in W(B_n)$ we have
	\[
		q_{\lambda, \mu}\cdot g\cdot q_{T_{\lambda', \mu'}} \cdot g^{-1}=q_{\lambda, \mu}\cdot q_{g\cdot T_{\lambda', \mu'}}=0 .
	\]
	 In the second case assume $|\lambda|=|\lambda'|$ and $|\mu|=|\mu'|$, but either $\lambda\neq\lambda'$ or $\mu\neq \mu'$. Without loss of generality $\lambda\neq \lambda'$ and assume $\lambda>\lambda'$ with respect to the lexicographical order. By the combinatorial argument from the proof of \cite[Lemma 2.20]{wojciechowski19} one finds two indices which are both in the same row in $\lambda$, but in the same column in $\lambda'$. Hence the argument in the proof of \cite[Lemma 4.23]{fulton2013}/\cite[Lemma 2.20]{wojciechowski19} gives that the product $q_{\lambda, \mu}x q_{\lambda', \mu'}$ vanishes and even more generally $q_{\lambda, \mu}x q_{\lambda', \mu'}=0$ for all $x\in W(B_n)$.
	From this we conclude
	\[
		\Hom(S(\lambda, \mu), S(\lambda', \mu'))\cong q_{\lambda, \mu} \C W(B_n) q_{\lambda', \mu'}=0
	\]
	and hence $S(\lambda, \mu)$ and $S(\lambda', \mu')$ are not isomorphic if the bipartitions differ. Finally the Specht modules $S(\lambda, \mu)$ are irreducible by the same proof as \cite[Lemma 4.25]{fulton2013}/\cite[Lemma 2.22]{wojciechowski19} using the above type $B$ version of
	\cite[Lemma 4.21]{fulton2013}/\cite[Lemma 2.16]{wojciechowski19}.
\end{proof}
\begin{beis} \label{n=2 irreducible representations}
	For $n=2$ we have the following five bipartitions:
	\begin{center} \label{n=2 bipartitions}
		\begingroup
		\setlength{\tabcolsep}{15pt}
		\begin{tabular}{ccccc}
			$(\ydiagram{2},\varnothing)$ 
			&
			$(\ydiagram{1,1},\varnothing)$
			&
			$(\ydiagram{1}, \ydiagram{1})$ 
			&
			$(\varnothing ,\ydiagram{1,1})$ 
			&
			$(\varnothing, \ydiagram{2})$ 
		\end{tabular}
		\endgroup
	\end{center} 
	The five irreducible representations can be constructed elementary as follows. One-dimensional representations correspond to elements in the abelianization
	\[
		W(B_2)/[W(B_2),W(B_2)] = W(B_2)/\{e,s_0s_1s_0s_1\} =\{\overline{e},\overline{s_0},\overline{s_1},\overline{s_0s_1}=\overline{s_1s_0}\}\cong (\mZ/2\mZ)^2.
	\]
	The actions of the generators $s_0$ and $s_1$ must have eigenvalue $1$ or $-1$ on these one-dimensional representations and any choice of such eigenvalues $\varepsilon=(\varepsilon_0,\varepsilon_1)\in\{1,-1\}^2$ defines a unique one-dimensional representation $\C_{\varepsilon}$ such that $s_i$ acts by $\varepsilon_i$ for $i=0,1$. These correspond to the four bipartitions
	\[
		S(\ydiagram{2},\varnothing)=\C_{1,1}\, , \quad S(\ydiagram{1,1},\varnothing)=\C_{1,-1} \, , \quad S(\varnothing,\ydiagram{2})=\C_{-1,1}\, , \quad S(\varnothing,\ydiagram{1,1})=\C_{-1,-1} \, .
	\]
	 Since the group $W(B_2)$ is not abelian, there must exist at least one irreducible representation, which is not one-dimensional. There is an irreducible two-dimensional representation of $W(B_2)$, which is given by $\C^2=\shift{e_1,e_2}$ and the action
	 \[
	 	s_0\cdot e_1=-e_1\, , \quad s_0 \cdot e_2 = e_2\, , \quad s_1(e_1)=e_2\, , \quad s_1(e_2)=e_1.
	 \] 
	We count and see that 
	\[
		\dim \C \! W(B_2)=8=1^2+1^2+2^2+1^2+1^2,
	\]
	so these are all irreducible representations by the Artin-Wedderburn theorem. The two-dimensional irreducible representation we wrote down is isomorphic to $\h^*$, where $\h$ is the Cartan subalgebra of $\lie(B_2)=\so_5$ and hence can be visualized in terms of the type $B_2$ root system. Concretely by setting $e_1\coloneqq\alpha_0, e_2\coloneqq\alpha_0+\alpha_1$ like in \eqref{B2 root system} gives the desired identification.
	The normalized Young symmetrizers $e_{\lambda, \mu}\coloneqq \frac{1}{n_{\lambda, \mu}}q_{\lambda, \mu}$ corresponding to the irreducible representations are given by
	\begin{align*}
		e_{(2),(0)}&=\frac{1}{8} \left(
		\cbox{
			\begin{tikzpicture}[tldiagram, xscale=0.6, yscale=0.7]
				\draw \tlcoord{0}{0} \lineup;
				\draw \tlcoord{0}{1} \lineup;
			\end{tikzpicture}
		} \, + \, 
		\cbox{
			\begin{tikzpicture}[tldiagram, xscale=0.6, yscale=0.7]
				\draw \tlcoord{0}{0} \dlineup;
				\draw \tlcoord{0}{1} \lineup;
			\end{tikzpicture}
		} \, + \, 
		\cbox{
			\begin{tikzpicture}[tldiagram, xscale=0.6, yscale=0.7]
				\draw \tlcoord{0}{0} \lineup;
				\draw \tlcoord{0}{1} \dlineup;
			\end{tikzpicture}
		} \, + \, 
		\cbox{
			\begin{tikzpicture}[tldiagram, xscale=0.6, yscale=0.7]
				\draw \tlcoord{0}{0} \dlineup;
				\draw \tlcoord{0}{1} \dlineup;
			\end{tikzpicture}
		} \, + \, 
		\cbox{
			\begin{tikzpicture}[tldiagram, xscale=0.6, yscale=0.7]
				\draw \tlcoord{0}{0} \linewave{1}{1};
				\draw \tlcoord{0}{1} \linewave{1}{-1};
			\end{tikzpicture}
		} \, + \, 
		\cbox{
			\begin{tikzpicture}[tldiagram, xscale=0.6, yscale=0.7]
				\draw \tlcoord{0}{0} \onedot \linewave{1}{1};
				\draw \tlcoord{0}{1} \linewave{1}{-1};
			\end{tikzpicture}
		} \, + \, 
		\cbox{
			\begin{tikzpicture}[tldiagram, xscale=0.6, yscale=0.7]
				\draw \tlcoord{0}{0} \linewave{1}{1};
				\draw \tlcoord{0}{1} \onedot \linewave{1}{-1};
			\end{tikzpicture}
		} \, + \, 
		\cbox{
			\begin{tikzpicture}[tldiagram, xscale=0.6, yscale=0.7]
				\draw \tlcoord{0}{0} \onedot \linewave{1}{1};
				\draw \tlcoord{0}{1} \onedot \linewave{1}{-1};
			\end{tikzpicture}
		} \right)  \\
		e_{(1,1), (0)}&=\frac{1}{8} \left(
		\cbox{
			\begin{tikzpicture}[tldiagram, xscale=0.6, yscale=0.7]
				\draw \tlcoord{0}{0} \lineup;
				\draw \tlcoord{0}{1} \lineup;
			\end{tikzpicture}
		} \, + \, 
		\cbox{
			\begin{tikzpicture}[tldiagram, xscale=0.6, yscale=0.7]
				\draw \tlcoord{0}{0} \dlineup;
				\draw \tlcoord{0}{1} \lineup;
			\end{tikzpicture}
		} \, + \, 
		\cbox{
			\begin{tikzpicture}[tldiagram, xscale=0.6, yscale=0.7]
				\draw \tlcoord{0}{0} \lineup;
				\draw \tlcoord{0}{1} \dlineup;
			\end{tikzpicture}
		} \, + \, 
		\cbox{
			\begin{tikzpicture}[tldiagram, xscale=0.6, yscale=0.7]
				\draw \tlcoord{0}{0} \dlineup;
				\draw \tlcoord{0}{1} \dlineup;
			\end{tikzpicture}
		} \, - \, 
		\cbox{
			\begin{tikzpicture}[tldiagram, xscale=0.6, yscale=0.7]
				\draw \tlcoord{0}{0} \linewave{1}{1};
				\draw \tlcoord{0}{1} \linewave{1}{-1};
			\end{tikzpicture}
		} \, - \, 
		\cbox{
			\begin{tikzpicture}[tldiagram, xscale=0.6, yscale=0.7]
				\draw \tlcoord{0}{0} \onedot \linewave{1}{1};
				\draw \tlcoord{0}{1} \linewave{1}{-1};
			\end{tikzpicture}
		} \, - \, 
		\cbox{
			\begin{tikzpicture}[tldiagram, xscale=0.6, yscale=0.7]
				\draw \tlcoord{0}{0} \linewave{1}{1};
				\draw \tlcoord{0}{1} \onedot \linewave{1}{-1};
			\end{tikzpicture}
		} \, - \, 
		\cbox{
			\begin{tikzpicture}[tldiagram, xscale=0.6, yscale=0.7]
				\draw \tlcoord{0}{0} \onedot \linewave{1}{1};
				\draw \tlcoord{0}{1} \onedot \linewave{1}{-1};
			\end{tikzpicture}
		} \right) \\
		e_{(1),(1)}&= \frac{1}{4} \left(
		\cbox{
			\begin{tikzpicture}[tldiagram, xscale=0.6, yscale=0.7]
				\draw \tlcoord{0}{0} \lineup;
				\draw \tlcoord{0}{1} \lineup;
			\end{tikzpicture}
		} \, + \, 
		\cbox{
			\begin{tikzpicture}[tldiagram, xscale=0.6, yscale=0.7]
				\draw \tlcoord{0}{0} \dlineup;
				\draw \tlcoord{0}{1} \lineup;
			\end{tikzpicture}
		} \, - \, 
		\cbox{
			\begin{tikzpicture}[tldiagram, xscale=0.6, yscale=0.7]
				\draw \tlcoord{0}{0} \lineup;
				\draw \tlcoord{0}{1} \dlineup;
			\end{tikzpicture}
		} \, - \, 
		\cbox{
			\begin{tikzpicture}[tldiagram, xscale=0.6, yscale=0.7]
				\draw \tlcoord{0}{0} \dlineup;
				\draw \tlcoord{0}{1} \dlineup;
			\end{tikzpicture}
		} \right)\\
		e_{(0),(1,1)}&=\frac{1}{8} \left(
		\cbox{
			\begin{tikzpicture}[tldiagram, xscale=0.6, yscale=0.7]
				\draw \tlcoord{0}{0} \lineup;
				\draw \tlcoord{0}{1} \lineup;
			\end{tikzpicture}
		} \, - \, 
		\cbox{
			\begin{tikzpicture}[tldiagram, xscale=0.6, yscale=0.7]
				\draw \tlcoord{0}{0} \dlineup;
				\draw \tlcoord{0}{1} \lineup;
			\end{tikzpicture}
		} \, - \, 
		\cbox{
			\begin{tikzpicture}[tldiagram, xscale=0.6, yscale=0.7]
				\draw \tlcoord{0}{0} \lineup;
				\draw \tlcoord{0}{1} \dlineup;
			\end{tikzpicture}
		} \, + \, 
		\cbox{
			\begin{tikzpicture}[tldiagram, xscale=0.6, yscale=0.7]
				\draw \tlcoord{0}{0} \dlineup;
				\draw \tlcoord{0}{1} \dlineup;
			\end{tikzpicture}
		} \, - \, 
		\cbox{
			\begin{tikzpicture}[tldiagram, xscale=0.6, yscale=0.7]
				\draw \tlcoord{0}{0} \linewave{1}{1};
				\draw \tlcoord{0}{1} \linewave{1}{-1};
			\end{tikzpicture}
		} \, + \, 
		\cbox{
			\begin{tikzpicture}[tldiagram, xscale=0.6, yscale=0.7]
				\draw \tlcoord{0}{0} \onedot \linewave{1}{1};
				\draw \tlcoord{0}{1} \linewave{1}{-1};
			\end{tikzpicture}
		} \, + \, 
		\cbox{
			\begin{tikzpicture}[tldiagram, xscale=0.6, yscale=0.7]
				\draw \tlcoord{0}{0} \linewave{1}{1};
				\draw \tlcoord{0}{1} \onedot \linewave{1}{-1};
			\end{tikzpicture}
		} \, - \, 
		\cbox{
			\begin{tikzpicture}[tldiagram, xscale=0.6, yscale=0.7]
				\draw \tlcoord{0}{0} \onedot \linewave{1}{1};
				\draw \tlcoord{0}{1} \onedot \linewave{1}{-1};
			\end{tikzpicture}
		} \right)  \\
		e_{(0), (2)} &= \frac{1}{8} \left(
		\cbox{
			\begin{tikzpicture}[tldiagram, xscale=0.6, yscale=0.7]
				\draw \tlcoord{0}{0} \lineup;
				\draw \tlcoord{0}{1} \lineup;
			\end{tikzpicture}
		} \, - \, 
		\cbox{
			\begin{tikzpicture}[tldiagram, xscale=0.6, yscale=0.7]
				\draw \tlcoord{0}{0} \dlineup;
				\draw \tlcoord{0}{1} \lineup;
			\end{tikzpicture}
		} \, - \, 
		\cbox{
			\begin{tikzpicture}[tldiagram, xscale=0.6, yscale=0.7]
				\draw \tlcoord{0}{0} \lineup;
				\draw \tlcoord{0}{1} \dlineup;
			\end{tikzpicture}
		} \, + \, 
		\cbox{
			\begin{tikzpicture}[tldiagram, xscale=0.6, yscale=0.7]
				\draw \tlcoord{0}{0} \dlineup;
				\draw \tlcoord{0}{1} \dlineup;
			\end{tikzpicture}
		} \, + \, 
		\cbox{
			\begin{tikzpicture}[tldiagram, xscale=0.6, yscale=0.7]
				\draw \tlcoord{0}{0} \linewave{1}{1};
				\draw \tlcoord{0}{1} \linewave{1}{-1};
			\end{tikzpicture}
		} \, - \, 
		\cbox{
			\begin{tikzpicture}[tldiagram, xscale=0.6, yscale=0.7]
				\draw \tlcoord{0}{0} \onedot \linewave{1}{1};
				\draw \tlcoord{0}{1} \linewave{1}{-1};
			\end{tikzpicture}
		} \, - \, 
		\cbox{
			\begin{tikzpicture}[tldiagram, xscale=0.6, yscale=0.7]
				\draw \tlcoord{0}{0} \linewave{1}{1};
				\draw \tlcoord{0}{1} \onedot \linewave{1}{-1};
			\end{tikzpicture}
		} \, + \, 
		\cbox{
			\begin{tikzpicture}[tldiagram, xscale=0.6, yscale=0.7]
				\draw \tlcoord{0}{0} \onedot \linewave{1}{1};
				\draw \tlcoord{0}{1} \onedot \linewave{1}{-1};
			\end{tikzpicture}
		} \right).
	\end{align*}
	Together with the element
	\[
		e_{(1),(1)}'= \cbox{
			\begin{tikzpicture}[tldiagram, xscale=0.6, yscale=0.7]
				\draw \tlcoord{0}{0}  \linewave{1}{1};
				\draw \tlcoord{0}{1}  \linewave{1}{-1};
			\end{tikzpicture}
		} \cdot  e_{(1),(1)} \cdot \cbox{
			\begin{tikzpicture}[tldiagram, xscale=0.6, yscale=0.7]
				\draw \tlcoord{0}{0}  \linewave{1}{1};
				\draw \tlcoord{0}{1}  \linewave{1}{-1};
			\end{tikzpicture}
		} =\frac{1}{4} \left(
		\cbox{
			\begin{tikzpicture}[tldiagram, xscale=0.6, yscale=0.7]
				\draw \tlcoord{0}{0} \lineup;
				\draw \tlcoord{0}{1} \lineup;
			\end{tikzpicture}
		} \, - \, 
		\cbox{
			\begin{tikzpicture}[tldiagram, xscale=0.6, yscale=0.7]
				\draw \tlcoord{0}{0} \dlineup;
				\draw \tlcoord{0}{1} \lineup;
			\end{tikzpicture}
		} \, + \, 
		\cbox{
			\begin{tikzpicture}[tldiagram, xscale=0.6, yscale=0.7]
				\draw \tlcoord{0}{0} \lineup;
				\draw \tlcoord{0}{1} \dlineup;
			\end{tikzpicture}
		} \, - \, 
		\cbox{
			\begin{tikzpicture}[tldiagram, xscale=0.6, yscale=0.7]
				\draw \tlcoord{0}{0} \dlineup;
				\draw \tlcoord{0}{1} \dlineup;
			\end{tikzpicture}
		} \right)
	\]
	we obtain a complete collection of pairwise orthogonal primitive idempotents. 
\end{beis} 
\begin{rema}
	The longest element $\wo$ lies in the center of $W(B_n)$ by \Cref{longet element generates center} and hence acts on all irreducible representations by a scalar, which has to be $1$ or $-1$ since $\wo$ is self-inverse. Concretely for $n=2$ the longest element $\wo=s_0s_1s_0s_1$ acts by $1$ on all $1$-dimensional representations and by $-1$ on the $2$-dimensional representation from \Cref{n=2 irreducible representations}. 
	This is a difference to the representation theory of $S_n=W(A_{n-1})$ or $W(D_n)$ (for $n$ odd), where the longest element is in general not central (c.f.\ \Cref{longest element in type D}) and does not need to act by a constant on each irreducible representation.
\end{rema}
The next remark outlines, how to generalize the definition of the Young symmetrizers to the family of complex reflection groups $G(m,1,n)$. While the representation theory for these groups is completely understood (see e.g.\ \cite{ogievetsky2012jucysmurphy} or \cite{Petrova2017}), the author did not find these generalized idempotents in the literature.
\begin{rema}
	One can replace the group $G(1,1,n)\coloneqq W(B_n)=(\mZ/2\!\mZ) \wr S_n$ by the group $G(m,1,n)=(\mZ/r\!\mZ)\wr S_n$. This group consists of all decorated permutation diagrams, which are decorated with up to $r-1$ dots on each strand and with multiplication rule that allows to cancel $r$ dots on the same strand. Bipartitions generalize to $r$-tuples of partitions, which label the irreducible complex representations of $(\mZ/r\!\mZ)\wr S_n$. 
	Instead of numbering the boxes with $I=\{-n,\ldots, n\}=\{-1,1\}\times \{1,\ldots, n\}$, one uses $ \mZ \! / m \! \mZ\times \{1,\ldots, n\}$. The Young symmetrizer is defined in the same way by replacing the scalar $(-1)^{|\varepsilon|_{\mu}}$ by a power of a primitive $n$-th root of unity $\zeta$. 
\end{rema}

\section{Branching rule} \label{Chapter 1 section 3}

In order to get a better feeling for the irreducible Specht modules $S(\lambda, \mu)$ from the last section, we use this section to explain the type $B$ branching rule. This rule determines the entire behavior of induction and restriction functors for representations of $W(B_n)$. Our reference for this section is \cite{ogievetsky2012jucysmurphy}, however this is a non-quantized version.

\begin{defi}
	Let $W(B_n)\hookrightarrow W(B_{m})$ denote the canonical inclusions of parabolic subgroup for $n\leq m$. The \textbf{restriction functor} is defined as
	\begin{align*}
		\RES^{m}_n\colon \Rep (W(B_{m}))&\to \Rep (W(B_n)) \\
		V &\mapsto V.
	\end{align*} 
	The \textbf{induction functor} is defined as
	\begin{align*}
		\IND^{m}_n\colon \Rep W(B_{n})&\to \Rep W(B_m) \\
		V &\mapsto \C\! W(B_m) \otimes_{\C \! W(B_n)} V.
	\end{align*}
\end{defi}
Before we state the branching rule, which gives concrete formulas for the induction and restriction functors, we consider a motivating example and a conceptual way to find the idempotents in $\C \!W(B_2)$ from \Cref{n=2 irreducible representations}.
\begin{beis} \label{example inducing up idempotents}
	Let $(\lambda, \mu)$ a bipartition of $n$. We can view the Young symmetrizer $e_{\lambda, \mu}\in \C \! W(B_n)$ as an element in $W(B_{n+1})$ via the embedding $\C\! W(B_n)\hookrightarrow \C \! W(B_{n+1})$ which diagrammatically adds an additional strand.
	This gives an idempotent in $\C \! W(B_{n+1})$, which is possibly no longer primitive, i.e.\ can be written as sum $e + e'$ for two orthogonal idempotents $e, e'\in \C \! W(B_{n+1})$. Decomposing this idempotent into primitive, pairwise orthogonal idempotents in the algebra $\C \! W(B_{n+1})$ can be interpreted as a decomposition of the unit $1$ in the algebra $e_{\lambda, \mu}\C \! W(B_{n+1}) e_{\lambda, \mu}$ into primitive idempotents. Up to conjugation the idempotents in this decomposition correspond to the irreducible representations of $e_{\lambda, \mu}\C \! W(B_{n+1}) e_{\lambda, \mu}$, which are precisely those irreducible representations $S(\lambda', \mu')$ of $\C \! W(B_{n+1})$ such that $e_{\lambda, \mu}S(\lambda', \mu')\neq 0$, i.e.\ those representations $S(\lambda', \mu')$ of $W(B_{n+1})$ which occur as direct summand in the projective module $\C \! W(B_{n+1})e_{\lambda, \mu}$. This module can be interpreted as the image of the irreducible module $S(\lambda, \mu)$ under the induction functor $\IND^{n+1}_{n}$. Indeed we have 
	\[
	\C \! W(B_{n+1}) e_{\lambda, \mu}=\C \! W(B_{n+1})  \otimes_{\C\!W(B_{n})} \C\!W(B_{n}) e_{\lambda, \mu} = \IND^{n+1}_{n}(S(\lambda, \mu)).
	\]
	Considering now again the case $n=1$ we have the idempotents $e_{(1),(0)}=\frac{1}{2}(e+s_0)$ and $e_{(0),(1)}=\frac{1}{2}(e-s_0)$ in $\C\!W(B_{1})$. Their images in $W(B_2)$ are given diagrammatically by 
	\[
		e_{(1),(0)} = \frac{1}{2} \left( \cbox{
		\begin{tikzpicture}[tldiagram]
			\draw \tlcoord{0}{0} \lineup;
			\draw \tlcoord{0}{1} \lineup;
		\end{tikzpicture}
	} + \cbox{
	\begin{tikzpicture}[tldiagram]
		\draw \tlcoord{0}{0} \dlineup;
		\draw \tlcoord{0}{1} \lineup;
	\end{tikzpicture}
} \right) \, , \quad e_{(0),(1)} = \frac{1}{2} \left( \cbox{
\begin{tikzpicture}[tldiagram]
	\draw \tlcoord{0}{0} \lineup;
	\draw \tlcoord{0}{1} \lineup;
\end{tikzpicture}
} - \cbox{
\begin{tikzpicture}[tldiagram]
	\draw \tlcoord{0}{0} \dlineup;
	\draw \tlcoord{0}{1} \lineup;
\end{tikzpicture}
} \right) \,.
	\]
	These idempotents now decompose precisely as
	\[
		e_{(1),(0)}=e_{(2),(0)}+e_{(1,1),(0)}+e_{(1),(1)} \, , \quad 	e_{(0),(1)}=e_{(0),(2)}+e_{(0),(1,1)}+e_{(1),(1)}'
	\]
	for the idempotents in \Cref{n=2 irreducible representations}.
	This shows that 
	\[
		\IND^{2}_1(S((1),(0)))\cong S((2),(0))\oplus S((1,1),(0)) \oplus S((1),(1)) 
	\]
	and 
	\[ 
		\IND^{2}_1(S((0),(1)))\cong S((0),(2))\oplus S((0),(1,1)) \oplus S((1),(1)) \, .
	\]
	We observe that the bipartitions which occur in these decompositions are precisely the ones obtained by adding an additional box to the existing bipartition $(\lambda, \mu)$. 
\end{beis}
\Cref{example inducing up idempotents} is part of a general behavior of induction and restriction functors. For the symmetric groups this phenomenon is well-known and proven in e.g.\ \cite[\S2.4]{jameskerber81} and for the following type $B$ version see e.g.\ \cite{ogievetsky2012jucysmurphy} and the references therein.
\begin{theo}[Branching rule] \label{theorem induction and restriction behaviour for type B}
	The induction functors $\IND^{n}_{n-1}$ are biadjoint to the restriction functors $\RES^{n}_{n-1}$. Moreover they satisfy the following decomposition rules.
	\begin{enumerate}
		\item The restriction functors satisfy
		\begin{equation*} \label{restriction gives decompositon}
			\RES^{n}_{n-1}(S(\lambda, \mu))\cong\bigoplus_{(\lambda',\mu')}S(\lambda',\mu')
		\end{equation*}
		where $(\lambda',\mu')$ runs over all bipartitions of $n-1$, which can be obtained from $(\lambda,\mu)$ by deletion of a single box in $\lambda$ or $\mu$.
		\item The induction functors satisfy
		\begin{equation*}
			\IND^{n+1}_{n}(S(\lambda, \mu))\cong\bigoplus_{(\lambda',\mu')}S(\lambda',\mu')
		\end{equation*}
		where $(\lambda',\mu')$ runs over all bipartitions of $n+1$, which can be obtained from $(\lambda,\mu)$ by addition of a single box to $\lambda$ or $\mu$. 
	\end{enumerate}
	Visually speaking both procedures correspond to picking out a vertex $(\lambda, \mu)$ in the branching graph of all bipartitions (see below) and either taking the direct sum of the representations corresponding to its parent or child vertices. 
\end{theo}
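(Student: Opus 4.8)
The plan is to obtain biadjointness by standard arguments, then reduce the restriction and induction rules to one another, and finally deduce the restriction rule from the classical branching rule for the symmetric group via the wreath-product description of the Specht modules.

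For biadjointness, note that $W(B_{n-1})$ is a subgroup of $W(B_n)$, so $\C W(B_n)$ is free of finite rank as a left and as a right $\C W(B_{n-1})$-module; in fact it is a symmetric Frobenius $\C W(B_{n-1})$-algebra. Left adjointness of $\IND^n_{n-1}$ to $\RES^n_{n-1}$ is the tensor–hom adjunction, and for a finite-index subgroup one has a natural isomorphism $\C W(B_n)\otimes_{\C W(B_{n-1})}V\cong\Hom_{\C W(B_{n-1})}(\C W(B_n),V)$ (the averaging map), which identifies $\IND$ with coinduction and gives right adjointness. Since we work over $\C$, every module is semisimple, so the multiplicity of $S(\nu,\rho)$ in a module $M$ is $\dim\Hom(S(\nu,\rho),M)$; biadjointness then shows that the multiplicity of $S(\lambda,\mu)$ in $\IND^{n+1}_nS(\lambda',\mu')$ equals that of $S(\lambda',\mu')$ in $\RES^{n+1}_nS(\lambda,\mu)$. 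Because "$(\lambda',\mu')$ is obtained from $(\lambda,\mu)$ by deleting a box" is the same relation as "$(\lambda,\mu)$ is obtained from $(\lambda',\mu')$ by adding a box", the two decomposition rules are equivalent, and it suffices to prove the restriction rule — which, in the language of \Cref{example inducing up idempotents}, amounts to decomposing $\C W(B_{n+1})e_{\lambda,\mu}$ into primitive idempotents.

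To prove the restriction rule, write $a=|\lambda|$, $b=|\mu|$, $n=a+b$, and let $\chi\colon W(B_n)\to\{\pm1\}$ be the linear character with $\chi(J_i)=-1$ for all $i$ and $\chi(s_j)=1$ for $j\ge1$ (the parity of the number of dots). From the definition of the Young symmetrizers one reads off that $S(\lambda,\varnothing)$ is the inflation of the type $A$ Specht module $S^\lambda$ along $W(B_n)\twoheadrightarrow S_n$ and that $S(\varnothing,\mu)\cong S(\mu,\varnothing)\otimes\chi$. Since $\chi$ restricts to the analogous character on $W(B_{n-1})$ and the composite $W(B_{n-1})\hookrightarrow W(B_n)\twoheadrightarrow S_n$ equals $W(B_{n-1})\twoheadrightarrow S_{n-1}\hookrightarrow S_n$, the restriction rule for the pure bipartitions $(\lambda,\varnothing)$ and $(\varnothing,\mu)$ is immediate from the classical branching rule for $S_n$ (see e.g.\ \cite{jameskerber81} or \cite{fulton2013}). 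For general $(\lambda,\mu)$, invoke the wreath-product (little-group) description already discussed in \Cref{Chapter 1 section 2},
\[
	S(\lambda,\mu)\cong\IND_{W(B_a)\times W(B_b)}^{W(B_n)}\bigl(S(\lambda,\varnothing)\boxtimes S(\varnothing,\mu)\bigr),
\]
where $W(B_a)\times W(B_b)\le W(B_n)$ is the "Young" subgroup splitting the $n$ strands into the first $a$ and the last $b$; a dimension check against $\dim S(\lambda,\mu)=\binom{n}{a}\dim S^\lambda\dim S^\mu$ confirms the identification and irreducibility. Then compute $\RES^n_{n-1}$ of the right-hand side with Mackey's formula: $W(B_{n-1})$ has exactly two orbits on $W(B_n)/(W(B_a)\times W(B_b))$, according to whether strand $n$ lies in the first or the second block, with point stabilizers $W(B_{a-1})\times W(B_b)$ and $W(B_a)\times W(B_{b-1})$; feeding in the pure-case branching together with $\IND_{W(B_{a-1})\times W(B_b)}^{W(B_{n-1})}(S(\lambda',\varnothing)\boxtimes S(\varnothing,\mu))\cong S(\lambda',\mu)$ produces exactly $\bigl(\bigoplus_{\lambda'}S(\lambda',\mu)\bigr)\oplus\bigl(\bigoplus_{\mu'}S(\lambda,\mu')\bigr)$ over single-box deletions, which is the claim.

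The main obstacle is the bookkeeping in this last step: cleanly describing the two double cosets, tracking the conjugates of $S(\lambda,\varnothing)\boxtimes S(\varnothing,\mu)$ by coset representatives, and — most of all — checking that the wreath-product induction is compatible across $n$, i.e.\ the identity $\IND_{W(B_{a-1})\times W(B_b)}^{W(B_{n-1})}(S(\lambda',\varnothing)\boxtimes S(\varnothing,\mu))\cong S(\lambda',\mu)$. Everything here is "type $A$ branching plus a sign twist", but the conventions (which strands carry the $W(B_b)$-factor, how $\chi$ interacts with the splitting) must be pinned down carefully. Once the rule is in place, the promised enhancement of \cite{ogievetsky2012jucysmurphy} comes from rewriting each multiplicity-one space $\Hom(S(\lambda,\mu),\RES S(\lambda',\mu'))$ as $e_{\lambda,\mu}\,\C W(B_n)\,e_{\lambda',\mu'}$ and exhibiting an explicit dotted-permutation diagram that spans it, which visualizes the branching graph on bipartitions.
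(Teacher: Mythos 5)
The paper does not actually prove this theorem: after the statement it only points to the literature (type $A$ in \cite[\S2.4]{jameskerber81}, the type $B$ version in \cite{ogievetsky2012jucysmurphy} and the references therein), so there is no in-paper argument to compare yours against. Your route --- Frobenius reciprocity/biadjointness to reduce induction to restriction, the pure cases $(\lambda,\varnothing)$ and $(\varnothing,\mu)$ via inflation from $S_n$ and twisting by the dot-parity character, the little-group identification $S(\lambda,\mu)\cong\IND_{W(B_a)\times W(B_b)}^{W(B_n)}\bigl(S(\lambda,\varnothing)\boxtimes S(\varnothing,\mu)\bigr)$, and finally Mackey's formula with the two double cosets indexed by whether strand $n$ sits in the $\lambda$- or the $\mu$-block --- is exactly the standard wreath-product argument that those references carry out, and it is sound: the double-coset count and the point stabilizers $W(B_{a-1})\times W(B_b)$, $W(B_a)\times W(B_{b-1})$ are correct, and the pure-case branching does reduce to the classical $S_n$ rule. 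So your proposal supplies a complete proof strategy for a statement the thesis deliberately outsources; it also meshes well with the paper's idempotent picture in \Cref{example inducing up idempotents}.

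One step is stated too glibly: ``a dimension check \ldots confirms the identification and irreducibility.'' A dimension count alone neither proves that the induced module is irreducible nor pins down which irreducible it is. Irreducibility needs the Wigner--Mackey little-group criterion (the character of $(\mZ/2\mZ)^n$ that is trivial on the first $a$ coordinates and sign on the last $b$ has stabilizer exactly $W(B_a)\times W(B_b)$, and the inducing representation of that stabilizer is irreducible), which is the argument of \cite[8.2]{serre77} that the paper itself invokes for the classification. Identifying the induced module with the Young-symmetrizer module $S(\lambda,\mu)$ then requires a genuine matching, e.g.\ Frobenius reciprocity together with the characterizing properties (i)--(iii) of $q_{\lambda,\mu}$ from the paper's sketch of the classification theorem (the $J_i$-eigenvalues $+1$ on $\lambda$-entries and $-1$ on $\mu$-entries single out the correct constituent), or a nonvanishing computation $q_{\lambda,\mu}\,\C W(B_n)\otimes_{\C(W(B_a)\times W(B_b))}\bigl(S(\lambda,\varnothing)\boxtimes S(\varnothing,\mu)\bigr)\neq 0$. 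With that inserted, the remaining bookkeeping you flag (coset representatives, compatibility of the wreath identification across $n$) is routine, and the proof goes through.
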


\begin{beis}
	The first four rows of the branching graph, which contain the labels of the irreducible representations of $W(B_n)$ for $n=0,1,2,3$,  look as follows:
	\begin{center}
		\begin{tikzpicture}[baseline= (a).base]
			\node[scale=.45] (a) at (0,0){
				\begin{tikzcd}
					&                 &               &                                                                           &                                                                              & {((0),(0))} \arrow[ld, no head] \arrow[rd, no head]                                           &                                                                              &                                                                           &               &                 &             \\
					&                 &               &                                                                           & {((1),(0))} \arrow[ld, no head] \arrow[d, no head] \arrow[rd, no head]       &                                                                                               & {((0),(1))} \arrow[ld, no head] \arrow[d, no head] \arrow[rd, no head]       &                                                                           &               &                 &             \\
					&                 &               & {((2),(0))} \arrow[llld, no head] \arrow[ld, no head] \arrow[rd, no head] & {((1,1),(0))} \arrow[lld, no head] \arrow[llld, no head] \arrow[ld, no head] & {((1),(1))} \arrow[ld, no head] \arrow[rd, no head] \arrow[rrd, no head] \arrow[lld, no head] & {((0),(1,1))} \arrow[rd, no head] \arrow[rrrd, no head] \arrow[rrd, no head] & {((0),(2))} \arrow[rd, no head] \arrow[rrrd, no head] \arrow[ld, no head] &               &                 &             \\
					{((3),(0))} & {((1,1,1),(0))} & {((2,1),(0))} & {((1,1),(1))}                                                             & {((2),(1))}                                                                  &                                                                                               & {((1),(2))}                                                                  & {((1),(1,1))}                                                             & {((0),(2,1))} & {((0),(1,1,1))} & {((3),(0))}
				\end{tikzcd}
			};
		\end{tikzpicture}
	\end{center}
	 Because the group $W(B_0)$ is trivial, the category of finite dimensional representations of $W(B_0)$ is just $\kVect_{\C}$. In particular the functor $\RES_0^n$ sends any representation of $W(B_n)$ to its underlying vector space. Using \ref{restriction gives decompositon} we see that
	\begin{align*}
		\RES_0^2(S(\ydiagram{1}, \ydiagram{1}))&=\RES_0^1(\RES_1^2(S(\ydiagram{1}, \ydiagram{1}))) \\
		&\cong\RES_0^1(S(\ydiagram{1}, \varnothing)\oplus S(\varnothing, \ydiagram{1})) \\
		&\cong S(\varnothing, \varnothing)\oplus S(\varnothing, \varnothing)\cong\C^2.
	\end{align*}
	The same argument shows that the other four representations of $W(B_2)$ are $1$-dimensional, because there is a unique path from each of their corresponding bipartitions to $S(\varnothing, \varnothing)$. In particular we recover our concrete example \Cref{n=2 irreducible representations} from the theory. The same procedure allows us to calculate the dimensions of the ten irreducible representations of $W(B_3)$. Hence the third row gives the following Artin--Wedderburn decomposition of the semisimple algebra $\C \! W(B_3)$:
	\[
	\dim \C\!  W(B_3) = 2^3 \cdot 3! = 48 = 1^2 + 1^2 + 2^2 + 3^2 + 3^2 + 3^2 + 3^2 + 2^2 + 1^2 + 1^2.
	\]	
\end{beis}
The method used in the last remark to determine the dimension of $S(\lambda, \mu)$ works in general and allows us to give a combinatorial description using a type $B$ version of ``standard tableaux'' in the following corollary. 
\begin{koro} \label{type B standard tableaux}
	The dimension of the irreducible  representation $S(\lambda, \mu)$ is the number of monotone paths from $(\varnothing, \varnothing)$ to $(\lambda, \mu)$ in the branching graph. Each path corresponds to a numbering of the boxes in $\lambda \sqcup \mu$ with $1,\ldots,n$, such that both the numberings of $\lambda$ and $\mu$ are increasing in the rows to the east and columns to the south.
\end{koro}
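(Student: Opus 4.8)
The plan is to induct on $n$, using the branching rule from \Cref{theorem induction and restriction behaviour for type B} as the engine. The base case $n=0$ is immediate: $W(B_0)$ is trivial, $S(\varnothing,\varnothing)$ is the one-dimensional trivial representation, and there is exactly one (empty) monotone path from $(\varnothing,\varnothing)$ to itself, whose associated ``filling'' of the empty shape is vacuous.

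For the inductive step I would first note that since $W(B_0)$ is trivial, the functor $\RES^n_0\colon\Rep(W(B_n))\to\Rep(W(B_0))=\kVect_{\C}$ is just ``take the underlying vector space'', so $\dim S(\lambda,\mu)=\dim\RES^n_0(S(\lambda,\mu))$. Factoring $\RES^n_0=\RES^1_0\circ\RES^2_1\circ\cdots\circ\RES^n_{n-1}$ and applying part (i) of \Cref{theorem induction and restriction behaviour for type B} repeatedly, the module $\RES^n_0(S(\lambda,\mu))$ decomposes as a direct sum indexed by all chains
\[
	(\varnothing,\varnothing)=(\lambda^{(0)},\mu^{(0)})\subseteq(\lambda^{(1)},\mu^{(1)})\subseteq\cdots\subseteq(\lambda^{(n)},\mu^{(n)})=(\lambda,\mu)
\]
in which each $(\lambda^{(i)},\mu^{(i)})$ is a bipartition of $i$ and $(\lambda^{(i+1)},\mu^{(i+1)})$ is obtained from it by adding a single box to $\lambda^{(i)}$ or to $\mu^{(i)}$; each such chain contributes a one-dimensional summand $S(\varnothing,\varnothing)\cong\C$. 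But such a chain is exactly a monotone path from $(\varnothing,\varnothing)$ to $(\lambda,\mu)$ in the branching graph, so $\dim S(\lambda,\mu)$ equals the number of such paths. (Equivalently, one can run the induction one level at a time: part (i) gives the recursion $\dim S(\lambda,\mu)=\sum_{(\lambda',\mu')}\dim S(\lambda',\mu')$ over single-box deletions, which is precisely the recursion satisfied by the path-counting function, with the same initial condition.)

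It then remains to make the combinatorial reformulation explicit. Given a monotone path as above, record in the box of $\lambda\sqcup\mu$ created at step $i$ the label $i$; this yields a bijective filling of the $n$ boxes of $\lambda\sqcup\mu$ with $1,\dots,n$. The condition that every intermediate $(\lambda^{(i)},\mu^{(i)})$ be an honest bipartition, i.e.\ that the boxes of $\lambda$ and the boxes of $\mu$ are each added in an order that keeps both shapes genuine Young diagrams, is equivalent to requiring that within each of $\lambda$ and $\mu$ the labels increase along rows (to the east) and down columns (to the south); conversely, any pair of standard tableaux on $\lambda$ and on $\mu$ recovers a monotone path by adding the boxes in increasing order of their labels. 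This gives the claimed bijection, and hence the corollary.

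The step that I expect needs the most care is this last translation: one must check that, along a path, the ``valid bipartition at every stage'' constraint decouples into the standard-tableau constraint on $\lambda$ and on $\mu$ \emph{separately}, i.e.\ that interleaving additions to the two components imposes no extra condition beyond the individual standardness of the two fillings. This is the type $B$ shadow of the classical symmetric-group fact, and it is where the product structure of $W(B_n)=(\mZ/2\!\mZ)\wr S_n$ (via the branching rule deleting a box from exactly one of the two diagrams) is really being used.
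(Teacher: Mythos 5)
Your argument is correct and is essentially the paper's own: the corollary is stated there as a direct consequence of the preceding example's method, namely factoring $\RES^n_0$ into single-step restrictions, applying part (i) of \Cref{theorem induction and restriction behaviour for type B} at each step so that each monotone path in the branching graph contributes one copy of $S(\varnothing,\varnothing)\cong\C$, and then identifying paths with fillings of $\lambda\sqcup\mu$ that are standard on each component separately. Your added care about the path-to-tableau translation is a reasonable elaboration of what the paper leaves implicit, not a departure from its route.
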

\begin{rema} \label{type A hook length}
	Recall the \textbf{hook length} of a box $[j]$ in the partition $\lambda$ is the number of all boxes, which are right or below of $j$, including $j$ itself as indicated in the next diagram:
	\begin{center}
		\ydiagram[*(white) \bullet]
		{1+4,1+1,1+1}
		*{5,4,3}
	\end{center} 
		If we insert the hook length for every box into it we obtain the following picture:
	\begin{center}
		\ytableaushort 
		{76531, 5431, 321} *{5,4,3} 
	\end{center} 
	The famous type $A_{n-1}$ formula states the number of standard tablaux of size $\lambda$, which is the dimension of of the Specht module $S(\lambda)$ in type $A$, is given by
	\[
	\dim S(\lambda)
	=
	\frac{n!}{\prod_j(\text{hook length of $j$})} \,.
	\]
	For instance the dimension of the irreducible Specht module of $S_{12}$ corresponding to the above partition is
	\[
		\dim S(\lambda) = \frac{12!}{7 \cdot 6\cdot 5^{2} \cdot 4 \cdot 3^3 \cdot 2 \cdot 1^3}
		=
		2112.
	\]
\end{rema}
We have the following type $B$ version of this formula.
\begin{koro} \label{type B hook length}
	The type $B_n$ \textbf{hook length formula} is
	\[
	\dim S(\lambda,\mu) 
	=
	\frac{n!}{\prod_j(\text{hook length of $j$})} \,,
	\]
	where $j$ runs through all boxes of both $\lambda$ and $\mu$.
\end{koro}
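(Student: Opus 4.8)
The plan is to reduce the type $B$ statement to the classical type $A$ hook length formula recalled in \Cref{type A hook length}, via the combinatorial description of $\dim S(\lambda,\mu)$ established in \Cref{type B standard tableaux}. First I would reinterpret a monotone path from $(\varnothing,\varnothing)$ to $(\lambda,\mu)$ in the branching graph as a \emph{standard bitableau} of shape $(\lambda,\mu)$: a bijection between the $n$ boxes of $\lambda\sqcup\mu$ and $\{1,\dots,n\}$ whose restriction to $\lambda$ and whose restriction to $\mu$ are each increasing along rows (to the east) and down columns (to the south). By \Cref{type B standard tableaux}, $\dim S(\lambda,\mu)$ is precisely the number of such standard bitableaux.

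Next I would decompose this count. Specifying a standard bitableau is the same as choosing the $|\lambda|$-element subset $A\subseteq\{1,\dots,n\}$ of entries placed in $\lambda$, then filling $\lambda$ with the elements of $A$ and $\mu$ with the elements of $\{1,\dots,n\}\setminus A$ so that rows and columns increase in each component. The number of increasing fillings of a Young diagram $\nu$ by a fixed totally ordered alphabet of $|\nu|$ symbols depends only on the order type of the alphabet — relabel the symbols order-preservingly by $1,\dots,|\nu|$ — hence equals the number $f^{\nu}$ of standard Young tableaux of shape $\nu$. Therefore
\[
\dim S(\lambda,\mu) = \binom{n}{|\lambda|}\, f^{\lambda}\, f^{\mu}.
\]

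Finally I would substitute the type $A$ hook length formula from \Cref{type A hook length}, i.e.\ $f^{\lambda}=|\lambda|!/\prod_{j\in\lambda}(\text{hook length of }j)$ and likewise for $\mu$, and simplify: since $\binom{n}{|\lambda|}=n!/(|\lambda|!\,|\mu|!)$, the factorials $|\lambda|!$ and $|\mu|!$ cancel and one is left with $n!$ divided by the product of the hook lengths of the boxes of $\lambda$ together with those of $\mu$, which is the claimed formula. I do not anticipate a genuine obstacle here; the only point needing a line of care is the remark in the second paragraph that counting increasing fillings by an arbitrary ordered alphabet recovers $f^{\nu}$, and this is immediate from order-preserving relabeling. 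All the substantive input — the branching-graph description of the dimensions and the type $A$ hook length formula — is already available from the preceding material.
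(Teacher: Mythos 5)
Your proposal is correct and follows essentially the same route as the paper: both reduce to the count from \Cref{type B standard tableaux}, establish $\dim S(\lambda,\mu)=\binom{n}{|\lambda|}f^{\lambda}f^{\mu}$, and then substitute the type $A$ hook length formula so the factorials cancel. The only difference is that you spell out the order-preserving relabeling justifying the factorization, which the paper leaves implicit.
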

\begin{proof}
	Let $k=|\lambda|$. Using the description of the dimension of $S(\lambda, \mu)$ from \Cref{type B standard tableaux} we see that
	\[
		\dim S(\lambda, \mu)
		=
		\binom{n}{k} \cdot\#\{\text{standard tableaux of $\lambda$}\}\cdot \#\{\text{standard tableaux of $\mu$} \},
	\]
	where the binomial coefficient counts the number of possible orders to delete all boxes of $\lambda$ and $\mu$ ignoring the order inside $\lambda$, respectively $\mu$. Using the type $A$ formula from \Cref{type A hook length} we calculate
	\begin{align*}
		\dim S(\lambda, \mu)
		&=
		\binom{n}{k} \cdot\#\{\text{standard tableaux of $\lambda$}\}\cdot \#\{\text{standard tableaux of $\mu$} \}\\
		&=\binom{n}{k} \cdot \frac{k!}{\prod(\text{hook lengths of $\lambda$})}\cdot \frac{(n-k)!}{\prod(\text{hook lengths of $\mu$})} \\
		&= \frac{n!}{k!(n-k)!}\cdot \frac{k!}{\prod(\text{hook lengths of $\lambda$})}\cdot \frac{(n-k)!}{\prod(\text{hook lengths of $\mu$})} \\
		&= \frac{n!}{\prod(\text{hook lengths of $\lambda$ and $\mu$})}. \qedhere
	\end{align*}
\end{proof}
	\begin{beis}
		For $S(\ydiagram{1}, \ydiagram{1})$ we conclude for the third time that
		\[
		\dim(S(\ydiagram{1}, \ydiagram{1}))
		=
		\frac{2!}{ 1 \cdot 1}
		=
		2 \,.
		\]
		The corresponding ``standard tableaux'' under \Cref{type B standard tableaux} are given by
		\[
			(\ytableaushort{1},\ytableaushort{2}) \qquad \text{and} \qquad (\ytableaushort{2},\ytableaushort{1}).
		\]
	\end{beis}
\begin{rema}
	Using the branching graph or the hook length formula we observe that
	\begin{equation*}
		\dim S(\lambda, \varnothing)=\dim S(\lambda) = \dim S(\varnothing, \lambda)
	\end{equation*}
	where $\lambda$ is a partition of $n$ and $S(\lambda)$ is the Specht module of the symmetric group $S_n$. This corresponds to the two natural ways to turn the irreducible representation $S(\lambda)$ of $S_n$ into a irreducible representation of $W(B_n)=S_2\wr S_n$. Namely we take the action of $S_n$ and let the additional generator $s_0$ act constantly by $1$ or $-1$.
	The first choice gives precisely $S(\lambda, \varnothing)$ and the second one gives $S(\varnothing, \lambda)$.
\end{rema}

\begin{beis}
	Let $\lambda$ a partition of $n$. The irreducible representations $S(\lambda, (1)\,)$ and $S((1), \lambda)$ of $W(B_{n+1})$ have dimension $(n+1)\cdot \dim S(\lambda)$. This can be concluded from the hook length formula or by counting standard tableaux. 
\end{beis}
	The next example is not from the literature, but is motivated by the fact that the Specht modules $S(\lambda)$ of the symmetric group $S_n$ can be constructed as subrepresentations of $\C[x_1, \ldots, x_n]$, where $S_n$ acts by permuting the coordinates (see e.g.\ \cite{peel75}). We present a way to define a submodule of the algebra of Laurent polynomials associated to a bipartition.
\begin{beis}
 	Consider $\C[x_1^{\pm 1},\ldots, x_n^{\pm 1}]$, the ring of complex Laurent polynomials in $n$ variables. The group $W(B_n)$ acts by on $\C[x_1^{\pm 1},\ldots, x_n^{\pm}]$ via the formulas
 	\[
 		s_i \cdot x_j = \begin{cases}
 			x_{i+1}, & \text{for } j=i, \\
 			x_{i},&  \text{for } j=i+1\\
 			x_j, & \text{otherwise.}
 		\end{cases} \, , \quad
 		s_0 \cdot x_j = \begin{cases}
 			x_{1}^{-1}, & \text{for } j=1, \\
 			x_{i},&  \text{otherwise.} 
 		\end{cases}
 	\]
 	by algebra homomorphisms. Consider for $i\in\{1,\ldots, n\}$ the elements
 	\[
 		x_i^{+}\coloneqq x_i + x_i^{-1}, \quad x_i^{-}\coloneqq x_i - x_i^{-1}.
 	\]
 	Let $(\lambda, \mu)$ of $n$ a bipartition of $n$ and consider the subrepresentation of $\C[x_1^{\pm 1},\ldots, x_n^{\pm 1}]$ generated by the element
 	\[
 		\prod_{(i,j)\in C_{\lambda}} (x_i^{+}-x_j^{+}) \cdot \prod_{(i, j)\in C_{\mu}}(x_i^{-}-x_j^{-})
 	\]
 	where $C_{\lambda}$ (respectively $C_{\mu}$) denotes the set of all unordered pairs $\{i\neq j\}$ of numbers, which are in the same column in $\lambda$ (respectively $\mu$) in the standard numbering $T_{\lambda, \mu}$ of $(\lambda, \mu)$. We expect that this subrepresentation is closely related to the Specht module $S(\lambda, \mu)$.
\end{beis}

The next family of irreducible representations has funny dimensions and their definition finishes this section.
\begin{defi} \label{special irreducible representations}
	For $0\leq k\leq n $ we define the $k$th \textbf{binomial representation} $S(k)$ of $W(B_n)$ as
	\[
		S(k)\coloneqq S((k),(n-k)).
	\]
	Note that by the hook length formula \ref{type B hook length}
	\begin{equation*}
		\dim S(k)=\frac{n!}{k!(n-k)!}=\binom{n}{k},
	\end{equation*}
	which justifies the name.
\end{defi}
Observe that if we take just the matrix factors of the semisimple algebra $\C \! W(B_n)$, which correspond to the binomial representations $S(k)$, i.e.\ the whole isotypical components, we obtain an algebra of dimension
\[
	\sum_{k=0}^n {n \choose k}^2 = {{2n} \choose n}.
\]
This algebra -- which turns out to be the Temperley--Lieb algebra of type $B_n$ -- is the main topic of the next section. The dimension ${{2n} \choose n}$ has then a diagrammatic meaning.


\section{Temperley--Lieb algebras of type \texorpdfstring{$B$}{B} and \texorpdfstring{$D$}{D}} \label{Chapter 1 section 4}

This section introduces one of the most important objects for this thesis, the Temperley--Lieb algebra $\TL(B_n)$ of type $B$ and its subalgebra $\TL(D_n)$, the Temperley--Lieb algebra of type $D$. 
Besides preparing the notation for the next two chapters, we intend to make \cite{green1998} more accessible by adding more examples. We expect that the reader is already familiar with the usual type $A$ Temperley--Lieb algebra $\TL(A_{n-1})\coloneqq\TL_n$. The basics on this type $A$ Temperley--Lieb algebra can be found for instance in \cite{flath95}, \cite{kauffman1994} or the author's Bachelor thesis \cite{wojciechowski19}, which rephrases the first two sources. We start with the most important definition.
Alternatively \Cref{Chapter 2 section 1} can be used as an introduction to the type $A$, however there we mostly focus on the representation theory aspects involving $\Ug_q(\sl2)$.

\begin{defi} \label{Definition Temperley--Lieb of Type B}
	Let $n\geq1$, $k$ a commutative ring and $\delta\in k$ a scalar. The (specialized) \textbf{type $B$ Temperley--Lieb algebra} $\TL(B_n)\coloneqq\TL(B_n,\delta)$ is the (associative, unital) $k$-algebra with generators $s_0, U_1, \ldots, U_{n-1}$ which are subject to the \textbf{Type $A$ Temperley--Lieb relations}
	\begin{enumerate}[label = {(TL\arabic*)}, align=left]
		\item \label{TL1 relation}
		$U_i^2=\delta U_i$ for all $i=1,\ldots, n-1$,
		\item \label{TL2 relation}
		$U_i U_{i+1} U_i=U_i$ and $U_{i+1} U_{i} U_{i+1}=U_{i+1}$ for all $i=1,\ldots,n-1$,
		\item \label{TL3 relation}
		$U_iU_j=U_jU_i$ for all $|i-j| \geq 2$,
	\end{enumerate}
	and the additional, specialized \textbf{Type $B$ Temperley--Lieb relations}
	\begin{enumerate}[label = {(BTL\arabic*)}, align=left]
		\item $s_0^2=1$, \label{TLdot1 relation}
		\item $U_1 s_0  U_1=0$, \label{TLdot2 relation}
		\item $s_0  U_i = U_i s_0$ for all $i=2\ldots,n-1$ \label{TLdot3 relation}
	\end{enumerate}
	involving the generator $s_0$. 
	We define $\TL(D_n)$, the \textbf{Temperley--Lieb algebra of type} $D$ as the subalgebra of $\TL(B_n)$ generated by $U_0\coloneqq s_0U_1s_0$ and $U_i$ for $1\leq i \leq n$.
\end{defi}
The algebra $\TL(B_n)$ is a special case of the family of algebras, which were defined and systematically treated in \cite{green1998}. Our version is a specialization of their algebra, where their version of $\TL(B_n)$ is defined by the same generators and relations, however \ref{TLdot2 relation} is replaced by $U_1 s_0  U_1=\delta'U_1$ for any $\delta'\in k$. There Green proved a diagrammatic description for $\TL(B_n)$ in terms of the ``blob algebra'', which appeared before in \cite{martin94}. The next definition introduces this diagram algebra description, which we denote by $V_n$.

\begin{defi} \label{diagrammatic definition type B TL}
	Consider the free $k$-module $V_n$ with basis consisting of \textbf{dotted Temperley--Lieb diagrams}. These are Temperley--Lieb $(n,n)$-diagrams (i.e.\ crossingless matchings of $n+n$ points placed equidistantly in two parallel rows) which are decorated with at most one dot per strand, such that every dotted strand can be reached from the left side of the diagram by a curve, which doesn't cross any strand. The vector space $V_n$ can be equipped with a bilinear, associative multiplication which is given on the diagram basis by vertical stacking and applying the local rules
	\[
	\cbox{
		\begin{tikzpicture}[tldiagram, yscale=2/3, xscale=2/3]
			\draw \tlcoord{0}{0} \lineup \capright \cupright \lineup;
		\end{tikzpicture}
	}
	\quad
	=
	\quad
	\cbox{
		\begin{tikzpicture}[tldiagram, yscale=2/3, xscale=2/3]
			\draw \tlcoord{0}{0} \lineup \lineup;
		\end{tikzpicture}
	}
	\quad
	=
	\quad
	\cbox{
		\begin{tikzpicture}[tldiagram, yscale=2/3, xscale=2/3]
			\draw \tlcoord{2}{0} \linedown \cupright \capright \linedown;
		\end{tikzpicture}
	}
	\, ,  \quad \cbox{
		\begin{tikzpicture}[tldiagram, yscale=2/3]
			
			\draw \tlcoord{0}{0} \dlineup \dlineup;
		\end{tikzpicture}
	} \quad = \quad \cbox{
		\begin{tikzpicture}[tldiagram,yscale=2/3]
			\draw \tlcoord{0}{0} \lineup \lineup;
		\end{tikzpicture}
	} \, , \quad
	\cbox{
		\begin{tikzpicture}[tldiagram]
			\draw \tlcoord{0}{0} \capright \cupleft;
		\end{tikzpicture}
	}
	\,
	=
	\,
	\delta \, , \quad
	\cbox{
		\begin{tikzpicture}[tldiagram]
			\draw \tlcoord{0}{0} \dcapright \cupleft;
		\end{tikzpicture}
	}
	\,
	=
	\,
	0 
\]
to obtain a scalar multiple of a dotted Temperley--Lieb diagram. See \Cref{example temperley-lieb inclusions n=2} and \Cref{example temperley lieb inclusions n=3} for illustrations of the diagram bases of $V_2$ respectively $V_3$.
\end{defi}

\begin{theo} \label{Temperley--Lieb algebra of type B is diagram algebra}
	The assignments
	\begin{align*}
		\TL(B_n) &\rightarrow V_n \\
		U_j &\mapsto \cbox{
			\begin{tikzpicture}[tldiagram]
				\draw \tlcoord{0}{0} \lineup;
				\draw \tlcoord{0}{2} \lineup;
				\draw \tlcoord{0}{5} \lineup;
				\draw \tlcoord{0}{7} \lineup;
				\makecdots{0}{1}
				\makecdots{0}{6}
				\draw \tlcoord{0}{3} \capright;
				\draw \tlcoord{1}{3} \cupright;
				\node at \tlcoord{0}{3} [anchor = north] {$\scriptstyle j$};
				\node at \tlcoord{0}{4} [anchor = north] {$\scriptstyle j+1$};
				\node at \tlcoord{1}{3} [anchor = south] {$\scriptstyle j$};
				\node at \tlcoord{1}{4} [anchor = south] {$\scriptstyle j+1$};
			\end{tikzpicture}
		} \, , \\
		s_0 &\mapsto \cbox{
			\begin{tikzpicture}[tldiagram]
				\draw \tlcoord{0}{0} \dlineup;
				\draw \tlcoord{0}{1} \lineup;
				\draw \tlcoord{0}{3} \lineup;
				\makecdots{0}{2}
			\end{tikzpicture}
		} \, , \\
	U_0 &\mapsto \cbox{
		\begin{tikzpicture}[tldiagram]
			\draw \tlcoord{0}{0} \dcapright;
			\draw \tlcoord{1}{0} \dcupright;
			\draw \tlcoord{0}{2} \lineup;
			\draw \tlcoord{0}{4} \lineup;
			\makecdots{0}{3}
		\end{tikzpicture}
	}
	\end{align*}
	extend to a well-defined $k$-algebra isomorphism $\TL(B_n)\xrightarrow{\cong} V_n$.
\end{theo}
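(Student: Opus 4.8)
The plan is to prove the isomorphism $\TL(B_n)\xrightarrow{\cong}V_n$ in the usual three-step manner for diagrammatic presentations: (1) show the assignment is well-defined, i.e.\ the proposed images of $s_0,U_1,\dots,U_{n-1}$ in $V_n$ satisfy all the defining relations \ref{TL1 relation}--\ref{TLdot3 relation}; (2) show the resulting algebra map $\varphi\colon\TL(B_n)\to V_n$ is surjective, by writing every dotted Temperley--Lieb diagram as a product of the images of the generators; (3) show it is injective, by proving $\dim_k\TL(B_n)\le \binom{2n}{n}=\dim_k V_n$, so that surjectivity forces bijectivity.

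First I would check well-definedness. Each relation becomes a local diagram computation in $V_n$ using the reduction rules of \Cref{diagrammatic definition type B TL}: \ref{TL1 relation} is the circle-removal rule giving the factor $\delta$; \ref{TL2 relation} and \ref{TL3 relation} are the classical type $A$ zig-zag and far-commutativity identities for crossingless matchings, which hold verbatim since dots play no role; \ref{TLdot1 relation} is the two-dots-cancel rule; \ref{TLdot3 relation} is clear since the dot sits on strand $1$ and $U_i$ ($i\ge2$) involves strands $\ge2$; and \ref{TLdot2 relation} is precisely the rule that a dotted cap stacked against a cup is $0$ — here $U_1 s_0 U_1$ produces exactly a dotted circle-type configuration, which evaluates to $0$. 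I would also verify that each image actually lies in $V_n$, i.e.\ that the dot on $s_0$ (and the dotted cap/cup of $U_0$) is accessible from the left boundary, which is immediate since these sit on strand $1$. This also shows the subalgebra generated by the $U_i$ and $U_0=s_0U_1s_0$ maps into the span of \emph{undotted-or-leftmost-dotted} diagrams, matching $\TL(D_n)\hookrightarrow V_n$.

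For surjectivity I would argue that any dotted Temperley--Lieb $(n,n)$-diagram $D$ is a product of generator-images. The underlying undotted crossingless matching is, by the classical type $A$ theory, a product of the $U_i$'s (choose any reduced word; e.g.\ peel off cups/caps from the top and bottom). It remains to produce the dots. By the accessibility condition every dotted strand can be connected to the left boundary without crossing strands; using this one can slide each dot, via the rules in $V_n$, to a position where it becomes the image of a conjugate $w\,s_0\,w^{-1}$ of $s_0$ by a suitable product $w$ of $U_i$'s and caps/cups — exactly mirroring the argument in \Cref{Weyl group of B_n is dotted group} that the simply-dotted identity $\eqref{image of Jucys-Murphy element in Type B Weyl Group}$ is $\Phi((s_{i-1}\cdots s_1)s_0(s_1\cdots s_{i-1}))$, but now in the Temperley--Lieb quotient; alternatively, since dots slide freely along strands, it suffices to handle a dot on the component meeting the leftmost point, which is realised by multiplying by the image of $s_0$ at the appropriate height. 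Carrying this out strand-component by strand-component expresses $D$ as a monomial in the generators.

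The main obstacle is the injectivity step, i.e.\ the upper bound $\dim_k\TL(B_n)\le\binom{2n}{n}$. The cleanest route is to produce a spanning set of $\TL(B_n)$ of that size: using the relations one shows every word in $s_0,U_1,\dots,U_{n-1}$ can be brought to a normal form in bijection with dotted Temperley--Lieb diagrams — essentially, reduce the $U$-part to a type $A$ normal form (at most $C_n$ such diagrams up to dots, where $C_n$ is the Catalan number, but counted with the refined statistic this totals $\binom{2n}{n}$), then use \ref{TLdot2 relation}, \ref{TLdot3 relation}, \ref{TLdot1 relation} to move all dots to a canonical accessible position and kill any inaccessible dot. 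One must be careful that \ref{TLdot2 relation} (giving $0$, not a scalar multiple) is exactly what forces \emph{accessibility}: a would-be diagram with a dot trapped behind a cap is set to zero. Combining the spanning bound $\le\binom{2n}{n}$ with the surjectivity of $\varphi$ and $\dim_k V_n=\binom{2n}{n}$ (the count of dotted Temperley--Lieb diagrams, which one checks combinatorially, e.g.\ via the branching/binomial-representation count $\sum_k\binom{n}{k}^2=\binom{2n}{n}$ recorded just before this section) yields that $\varphi$ is an isomorphism. Since the reference \cite{green1998} already contains this dimension count and normal-form analysis for the more general blob algebra, I would either cite it directly or reproduce the specialization at $\delta'=0$, noting that the specialization only makes the algebra smaller, hence the bound persists.
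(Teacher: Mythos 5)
Your outline is correct and is essentially the argument the paper relies on: the paper's proof of this theorem consists solely of the citation of \cite[Theorem 4.1]{green1998} specialized at $\delta'=0$, i.e.\ exactly the well-definedness, surjectivity, and dimension/normal-form analysis you sketch. The one genuinely hard step in your plan, the spanning bound $\dim_k \TL(B_n)\le\binom{2n}{n}$ via a normal form for words in the generators, is precisely the part you (like the paper) defer to Green, so nothing further needs to be checked.
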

\begin{proof}
	See \cite[Theorem 4.1]{green1998}, where we set the value of the dotted circle to $\delta'=0$. 
\end{proof}
We will from now on identify $\TL(B_n)$ with the diagram algebra $V_n$ from \Cref{diagrammatic definition type B TL}.
As a consequence we obtain a version of \Cref{lemma inclusion of d in b} for Temperley--Lieb algebras.
\begin{koro} \label{inclusions of Temperley--Lieb algebras}
	Let $n\geq 2$. The assignments
		\[
		\begin{tikzcd}[row sep=tiny]
			\phantom{U_i}\TL(A_{n-1}) \arrow[r, hook]   & \TL(D_n) \arrow[r, hook]        & \TL(B_n)\phantom{U_i}    \\
			\phantom{\TL(A_{n-1})}U_i \arrow[r, mapsto] & U_i \arrow[r, mapsto]  & U_i\phantom{\TL(A_{n-1})}       \\
			& U_0  \arrow[r, mapsto] & s_0U_1s_0.
		\end{tikzcd}
		\]
		extend to well-defined $k$-algebra inclusions.
		Under these inclusions $\TL(A_{n-1})$ is spanned by all usual Temperley--Lieb diagrams and  $\TL(D_n)$ is spanned by all dotted Temperley--Lieb diagrams, which are decorated with an even number of dots.
\end{koro}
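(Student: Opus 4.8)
The plan is to deduce everything from the diagrammatic description of $\TL(B_n) \cong V_n$ established in \Cref{Temperley--Lieb algebra of type B is diagram algebra}, reducing the corollary to the analogous (and essentially elementary) statements about diagrams. First I would verify that the three assignments are well-defined $k$-algebra homomorphisms. For $\TL(A_{n-1}) \hookrightarrow \TL(D_n)$, the images $U_i$ ($1 \le i \le n-1$) satisfy the type $A$ Temperley--Lieb relations \ref{TL1 relation}--\ref{TL3 relation} already inside $\TL(B_n)$, so this map is well-defined. For $\TL(D_n) \hookrightarrow \TL(B_n)$ there is in fact nothing to prove: $\TL(D_n)$ was \emph{defined} in \Cref{Definition Temperley--Lieb of Type B} as the subalgebra of $\TL(B_n)$ generated by $U_0 = s_0 U_1 s_0$ and $U_1, \dots, U_{n-1}$, so the inclusion is the tautological one, and under it $U_0 \mapsto s_0 U_1 s_0$ automatically. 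So the only genuine content in the first half is that the composite $\TL(A_{n-1}) \to \TL(D_n)$ is \emph{injective}; I would get this from the known faithfulness of the diagrammatic calculus, since the images of the standard type $A$ diagram basis under $U_i \mapsto U_i$ are linearly independent dotless Temperley--Lieb $(n,n)$-diagrams in $V_n$.

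Next I would identify the images. Under the isomorphism $\TL(B_n) \cong V_n$, the generator $s_0$ is the identity diagram with a single dot on the leftmost strand, and $U_0 = s_0 U_1 s_0$ is the cup-cap $U_1$ with a dot on both the cup and the cap (the picture drawn in the statement of \Cref{Temperley--Lieb algebra of type B is diagram algebra}). The key combinatorial observation is a \emph{dot-parity} statement: multiplying two dotted Temperley--Lieb diagrams and applying the local reduction rules of \Cref{diagrammatic definition type B TL} — isotopy, the rule that two dots on a strand cancel, $\delta$ for a plain loop, and $0$ for a dotted loop — changes the number of dots only by an even amount (two dots colliding and cancelling, or a dotted loop being deleted which removes two dots but kills the diagram). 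Hence the $k$-span of all dotted Temperley--Lieb diagrams with an even number of dots is a subalgebra $V_n^{\mathrm{even}} \subseteq V_n$. I would then argue that the subalgebra generated by $U_0, U_1, \dots, U_{n-1}$ equals $V_n^{\mathrm{even}}$: the generators are all even-dotted, so one containment is the parity observation; for the reverse, one shows every even-dotted diagram is a product of these generators by a normal-form/induction argument, pushing dots along strands (which is allowed, as dots slide freely) to bring them in pairs onto cups or caps, and then factoring the underlying dotless Temperley--Lieb diagram as a product of the $U_i$ in the usual type $A$ way, inserting $U_0 = s_0U_1s_0$ wherever a dotted cup-cap is needed. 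Likewise $\TL(A_{n-1})$ maps onto the span of dotless diagrams, which is the zero-dot case of the same argument.

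The main obstacle is this last surjectivity/spanning claim — showing that every even-dotted Temperley--Lieb diagram lies in the subalgebra generated by $U_0, U_1, \dots, U_{n-1}$. One has to be somewhat careful about the \emph{admissibility} condition in \Cref{diagrammatic definition type B TL} (a dotted strand must be reachable from the left without crossing another strand), which constrains where dots can sit; the induction should proceed on the number of through-strands or on $n$, peeling off a cup/cap at the boundary and using that an isolated pair of dots on the leftmost available arc is exactly $U_0$ up to multiplying by dotless generators. I expect this to mirror the standard proof that $U_1, \dots, U_{n-1}$ generate $\TL_n$ as a diagram algebra, with the dots handled by the parity bookkeeping, and I would cite \cite{green1998} for the finer points rather than reproduce the full normal-form computation here.
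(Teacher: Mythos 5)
Your proposal is correct and follows essentially the same route as the paper: the inclusion $\TL(D_n)\hookrightarrow\TL(B_n)$ is tautological from \Cref{Definition Temperley--Lieb of Type B}, the type $A$ inclusion is read off from the dotless diagrams in $V_n$, and the identification of $\TL(D_n)$ with the span of even-dotted diagrams is ultimately deferred to \cite[Theorem 4.2]{green1998}, exactly as the paper does. Your explicit dot-parity observation is a nice sanity check for one containment, but the substantive spanning claim is cited in both treatments rather than proved.
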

\begin{proof}
	Since we defined $\TL(D_n)$ as a subalgebra of $\TL(B_n)$, the inclusions are clear from \Cref{Temperley--Lieb algebra of type B is diagram algebra}. The second statement is \cite[Theorem 4.2]{green1998}. The subalgebra $\TL(D_n)$ as we defined it has basis consisting of the diagrams of type ii) from that theorem.
\end{proof}
\begin{rema} \label{explanation green type D vs our type D}
	As a consequence of \cite[Theorem 4.2]{green1998} we have the following description for $\TL(D_n)$ in terms of generators and relations. The generators are $\{U_i \mid 0\leq i\leq n-1 \}$ and the relations are in addition to the type $A$ Temperley--Lieb relations \ref{TL1 relation}, \ref{TL2 relation} and \ref{TL3 relation} given by
	\begin{enumerate}[label = {(DTL\arabic*)}, align=left]
		\item $U_0^2=\delta U_0$, \label{DTLdot1 relation}
		\item $U_1 U_0= 0 = U_0  U_1$, \label{DTLdot2 relation}
		\item $U_0  U_j = U_j U_0$ for all $j=2\ldots,n-1$. \label{DTLdot3 relation}
	\end{enumerate}
	However note that our algebra $\TL(D_n)$ is not the Temperley--Lieb algebra of type $D$ from \cite{green1998}, which we will refer to as $\TL^{\opn{Gr}}(D_n)$, but instead its quotient by the relation \ref{DTLdot2 relation}, which comes from the fact that we chose the value of the dotted circle to be zero. In the algebra $\TL^{\opn{Gr}}(D_n)$ the generators $U_0$ and $U_1$ commute and do not cancel each other. As consequence the exceptional Dynkin diagram isomorphisms $D_2=A_1\times A_1$ and $D_3=A_3$ become
	\[
		\TL(A_1)\times \TL(A_1)\cong\TL^{\opn{Gr}}(D_2)\twoheadrightarrow\TL(D_2)
	\]
	and
	\[
		\TL(A_3)\cong\TL^{\opn{Gr}}(D_3)\twoheadrightarrow \TL(D_3),
	\]
	where the quotient maps are not isomorphisms. The missing part of $\TL^{\opn{Gr}}(D_n)$ in $\TL(D_n)$ is the two-sided ideal generated by $U_0U_1$. This ideal has by \cite[Theorem 4.2]{green1998} a $\C(q)$-basis consisting of so-called type i) diagrams. Type i) diagrams are usual Temperley--Lieb $(n,n)$-diagrams with an additional dotted circle on the very left, excluding the identity diagram. The dotted circle marks the contribution of $U_0U_1=U_1U_0$ in basis elements and Green's diagrammatics make it absorb possible dots on neighbored arcs:
	\[
		\cbox{
		\begin{tikzpicture}[tldiagram]
			\draw \tlcoord{0}{0} \onedot \capright \cupleft;
			\draw \tlcoord{1}{0} \dcupright; 
		\end{tikzpicture}
	} \quad = \quad 
	\cbox{
		\begin{tikzpicture}[tldiagram]
			\draw \tlcoord{0}{0} \onedot \capright \cupleft;
				\draw \tlcoord{1}{0} \cupright;
		\end{tikzpicture}
	} \, , \quad \cbox{
	\begin{tikzpicture}[tldiagram]
		\draw \tlcoord{0}{0} \onedot \capright \cupleft;
		\draw \tlcoord{-1}{2} \lineup \onedot \lineup; 
	\end{tikzpicture}
} \quad = \quad 
\cbox{
\begin{tikzpicture}[tldiagram]
	\draw \tlcoord{0}{0} \onedot \capright \cupleft;
	\draw \tlcoord{-1}{2} \lineup \lineup; 
\end{tikzpicture}
} \, , \quad 
\cbox{
\begin{tikzpicture}[tldiagram]
	\draw \tlcoord{0}{0} \onedot \capright \cupleft;
	\draw \tlcoord{-1}{0} \dcapright; 
\end{tikzpicture}
} \quad = \quad 
\cbox{
\begin{tikzpicture}[tldiagram]
	\draw \tlcoord{0}{0} \onedot \capright \cupleft;
	\draw \tlcoord{-1}{0} \capright;
\end{tikzpicture}
} 
	\]
	For a comparison between $\TL(D_n)$ and $\TL^{\opn{Gr}}(D_n)$ see \Cref{example temperley lieb inclusions n=3} and \Cref{example additional diagrams} thereafter, which shows the additional diagrams.
	Finally note that the $\TL^{\opn{Gr}}(D_n)$, the true Temperley--Lieb algebra of type $D$, is not a subalgebra of $\TL(B_n)$. In contrast to the algebras $\TL(D_n)$ and $\TL(B_n)$, which were categorified in \cite{stroppel2016} by projective functors on parabolic category $\cO$ for $\so_{2n}$, it lacks currently a topological/categorified interpretation. Also the constructions of Jones--Wenzl projectors for $\TL(D_n)$ which we will do later on really require that we work in the quotient by $U_0U_1$.
\end{rema}
The next examples show bases for small Temperley--Lieb algebras of type $B$ and $D$. 
\begin{beis} \label{example temperley-lieb inclusions n=2}
	The algebra inclusions $\TL(A_1)\subseteq \TL(D_2)\subseteq \TL(B_2)\cong V_2$ are
	\[
	\{1,U_1\}\subseteq \{ 1,U_1,U_0\coloneqq s_0U_1s_0\} \subseteq \{ 1,s_0, U_1,s_0U_1, U_1s_0, s_0U_1s_0\}.
	\]
	In terms of diagram bases the inclusions look as follows
	\[
	\left\{
	\cbox{
		\begin{tikzpicture}[tldiagram, xscale=0.6, yscale=0.6]
			\draw \tlcoord{0}{0} \lineup;
			\draw \tlcoord{0}{1} \lineup;
		\end{tikzpicture}
	} \, , \, \cbox{
		\begin{tikzpicture}[tldiagram, xscale=0.6, yscale=0.6]
			\draw \tlcoord{0}{0} \capright;
			\draw \tlcoord{1}{0} \cupright;
		\end{tikzpicture}
	}
	\right\} \subseteq 
	\left\{
	\cbox{
		\begin{tikzpicture}[tldiagram, xscale=0.6, yscale=0.6]
			\draw \tlcoord{0}{0} \lineup;
			\draw \tlcoord{0}{1} \lineup;
		\end{tikzpicture}
	} \, , \, \cbox{
		\begin{tikzpicture}[tldiagram, xscale=0.6, yscale=0.6]
			\draw \tlcoord{0}{0} \capright;
			\draw \tlcoord{1}{0} \cupright;
		\end{tikzpicture}
	} \, ,
	\, \cbox{
		\begin{tikzpicture}[tldiagram, xscale=0.6, yscale=0.6]
			\draw \tlcoord{0}{0} \dcapright;
			\draw \tlcoord{1}{0} \dcupright;
		\end{tikzpicture}
	}
	\right\} \subseteq 
	\left\{
	\cbox{
		\begin{tikzpicture}[tldiagram, xscale=0.6, yscale=0.6]
			\draw \tlcoord{0}{0} \lineup;
			\draw \tlcoord{0}{1} \lineup;
		\end{tikzpicture}
	} \, , \,
	\cbox{
		\begin{tikzpicture}[tldiagram, xscale=0.6, yscale=0.6]
			\draw \tlcoord{0}{0} \dlineup;
			\draw \tlcoord{0}{1} \lineup;
		\end{tikzpicture}
	} \, , \, \cbox{
		\begin{tikzpicture}[tldiagram, xscale=0.6, yscale=0.6]
			\draw \tlcoord{0}{0} \capright;
			\draw \tlcoord{1}{0} \cupright;
		\end{tikzpicture}
	} \, , \, \cbox{
		\begin{tikzpicture}[tldiagram, xscale=0.6, yscale=0.6]
			\draw \tlcoord{0}{0} \capright;
			\draw \tlcoord{1}{0} \dcupright;
		\end{tikzpicture}
	} \, , \, \cbox{
		\begin{tikzpicture}[tldiagram, xscale=0.6, yscale=0.6]
			\draw \tlcoord{0}{0} \dcapright;
			\draw \tlcoord{1}{0} \cupright;
		\end{tikzpicture}
	} \, ,	\, \cbox{
		\begin{tikzpicture}[tldiagram, xscale=0.6, yscale=0.6]
			\draw \tlcoord{0}{0} \dcapright;
			\draw \tlcoord{1}{0} \dcupright;
		\end{tikzpicture}
	}
	\right\}.
	\]
	Observe that $2 \dim \TL(D_2)=\dim \TL(B_2)$ and $3 \dim \TL(A_1)=\dim \TL(B_2)$. 
\end{beis}
Before we discuss the dimension of $\TL(D_n)$ and $\TL(B_n)$ for general $n$, let us consider the example $n=3$.

\begin{beis} \label{example temperley lieb inclusions n=3}
	The diagram basis of $\TL_3=\TL(A_2)$ is given by
	\[
		\cbox{
			\begin{tikzpicture}[tldiagram, xscale=2/3, yscale=2/3]
				\draw \tlcoord{-1.5}{0} \lineup;
				\draw \tlcoord{-1.5}{1} \lineup;
				\draw \tlcoord{-1.5}{2} \lineup;
				=
			\end{tikzpicture}
		} , \, \cbox{
		\begin{tikzpicture}[tldiagram, xscale=2/3, yscale=2/3]
			\draw \tlcoord{0}{0} \capright;
			\draw \tlcoord{1}{0} \cupright;
			\draw \tlcoord{0}{2} \lineup; 
			=
		\end{tikzpicture}
	} , \, \cbox{
	\begin{tikzpicture}[tldiagram, xscale=2/3, yscale=2/3 ]
		\draw \tlcoord{0}{1} \capright;
		\draw \tlcoord{1}{1} \cupright;
		\draw \tlcoord{0}{0} \lineup; 
		=
	\end{tikzpicture}
} , \,
	\cbox{
	\begin{tikzpicture}[tldiagram, xscale=2/3, yscale=2/3]
		\draw \tlcoord{0}{0} \linewave{1}{2};
		\draw \tlcoord{0}{1} \capright;
		\draw \tlcoord{1}{1} \cupleft;
		=
	\end{tikzpicture}
	} , \, \cbox{
	\begin{tikzpicture}[tldiagram, xscale=2/3, yscale=2/3]
		\draw \tlcoord{0}{2} \linewave{1}{-2};
		\draw \tlcoord{0}{0} \capright;
		\draw \tlcoord{1}{2} \cupleft;
		=
	\end{tikzpicture}
} .
	\]
	The diagram basis of $\TL(D_3)$ contains additionally the diagrams
	\begin{equation*}
			\cbox{
				\begin{tikzpicture}[tldiagram, xscale=2/3, yscale=2/3]
					\draw \tlcoord{0}{0} \dcapright;
					\draw \tlcoord{1}{0} \dcupright;
					\draw \tlcoord{0}{2} \lineup; 
					=
				\end{tikzpicture}
			} , \,
			\cbox{
				\begin{tikzpicture}[tldiagram, xscale=2/3, yscale=2/3]
					\draw \tlcoord{0}{0} \dlinewave{1}{2};
					\draw \tlcoord{0}{1} \capright;
					\draw \tlcoord{1}{1} \dcupleft;
					=
				\end{tikzpicture}
			} , \,
			\cbox{
				\begin{tikzpicture}[tldiagram, xscale=2/3, yscale=2/3]
					\draw \tlcoord{0}{2} \dlinewave{1}{-2};
					\draw \tlcoord{0}{0} \dcapright;
					\draw \tlcoord{1}{2} \cupleft;
					=
				\end{tikzpicture}
			}  , \,
			\cbox{
				\begin{tikzpicture}[tldiagram, xscale=2/3, yscale=2/3]
					\draw \tlcoord{0}{0} \capright;
					\draw \tlcoord{1}{0} \dcupright;
					\draw \tlcoord{0}{2} \dlineup; 
					=
				\end{tikzpicture}
			} , \,
			\cbox{
				\begin{tikzpicture}[tldiagram, xscale=2/3, yscale=2/3]
					\draw \tlcoord{0}{0} \dcapright;
					\draw \tlcoord{1}{0} \cupright;
					\draw \tlcoord{0}{2} \dlineup; 
					=
				\end{tikzpicture}
			}.
	\end{equation*}
	In order to get the diagram basis of $\TL(B_3)\cong V_3$ we need ten more diagrams, which are given by multiplying the diagram basis of $\TL(D_3)$ with $s_0$:
	\begin{align*}
		\cbox{
			\begin{tikzpicture}[tldiagram, xscale=2/3, yscale=2/3]
				\draw \tlcoord{-1.5}{0} \dlineup;
				\draw \tlcoord{-1.5}{1} \lineup;
				\draw \tlcoord{-1.5}{2} \lineup;
				=
			\end{tikzpicture}
		} &, \, \cbox{
			\begin{tikzpicture}[tldiagram, xscale=2/3, yscale=2/3]
				\draw \tlcoord{0}{0} \capright;
				\draw \tlcoord{1}{0} \dcupright;
				\draw \tlcoord{0}{2} \lineup; 
				=
			\end{tikzpicture}
		} , \, \cbox{
			\begin{tikzpicture}[tldiagram, xscale=2/3, yscale=2/3 ]
				\draw \tlcoord{0}{1} \capright;
				\draw \tlcoord{1}{1} \cupright;
				\draw \tlcoord{0}{0} \dlineup; 
				=
			\end{tikzpicture}
		} , \,
		\cbox{
			\begin{tikzpicture}[tldiagram, xscale=2/3, yscale=2/3]
				\draw \tlcoord{0}{0} \linewave{1}{2};
				\draw \tlcoord{0}{1} \capright;
				\draw \tlcoord{1}{1} \dcupleft;
				=
			\end{tikzpicture}
		} , \, \cbox{
			\begin{tikzpicture}[tldiagram, xscale=2/3, yscale=2/3]
				\draw \tlcoord{0}{2} \dlinewave{1}{-2};
				\draw \tlcoord{0}{0} \capright;
				\draw \tlcoord{1}{2} \cupleft;
				=
			\end{tikzpicture}
		}  , \, \\
		\cbox{
			\begin{tikzpicture}[tldiagram, xscale=2/3, yscale=2/3]
				\draw \tlcoord{0}{0} \dcapright;
				\draw \tlcoord{1}{0} \cupright;
				\draw \tlcoord{0}{2} \lineup; 
				=
			\end{tikzpicture}
		} &, \,
		\cbox{
			\begin{tikzpicture}[tldiagram, xscale=2/3, yscale=2/3]
				\draw \tlcoord{0}{0} \dlinewave{1}{2};
				\draw \tlcoord{0}{1} \capright;
				\draw \tlcoord{1}{1} \cupleft;
				=
			\end{tikzpicture}
		} , \,
		\cbox{
			\begin{tikzpicture}[tldiagram, xscale=2/3, yscale=2/3]
				\draw \tlcoord{0}{2} \linewave{1}{-2};
				\draw \tlcoord{0}{0} \dcapright;
				\draw \tlcoord{1}{2} \cupleft;
				=
			\end{tikzpicture}
		}  , \,
		\cbox{
			\begin{tikzpicture}[tldiagram, xscale=2/3, yscale=2/3]
				\draw \tlcoord{0}{0} \capright;
				\draw \tlcoord{1}{0} \cupright;
				\draw \tlcoord{0}{2} \dlineup; 
				=
			\end{tikzpicture}
		} , \,
		\cbox{
			\begin{tikzpicture}[tldiagram, xscale=2/3, yscale=2/3]
				\draw \tlcoord{0}{0} \dcapright;
				\draw \tlcoord{1}{0} \dcupright;
				\draw \tlcoord{0}{2} \dlineup; 
				=
			\end{tikzpicture}
		}.
	\end{align*}
	Observe that $2 \dim \TL(D_2)=\dim \TL(B_2)$ and $4 \dim \TL(A_2)=\dim \TL(B_3)$.
\end{beis}
\begin{beis} \label{example additional diagrams}
	If one works with the Temperley--Lieb algebra of type $D$ from \cite{green1998}, one obtains the following four additional diagrams, in addition to the first ten diagrams from the last example:
	\begin{gather*}
		U_0U_1 \, = \, \cbox{
			\begin{tikzpicture}[tldiagram, xscale=2/3, yscale=2/3]
				\draw \tlcoord{0}{0} \capright;
				\draw \tlcoord{2}{0} \cupright;
				\draw \tlcoord{1}{0} \onedot \capright \cupleft;
				\draw \tlcoord{0}{2} \lineup \lineup; 
				=
			\end{tikzpicture}
		} \, , \quad U_2U_0U_1U_2 \, = \, \cbox{
			\begin{tikzpicture}[tldiagram, xscale=2/3, yscale=2/3 ]
				\draw \tlcoord{0.5}{-1.5} \onedot \capright \cupleft;
				\draw \tlcoord{0}{1} \capright;
				\draw \tlcoord{1}{1} \cupright;
				\draw \tlcoord{0}{0} \lineup; 
				=
			\end{tikzpicture}
		} \, , \\[1em] U_0U_1U_2
		\cbox{
			\begin{tikzpicture}[tldiagram, xscale=2/3, yscale=2/3]
				\draw \tlcoord{0.5}{-1.5} \onedot \capright \cupleft;
				\draw \tlcoord{0}{0} \linewave{1}{2};
				\draw \tlcoord{0}{1} \capright;
				\draw \tlcoord{1}{1} \cupleft;
				=
			\end{tikzpicture}
		} , \quad U_2U_0U_1 \, = \, \cbox{
			\begin{tikzpicture}[tldiagram, xscale=2/3, yscale=2/3]
				\draw \tlcoord{0.5}{-1.5} \onedot \capright \cupleft;
				\draw \tlcoord{0}{2} \linewave{1}{-2};
				\draw \tlcoord{0}{0} \capright;
				\draw \tlcoord{1}{2} \cupleft;
				=
			\end{tikzpicture}
		} \, .
	\end{gather*}
	This makes sense since one can identify $\TL^{\opn{Gr}}(D_3)$ with $\TL_4=\TL(A_3)$ by mapping $U_1 \mapsto U_1, U_2 \mapsto U_2, U_0 \mapsto U_3$. Under this identification these four diagrams, which are missing in $\TL(D_3)$, become
	\begin{gather*}
		U_1U_3 \quad = \quad 
		\cbox{
			\begin{tikzpicture}[tldiagram, xscale=2/3, yscale=2/3]
				\draw \tlcoord{0}{0} \capright;
				\draw \tlcoord{1}{0} \cupright;
				\draw \tlcoord{0}{2} \capright;
				\draw \tlcoord{1}{2} \cupright;
			\end{tikzpicture}
		}
		\,, \quad U_2U_1U_3U_2 \quad = \quad 
		\cbox{
			\begin{tikzpicture}[tldiagram, xscale=2/3, yscale=2/3]
				\draw \tlcoord{0}{0} \xcapright{3};
				\draw \tlcoord{0}{1} \capright;
				\draw \tlcoord{2.4}{0} \xcupright{3};
				\draw \tlcoord{2.4}{1} \cupright;
			\end{tikzpicture}
		}
		\, , \\[1em]
		U_1U_3U_2 \quad = \quad 
		\cbox{
			\begin{tikzpicture}[tldiagram, xscale=2/3, yscale=2/3]
				\draw \tlcoord{1.4}{0} \cupright;
				\draw \tlcoord{1.4}{2} \cupright;
				\draw \tlcoord{0}{0} \xcapright{3};
				\draw \tlcoord{0}{1} \capright;
			\end{tikzpicture}
		}
		\,, \quad U_2U_1U_3 \quad = \quad 
		\cbox{
			\begin{tikzpicture}[tldiagram, xscale=2/3, yscale=2/3]
				\draw \tlcoord{0}{0} \capright;
				\draw \tlcoord{0}{2} \capright;
				\draw \tlcoord{1.4}{0} \xcupright{3};
				\draw \tlcoord{1.4}{1} \cupright;
			\end{tikzpicture}
		} \, .
	\end{gather*}
	So now we understand what we miss out on in our $10$-dimensional algebra $\TL(D_3)$. In particular observe that we cannot obtain the diagram basis of $\TL^{\opn{Gr}}(D_3)$ by taking the diagram basis of $\TL^{\opn{Gr}}(D_2)$ and adding the rightmost strand or multiplying $U_2$ from the left and right with elements of $\TL^{\opn{Gr}}(D_2)$. This implies that the inductive formula for the Jones--Wenzl projectors of type $D$, which mimics the recursion rule for the traditional Jones--Wenzl projectors, only works for $\TL(D_n)$ and not for $\TL^{\opn{Gr}}(D_n)$. 
\end{beis}
\begin{rema}
	Looking at examples \ref{example temperley-lieb inclusions n=2} and \ref{example temperley lieb inclusions n=3} we observe that
	\begin{equation} \label{dimension Temperley--Lieb von typ B}
		\dim \TL(B_n)=(n+1)\dim \TL(A_{n-1})={{2n} \choose n},
	\end{equation}
	since $\dim \TL(A_{n-1})=\frac{1}{(n+1)}{{2n} \choose n}$ is the $n$-th Catalan number. This holds in general and can be found as dimension for $\TL(B_n)$ from \cite[Lemma 5.7]{green1998}. However even if one knows the basis, it is combinatorially non-trivial to see that decorating the strands with dots increases the total number of diagrams by the factor $n+1$, since the number of possible decorations of any fixed Temperley--Lieb diagram is highly dependent on the diagram one starts with. At least it is clear that the dimension of $\TL(D_n)$, that is the number of diagrams decorated with an even amount of dots, is precisely $\frac{1}{2}\dim \TL(B_n)$ since
	\[
		 \TL(B_n) = \TL(D_n) \oplus s_0\! \TL(D_n) = \TL(D_n) \oplus \TL(D_n) s_0.
	\]
	In contrast the true Temperley--Lieb algebra of type $D$ has dimension
	\[
		\dim \TL^{\opn{Gr}}(D_n)=\frac{1}{2} {{2n} \choose n} + \frac{1}{n+1} {{2n} \choose n} -1 = \frac{n+3}{2(n+1)} {{2n} \choose n}-1,
	\]
	since we had to take the Catalan number minus one many type i) diagrams from \Cref{explanation green type D vs our type D} into account. We can now summarize the dimensions of all the Temperley--Lieb algebras present
	\begin{gather*}
		\dim \TL(A_{n-1})= \frac{1}{n+1} {{2n} \choose n}, \, \dim \TL(D_n)= \frac{1}{2} {{2n} \choose n}, \, \dim \TL(B_n)= {{2n} \choose n}, \\ \dim \TL^{\opn{Gr}}(D_n)= \frac{n+3}{2(n+1)} {{2n} \choose n}-1.
	\end{gather*}
\end{rema}
	The next purely combinatorial observation gives a different approach from \cite{green1998} to calculate the type $B$ Catalan numbers using what we call signed Brauer diagrams. 
\begin{obse} \label{lemma number of signed brauer diagrams}
	Consider $\Br_n^{\opn{sign}}$ the set of all signed Brauer diagrams, that is all sign symmetric partitions of size $2$ on the set $\{-n,\ldots, -1,1, \ldots, n\}\times \{ 0,1 \}$ of vertices, such that no vertex $(i,\varepsilon)$ pairs with its vertical mirror image $(-i,\varepsilon)$. Then the number of diagrams is given by
	\begin{equation} \label{number of signed brauer diagrams}
		|\Br_n^{\opn{sign}}|=\prod_{i=0}^{n-1}({4(n-i)-2})=(4n-2)(4n-6)\ldots 6\cdot 2=\prod_{i=0}^{n-1}(4i+2),
	\end{equation}
	which follows from the following inductive argument. The first vertex can be connected to $4n-2$ vertices (i.e.\ all vertices except itself and its mirror image) and any choice of partner vertex determines two pairs of partners, which leaves $4n-4$ vertices without a partner. Proceed with any other vertex, which has then $4n-6=4n-(4+2)$ possible partners and repeat the argument.
	The diagram basis of $\TL(B_n)$ consisting of decorated Temperley--Lieb diagrams can be identified with the set $\TL_n^{\opn{sign}}$ consisting of all sign symmetric diagrams, which can only cross at the middle vertical line and are planar otherwise (i.e.\ an arc can only intersect with its mirror image). In particular we have
	\begin{equation} \label{number of nice temperley--lieb diagrams}
		\dim \TL(B_n)=|\TL_n^{\opn{sign}}|=\frac{\prod_{i=0}^{n-1}(4i+2)}{n!}={{2n} \choose n}
	\end{equation}
	since
	\[
		{{2n} \choose n} =\frac{(2n)!}{(n!)^2}=\frac{\prod_{i=1}^n(2i)}{n!}\cdot \frac{\prod_{i=0}^{n-1}(2i+1)}{n!}=2^n\frac{\prod_{i=0}^{n-1}(2i+1)}{n!}=\frac{\prod_{i=0}^{n-1}(4i+2)}{n!}.
	\]
	For example $\Br_2^{\opn{sign}}$ consists of the $12=6\cdot2$ diagrams
	\begin{gather*}
		\cbox{
		\begin{tikzpicture}[tldiagram, xscale=2/3, yscale=2/3]
			\draw \tlcoord{0}{0} \lineup;
			\draw \tlcoord{0}{1} \lineup;
			\draw \tlcoord{0}{2} \lineup;
			\draw \tlcoord{0}{3} \lineup;
		\end{tikzpicture}
	} \, , \, 
	\cbox{
	\begin{tikzpicture}[tldiagram, xscale=2/3, yscale=2/3]
		\draw \tlcoord{0}{0} \lineup;
		\draw \tlcoord{0}{1} \linewave{1}{1};
		\draw \tlcoord{0}{2} \linewave{1}{-1};
		\draw \tlcoord{0}{3} \lineup;
	\end{tikzpicture}
}\, , \, 
	\cbox{
		\begin{tikzpicture}[tldiagram, xscale=2/3, yscale=2/3]
			\draw \tlcoord{0}{0} \capright;
			\draw \tlcoord{1}{0} \cupright;
			\draw \tlcoord{0}{2} \capright;
			\draw \tlcoord{1}{2} \cupright;
		\end{tikzpicture}
	} \, , \,
\cbox{
	\begin{tikzpicture}[tldiagram, xscale=2/3, yscale=2/3]
		\draw \tlcoord{0}{0} \capright;
		\draw \tlcoord{1.5}{0} \xcupright{2};
		\draw \tlcoord{0}{3} \capleft;
		\draw \tlcoord{1.5}{3} \xcupleft{2};
\end{tikzpicture} }
\, , \,
	\cbox{
		\begin{tikzpicture}[tldiagram, xscale=2/3, yscale=2/3]
			\draw \tlcoord{0}{0} \xcapright{2};
			\draw \tlcoord{1.5}{0} \cupright;
			\draw \tlcoord{0}{3} \xcapleft{2};
			\draw \tlcoord{1.5}{2} \cupright;
		\end{tikzpicture}
	}  \, , \,
\cbox{
	\begin{tikzpicture}[tldiagram, xscale=2/3, yscale=2/3]
		\draw \tlcoord{0}{0} \xcapright{2};
		\draw \tlcoord{1.5}{0} \xcupright{2};
		\draw \tlcoord{0}{3} \xcapleft{2};
		\draw \tlcoord{1.5}{3} \xcupleft{2};
	\end{tikzpicture}
} \\[1em]
	\cbox{
		\begin{tikzpicture}[tldiagram, xscale=2/3, yscale=2/3]
			\draw \tlcoord{0}{0} \linewave{1}{2};
			\draw \tlcoord{0}{1} \linewave{1}{-1};
			\draw \tlcoord{0}{2} \linewave{1}{1};
			\draw \tlcoord{0}{3} \linewave{1}{-2};
		\end{tikzpicture}
	}  \, , \, 
	\cbox{
		\begin{tikzpicture}[tldiagram, xscale=2/3, yscale=2/3]
			\draw \tlcoord{0}{0} \linewave{1}{2};
			\draw \tlcoord{0}{1} \linewave{1}{2};
			\draw \tlcoord{0}{2} \linewave{1}{-2};
			\draw \tlcoord{0}{3} \linewave{1}{-2};
		\end{tikzpicture}
	}  \, , \, 
\cbox{
	\begin{tikzpicture}[tldiagram, xscale=2/3, yscale=2/3]
		\draw \tlcoord{0}{0} \linewave{1}{1};
		\draw \tlcoord{0}{1} \linewave{1}{2};
		\draw \tlcoord{0}{2} \linewave{1}{-2};
		\draw \tlcoord{0}{3} \linewave{1}{-1};
	\end{tikzpicture}
}  \, , \, 
\cbox{
	\begin{tikzpicture}[tldiagram, xscale=2/3, yscale=2/3]
		\draw \tlcoord{0}{0} \linewave{1}{1};
		\draw \tlcoord{0}{1} \linewave{1}{-1};
		\draw \tlcoord{0}{2} \linewave{1}{1};
		\draw \tlcoord{0}{3} \linewave{1}{-1};
	\end{tikzpicture}
}  \, , \, 
\cbox{
	\begin{tikzpicture}[tldiagram, xscale=2/3, yscale=2/3]
		\draw \tlcoord{0}{0} \linewave{1}{3};
		\draw \tlcoord{0}{1} \linewave{1}{1};
		\draw \tlcoord{0}{2} \linewave{1}{-1};
		\draw \tlcoord{0}{3} \linewave{1}{-3};
	\end{tikzpicture}
}  \, , \, 
\cbox{
	\begin{tikzpicture}[tldiagram, xscale=2/3, yscale=2/3]
		\draw \tlcoord{0}{0} \linewave{1}{3};
		\draw \tlcoord{0}{1} \lineup;
		\draw \tlcoord{0}{2} \lineup;
		\draw \tlcoord{0}{3} \linewave{1}{-3};
	\end{tikzpicture}
}
	\end{gather*}
	connecting the vertices $\{-2,-1,1,2\}\times \{0,1\}$ into pairs. The top six of these diagrams correspond one-to-one with the diagram basis of $\TL(B_2)$ from \Cref{example temperley-lieb inclusions n=2} by restricting to the right half of each diagram and translating every arc crossing at the middle vertical line into an arc decorated with a dot.
\end{obse}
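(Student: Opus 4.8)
The plan is to separate the statement into two almost independent parts: the enumeration \eqref{number of signed brauer diagrams} of $\Br_n^{\opn{sign}}$, and the ``folding'' bijection between the diagram basis of $\TL(B_n)$ and $\TL_n^{\opn{sign}}$, from which \eqref{number of nice temperley--lieb diagrams} follows once it is combined with the dimension count \eqref{dimension Temperley--Lieb von typ B} and a short arithmetic identity. The reformulation $\prod_{i=0}^{n-1}(4(n-i)-2)=\prod_{i=0}^{n-1}(4i+2)$ is just a reindexing, so there is nothing to do there.

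For $|\Br_n^{\opn{sign}}|$ I would make the inductive ``greedy matching'' argument precise. Write $\sigma$ for the fixed-point-free involution $(i,\varepsilon)\mapsto(-i,\varepsilon)$ on the $4n$ vertices, so that ``sign symmetric'' means $\sigma$-equivariant and the forbidden configuration is exactly an edge of the form $\{v,\sigma(v)\}$. Fix a total order on the vertex set; given a diagram $D$, let $v$ be its minimal vertex and $\{v,w\}$ the edge of $D$ through $v$. Then $w\notin\{v,\sigma(v)\}$, giving $4n-2$ possibilities, and $\sigma$-equivariance forces $\{\sigma(v),\sigma(w)\}\in D$. The step I expect to be the only subtle point of the count is verifying that $\{v,w\}\neq\{\sigma(v),\sigma(w)\}$: an edge equal to its own mirror image would have to be of the shape $\{v,\sigma(v)\}$, so the no-mirror-match hypothesis is precisely what guarantees that each choice of $w$ removes four distinct, $\sigma$-closed vertices rather than two. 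Deleting those four leaves a $\sigma$-closed set of $4(n-1)$ vertices on which $\sigma$ is still free, i.e. (after relabelling) an instance of the same problem for $n-1$; hence $|\Br_n^{\opn{sign}}|=(4n-2)\,|\Br_{n-1}^{\opn{sign}}|$, and with $|\Br_0^{\opn{sign}}|=1$ the product $\prod_{i=0}^{n-1}(4n-2-4i)=\prod_{i=0}^{n-1}(4i+2)$ drops out.

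For the Temperley--Lieb part I would spell out the folding map $\Phi$ from decorated $(n,n)$-diagrams to sign-symmetric diagrams: reflect across the left edge, sending a dot-free strand on right-hand points $p,q$ to the pair of mirror arcs $\{p,q\}$ and $\{-p,-q\}$, and a dotted strand on $p,q$ to the two middle-crossing arcs $\{p,-q\}$ and $\{-p,q\}$. The content to check is: a middle-crossing arc never has the forbidden shape $\{v,\sigma(v)\}$ (that would force a strand from a boundary point to itself), the reachability condition on dots corresponds exactly to planarity of the image away from the middle vertical line (each arc crosses only its own mirror image), and the construction is reversible — so $\Phi$ restricts to a bijection onto $\TL_n^{\opn{sign}}$, regarded as the subset of $\Br_n^{\opn{sign}}$ cut out by that planarity condition. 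Combined with \Cref{Temperley--Lieb algebra of type B is diagram algebra} this gives $\dim\TL(B_n)=|\TL_n^{\opn{sign}}|$. I would finish with the identity $\prod_{i=0}^{n-1}(4i+2)=2^{n}\prod_{i=0}^{n-1}(2i+1)=2^{n}\cdot(2n)!/(2^{n}n!)=(2n)!/n!$, so that $\tfrac1{n!}\prod_{i=0}^{n-1}(4i+2)=\binom{2n}{n}=\dim\TL(B_n)$ by \eqref{dimension Temperley--Lieb von typ B}, which proves \eqref{number of nice temperley--lieb diagrams} and, as a byproduct, the coincidence $|\TL_n^{\opn{sign}}|=|\Br_n^{\opn{sign}}|/n!$.

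The one thing this route does not do is explain that factor $n!$ bijectively; producing an explicit $n!$-to-$1$ ``uncrossing'' map $\Br_n^{\opn{sign}}\twoheadrightarrow\TL_n^{\opn{sign}}$ would be the genuinely delicate piece if one wanted to enumerate $\TL_n^{\opn{sign}}$ without invoking the known value $\dim\TL(B_n)=\binom{2n}{n}$, but since that value is already available \eqref{dimension Temperley--Lieb von typ B} the argument stays routine.
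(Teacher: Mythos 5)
Your proposal is correct and follows essentially the same route as the paper's observation: the same greedy $\sigma$-equivariant matching count giving $(4n-2)(4n-6)\cdots 2$, the same folding identification of dotted Temperley--Lieb diagrams with sign-symmetric diagrams crossing only at the middle line, and the same arithmetic identity $\prod_{i=0}^{n-1}(4i+2)=(2n)!/n!$, with the factor $n!$ closed via the already known dimension $\dim\TL(B_n)=\binom{2n}{n}$. The extra care you take (checking that no edge equals its own mirror image in the inductive count, and flagging that the $n!$ is not accounted for bijectively) only makes explicit what the paper leaves implicit.
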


\chapter{Tensor product decompositions for a coideal subalgebra} \label{Chapter 2}

This chapter is all about $\sl2$-like decompositions of tensor powers of representations. \Cref{Chapter 2 section 1} is a fast-paced recollection of the $\Ug_q(\sl2)$ story with emphasis on the distinction between type I and type II representations. It is meant as a brief review of the topic. Additionally it contains some philosophical remarks on the subject. \Cref{Chapter 2 section 2} is far more important and contains the new story based on \cite{stroppel2018}, where we explicitly decompose tensor powers of the standard representation of the coideal subalgebra $\Vq\subseteq \Ug_q(\gl_2)$. 

\section{Tensor product decompositions for \texorpdfstring{$\Ug_q(\sl2)$}{U(sl2)}} \label{Chapter 2 section 1}

This section is a review of some important aspects of the representation theory of $\Ug_q(\sl2)$. It is not an optimal way to learn about the subject for the first time, nor a complete collection of all the important facts about tensor powers. However it contains visualizations of representations for $\Ug_q(\sl2)$, a short review of type $A$ Jones--Wenzl projectors and some philosophical discussions on braided monoidal categories or categorical interpretations of the diagrammatic formulas. We do not follow any particular author, since the author learned much of the theory from Catharina Stroppel, but some parts can be found in \cite{kassel2012}, \cite{flath95} or the references listed there.
We start with recalling the definition of the quantum group $\Ug_q(\sl2)$ and its two most important irreducible representations. 

\begin{defi}
	Let $k=\C(q)$. The \textbf{quantum group} $\Ug_q\coloneqq\Ug_q(\sl2)$ is the associative, unital $\C(q)$-algebra with generators $E,F,K,K^{-1}$ subject to the relations $KE=q^2EK$, $KF=q^{-2}FK$, $EF-FK=\frac{K-K^{-1}}{q-q^{-1}}$, and $KK^{-1}=1=K^{-1}K$. The assignments 
	\[
		E\mapsto E\otimes 1 + K \otimes E, \, F\mapsto F\otimes K^{-1} + 1 \otimes F, \, K \mapsto K \otimes K
	\] define a $\C(q)$-algebra homomorphism $\Delta\colon \Ug_q \rightarrow \Ug_q \otimes \Ug_q$ which turns $\Ug_q$ into a Hopf-algebra and hence the category $\Rep(\Ug_q)$ of all finite dimensional $\Ug_q$-modules into a monoidal category with tensor product $\otimes\coloneqq \otimes_k$.
\end{defi}
\begin{defi}
	The \textbf{type I standard representation} of $\Uq$ is the two dimensional vector space $V$ with basis denoted $\{x,y\}$. The action is given diagrammatically by
	\begin{equation*}
		\begin{tikzcd}
			y \arrow[r, "1"', bend right, red] \arrow["q^{-1}"', loop, distance=2em, in=125, out=55, black!50!green] & x \arrow["q"', loop, distance=2em, in=125, out=55, black!50!green] \arrow[l, "1"', bend right, blue]
		\end{tikzcd},
	\end{equation*}
	where the action of $\textcolor{black!50!green}{K}$ is \textcolor{black!50!green}{green}, the action of $\textcolor{red}{E}$ is \textcolor{red}{red} and the action of $\textcolor{blue}{F}$ is \textcolor{blue}{blue}.
	Similarly we define the \textbf{type II standard representation} of $\Uq$, which is given by a two dimensional vector space with basis $\{x,y \}$, where the $\Uq$-action is given by
	\begin{equation*}
		\begin{tikzcd}
			y \arrow[r, "1"', bend right, red] \arrow["-q^{-1}"', loop, distance=2em, in=125, out=55, black!50!green] & x \arrow["-q"', loop, distance=2em, in=125, out=55, black!50!green] \arrow[l, "-1"', bend right, blue]
		\end{tikzcd}.
	\end{equation*} 
\end{defi}
\begin{warn}
	The type I standard representation and the type II standard representation are not isomorphic. Throughout the thesis we will usually refer to the type I standard representation by $V$. However in rare corner cases we will by abuse of notation also denote the type II representation by $V$. We will be careful to not confuse anyone, even though our notation is dangerous.
\end{warn}
Next we will recall the Temperley--Lieb category $\TL$ as defined in \cite{cooper2015}. Afterwards we will recall the monoidal equivalence between $\TL$ and the monoidal subcategory of $\Rep(\Uq)$ generated by $V$. 
\begin{defi}
	Let $k=\C(q)$ and let $\delta\in k$ a scalar. The \textbf{Temperley--Lieb category} $\TL=\TL(\delta)$ is the category with:
	\begin{enumerate}
		\item Objects: non-negative integers $n\in\mN$
		\item Morphisms: $\TL(m,n)=$ the $k$-vector space with basis $(m,n)$-Temperley--Lieb diagrams.
		\item Composition of morphisms: Stacking diagrams on top of each other and replacing each disjoint circle created by the scalar $\delta$.
	\end{enumerate}
	The tensor product $\otimes\colon \TL \times \TL \rightarrow \TL$, given on objects by $m \otimes n\coloneqq m+n$ and defined on Temperley--Lieb diagrams $\lambda\in\TL(a,b), \lambda'\in\TL(a',b')$ as
	\[
		[\lambda\mid\lambda']\in\TL(a+a',b+b'),
	\]
	which is the diagram obtained by placing the Temperley--Lieb diagram $\lambda$ on the left of $\mu$ and extended $k$-bilinearly to all morphisms, turns $\TL$ into a monoidal category. The monoidal unit $\mU$ of $\TL$  is given by the object $0$, whose endomorphism algebra is the $1$-dimensional $k$-algebra spanned by the empty diagram. The type $A$ Temperley--Lieb algebra  $\TL(A_{n-1})=\TL_n=\TL_n(\delta)$ is then by definition the endomorphism-algebra $\TL_n=\End_{\TL}(n)$ of the object $n$. 
\end{defi}
Before we state the connection between $\TL$ and $\Rep(\Uq)$ we recall a fact, which summarizes the Temperley--Lieb category quickly.
\begin{fact} \label{free monoidal?}
	The Temperley--Lieb category $\TL$ is the free, strict monoidal, $k$-linear category with $\End(\mU)=k$ generated by one selfdual object of dimension $\delta$. More precisely the morphisms ${\tlcap}\colon 2 \rightarrow 0$ and ${\tlcup}\colon 0 \rightarrow 2$ generate $\TL$ as a monoidal category and are subject to the minimal set of relations given by
	\[
	\cbox{
		\begin{tikzpicture}[tldiagram, yscale=2/3,xscale=2/3]
			\draw \tlcoord{0}{0} \lineup \capright \cupright \lineup;
		\end{tikzpicture}
	}
	\quad
	=
	\quad
	\cbox{
		\begin{tikzpicture}[tldiagram, yscale=2/3]
			\draw \tlcoord{0}{0} \lineup \lineup;
		\end{tikzpicture}
	}
	\quad
	=
	\quad
	\cbox{
		\begin{tikzpicture}[tldiagram, yscale=2/3, xscale=2/3]
			\draw \tlcoord{2}{0} \linedown \cupright \capright \linedown;
		\end{tikzpicture}
	}
	\qquad \text{and} \qquad 
	\cbox{
		\begin{tikzpicture}[tldiagram]
			\draw (0,0) circle (1);
		\end{tikzpicture}
	}
	\,
	=
	\,
	\delta,
	\]
	where the last relation is interpreted inside $\End_{\TL}(0)=k$.
\end{fact}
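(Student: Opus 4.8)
The plan is to realize $\TL$ explicitly as the category presented by the generators $\tlcap\colon 2\to 0$, $\tlcup\colon 0\to 2$ and the stated relations, by constructing a pair of mutually inverse strict monoidal $k$-linear functors between $\TL$ and the abstract presented category. Let $\mathcal{C}$ denote the free strict monoidal $k$-linear category on a single object $X$ equipped with morphisms $\cup\colon \mU\to X^{\otimes 2}$ and $\cap\colon X^{\otimes 2}\to \mU$, with $\End_{\mathcal{C}}(\mU)=k$, modulo the two-sided tensor-ideal generated by the two snake relations and the relation $\cap\circ\cup=\delta\cdot\id_{\mU}$. Concretely, the objects of $\mathcal{C}$ are the elements of $\mN$ (tensor powers of $X$), and $\mathcal{C}(m,n)$ is the $k$-span of words in $\cup$, $\cap$, $\id_X$ built using $\circ$ and $\otimes$, modulo those relations and the strict monoidal axioms.

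First I would define a strict monoidal functor $\Phi\colon \mathcal{C}\to\TL$ by $X\mapsto 1$, $\cup\mapsto\tlcup$, $\cap\mapsto\tlcap$. Well-definedness is a routine check that the images satisfy the defining relations: each snake relation becomes an equality of crossingless $(1,1)$-matchings (a planar isotopy, true in $\TL$), and $\tlcap\circ\tlcup$ produces exactly one closed loop, which by the composition rule of $\TL$ is the scalar $\delta$. Next I would observe that $\Phi$ is \emph{full}: every crossingless matching of $m+n$ boundary points can be written as a composite of tensor products of $\tlcap$, $\tlcup$ and identity strands — I would give the standard "turn-back/comb" decomposition and induct on the number of cups. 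It then remains to prove $\Phi$ is \emph{faithful}, which is the core of the argument.

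For faithfulness it suffices to exhibit, for each $(m,n)$, a spanning set of $\mathcal{C}(m,n)$ of cardinality equal to the number of $(m,n)$-Temperley--Lieb diagrams on which $\Phi$ is injective. I would do this via a normal-form argument: single out a canonical planar word for each crossingless matching (all $\cup$'s at the bottom, all $\cap$'s at the top, arranged into the obvious nested/juxtaposed blocks), and show by induction on word length that every word rewrites into a normal form using only the interchange law of strict monoidal categories, the snake relations (to delete zigzags), and $\cap\circ\cup=\delta$ (to remove closed loops). Since $\Phi$ carries distinct normal forms to distinct TL diagrams and is already full, it is then a bijection on every hom-space, so $\Phi$ is an isomorphism of categories; its inverse $\Psi$, given on objects by $n\mapsto X^{\otimes n}$, is the desired functor, and this is precisely the assertion that $\TL$ is the free such category. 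Minimality of the relation set can be disposed of by a short remark: dropping the circle relation changes $\End(\mU)$, while dropping either snake relation destroys the dualizability of $X$, so no relation is redundant.

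The main obstacle is the normal-form reduction — proving that the listed relations together with the strict-monoidal axioms really do suffice to straighten an arbitrary composite into canonical planar form. The clean route is a Morse-theoretic argument: present a word by a generic planar diagram in which each horizontal slice contains exactly one generator, and note that a planar isotopy carrying one such diagram to another decomposes into finitely many elementary moves, each of which is either (i) an exchange of the heights of two generators lying in disjoint groups of strands — exactly an instance of the interchange law; (ii) the creation or cancellation of a zigzag — exactly a snake relation; or (iii) the birth or death of a closed loop — exactly $\cap\circ\cup=\delta$. I expect to invoke (and lightly adapt) the standard treatment of planar isotopy calculus for pivotal/spherical categories, as developed for the Temperley--Lieb category in the cited literature, rather than reprove it from first principles here.
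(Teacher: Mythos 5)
The paper does not actually prove this statement: it is recorded as a Fact, recalled from the literature (the surrounding discussion points to \cite{cooper2015}, \cite{flath95}, \cite{kauffman1994}) and used as a black box, so there is no in-paper argument to compare yours against. Your proposal is the standard proof and is essentially correct: build the abstractly presented strict monoidal $k$-linear category $\mathcal{C}$ on one object with $\cup$, $\cap$ modulo the snake and circle relations, map it to $\TL$, get fullness from the comb decomposition of crossingless matchings, and get faithfulness by showing every word in the generators rewrites to a canonical planar normal form, so that the normal forms are carried bijectively onto the diagram basis and hence form a basis of $\mathcal{C}(m,n)$. The one step that carries all the weight is exactly the one you flag: the claim that planar isotopies of generic (Morse-position) diagrams are generated by interchange moves, zigzag creation/cancellation, and loop birth/death; this is a genuine lemma (the diagrammatic coherence for duals), and deferring it to the standard treatments is legitimate as long as you cite it rather than treat it as obvious. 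Two small caveats: your quick disposal of ``minimality'' of the relations would need an actual model in which one snake relation holds and the other fails if you want it to be more than a remark, and you should state explicitly that faithfulness uses the linear independence of the diagram basis in $\TL(m,n)$ (which holds by definition of $\TL$), so that the spanning normal forms are forced to be linearly independent in $\mathcal{C}(m,n)$. Neither point affects the overall correctness of the outline.
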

\begin{fact}[Graphical calculus, type I] \label{fact graphical sl2 calculus}
	Let $\delta=-[2]=-q-q^{-1}\in k=\C(q)$. The assignments 
	\begin{align*}
		\TL(-[2])&\to \Rep(\Uq) \\
		1 &\mapsto V \\
		\tlcap &\mapsto (\tlcap\colon V^{\otimes 2}\to \mU)\\
		\tlcup &\mapsto (\tlcup\colon \mU \to V^{\otimes 2})
	\end{align*}
	extend to a fully faithful $k$-linear monoidal functor $\TL\to \Rep(\Uq)$. Here $\mU=\C(q)$ is the trivial representation and $V$ is the type I standard representation of $\Uq$. The morphisms $\tlcap\colon V\otimes V \rightarrow \mU$ and $\tlcup\colon \mU \rightarrow V \otimes V$ are defined as
	\begin{gather*}
		\tlcap(x \otimes x) = \cap(y \otimes y) = 0 \,,
		\quad
		\tlcap(x \otimes y) = -q^{-1} \,,
		\quad
		\tlcap(y \otimes x) = 1 \,,
		\shortintertext{and}
		\tlcup(1) =  x \otimes y -  q y \otimes x .
	\end{gather*}
\end{fact}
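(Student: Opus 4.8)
This is \textbf{Fact 1.41} (Graphical calculus, type I), which asserts that the obvious assignment on objects and on the cap/cup morphisms extends to a fully faithful $k$-linear monoidal functor $\catTL(-[2]) \to \Rep(\Uq)$.

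\textbf{Plan of proof.} The argument proceeds in three stages: (1) well-definedness of the functor, (2) monoidality, (3) full faithfulness. For step (1), I would invoke \Cref{free monoidal?}: since $\catTL(-[2])$ is the \emph{free} strict monoidal $k$-linear category on one self-dual object of dimension $\delta = -[2]$, presented by the generators $\tlcap\colon 2\to 0$ and $\tlcup\colon 0 \to 2$ subject only to the two zig-zag (snake) relations and the loop relation $\tlcircle = \delta$, it suffices to check that the proposed images in $\Rep(\Uq)$ satisfy these defining relations. So first I would verify that $\tlcap\colon V\otimes V \to \mU$ and $\tlcup\colon \mU \to V\otimes V$, defined by the given formulas, are genuinely morphisms of $\Uq$-modules — i.e. they intertwine the $E, F, K$ actions, using the comultiplication $\Delta(E) = E\otimes 1 + K\otimes E$, $\Delta(F)=F\otimes K^{-1} + 1\otimes F$, $\Delta(K)=K\otimes K$ and the explicit action on the type I standard representation $V$ with basis $\{x,y\}$. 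This is a short direct computation on basis elements: e.g. $\tlcap$ must kill $E\cdot(x\otimes y) = x\otimes(Ey) + (Kx)\otimes y = x\otimes x + q\, y \otimes x$ wait — one checks $\tlcap(E\cdot v) = 0 = E\cdot \tlcap(v)$ since $E$ acts as zero on $\mU$, and similarly for $F$ and $K$. Then I would check the snake relations $(\id_V \otimes \tlcap)\circ(\tlcup \otimes \id_V) = \id_V = (\tlcap\otimes\id_V)\circ(\id_V\otimes\tlcup)$ and the loop evaluation $\tlcap\circ\tlcup = \delta\cdot\id_{\mU}$: the latter gives $\tlcap(x\otimes y - q\, y\otimes x) = -q^{-1} - q = -[2] = \delta$, as required, and the snakes are two more one-line basis computations.

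\textbf{Monoidality and full faithfulness.} Monoidality is essentially automatic from the universal property: the free construction hands us a \emph{monoidal} functor once the generating relations are respected, with the structure isomorphisms being identities since both categories are strict (and $\End(\mU) = k$ on both sides). For full faithfulness, the key point is to compare hom-spaces $\catTL(m,n)$ and $\Hom_{\Uq}(V^{\otimes m}, V^{\otimes n})$. On the $\catTL$ side, $\dim_k \catTL(m,n)$ equals the number of $(m,n)$-crossingless matchings, which is $0$ unless $m+n$ is even and otherwise the appropriate Catalan-type number; in particular $\dim \End_{\catTL}(n) = \dim\TL_n = \frac{1}{n+1}\binom{2n}{n}$. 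On the representation-theoretic side, one uses the (quantum) Schur--Weyl / double-centralizer statement for $\Uq$ acting on $V^{\otimes n}$: the endomorphism algebra $\End_{\Uq}(V^{\otimes n})$ is the Temperley--Lieb algebra $\TL_n$ at $\delta = -[2]$, and more generally $\Hom_{\Uq}(V^{\otimes m},V^{\otimes n})$ has the matching dimension. Faithfulness then follows because the cap/cup diagrams span $\catTL(m,n)$ and their images are linearly independent in $\Hom_{\Uq}$ (this can be seen by pairing via the bilinear form coming from duality, or by the standard argument that the Markov/partial trace separates distinct diagrams at a generic value of $q$); fullness then follows by the dimension count. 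Alternatively, and more economically, one can cite that $\TL_n \cong \End_{\Uq}(V^{\otimes n})$ is already known (e.g.\ from the references in the paragraph preceding the Fact) and bootstrap the general $\Hom(V^{\otimes m}, V^{\otimes n})$ case from the endomorphism case together with the self-duality of $V$.

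\textbf{Main obstacle.} The routine part is checking the three defining relations — it is just bookkeeping with $q$'s. The genuinely substantive ingredient is full faithfulness, and specifically establishing that $\dim \Hom_{\Uq}(V^{\otimes m}, V^{\otimes n})$ equals the number of crossingless matchings (equivalently, that the diagram images are linearly independent). This rests on knowing enough about the representation theory of $\Uq$ — either the explicit decomposition of $V^{\otimes n}$ into simples $L(q^k)$ with the correct multiplicities (Clebsch--Gordan for $\Uq$), or the quantum Schur--Weyl duality with $\TL_n$. I would handle this by first recording the Clebsch--Gordan rule $V \otimes L(q^k) \cong L(q^{k+1}) \oplus L(q^{k-1})$ (for $k\geq 1$, with the obvious truncation at $k=0$), deducing the multiplicity of $L(q^{n-2j})$ in $V^{\otimes n}$ is $\binom{n}{j} - \binom{n}{j-1}$, and then computing $\dim\End_{\Uq}(V^{\otimes n}) = \sum_j \bigl(\binom{n}{j}-\binom{n}{j-1}\bigr)^2 = \frac{1}{n+1}\binom{2n}{n}$, matching $\dim\TL_n$; the off-diagonal $\Hom$ dimensions follow the same way. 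Linear independence of the diagram images is then forced by the equality of dimensions together with surjectivity of the diagram span onto $\Hom$, the latter being exactly the content of (quantum) Schur--Weyl duality, which I would either cite or prove by a standard filtration/cellular argument.
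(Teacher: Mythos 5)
The paper itself contains no proof of this statement: it is recorded as a \emph{Fact} in a review section, with the reader referred to the literature (e.g.\ \cite{kassel2012}, \cite{flath95}), so there is no in-paper argument to compare yours against; your proposal is the standard proof and is essentially correct. The reduction of well-definedness and monoidality to \Cref{free monoidal?} is exactly right, and the checks you defer do work out: $\tlcap$ and $\tlcup$ intertwine the $\Uq$-action computed through $\Delta$, both snake identities hold on the basis $\{x,y\}$, and $\tlcap\circ\tlcup(1)=-q^{-1}-q=-[2]$. (One small slip: in your aside you compute $E\cdot(x\otimes y)$ with the coproduct factors applied in the wrong order; with $\Delta(E)=E\otimes 1+K\otimes E$ one gets $q\,x\otimes x$, which $\tlcap$ kills, so the conclusion you wanted still stands.) For full faithfulness your bookkeeping is correct: the multiplicity of $L(q^{n-2j})$ in $V^{\otimes n}$ is $\binom{n}{j}-\binom{n}{j-1}$, the sum of squares is the Catalan number $\frac{1}{n+1}\binom{2n}{n}=\dim\TL_n$, and the off-diagonal Hom-spaces match the diagram count via self-duality of $V$. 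Just note that you need only one of your two closing ingredients, not both: either prove linear independence of the diagram images directly (Gram-determinant/trace-form argument at generic $q$), whence surjectivity follows from the dimension count, or cite quantum Schur--Weyl duality for surjectivity, whence faithfulness follows from the dimension count. Either route completes the argument that the paper outsources to its references.
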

The important thing to note in the type I story is that the value of $\tlcircle$ is $-[2]$. One can formulate a version of this fact for type II, however there $\tlcircle$ is $[2]$. 
\begin{warn}[Dimensions and braidings] \label{discussion on braidings i}
	Throughout this warning we work for simplicity with representations of the Lie algebra $\sl2$ and its vector representation $\C^2$. This representation is obtained by specializing the type I standard representation $V$ of $\Uq$ at $q=1$. The formulas for $\tlcap$ and $\tlcup$ also specialize to the non-quantized case. We mentioned in \Cref{free monoidal?} that the Temperley--Lieb category is the free monoidal category generated by one self-dual object of dimension $\delta$. In order to define the categorical dimension, one needs at least a braiding on the given monoidal category. However we did not define the braiding of $\TL$, since the existence of such a braiding depends of $\delta$. If we choose the standard symmetric braiding on $\Rep(\sl2)$ defined by 
	\begin{align*}
		W_1 \otimes W_2 &\rightarrow W_2 \otimes W_1 \\
		w_1 \otimes w_2 &\mapsto w_2 \otimes w_1
	\end{align*}
 	(and extended $k$-linearly) the braiding on $\TL$ has to be defined in such a way that the dimension of $1\in \TL$ is also $2$. Such a symmetric braiding exists and is given on $2=1\otimes 1 \rightarrow 1\otimes 1=2$ by the formula
	\begin{equation*} \label{binor identity}
		\cbox{
			\begin{tikzpicture}[tldiagram]
				\draw \tlcoord{0}{0} -- \tlcoord{1}{1};
				\draw \tlcoord{0}{1} -- \tlcoord{1}{0};
		\end{tikzpicture}} \quad \coloneqq \quad
		\cbox{
			\begin{tikzpicture}[tldiagram]
				\draw \tlcoord{0}{0} \lineup;
				\draw \tlcoord{0}{1} \lineup;
			\end{tikzpicture}
		}
		\quad
		+
		\quad
		\cbox{
			\begin{tikzpicture}[tldiagram]
				\draw \tlcoord{0}{0} \capright;
				\draw \tlcoord{1}{0} \cupright;
			\end{tikzpicture}
		},
	\end{equation*}
	which turns the functor $\TL(-2)\to \Rep(\sl2)$ into a braided monoidal functor. Indeed in this case the categorical dimension of $1\in\TL$ with this chosen braiding is
	\begin{equation*}
		\cbox{
			\begin{tikzpicture}[tldiagram, yscale=2/3]
				\draw \tlcoord{0}{0} -- \tlcoord{1}{1} \capleft;
				\draw \tlcoord{0}{0} \cupright -- \tlcoord{1}{0} ;
		\end{tikzpicture}} \quad = \quad
		\cbox{
			\begin{tikzpicture}[tldiagram, yscale=2/3]
				\draw \tlcoord{0}{0} \lineup \capright \linedown \cupleft;
			\end{tikzpicture}
		}
		\quad
		+
		\quad
		\cbox{
			\begin{tikzpicture}[tldiagram, yscale=2/3]
				\draw \tlcoord{0}{0} \cupright \capleft;
				\draw \tlcoord{1}{0} \cupright \capleft;
			\end{tikzpicture}
		} \quad = \quad -2 + (-2)^2=2.
	\end{equation*}
	If one is not satisfied with the value of the circle being $-2$ instead of $2$ one could consider the monoidal category $\TL(2)$.
	This is again a symmetric monoidal category, where the braiding morphism on $1\otimes 1\rightarrow 1\otimes 1$ is given by
	\begin{equation*} \label{binor identity 2}
		\cbox{
			\begin{tikzpicture}[tldiagram]
				\draw \tlcoord{0}{0} -- \tlcoord{1}{1};
				\draw \tlcoord{0}{1} -- \tlcoord{1}{0};
		\end{tikzpicture}} \quad \coloneqq \quad
		\cbox{
			\begin{tikzpicture}[tldiagram]
				\draw \tlcoord{0}{0} \lineup;
				\draw \tlcoord{0}{1} \lineup;
			\end{tikzpicture}
		}
		\quad
		-
		\quad
		\cbox{
			\begin{tikzpicture}[tldiagram]
				\draw \tlcoord{0}{0} \capright;
				\draw \tlcoord{1}{0} \cupright;
			\end{tikzpicture}
		},
	\end{equation*}
	However the dimension of $1\in\TL(2)$ with respect to this braiding is $-2$, so the category $\TL(2)$ is not equivalent as braided monoidal category to $\TL(-2)$ and there is no hope for a sensible fully faithful braided monoidal functor $\TL(2)\rightarrow \Rep(\sl2)$ mapping $1$ to the standard representation $V$. However note that as algebras $\TL_n(\delta)$ and $\TL_n(-\delta)$ are abstractly isomorphic for any fixed $\delta\in k$ by mapping $U_i\mapsto -U_i$.
	In particular as algebras
	\[
		\TL_n(2)\cong \TL(-2)\cong \End_{\sl2}(V^{\otimes n}),
	\]
	which can be very misleading, since as explained the isomorphism $\TL_n(2)\cong \End_{\sl2}(V^{\otimes n})$ is unnatural. In the quantized setting there are quantized versions of both proposed braidings. The choice of whether one wants to work with the type I or type II standard representation is the choice of whether one wants to have signs in the action of the quantum group or in the formulas for the Temperley--Lieb algebra, i.e.\ the evaluation morphism $\tlcap$ and the coevaluation morphism $\tlcup$.
	Most importantly the value of the circle $\tlcircle$ is $-[2]$ in type I, but $[2]=q+q^{-1}\in\C(q)$ in type II as we will see in \Cref{type II corr formulas}.
	Finally note that the (symmetric) braided monoidal category generated by one self-dual object is the Brauer category (see e.g. \cite{lehrer15}) and not the Temperley--Lieb category, which does not come with inherent braiding morphisms for general $\delta$.
\end{warn}
 
\begin{fact}[Graphical calculus, type II] \label{type II corr formulas}
	Let $\delta=[2]=q+q^{-1}\in \C(q)$. Let $V$ be the type II standard representation of $\Uq$. The assignments 
	\begin{align*}
		\TL([2])&\to \Rep(\Uq) \\
		n &\mapsto V^{\otimes n} \\
		\tlcap &\mapsto (\tlcap\colon V^{\otimes 2}\to \mU)\\
		\tlcup &\mapsto (\tlcup\colon \mU \to V^{\otimes 2})
	\end{align*}
	$\TL\to \Rep(\Uq)$ extend to a fully faithful $\C(q)$-linear monoidal functor. The morphisms $\tlcap$ and $\tlcup$ are defined as
	\begin{gather*}
		\tlcap(x \otimes x) = \cap(y \otimes y) = 0 \,,
		\quad
		\tlcap(x \otimes y) = q^{-1} \,,
		\quad
		\tlcap(y \otimes x) = 1 \,,
		\shortintertext{and}
		\tlcup(1) =  x \otimes y + q  y \otimes x .
	\end{gather*}
\end{fact}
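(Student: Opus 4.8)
The plan is to mirror the proof of the type I statement, \Cref{fact graphical sl2 calculus}, with the type II standard representation in place of the type I one, and then to deduce full faithfulness by comparison with the type I case. To fix notation for the sketch, write $V$ for the type II standard representation (as in the statement) and $V^{\mathrm{I}}$ for the type I one.

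To construct the functor I would invoke the universal property of the Temperley--Lieb category from \Cref{free monoidal?}: since $\TL([2])$ is the free strict monoidal $k$-linear category with $\End(\mU)=k$ on a single self-dual generating object of dimension $[2]$, giving a monoidal functor out of it sending that object to $V$ is the same as giving $\Uq$-module maps $\tlcap\colon V^{\otimes 2}\to\mU$ and $\tlcup\colon\mU\to V^{\otimes 2}$ that satisfy the two zigzag identities and the loop relation $\tlcap\circ\tlcup=[2]$ in $\End(\mU)=k$. I would verify these three points by hand. For $\Uq$-equivariance one checks $\tlcap(\Delta(g)\,w)=\varepsilon(g)\,\tlcap(w)$ and $\Delta(g)\,\tlcup(1)=\varepsilon(g)\,\tlcup(1)$ for $g\in\{E,F,K\}$, evaluating on the basis vectors $x\otimes x,x\otimes y,y\otimes x,y\otimes y$ of $V^{\otimes 2}$ using $\Delta(E)=E\otimes 1+K\otimes E$, $\Delta(F)=F\otimes K^{-1}+1\otimes F$, $\Delta(K)=K\otimes K$ together with the type II action ($K$ acting by $-q^{\pm 1}$, $F$ by $-1$). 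For the zigzags one evaluates both snake composites on $x$ and on $y$. The loop relation is immediate: $\tlcap(\tlcup(1))=\tlcap(x\otimes y+qy\otimes x)=q^{-1}+q=[2]$. All of this is the same finite computation as in type I, with the signs already absorbed into the formulas for $\tlcap$, $\tlcup$ and the action; it yields a well-defined $k$-linear monoidal functor $G\colon\TL([2])\to\Rep(\Uq)$ with $G(1)=V$.

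The substantive point is full faithfulness of $G$, which I would reduce to \Cref{fact graphical sl2 calculus} via the $\Uq$-module isomorphism $V\cong V^{\mathrm{I}}\otimes\C_{-1}$, where $\C_{-1}$ is the one-dimensional module on which $K$ acts by $-1$ and $E,F$ act by $0$ (one checks directly that the $\Uq$-relations hold and that $v\mapsto v\otimes z$ is an isomorphism). As $\C_{-1}$ is one-dimensional and $\otimes$-invertible, $(-)\otimes\C_{-1}$ is an autoequivalence of $\Rep(\Uq)$, so for all $m,n$ it induces an isomorphism
\[
\Hom_{\Uq}\bigl(V^{\otimes m},V^{\otimes n}\bigr)\;\cong\;\Hom_{\Uq}\bigl((V^{\mathrm{I}})^{\otimes m},(V^{\mathrm{I}})^{\otimes n}\bigr)
\]
when $m\equiv n\pmod 2$, both Hom-spaces vanishing otherwise (no common simple constituents). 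By \Cref{fact graphical sl2 calculus} the right-hand space has dimension the number of $(m,n)$-Temperley--Lieb diagrams, which is also $\dim\TL([2])(m,n)$; hence on each morphism space the source and target of $G$ have equal finite dimension, so it is enough to prove $G$ faithful --- fullness then follows automatically. Faithfulness can be checked on endomorphism algebras (bending cups and caps via the zigzags reduces a general $\Hom$ to an $\End$), where it follows from injectivity of the type I functor on $\End_{\Uq}((V^{\mathrm{I}})^{\otimes n})$, together with the algebra isomorphism $\TL_n([2])\cong\TL_n(-[2])$, $U_i\mapsto -U_i$, from \Cref{discussion on braidings i} and the conjugation isomorphism $\End_{\Uq}(V^{\otimes n})\cong\End_{\Uq}((V^{\mathrm{I}})^{\otimes n})$ coming from $V\cong V^{\mathrm{I}}\otimes\C_{-1}$.

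The step I expect to require the most care is matching up these identifications: one has to verify that the sign twist $U_i\mapsto -U_i$ on the Temperley--Lieb side is exactly what corresponds, under $V\cong V^{\mathrm{I}}\otimes\C_{-1}$, to replacing the type I cup and cap by the type II ones --- equivalently, to the change of loop value from $-[2]$ to $[2]$. A cleaner alternative, which I would record as a remark, bypasses the transfer: one runs quantum Schur--Weyl duality for $\Uq$ directly with $V$, using that the decomposition of $V^{\otimes n}$ into simple modules has the same multiplicities as in type I --- these are read off from weight spaces and $\dim V^{\otimes n}$ is unchanged --- so that $\dim\End_{\Uq}(V^{\otimes n})$ equals the $n$-th Catalan number $=\dim\TL_n$; fullness then follows by locating the isotypic idempotents in the image of $G$, and faithfulness is forced by the dimension count.
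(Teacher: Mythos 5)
The paper does not actually prove this statement: it is recorded as a \emph{Fact} (like its type I counterpart, \Cref{fact graphical sl2 calculus}), with the section pointing to standard references, so there is no in-text proof to compare yours against. On its own merits your outline is sound and would complete into a proof. The construction step is exactly right: by \Cref{free monoidal?} you only need $\Uq$-equivariance of the given $\tlcap$ and $\tlcup$, the two zigzag identities, and the loop value $\tlcap\circ\tlcup=[2]$, all of which are short checks on the four basis vectors of $V^{\otimes 2}$ with the type II action. Your transfer of full faithfulness from type I via $V\cong V^{\mathrm{I}}\otimes\C_{-1}$ is also a legitimate route, and the step you flag as delicate does work out: since $\C_{-1}\otimes V^{\mathrm{I}}\cong V^{\mathrm{I}}\otimes\C_{-1}$ via $x\mapsto x$, $y\mapsto -y$, the identification $V^{\otimes n}\cong (V^{\mathrm{I}})^{\otimes n}\otimes\C_{(-1)^n}$ is a diagonal sign matrix in the standard basis, and one can check (already instructive for $n=2$, $i=1$) that conjugating the type II cup-cap endomorphism by this sign twist gives precisely $-U_i$ acting through the type I formulas, which is the isomorphism $\TL_n([2])\cong\TL_n(-[2])$, $U_i\mapsto -U_i$, of \Cref{discussion on braidings i}; so your square commutes and faithfulness transfers, after which the dimension count ($\dim\Hom$ equals the number of diagrams, by the type I fact and parity vanishing) yields fullness.

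Two smaller remarks. First, if you want to avoid the sign bookkeeping altogether, faithfulness can be proved directly for the type II formulas by the bending/leading-coefficient trick the paper itself uses later in the proof of \Cref{Schur-Weyl duality}: bend an $(m,n)$-diagram into a map $\C(q)\to V^{\otimes(m+n)}$ and observe that the image of $1$ has a triangular expression in the standard basis which recovers the diagram, so the diagram basis stays linearly independent; this is self-contained and arguably shorter than matching conventions with type I. Second, the alternative sketched in your closing remark is the weakest part as stated: knowing $\dim\End_{\Uq}(V^{\otimes n})$ equals the Catalan number and ``locating the isotypic idempotents in the image'' does not by itself give surjectivity of $\TL_n\to\End_{\Uq}(V^{\otimes n})$ (the isotypic idempotents generate only the centre of that semisimple algebra); you still need injectivity plus the dimension count, i.e.\ essentially your main route or the bending argument.
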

\begin{rema} \label{type A Hecke algebra}
	Recall the Hecke algebra $\Hecke_q(A_{n-1})$, which is the $\C(q)$-algebra with generators $H_1,\ldots, H_{n-1}$ and relations
	\begin{enumerate}
		\item $H_i^2=1+(q^{-1}-q)H_i$ for all $i=1,\ldots, n-1$,
		\item $H_iH_{i+1}H_i=H_{i+1}H_iH_{i+1}$ for all $i=1,\ldots,n-2$,
		\item $H_iH_j=H_jH_i$ for all $|i-j|\geq 2$.
	\end{enumerate}
 	Using the morphisms $\tlcup$ and $\tlcap$ one defines an action of the Hecke algebra $\Hecke_q(A_{n-1})$ on the tensor power of $V^{\otimes n}$ by letting $H_i$ act by $U_i+q^{-1}$, where $U_i$ is the endomorphism $\tlline\otimes\cdots \tlline\otimes\tlcupcap\otimes \tlline \otimes \cdots \tlline$ with $\tlcupcap$ acting on the $i$-th and $(i+1)$-st tensor factors. 
\end{rema}
Before we define the type $A$ Jones--Wenzl projectors, we compare in two examples the tensor product decompositions of $V\otimes V$ for type I and type II. 
\begin{beis}[$V\otimes V$ decomposition for $\Uq$, type I]
	Let $V$ the type I standard representation of $\Uq$.
	The action of $\Uq$ on $V\otimes V$ can be visualized as:
	\begin{equation*}
		\begin{tikzcd}
			& x\otimes y \arrow[ld, "q"', bend right, blue] \arrow[rd, "q", bend right, red] \arrow["q^{0}"', loop, black!50!green, distance=2em, in=125, out=55] &                                                                                                                              \\
			y\otimes y \arrow[ru, "1", bend right, red] \arrow[rd, "q^{-1}"', bend right, red] \arrow["q^{-2}"', loop, distance=2em, in=215, out=145, black!50!green] &                                                                                                                           & x\otimes x \arrow[lu, "1"', bend right, blue] \arrow[ld, "q^{-1}", bend right, blue] \arrow["q^2"', loop, black!50!green, distance=2em, in=35, out=325] \\
			& y\otimes x \arrow[lu, "1", bend right, blue] \arrow[ru, "1"', bend right, red] \arrow["q^0"', loop, distance=2em, in=305, out=235, black!50!green]  &                                                                                                                             
		\end{tikzcd}
	\end{equation*}
	This representation decomposes into the type I representations $L(q^2)$ and $L(q^0)=\mU$, where 
	\begin{equation*}
		L(q^2) \quad = \quad %
		\begin{tikzcd}
			y\otimes y \arrow[r, "1"', bend right, red] \arrow["q^{-2}"', loop, black!50!green, distance=2em, in=215, out=145, black!50!green] & x\otimes y + q^{-1} y\otimes x \arrow[l, "q+q^{-1}"', bend right, blue] \arrow[r, "q+q^{-1}"', bend right, red] \arrow["q^0"', loop, distance=2em, in=125, out=55, black!50!green] & x\otimes x \arrow[l, "1"', bend right, blue] \arrow["q^2"', loop, distance=2em, in=35, out=325, black!50!green]
		\end{tikzcd} \, 
	\end{equation*}
	and $L(q^0)=\shift{x\otimes y - q y\otimes x} \subseteq V\otimes V$. 
\end{beis}
\begin{beis}[$V\otimes V$ decomposition for $\Uq$, type II]
	Let $V$ be the type II standard representation of $\Uq$. The action of $\Uq$ on $V\otimes V$ can be vizualized as:
	\begin{equation*}
		\begin{tikzcd}
			& x\otimes y \arrow[ld, "q"', bend right, blue] \arrow[rd, "-q", bend right, red] \arrow["q^{0}"', loop, black!50!green, distance=2em, in=125, out=55] &                                                                                                                              \\
			y\otimes y \arrow[ru, "1", bend right, red] \arrow[rd, "-q^{-1}"', bend right, red] \arrow["q^{-2}"', loop, distance=2em, in=215, out=145, black!50!green] &                                                                                                                           & x\otimes x \arrow[lu, "-1"', bend right, blue] \arrow[ld, "q^{-1}", bend right, blue] \arrow["q^2"', loop, black!50!green, distance=2em, in=35, out=325] \\
			& y\otimes x \arrow[lu, "-1", bend right, blue] \arrow[ru, "1"', bend right, red] \arrow["q^0"', loop, distance=2em, in=305, out=235, black!50!green]  &                                                                                                                             
		\end{tikzcd}
	\end{equation*}
	This representation decomposes into the type I representations $L(q^2)$ and $L(q^0)=\mU$, where 
	\begin{equation*}
		L(q^2) \quad = \quad 
		\begin{tikzcd}
			y\otimes y \arrow[r, "-1"', bend right, red] \arrow["q^{-2}"', loop, black!50!green, distance=2em, in=215, out=145, black!50!green] & - x\otimes y + q^{-1} y\otimes x \arrow[l, "-q-q^{-1}"', bend right, blue] \arrow[r, "q+q^{-1}"', bend right, red] \arrow["q^0"', loop, distance=2em, in=125, out=55, black!50!green] & x\otimes x \arrow[l, "1"', bend right, blue] \arrow["q^2"', loop, distance=2em, in=35, out=325, black!50!green]
		\end{tikzcd} \, 
	\end{equation*}
	and $L(q^0)=\shift{x\otimes y + q y\otimes x} \subseteq V\otimes V$.
\end{beis}
\begin{warn}
	These two examples are both helpful and misleading. For general $n$ the tensor product decompositions of $V^{\otimes n}$ is described iteratively by the Clebsch--Gordan rule
	\[
		V \otimes L(q^n) \cong L(q^{n+1}) \oplus L(q^{n-1}),
	\]
	where $L(q^n)$ is an $(n+1)$-dimensional irreducible representation of $\Uq$ of type I/type II. In this equality tensoring with the type I standard representation preserves the type, while tensoring with the type II standard representation changes the type.
	This gives
	\begin{equation} \label{Clebsch--Gordan rule nice}
		V^{\otimes n} =L(q^n) \oplus L(q^{n-2})^{a_{n-2}} \oplus L(q^{n-4})^{a_{n-4}} \oplus \cdots 
	\end{equation}
	for certain $a_{n-2}, a_{n-4}, \ldots \in \mZ_{>0}$. In particular if $n$ is odd, the direct summands in this decomposition are either all type I or type II representations, depending on whether $V$ was of type I or of type II. Hence the $n$-th tensor powers of the type I and type II standard representations are not isomorphic if and only if $n$ is odd. However the endomorphism algebras of the tensor powers for type I and type II are always abstractly isomorphic, since they are given by $\TL_n(-[2])$ respectively $\TL_n([2])$, as discussed in \Cref{discussion on braidings i}.
\end{warn}
Now that we discussed tensor powers to a fair extent, we can define the type $A$ Jones--Wenzl projector $a_{n-1}\in \TL_n=\TL(A_{n-1})$ recursively. This projector corresponds to the projection onto the unique $(n+1)$-dimensional irreducible summand of $V^{\otimes n}$. However we give a more general definition based on \cite{lickorish1991}. Afterwards we discuss in \Cref{defis for JW} how these definitions specialize to our setup.

\begin{defi}
	Let $k$ any commutative ring, $\delta\in k$ arbitrary and $U_0,U_1,\ldots,U_{n+1}\in k^{\times}$ which satisfy the recursive identities
	\begin{equation} \label{weird equality}
		U_0=0 \quad \text{and }\quad \delta \cdot U_{m+1} = U_m + U_{m+2}
	\end{equation}
	for all $m\geq 0$. Then we define the \textbf{type $A$ Jones--Wenzl projector}
	\[
		a_{n-1} =\cbox{
			\begin{tikzpicture}[tldiagram, yscale=2/3]
				\draw \tlcoord{-1.5}{0} \lineup \lineup \lineup;
				\draw \tlcoord{-1.5}{1} \lineup \lineup \lineup;
				\draw \tlcoord{-1.5}{2} \lineup \lineup \lineup;
				\draw \tlcoord{0}{0} \maketlboxnormal{3}{$n$};
				=
			\end{tikzpicture}
		} \in \TL_n(\delta)=\TL(A_{n-1})
	\]
	recursively via $a_0=1\in \TL_1$ and
	\begin{equation*}
		a_n \quad = \quad \cbox{
			\begin{tikzpicture}[tldiagram, yscale=2/3]
				\draw \tlcoord{-1.5}{0} \lineup \lineup \lineup;
				\draw \tlcoord{-1.5}{1} \lineup \lineup \lineup;
				\draw \tlcoord{-1.5}{2} \lineup \lineup \lineup;
				\draw \tlcoord{0}{0} \maketlboxnormal{3}{$n+1$};
				=
			\end{tikzpicture}
		}
		\quad=\quad
		\cbox{
			\begin{tikzpicture}[tldiagram, yscale=2/3]
				\draw \tlcoord{-1.5}{0} \lineup \lineup \lineup;
				\draw \tlcoord{-1.5}{1} \lineup \lineup \lineup;
				\draw \tlcoord{-1.5}{2} \lineup \lineup \lineup;
				\draw \tlcoord{0}{0} \maketlboxnormal{2}{$n$};
				=
			\end{tikzpicture}
		}
		\quad-\quad
		\frac{U_n}{U_{n+1}}
		\cdot
		\cbox{
			\begin{tikzpicture}[tldiagram, yscale=2/3]
				\draw \tlcoord{-1}{0} \lineup \lineup \lineup \lineup \lineup;
				\draw \tlcoord{3}{1} \lineup;
				\draw \tlcoord{0}{1} \linedown;
				\draw \tlcoord{0}{1} \smalllineup \capright \smalllinedown \linedown;
				\draw \tlcoord{3}{1} \smalllinedown \cupright \smalllineup \lineup;
				\draw \tlcoord{0}{0} \maketlboxnormal{2}{$n$};
				\draw \tlcoord{3}{0} \maketlboxnormal{2}{$n$};
			\end{tikzpicture}
		}.
	\end{equation*}
\end{defi}
\begin{rema} \label{defis for JW}
	Note that the $U_i$ are uniquely determined by $U_1$. Moreover if $U_1=1$ we have automatically $U_2=\delta$.
	If we set $\delta=-[2]$ and $U_n= (-1)^n[n]=(-1)^n \frac{q^n-q^{-n}}{q-q^{-1}}\in\C(q)$ these formulas give an idempotent in $\TL(-[2])$, which is the endomorphism algebra of the $n$-th tensor power of the type I standard representation. On the other hand, if we set $\delta=[2]$ and $U_n=[n]$ these formulas give an idempotent in $\TL(-[2])$, which is the endomorphism algebra of the $n$-th tensor power of the type II standard representation. 
	The assumption \eqref{weird equality} in both these cases follows from the identity
	\begin{equation*}
		[2] \cdot [n] = [n+1] \oplus [n-1]
	\end{equation*}
	for quantum numbers.
\end{rema}
\begin{beis}
	Throughout this example $V$ is the type II standard representation of $\Uq$. We list the type $A$ Jones--Wenzl projectors for the cases $n=2$ and $n=3$.
	\begin{enumerate}
		\item For $n=2$ we have $\TL_2=\End_{\Ug_q}(V^{\otimes 2})\cong \Hecke_q(S_2)$. The Jones--Wenzl projector is given by
		\begin{equation*}
			a_1=1-\frac{1}{[2]}U_1=\frac{1}{[2]}(q^{-1}-H_1).
		\end{equation*}
		Visually speaking:
		\begin{equation*}
			\cbox{
				\begin{tikzpicture}[tldiagram, yscale=2/3]
					\draw \tlcoord{-1}{0} \lineup \lineup;
					\draw \tlcoord{-1}{1} \lineup \lineup;
					\draw \tlcoord{0}{0} \maketlboxnormal{2}{$2$};
				\end{tikzpicture}
			} \quad = \quad 
			\cbox{
				\begin{tikzpicture}[tldiagram]
					\draw \tlcoord{0}{0} \lineup;
					\draw \tlcoord{0}{1} \lineup;
				\end{tikzpicture}
			} \quad - \quad \frac{1}{[2]}  \cbox{
				\begin{tikzpicture}[tldiagram]
					\draw \tlcoord{0}{0} \capright;
					\draw \tlcoord{1}{0} \cupright;
				\end{tikzpicture}
			} \, .
		\end{equation*}
		\item For $n=3$ one computes
		\begin{equation*}
			a_2=1-\frac{[2]}{[3]}(U_1+U_2)+\frac{1}{[3]}(U_1U_2+U_2U_1),
		\end{equation*}
		or visually speaking
		\begin{equation*}
			\cbox{
				\begin{tikzpicture}[tldiagram, yscale=1/2, xscale=1/2]
					\draw \tlcoord{-1.5}{0} \lineup \lineup \lineup;
					\draw \tlcoord{-1.5}{1} \lineup \lineup \lineup;
					\draw \tlcoord{-1.5}{2} \lineup \lineup \lineup;
					\draw \tlcoord{0}{0} \maketlboxnormal{3}{$3$};
				\end{tikzpicture}
			}
			\quad = \quad \cbox{
				\begin{tikzpicture}[tldiagram, yscale=2/3, xscale=2/3]
					\draw \tlcoord{0}{0} \lineup;
					\draw \tlcoord{0}{1} \lineup;
					\draw \tlcoord{0}{2} \lineup;
				\end{tikzpicture}
			} 
			-\frac{[2]}{[3]}\left( 
			\cbox{
				\begin{tikzpicture}[tldiagram, yscale=2/3, xscale=2/3]
					\draw \tlcoord{0}{0} \capright;
					\draw \tlcoord{1}{0} \cupright;
					\draw \tlcoord{0}{2} \lineup;
				\end{tikzpicture}
			} + 
			\cbox{
				\begin{tikzpicture}[tldiagram, yscale=2/3, xscale=2/3]
					\draw \tlcoord{0}{0} \lineup;
					\draw \tlcoord{0}{1} \capright;
					\draw \tlcoord{1}{1} \cupright;
				\end{tikzpicture}
			} 
			\right) + \frac{1}{[3]} \left(
			\cbox{
				\begin{tikzpicture}[tldiagram, yscale=2/3, xscale=2/3]
					\draw \tlcoord{1}{0} \cupright;
					\draw \tlcoord{0}{0} \linewave{1}{2};
					\draw \tlcoord{0}{1} \capright;
				\end{tikzpicture}
			}
			+
			\cbox{
				\begin{tikzpicture}[tldiagram, yscale=2/3, xscale=2/3]
					\draw \tlcoord{0}{0} \capright;
					\draw \tlcoord{0}{2} \linewave{1}{-2};
					\draw \tlcoord{1}{1} \cupright;
				\end{tikzpicture}
			}
			\right).
		\end{equation*}
	\end{enumerate}
\end{beis}
The next fact is the most important property of the type $A$ Jones--Wenzl projector.
\begin{fact}[Characterizing property of type $A$ Jones--Wenzl projector] \label{char property type A projector}
	The Jones--Wenzl projector $a_{n-1}$ satisfies
	\begin{equation} \label{killing rule}
		a_{n-1}\circ \tlcup_i=0, \quad {\tlcap_i} \circ a_{n-1} =0 
	\end{equation}
	for all $i=1,\ldots, n$. Here $\tlcup_i$ (resp.\ $\tlcap_i$) are defined by applying $\tlcup$ (resp.\ $\tlcap$) to the $i$-th tensor factor:
	\[
		\tlcup_i=\tlline \cdots \tlline \tlcup\tlline \cdots \tlline\colon V^{\otimes (n-2)}\rightarrow V^{\otimes n} \, , \quad \tlcap_i=\tlline \cdots \tlline\tlcap \tlline \cdots \tlline \colon V^{\otimes (n-2)}\rightarrow V^{\otimes n}.
	\]
	It is the unique nonzero idempotent in $\TL_n$, which satisfies \eqref{killing rule} and every other element in $\TL_n$, which satisfies \eqref{killing rule} is a scalar multiple of $a_{n-1}$. 
\end{fact}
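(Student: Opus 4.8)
The plan is to establish the three assertions in turn: first the killing rule $a_{n-1}\circ\tlcup_i = 0 = \tlcap_i\circ a_{n-1}$ for all $i$, then uniqueness of $a_{n-1}$ among nonzero idempotents with this property, and finally that any element with this property is a scalar multiple of $a_{n-1}$. The first assertion I would prove by induction on $n$ directly from the recursive definition above; the last two will then follow from a short structural argument inside $\TL_n$.

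First step. I would run a simultaneous induction on $n$ establishing, at level $n$: (a) $a_{n-1}^2 = a_{n-1}$; (b) $\tlcap_i a_{n-1} = 0 = a_{n-1}\tlcup_i$ for $1\le i\le n-1$; and (c) the partial-trace identity, that closing up the last strand of $a_{n-1}\in\TL_n$ returns $\tfrac{U_{n+1}}{U_n}\,a_{n-2}\in\TL_{n-1}$. Write $1$ for the identity diagram, $e_n\in\TL_{n+1}$ for the Temperley--Lieb generator on strands $n,n+1$ and $p_n := a_{n-1}\otimes 1\in\TL_{n+1}$, so the recursion reads $a_n = p_n - \tfrac{U_n}{U_{n+1}}\,p_n e_n p_n$. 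From $p_n^2 = p_n$ (using (a) at level $n$) one gets $a_n p_n = a_n = p_n a_n$; feeding (c) into the middle of $e_n p_n e_n$ (the ``bubble'' collapse $e_n p_n e_n = \tfrac{U_{n+1}}{U_n}(a_{n-2}\otimes 1\otimes 1)e_n$) together with $a_{n-1}(a_{n-2}\otimes 1) = a_{n-1}$ yields $p_n e_n p_n e_n = \tfrac{U_{n+1}}{U_n}\,p_n e_n$, whence $a_n e_n = 0$ and therefore $a_n^2 = a_n$. For (b) at level $n+1$: when $i\le n-1$ both summands of $a_n$ have $p_n$ as an outer factor, so they are killed by $\tlcap_i$ and $\tlcup_i$ by (b) at level $n$; for $i = n$, from $e_n = \tlcup_n\circ\tlcap_n$ and $\tlcap_n\circ\tlcup_n = \delta\cdot 1$ (with $\delta = U_2/U_1\in k^\times$, automatic here) one rewrites $a_n\tlcup_n = \delta^{-1}a_n e_n\tlcup_n = 0$ and dually $\tlcap_n a_n = 0$. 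Finally (c) at level $n+1$ is verified by applying the partial trace to the recursion and invoking (c) at level $n$.

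Second step (uniqueness). Once (b) holds, $U_i = \tlcup_i\circ\tlcap_i$ gives $a_{n-1}U_i = 0 = U_i a_{n-1}$ for every $i$. Let $J_n\trianglelefteq\TL_n$ be the ideal generated by $U_1,\dots,U_{n-1}$; recall $\TL_n = k\cdot 1\oplus J_n$ as a $k$-module, $J_n = \sum_i U_i\TL_n = \sum_i\TL_n U_i$ (every non-identity diagram has both an innermost top turn-back and an innermost bottom turn-back), and $a_{n-1}\equiv 1\pmod{J_n}$ by the recursion. Hence $a_{n-1}J_n = 0 = J_n a_{n-1}$, so the corner $a_{n-1}\TL_n a_{n-1} = k\,a_{n-1}$ is one-dimensional. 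Now if $z\in\TL_n$ satisfies the killing rule, then $zU_i = 0 = U_i z$ for all $i$, so $zJ_n = 0 = J_n z$, and writing $1 = a_{n-1} - j_0$ with $j_0\in J_n$ gives $z a_{n-1} = z = a_{n-1} z$, hence $z = a_{n-1}z a_{n-1}\in k\,a_{n-1}$. Thus every such $z$ is a scalar multiple of $a_{n-1}$ (the third assertion); if moreover $z\ne 0$ is idempotent, then $z = c\,a_{n-1}$ with $c^2 = c\ne 0$, so $c = 1$ and $z = a_{n-1}$, giving uniqueness.

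The main obstacle will be the bookkeeping in the first step: pinning down the indices and the coefficient $U_{n+1}/U_n$ in the partial-trace identity (c), and checking that the recursion genuinely propagates the whole bundle (a)--(c) rather than one piece at a time — the key algebraic inputs being $p_n^2 = p_n$ (hence $a_n p_n = a_n = p_n a_n$), $a_{n-1}(a_{n-2}\otimes 1) = a_{n-1}$, and the bubble collapse. As an alternative to the second step, over $k=\C(q)$ with $\delta = \mp[2]$ one may argue representation-theoretically: $\TL_n$ is semisimple, $V^{\otimes n}$ contains $L(q^n)$ with multiplicity one, every other simple constituent lies in $\sum_i \im\tlcup_i$ (it already occurs in $V^{\otimes n-2}$ by Clebsch--Gordan), and $a_{n-1}$ is the idempotent projecting onto the $L(q^n)$-isotypic component; but the argument above has the advantage of working verbatim over the general ring $k$ of \Cref{defis for JW}.
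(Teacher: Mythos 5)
Your argument is correct. Note that the paper states this as a \emph{Fact} and supplies no proof (it is recalled from the literature); the closest in-paper comparison is the proof of the type $B$ analogue, \Cref{Theorem on Existence of JW projectors of type B}, and your strategy mirrors that proof: a simultaneous induction carrying idempotency and the killing property through the recursion, followed by a uniqueness argument resting on the $k$-module decomposition $\TL_n = k\cdot 1\oplus(U_1,\dots,U_{n-1})$ and the observation that an element killing the ideal and congruent to $1$ modulo it spans the corner $a_{n-1}\TL_n a_{n-1}$. The one genuine difference is how the key identity $(p_ne_n)^2=\tfrac{U_{n+1}}{U_n}\,p_ne_n$ is obtained: you derive it from your partial-trace statement (c) via the bubble collapse (Wenzl's original route), whereas the paper's type-$B$ proof carries the corresponding identity for $(U_nb_n)^2$ and $(b_nU_n)^2$ as part of the inductive bundle and verifies it directly from the Temperley--Lieb relations, never invoking partial traces. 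Both work; your route costs the extra bookkeeping of propagating (c) but explains where the coefficient $U_{n+1}/U_n$ comes from, while the paper's is a shorter pure computation. Two small caveats: the index range in the statement should read $i=1,\dots,n-1$ (a cup in $\TL_n$ has only $n-1$ positions), which your level-shifted induction in $\TL_{n+1}$ in fact handles correctly; and the final step $c^2=c$, $c\neq 0\Rightarrow c=1$ needs $k$ to have no nontrivial idempotents --- true for the paper's $\C(q)$ and $\mZ[[q]][q^{-1}]$, but worth flagging since you claim the argument works verbatim over the general ring of \Cref{defis for JW}.
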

Before we end this section reviewing type $A$ we want to advertise the following perspective, which the author gained through the introduction of \cite{wedrich2018}.
\begin{rema}[Grothendieck groups]
	Throughout this remark we work with the non-quantized case for simplicity. We have restriction functors 
	\[
		F\colon \Rep(\sl2)\to \Rep(\h) \quad \text{and} \quad G\colon \Rep(\h)\to \kVect(k),
	\] 
	which are $k$-linear, monoidal and exact. Since all finite dimensional $\sl2$-representations have weight space decompositions with integral weights, we can view $F$ (resp.\ $G$) as a functor mapping to (resp.\ from) the full subcategory $\Rep(\h)^{\nice}$ of $\Rep(\h)$ consisting of all finite dimensional semisimple $\h$-representations with simples given by $1$-dimensional eigenspaces $\{\C_n\vert n\in\mZ\}$ for the action of $h\in\h$. In particular $\Ko(\Rep(\h)^{\nice})=\mZ[q,q^{-1}]$ is a ring of Laurent polynomials in the variable $q=[\C_1]$ and here plays the role of the character ring. By the Clebsch--Gordan formula the Grothendieck ring $\Ko(\Rep(\sl2))$ is a polynomial ring $\mZ[x]$ in one variable $x$, where $x=[V]$ is the class of the standard representation. Of course we have $\Ko(\kVect(k))=\mZ$.
	The functors $F$ and $G$ induce ring homomorphisms
	\begin{equation*}
		f\colon \mZ[x]=\Ko(\Rep(\sl2)) \to \Ko(\Rep(\h)^{\nice})=\mZ[q,q^{-1}]
	\end{equation*} 
	and
	\begin{equation*}
		g\colon \mZ[q,q^{-1}]=\Ko(\Rep(\h)^{\nice}) \to \Ko(\kVect(k))=\mZ.
	\end{equation*}
	The first morphism sends the class of a representation $W$ to its character, while the second sends a representation to its dimension. Concretely $f$ is the inclusion $\mZ[x]\hookrightarrow \mZ[q,q^{-1}]$ given by $x=[V]\mapsto [\C_1]+[\C_{-1}]=q+q^{-1}$ and $g$ is the projection $\mZ[q,q^{-1}]\to \mZ$ given by setting $q=1$. The first Grothendieck group $\Ko(\Rep(\sl2))$ has a basis given by the classes of the simple objects $\{[L(n)] \mid n\in\mN\}$, which satisfy
	\begin{equation*}
		f([L(n)])=q^n+q^{n-2}+\ldots+q^{n-2}+q^{-n}=[n+1]\in\mZ[q,q^{-1}].
	\end{equation*}
	We have $\Rep(\sl2)\simeq\Kar(\Add(\TL(-2)))$ as symmetric monoidal, $k$-linear categories, where $\Kar(\Add(\TL(-2)))$ denotes the idempotent completion of the additive closure of the Temperley--Lieb category $\TL(-2)$ (see e.g.\ \cite{comes12} for these notions). Under this equivalence we have
	\begin{equation*}
		[L(n)]=(n, a_{n-1}),
	\end{equation*}
	where $a_{n-1}\in\TL_n(-2)$ is the Jones--Wenzl Projector. The Clebsch--Gordan formula in the non-quantized setting states that
	\begin{equation*}
		[n]\cdot [2]=[L(n)]\otimes[V]=[L(n+1)]+[L(n-1)]=[n+1]+[n-1],
	\end{equation*}
	which can already be seen in $\Kar(\Add(\TL(-2)))$ via the decomposition of $f_n\otimes \tlline$ as
	\begin{equation*}
		\cbox{
			\begin{tikzpicture}[tldiagram, yscale=2/3]
				\draw \tlcoord{-1.5}{0} \lineup \lineup \lineup;
				\draw \tlcoord{-1.5}{1} \lineup \lineup \lineup;
				\draw \tlcoord{-1.5}{2} \lineup \lineup \lineup;
				\draw \tlcoord{0}{0} \maketlboxnormal{2}{$n$};
				=
			\end{tikzpicture}
		}
		\quad=\quad
		-\cbox{
			\begin{tikzpicture}[tldiagram, yscale=2/3]
				\draw \tlcoord{-1.5}{0} \lineup \lineup \lineup;
				\draw \tlcoord{-1.5}{1} \lineup \lineup \lineup;
				\draw \tlcoord{-1.5}{2} \lineup \lineup \lineup;
				\draw \tlcoord{0}{0} \maketlboxnormal{3}{$n+1$};
				=
			\end{tikzpicture}
		}
		\quad+\quad
		\frac{n}{n+1}
		\cdot
		\cbox{
			\begin{tikzpicture}[tldiagram, yscale=2/3]
				\draw \tlcoord{-1}{0} \lineup \lineup \lineup \lineup \lineup;
				\draw \tlcoord{3}{1} \lineup;
				\draw \tlcoord{0}{1} \linedown;
				\draw \tlcoord{0}{1} \smalllineup \capright \smalllinedown \linedown;
				\draw \tlcoord{3}{1} \smalllinedown \cupright \smalllineup \lineup;
				\draw \tlcoord{0}{0} \maketlboxnormal{2}{$n$};
				\draw \tlcoord{3}{0} \maketlboxnormal{2}{$n$};
			\end{tikzpicture}
		}
	\end{equation*}
	into orthogonal idempotents projecting onto $L(n+1)$ and $L(n-1)$.
\end{rema}

\section{Classical and quantum Schur--Weyl duality for \texorpdfstring{$W(B_n)$}{W(Bn)} and \texorpdfstring{$\Ug(\gl_m\times \gl_m)$}{U(glm x glm)}} \label{Chapter 2 section 2} 

In this section we state the quantum coideal version of the Schur--Weyl duality between the group $W(B_n)$ and the Lie algebra $\gl_m\times\gl_m$. The main goal of this section is prove a special case of this Schur--Weyl duality and decompose tensor powers explicitly in \Cref{Theorem decomposition of tensor power of V}. This section is largely influenced by \cite[\S7]{stroppel2018}.

We start by introducing the quantized versions of both participants in the Schur--Weyl duality.
The first participant is the $\C(q)$-algebra $\Hecke_{1,q}(B_n)$, which is a specialization of the $2$-parameter Hecke algebra $\Hecke_{v,q}(B_n)$ defined over $\C(v,q)$, the field of rational functions in two indeterminates.  
\begin{defi} \label{definition Hecke algebra}
	The \textbf{$2$-parameter Hecke algebra} $\Hecke_{v,q}(B_n)$ of type $B_n$ is the associative, unital $\C(v,q)$-algebra with generators $H_0, H_1, \ldots, H_{n-1}$ and relations
	\begin{enumerate}[label = {(H\arabic*)}, align=left]
		\item \label{Hecke1 parameter relation}
		$H_0^2=1 + (v^{-1}-v)H_0$,
		\item \label{Hecke2 parameter relation}
		$H_i^2=1 + (q^{-1}-q)H_i$ for all $1\leq i\leq n-1$,
		\item \label{Hecke type B relation}
		$H_0H_1H_0H_1=H_1H_0H_1H_0$,
		\item \label{Hecke usual Braid relation}
		$H_iH_{i+1}H_i=H_{i+1}H_iH_{i+1}$ for all $1\leq i \leq n-2$,
		\item \label{Hecke usual comm relation}
		$H_iH_j=H_j H_i$ for all $0\leq i,j \leq n-1$ such that $|i-j|\geq 2$.
	\end{enumerate}
	The \textbf{specialized Hecke algebra} $\Hecke_{q}(B_n)\coloneqq\Hecke_{1,q}(B_n)$ is $\C(q)$-algebra, obtained by specializing $\Hecke_{v,q}(B_n)$ by the relation $v=1$. We refer to the image of $H_0$ in $\Hecke_{1,q}(B_n)$ by $s_0$, since it becomes self-inverse.
\end{defi}

Next we introduce the second participant of the Schur--Weyl duality, which is a coideal version of the universal enveloping algebra $U(\gl_m\times \gl_m)$. First recall the definition of the universal enveloping algebra $U_q(\gl_{2m})$. We fix two labeling sets  
\[
I=\{-m+1,-m+2,\ldots, m-2, m-1\}, \quad J=\{-m+\frac{1}{2}, -m+\frac{3}{2}, \ldots, m-\frac{3}{2},m-\frac{1}{2}\}
\]
for a fixed natural number $m\geq 1$.
\begin{defi}
	The \textbf{quantum group} $\Ug_q\coloneqq\Uzwei$ is the (associative, unital) $\C(q)$-algebra with generators $\{E_i, F_i\mid i\in I\} \cup \{ D_j^{\pm1} \mid j\in J\}$ subject to the relations:
	\begin{enumerate}[label = {(U\arabic*)}, align=left]
		\item[\textbf{Cartan relations:}] \label{Commutativity relation}
		The $\{D_j^{\pm 1}\mid j\in J\}$ commute pairwise, moreover $D_jD_j^{-1}=1$ for all $j\in J$.
		\item[\textbf{$\gl_2$-relations:}] \label{gl2 relation}
		\begin{gather*}
			D_jE_iD_j^{-1}=\begin{cases}
				qE_i, & \text{for $j=i+\frac{1}{2}$}, \\
				q^{-1}E_i, & \text{for $j=i-\frac{1}{2}$}, \\
				E_i, & \text{otherwise,}
			\end{cases} \quad D_jF_iD_j^{-1}=\begin{cases}
				q^{-1}F_i, & \text{for $j=i+\frac{1}{2}$}, \\
				qF_i, & \text{for $j=i-\frac{1}{2}$}, \\
				F_i, & \text{otherwise,}
			\end{cases} \\
			[E_i,F_i]=\frac{K_i-K_i^{-1}}{q-q^{-1}}
		\end{gather*}
		for all $i\in I,j\in J$ where $K_i\coloneqq D_{i+\frac{1}{2}}D_{i-\frac{1}{2}}^{-1}$.
		\item[\textbf{Serre relations:}]
		\[
		E_i^2E_{i'}-[2]E_iE_{i'}E_i+ E_{i'}E_i^2=0, \quad F_i^2F_{i'}-[2]F_iF_{i'}F_i+ F_{i'}F_i^2=0
		\]
		for all $i,i'$ such that $i'=i\pm 1$. For all other $i,i'$ we have $E_iE_{i'}=E_{i'}E_i$ and $F_iF_{i'}=F_{i'}F_i$. Here $[2]=q+q^{-1}\in \C(q)$ is the second quantum number.
	\end{enumerate}
	The assignments
	\begin{align*}
		\Delta\colon \Ug &\to \Ug \otimes \Ug \\
		E_i &\mapsto E_i \otimes 1 + K\otimes E_i,\\
		F_i &\mapsto F_i \otimes K^{-1} + 1 \otimes F_i,\\
		D_j &\mapsto D_j\otimes D_j
	\end{align*}
	define a comultiplication, which additionally determines a counit $\varepsilon\colon \Uzwei \rightarrow \C(q)$ and an antipode $S\colon \Uzwei \rightarrow \Uzwei$. This data turns the algebra into a Hopf algebra. 
\end{defi}

Now we can define the subalgebra $\Ug^{\iota}\coloneqq\Ug_q'(\gl_m\times \gl_m)$ of $\Ug_q=\Uzwei$. The reader should think of $\Ug^{\iota}$ as a quantized version of a universal enveloping algebra $\Ug(\lie)$ of some lie subalgebra $\lie\subseteq \gl_{2m}$ isomorphic to $\gl_m\times \gl_m$.
\begin{defi}
	The \textbf{coideal subalgebra} $\Ug^{\iota}\coloneqq\Ug_q(\gl_m\times \gl_m)\subseteq\Ug$ is the $\C(q)$-subalgebra of $\Ug$ generated by the elements
	\[
	\{C_j^{\pm 1}\coloneqq (D_jD_{-j})^{\pm1} \mid j\in J\} \cup \{ B_i\coloneqq E_iK_{-i}^{-1}+F_{-i}\mid i\in I\} \cup \{ B_0=q^{-1}E_0K_0^{-1}+F_0\}.
	\]
\end{defi}
This definition is from \cite{stroppel2018}. In \cite{watanabe2021} a braid group action was defined on this algebra as well as crystal bases for its integrable representations. We will not use these tools for the thesis, since we are mostly interested in the case $m=1$, which can be treated more explicitly.

\begin{beis}
	The most important case for the thesis is $\Vq$. In order to simplify the notation we write 
	\[
	E\coloneqq E_0, \quad F\coloneqq F_0, \quad K\coloneqq K_0, \quad D_1\coloneqq D_{\frac{1}{2}},\quad D_2\coloneqq D_{-\frac{1}{2}}.
	\]
	The coideal subalgebra $\Ug^{\iota}=\Vq$ is with this notation generated by  $C\coloneqq D_1D_2$, its inverse and $B\coloneqq q^{-1}EK^{-1}+F$. Since the element  $D_1D_2\in \Ug$ is central we see by the PBW-theorem that $U^{\iota}$ is as a $\C(q)$-algebra isomorphic to the (commutative!) algebra $\C(q)[C,C^{-1},B]$. Our interest in the subalgebra $\Ug^{\iota}$ comes from the following computation, which shows that $\Ug^{\iota}$ is indeed a right coideal of $\Ug$:
	\begin{align*}
		\Delta (B)&=\Delta ( q^{-1} EK^{-1} + F) \\
		&=q^{-1} \Delta (E) \Delta (K^{-1}) + \Delta (F) \\
		&= q^{-1} (E \otimes 1 + K \otimes E) \cdot (K^{-1} \otimes K^{-1}) + F \otimes K^{-1} + 1 \otimes F \\
		&=q^{-1} EK^{-1}\otimes K^{-1} +q^{-1} 1\otimes K^{-1} + F \otimes K^{-1} + 1 \otimes F \\
		&= \underbrace{B \otimes K^{-1}}_{\in \Ug^{\iota}  \otimes \Ug} + \underbrace{1 \otimes B}_{\in \Ug^{\iota}  \otimes \Ug} \in \Ug^{\iota} \otimes \Ug.
	\end{align*}
	This justifies why we refer to $\U^{\iota}$ as ``coideal subalgebra'' and not itself as a quantum group, since it is not a Hopf ideal and hence no Hopf algebra with the restricted comultiplication.
\end{beis}
The next remark comes from a discussion with Bart Vlaar and explains the misleading ``asymmetry'' in the definition of the element $B\in\Vq$ in addition to why one defines this element in this way.
\begin{rema} \label{assymmetry explained}
	When looking at the definition of $\Ug^{\iota}$ one might wonder, why the element $B=q^{-1}EK^{-1}+F$ looks so asymmetrical in $E$ and $F$. The reason is the following. We want that the algebra $\Ug^{\iota}$ is a quantized version of the subalgebra of $\Ug(\gl_2)$ generated by $c\coloneqq E_{1,1}+E_{2,2}$ and $b\coloneqq e+f$. This quantized version should return the classical action of this subalgebra on tensor powers of $V$ if we set $q=1$. However the ``naive'' definition of $B$ given by $B\coloneqq E+F\in \Ug_q(\gl_2)$ is badly behaved, since $E+F$ is not anywhere close to being a primitive element:
	\[
	\Delta(E+F)=E\otimes 1 + K \otimes E + F \otimes K^{-1} + 1 \otimes F. 
	\]
	This is a problem, since $e+f\in \Ug(\gl_2)$ is of course primitive and we would like a quantized version to resemble this behavior. By defining $B\coloneqq q^{-1}EK^{-1} + F$ we obtain $\Delta(B)=B\otimes K^{-1} + 1\otimes B$, i.e.\ $\Delta(B)$ behaves just like $\Delta(F)$. The scalar $q^{-1}$ just ``cancels'' the contribution of the $K^{-1}$ we had to use and makes the action of $B$ on $V$ to something which we would expect. In particular one can define an analogue $B'$ of $B$ in such a way that $\Delta(B')=B'\otimes 1 + K \otimes B'$, i.e.\ its comultiplication behaves in the same way as $E$. In the same spirit as before the sensible definition (with the same scalar philosophy) would be $B'\coloneqq E + q KF$. However note that the subalgebra generated by $D_1D_2$ and $B'$ would then be a left coideal instead of a right coideal.
	The missing symmetry in $E$ and $F$ can be therefore recovered by relating these two elements via an involution of $\Ug_q(\gl_2)$. Indeed the ``quantum transpose'', which is the $q$-antilinear anti-algebra-involution
	\begin{align*}
		\Phi\colon \Ug_q(\gl_2) &\rightarrow \Ug_q(\gl_2) \\
		E &\mapsto F \\
		F &\mapsto E \\
		D_i &\mapsto D_i^{-1} \\
		q &\mapsto q^{-1}, 
	\end{align*}     
	exchanges the roles of $E$ and $F$ and hence the roles of $B=q^{-1}EK^{-1} + F$ and $B'=E + q KF$, which both quantize $e+f$.
\end{rema}
Next we state the quantum Schur--Weyl duality between $\Hecke_{1,q}(B_n)$ and $\Ug_q(\gl_m\times \gl_m)$. In order to state the theorem, we recall the standard representation $V$ of type I of $\Uzwei$ and define an action of $\Hecke_{1,q}(B_n)$ on the tensor power $V^{\otimes n}$.
\begin{defi}
	The standard representation of $\Ug=\Ug_q(\gl_{2n})$ is the $2n$-dimensional vector space $V$ with basis $\{x_j\mid j\in J\}$ and left $\Ug$ action given by the formulas
	\[
	D_j\cdot x_{j'} = q^{\delta_{j,j'}} x_j, E_i \cdot x_j = \delta_{i,j+\frac{1}{2}}x_{j+1}, F_i \cdot x_j = \delta_{i,j-\frac{1}{2}}x_{j-1}.
	\]
\end{defi}
\begin{lemm} \label{right action of Hecke}
	The Hecke algebra $\Hecke_q(B_n)$ acts from the right on $V^{\otimes n}$ via the following rules. The generator $H_i$ for $1\leq i \leq n-1$ acts on the $i$-th and $(i+1)$-th tensor factor via the formulas
	\[
	(x_j \otimes x_{j'})\cdot H_i = \begin{cases}
		x_{j'}\otimes x_j, & \text{if $j<j'$,} \\
		x_{j'}\otimes x_j + (q^{-1}-q) x_j \otimes x_{j'},& \text{if $j>j'$,}\\
		q^{-1} x_{j'} \otimes x_j, & \text{if $j=j'$,}
	\end{cases} 
	\]
	and the generator $s_0=H_0$ acts on the first tensor-factor by $v_j \mapsto v_{-j}$.
\end{lemm}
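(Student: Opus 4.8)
The plan is to prove the lemma by showing that the proposed operators satisfy the defining relations \ref{Hecke1 parameter relation}--\ref{Hecke usual comm relation} of \Cref{definition Hecke algebra} after the specialization $v=1$. Giving a right $\Hecke_q(B_n)$-module structure amounts to an anti-homomorphism $\Hecke_q(B_n)\to\End_{\C(q)}(V^{\otimes n})$; but every relation \ref{Hecke1 parameter relation}--\ref{Hecke usual comm relation} is invariant under reversing the order of the two sides (a relation in a single generator, a braid relation, and a commutation relation are all their own reverses), so verifying it for right-acting operators is literally the same computation as verifying it for left-acting operators. Thus it suffices to check that the endomorphisms $v\mapsto v\cdot H_i$ for $1\le i\le n-1$ and $v\mapsto v\cdot s_0$ of $V^{\otimes n}$ satisfy \ref{Hecke1 parameter relation}--\ref{Hecke usual comm relation}.

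\textbf{Locality and the easy relations.} Each $H_i$ ($1\le i\le n-1$) acts as the identity on all tensor factors other than the $i$-th and $(i+1)$-st, and $s_0$ acts as the identity on all factors other than the first. Consequently relation \ref{Hecke usual comm relation} for $|i-j|\ge 2$ and the relation $s_0H_i=H_is_0$ for $i\ge 2$ are automatic; relation \ref{Hecke1 parameter relation} at $v=1$ reads $s_0^2=1$, which is immediate from $x_j\mapsto x_{-j}\mapsto x_j$; and the remaining relations \ref{Hecke2 parameter relation}, \ref{Hecke usual Braid relation} and \ref{Hecke type B relation} involve only two or three adjacent tensor factors, so they reduce respectively to identities on $V^{\otimes 2}$, $V^{\otimes 3}$ and $V^{\otimes 2}$.

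\textbf{Type $A$ relations.} The identities \ref{Hecke2 parameter relation} and \ref{Hecke usual Braid relation} concern only $H_1,\dots,H_{n-1}$, and the given formulas are exactly the standard quantum $R$-matrix action of $\Hecke_q(A_{n-1})$ on the $n$-th tensor power of the vector representation of $\Ug_q(\gl_{2m})$; this is classical, but for a self-contained account I would verify it directly. For \ref{Hecke2 parameter relation} one fixes an ordered pair $(j,j')$ and computes $(x_j\otimes x_{j'})\cdot H_i^2$ in the three cases $j<j'$, $j>j'$, $j=j'$, comparing with $(x_j\otimes x_{j'})\cdot\bigl(1+(q^{-1}-q)H_i\bigr)$. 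For \ref{Hecke usual Braid relation} one works on $V^{\otimes 3}$ and organizes the basis $\{x_{j_1}\otimes x_{j_2}\otimes x_{j_3}\}$ by the pattern of equalities and strict inequalities among $j_1,j_2,j_3$; this is a finite, if lengthy, case check.

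\textbf{The type $B$ braid relation, and the main obstacle.} The only genuinely new relation is \ref{Hecke type B relation}, $s_0H_1s_0H_1=H_1s_0H_1s_0$, which lives on $V^{\otimes 2}$, and I would check it on each basis vector $x_j\otimes x_{j'}$. Since $s_0$ merely replaces $j$ by $-j$ in the first factor while $H_1$ depends only on the order of the first two indices, the bookkeeping reduces to the signs of $j$ and $j'$ together with whether $|j|<|j'|$, $|j|>|j'|$ or $|j|=|j'|$; the subcases $j=j'$ and $j=-j'$ need separate attention because of the $q^{-1}$-eigenvalue and the diagonal correction term $(q^{-1}-q)$. Alternatively one can package the $H_1$-action as a normalized $R$-matrix endomorphism $\check R$ of $V\otimes V$ together with the involution $\theta\colon x_j\mapsto x_{-j}$ of $V$, so that \ref{Hecke type B relation} becomes the reflection equation $\check R\,(\theta\otimes\id)\,\check R\,(\theta\otimes\id)=(\theta\otimes\id)\,\check R\,(\theta\otimes\id)\,\check R$, which one could hope to deduce from the fact that $\theta$ implements the symmetry underlying the quantum symmetric pair $(\Ug_q(\gl_{2m}),\Vq)$; whether this is shorter than the brute-force verification is unclear, so I would attempt the direct check first. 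This relation \ref{Hecke type B relation} is exactly where the sign flip of $s_0$, the order-dependence of $H_1$ and the correction term $(q^{-1}-q)$ all interact, and it is the only place in the proof where the case analysis genuinely proliferates; I expect it to be the main obstacle.
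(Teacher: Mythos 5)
Your proposal is correct and follows essentially the same route as the paper: reduce to verifying the defining relations of $\Hecke_{1,q}(B_n)$, dispose of the quadratic, commutation and type $A$ relations by locality and standard Schur--Weyl facts, and isolate the type $B$ relation \ref{Hecke type B relation} as the only genuinely new check, done by a case analysis on basis vectors of $V^{\otimes 2}$. The only difference is scope: the paper carries out this verification explicitly only for $m=1$ (where $V$ is two-dimensional) and cites \cite[Lemma 10.3]{stroppel2018} for general $m$, whereas you sketch the general case directly.
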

\begin{proof}
	This is \cite[Lemma 10.3]{stroppel2018}. We provide a part of the proof later in the case $m=1$, i.e.\ where $V$ is two-dimensional.
	%
\end{proof}

The next example visualizes the action of $\Vq$ on $V \otimes V$ and is of great importance.
\begin{beis} \label{example standard representation m=1}
	Let $V$ the type I standard representation of $\Ug_q(\gl_2)$.
	We simplify again the notation and redefine $x\coloneqq x_{\frac{1}{2}}, y\coloneqq x_{-\frac{1}{2}}$.
	The representation $V$ is $2$-dimensional and is usually drawn as
	\begin{equation*}
		\begin{tikzcd}
			y \arrow[r, "1"', bend right, red] \arrow["1"', loop, distance=2em, in=125, out=55, black!50!green] \arrow["q"', loop, distance=2em, in=305, out=235, orange] & x \arrow["q"', loop, distance=2em, in=125, out=55, black!50!green] \arrow[l, "1"', bend right, blue] \arrow["1"', loop, distance=2em, in=305, out=235, orange]
		\end{tikzcd},
	\end{equation*} 	
	where the action of $\textcolor{black!50!green}{D_1}$ is visualized \textcolor{black!50!green}{green}, the action of $\textcolor{orange}{D_2}$ is \textcolor{orange}{orange}, the action of $\textcolor{red}{E}$ is \textcolor{red}{red} and the action of $\textcolor{blue}{F}$ is \textcolor{blue}{blue}. 
	In this picture we omitted the actions of $D_1,D_2$.
	When restricting to the action of $\Ug^{\iota}=\Vq$ these two pictures become
	\[
	\begin{tikzcd}
		y \arrow[r, "1"', bend right, purple] \arrow["q"', loop, distance=2em, in=125, out=55, gray]  & x \arrow["q"', loop, distance=2em, in=125, out=55, gray] \arrow[l, "1"', bend right, purple] 
	\end{tikzcd} \rightsquigarrow
\begin{tikzcd}
y \arrow[r, "1"', bend right, purple]   & x  \arrow[l, "1"', bend right, purple] 
\end{tikzcd}
	\]
	and
	\begin{equation} \label{action on second tensor power for coideal}
		\begin{tikzcd}
			& x\otimes y \arrow[ld, "q"', bend right, purple] \arrow[rd, "1", bend right, purple] \arrow["q^{2}"', loop, gray, distance=2em, in=125, out=55] &                                                                                                                              \\
			y\otimes y \arrow[ru, "q", bend right, purple] \arrow[rd, "1"', bend right, purple] \arrow["q^{2}"', loop, distance=2em, in=215, out=145, gray] &                                                                                                                           & x \otimes x \arrow[lu, "1"', bend right, purple] \arrow[ld, "q^{-1}", bend right, purple] \arrow["q^2"', loop, gray, distance=2em, in=35, out=325] \\
			& y \otimes x \arrow[lu, "1", bend right, purple] \arrow[ru, "q^{-1}"', bend right, purple] \arrow["q^2"', loop, distance=2em, in=305, out=235, gray]  &                                                                                                                             
		\end{tikzcd},
	\end{equation} 
	where the action of $\textcolor{gray}{C}$ is visualized \textcolor{gray}{gray} and the action of $\textcolor{purple}{B}$ is \textcolor{purple}{purple}. 
	By looking carefully one sees that $V\otimes V$ has a non-trivial eigenbasis for $B$ and $C$ given by
	\begin{equation} \label{eigen basis in second tensor power}
		\begin{array}{c|c}
			\text{basis vector} & \text{eigenvalue of } B  \\ \hline
			x\otimes x + y \otimes x + q x \otimes y + q y \otimes y & q+q^{-1} \\
			x\otimes x + y \otimes x - q^{-1} x \otimes y - q^{-1} y \otimes y & 0 \\
			x\otimes x - y \otimes x + q^{-1} x \otimes y - q^{-1} y \otimes y & 0 \\
			x\otimes x - y \otimes x - q x \otimes y + q y \otimes y & -q-q^{-1}.
		\end{array}
	\end{equation}
	If we compare picture \eqref{action on second tensor power for coideal} with the action of $B'=E+qKF$ and $C^{-1}$ in the spirit of \Cref{assymmetry explained} we see
	\begin{equation*}
		\begin{tikzcd}
			& x\otimes y \arrow[ld, "1"', bend right, brown] \arrow[rd, "q", bend right, brown] \arrow["q^{-2}"', loop, darkgray, distance=2em, in=125, out=55] &                                                                                                                              \\
			y\otimes y \arrow[ru, "1", bend right, brown] \arrow[rd, "q^{-1}"', bend right, brown] \arrow["q^{-2}"', loop, distance=2em, in=215, out=145, darkgray] &                                                                                                                           & x \otimes x \arrow[lu, "q"', bend right, brown] \arrow[ld, "1", bend right, brown] \arrow["q^{-2}"', loop, darkgray, distance=2em, in=35, out=325] \\
			& y \otimes x \arrow[lu, "q^{-1}", bend right, brown] \arrow[ru, "1"', bend right, brown] \arrow["q^{-2}"', loop, distance=2em, in=305, out=235, darkgray]  &                                                                                                                             
		\end{tikzcd}.
	\end{equation*}
	This action is the same up to a $q$-antilinear automorphism of $V$, which flips $x$ and $y$, so there is nothing new going on here. In particular we can insert the eigenbasis \eqref{eigen basis in second tensor power} into the automorphism to get the corresponding eigenbasis. An interesting observation, which one obtains from this calculation, is that the eigenbasis for $B$ is already an eigenbasis for $B'$, since the automorphism of $V\otimes V$ rescales the basis \eqref{eigen basis in second tensor power}. 
\end{beis}

\begin{proof}[Proof of \Cref{right action of Hecke} for $m=1$]
	With the notation from \Cref{example standard representation m=1} the action of $\Hecke_{1,q}(B_n)$ on $V^{\otimes n}$ looks as follows. The element $H_i$ acts on the $i$th and $(i+1)$st tensor factors by
	\begin{align*} 
		(x \otimes x)\cdot H_i = q^{-1} x\otimes x &, \quad (x \otimes y)\cdot H_i = y \otimes x\\
		(y \otimes x)\cdot H_i = x\otimes y + (q^{-1} - q) x \otimes y &, \quad (y\otimes y) \cdot H_i = q^{-1} y \otimes y.  
	\end{align*}
	The element $s_0=H_0$ acts on the first tensor factor by swapping the basis vectors $x$ and $y$.
	We have to show that these formulas respect the defining relations of the Hecke algebra from \Cref{definition Hecke algebra}. The type $A$ relation \ref{Hecke usual Braid relation} is well-known. It is clear that the action of $H_0$ commutes with the action of $H_i$ for $i\geq 2$, since $H_0$ only changes the first tensor factor, while $H_i$ only changes tensor factors, which are far away from the first one. Also note the self-inverse element $s_0=H_0$ acts by a self-inverse map. Hence we only to show that the type $B$ relation \ref{Hecke type B relation} is satisfied. This is a easy case-by-case distinction. 
\end{proof}
\begin{rema}
	One can check that the element $C$ acts on $V^{\otimes n}$ by $q^n$ in general. In particular note that $\Hom_{\Ug^{\iota}}(V^{\otimes n}, V^{\otimes m})=0$ for $n\neq m$ just like for $\Ug_q(\gl_2)$. 
\end{rema}
\begin{rema}[Challenge of working with coideals]
	When working with $\Ug^{\iota}$ one will run eventually into the following observation. Although $s_0\otimes \id \colon V\otimes V\rightarrow V\otimes V$ commutes with the action of $B$ on $V\otimes V$ and is hence a morphism of $\Ug^{\iota}$ modules, the map $\id_V\otimes s_0 \colon V \rightarrow V$ is \textbf{not} a morphism of $\Ug^{\iota}$-modules. Indeed we have
	\[
	(B \circ \id_V\otimes s_0) (x\otimes x)= B \cdot x\otimes y = x \otimes x + q y\otimes y
	\]
	which is not equal to 
	\[
	(\id_V\otimes s_0 \circ B) (x\otimes x) = (\id_V\otimes s_0)(x\otimes y + q^{-1}y\otimes x)=x\otimes x +q^{-1}y\otimes y.
	\]
	In particular finite dimensional $\Ug^{\iota}$-modules do \textbf{not} form a monoidal category with the usual tensor product. Using the comultiplication one just obtains the following implications:
	\begin{enumerate}
		\item If $V$ is $\Ug^{\iota}$-module and $W$ is a $\Ug$-module, then $V\otimes W$ becomes a $\Ug^{\iota}$-module via
		\[
			\begin{tikzcd}
				\Ug^{\iota}\otimes V \otimes W \arrow[r, "\Delta \otimes \id"] & \Ug^{\iota}\otimes \Ug \otimes V \otimes W \arrow[r, "\cong"] & \Ug^{\iota}\otimes V \otimes \Ug \otimes W \arrow[r, "\varphi_V \otimes \varphi_W"] & V\otimes W
			\end{tikzcd}
		\]
		where $\varphi_V\colon \Ug^{\iota} \otimes V \rightarrow V$ and $\varphi_W\colon \Ug \otimes W \rightarrow W$ are the structure maps turning $V$ and $W$ into left modules.
		\item If $V_1\rightarrow V_2$ is a morphism of $\Ug^{\iota}$-modules and $g\colon W_1 \rightarrow W_2$ is a morphism of $\Ug$-modules, then $f\otimes g$ is a morphism of $\Ug^{\iota}$-modules.
		\item The trivial representation $\mathbb{1}=\C(q)$ of $\Ug$ becomes a representation of $\Ug^{\iota}$ via restriction of the counit to $\Ug^{\iota}$. 
		The tensor product of this representation with an arbitrary representation $V$ of $\Ug^{\iota}$ still makes sense
		and we have $\mathbb{1}\otimes V\cong V$ as representations of $\Ug^{\iota}$. 
	\end{enumerate}
	In $2$-categorical language this means the category  $\modd(\Ug^{\iota})$ of left modules $\Ug^{\iota}$ is a module category of the monoidal category $\modd(\Ug)$ in the sense of \cite{ostrik03}. This abstract setup is studied in \cite{kolb20} and the references therein.
\end{rema}

As representation theorists we ask ourselves how these representations decompose into irreducible/indecomposable subrepresentations. This will be the content of the next theorem after we proved \Cref{right action of Hecke}.
\begin{rema}
	If we work classically, i.e.\ set $q=1$, pass from $\C(q)$ to $\C$ and $\Hecke_q(B_n)$ to $W(B_n)$ the type $B$ relation \begin{equation*} 
		s_0s_1s_0s_1= \cbox{
			\begin{tikzpicture}[tldiagram]
				\draw \tlcoord{0}{0}   \linewave{1}{1} \linewave{1}{-1} \onedot;
				\draw \tlcoord{0}{1} \linewave{1}{-1} \onedot \linewave{1}{1};
			\end{tikzpicture}
		} \quad = 
		\cbox{
			\begin{tikzpicture}[tldiagram]
				\draw \tlcoord{-1}{0} \lineup \onedot \lineup;
				\draw \tlcoord{-1}{1} \lineup \onedot \lineup;
			\end{tikzpicture}
		}
		\quad 
		= 
		\quad 
		\cbox{
			\begin{tikzpicture}[tldiagram]
				\draw \tlcoord{0}{0} \onedot  \linewave{1}{1} \linewave{1}{-1} ;
				\draw \tlcoord{0}{1} \linewave{1}{-1}  \onedot \linewave{1}{1} ;
			\end{tikzpicture}
		}
		=s_1s_0s_1s_0
	\end{equation*}
	has a nice interpretation for the action of $W(B_n)$ on $V^{\otimes n}$. Here $H_i=s_i$ becomes the usual swap of tensor factors and hence both sides of the relation map a pure tensor $v_1\otimes v_2$ to 
	\[
	Tv_1\otimes Tv_2,
	\]
	where $T$ is the linear endomorphism of $V$ given by $x \mapsto y, y\mapsto x$. In particular note that the action of the Jucys--Murphy element on a pure tensor $v_1 \otimes \cdots \otimes v_n$ is just given by
	\[
	v_1 \otimes \cdots \otimes v_i \otimes \cdots \otimes v_n \cdot J_i= v_1 \otimes \cdots \otimes Tv_i \otimes \cdots \otimes v_n.
	\]
\end{rema}

Now that both the Hecke algebra and the coideal subalgebra are established we can state the following theorem.
\begin{theo}[Coideal Schur--Weyl duality] \label{Schur-Weyl duality}
	The actions of $\Hecke_{q}(B_n)$ and $\Ug_q(\gl_m \times \gl_m)$ on $V^{\otimes n}$ commute and centralize each other. 
\end{theo}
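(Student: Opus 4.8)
The plan is to prove the two halves of the statement --- that the $\Hecke_q(B_n)$-action and the $\Ug_q(\gl_m\times\gl_m)$-action commute, and that each is the full centralizer of the other --- in that order, reducing the second (double centralizer) statement to the first (commutation) plus semisimplicity. For the commutation part, I would first recall that the $\Ug$-action on $V^{\otimes n}$ is by the coproduct $\Delta^{(n-1)}$ and that, by the coideal property $\Delta(\Ug^{\iota})\subseteq \Ug^{\iota}\otimes\Ug$, the restriction to $\Ug^{\iota}$ makes $V^{\otimes n}$ a left $\Ug^{\iota}$-module in the way recalled in the remark on the ``Challenge of working with coideals''. The $\Hecke_q(B_n)$-action from \Cref{right action of Hecke} is built from the operators $U_i$ acting on the $i$-th and $(i+1)$-st tensor factors and from $s_0$ acting on the first factor. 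The key observations are: (i) the $\Ug_q(\gl_{2m})$-action commutes with each $U_i$ --- this is the standard type $A$ fact, since $\tlcap$ and $\tlcup$ (hence $U_i=\tlcup_i\circ\tlcap_i$ up to scalar) are $\Ug_q(\gl_{2m})$-morphisms --- and a fortiori the restricted $\Ug^{\iota}$-action commutes with $U_i$; and (ii) the $\Ug^{\iota}$-action commutes with $s_0$. Point (ii) is the only genuinely coideal-flavoured computation: $s_0$ acts only on the first tensor factor by $x_j\mapsto x_{-j}$, which is exactly the linear map through which the ``quantum transpose''-type symmetry is implemented, and one checks on generators $B_i, B_0, C_j^{\pm1}$ of $\Ug^{\iota}$ that $(s_0\otimes\id^{\otimes(n-1)})$ intertwines the action; because $B_i=E_iK_{-i}^{-1}+F_{-i}$ pairs index $i$ with index $-i$ symmetrically, the sign flip on the first factor is compatible with $\Delta(B_i)=B_i\otimes K_{-i}^{-1}+1\otimes B_i$, and similarly for $B_0$ and the central $C_j$. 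This reduces to a short check on $V$ itself (essentially the computation already displayed in \Cref{example standard representation m=1} that $s_0\otimes\id$ commutes with $B$ on $V\otimes V$, extended to general $n$ via the coproduct recursion).

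For the centralizer statements, the plan is to invoke semisimplicity and dimension counting rather than a direct argument. Over $\C(q)$ the Hecke algebra $\Hecke_q(B_n)$ is semisimple (it is a specialization $v=1$ of the generic two-parameter Hecke algebra, which is semisimple; alternatively one cites that it is a deformation of the semisimple group algebra $\C W(B_n)$), and $\Ug^{\iota}=\Vq\cong\C(q)[C,C^{-1},B]$ acts semisimply on $V^{\otimes n}$ --- indeed from \Cref{Theorem decomposition of tensor power of V} the module $V^{\otimes n}$ decomposes into one-dimensional $\Ug^{\iota}$-submodules. Given semisimplicity of both algebras' images in $\End_{\C(q)}(V^{\otimes n})$ and the commutation just proved, the double centralizer theorem (Jacobson density / von Neumann bicommutant in the finite-dimensional semisimple setting) gives that each algebra's image is the full centralizer of the other's image, provided the images are mutually centralizing algebras of the expected sizes. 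Concretely I would argue: let $A\subseteq\End(V^{\otimes n})$ be the image of $\Hecke_q(B_n)$ and $A'$ the image of $\Ug^{\iota}$; commutation gives $A\subseteq\opn{End}_{\Ug^{\iota}}(V^{\otimes n})$ and $A'\subseteq\opn{End}_{A}(V^{\otimes n})$; semisimplicity of $A'$ forces $\opn{End}_{A'}(V^{\otimes n})$ to be the algebra generated by $A'$-isotypic projections and intertwiners, and one matches dimensions. The cleanest route is to specialize $q=1$: the statement then becomes the classical Schur--Weyl duality between $W(B_n)=S_2\wr S_n$ and $\GL_m\times\GL_m$ (decomposition \eqref{Schur--Weyl decomp}), which is known; then a standard flatness/semicontinuity argument --- the dimension of a Hom-space can only drop under specialization, and it cannot drop here because the classical value is already the generic upper bound $\dim\End_{\C(q)}(V^{\otimes n})$ computed from \eqref{Schur--Weyl decomp} --- lifts the equality $A=\opn{End}_{\Ug^{\iota}}(V^{\otimes n})$ and $A'=\opn{End}_{A}(V^{\otimes n})$ to generic $q$.

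I expect the main obstacle to be the ``upgrade'' step: translating the classical double centralizer into the quantum coideal setting. The difficulty is precisely that $\Rep(\Ug^{\iota})$ is not monoidal, so one cannot simply transport the usual quantum Schur--Weyl machinery (which relies on the $R$-matrix and the braiding) verbatim; the coideal only gives a module-category structure over $\Rep(\Ug_q(\gl_{2m}))$. Consequently the $\Hecke_q(B_n)$-action is genuinely more subtle than the type $A$ Hecke action on $V^{\otimes n}$ --- the extra generator $s_0$ is not a braiding operator but the ``reflection'' coming from the $K$-matrix / reflection equation --- and one must verify by hand that it lands in $\End_{\Ug^{\iota}}(V^{\otimes n})$. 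Once commutation is secured, the hard analytic content is the surjectivity direction ($A$ and $A'$ are the \emph{full} centralizers, not just subalgebras of them), and for that the specialization-and-dimension-count argument is the safest; the risk there is ensuring the generic Hom-space dimensions are actually attained, which is where one leans on \Cref{Theorem decomposition of tensor power of V} (for $m=1$) or on the known decomposition \eqref{Schur--Weyl decomp} together with semisimplicity of $\Hecke_q(B_n)$ (for general $m$).
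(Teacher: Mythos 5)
Your proposal is correct in outline and its commutation step is essentially the paper's: the type $A$ generators commute by ordinary quantum Schur--Weyl duality, so the only genuinely coideal computation is that $s_0$ commutes with the coideal generators, which (for $m=1$) the paper does exactly as you suggest, using $\Delta^{(n)}(B)=\sum_i 1\otimes\cdots\otimes B\otimes K^{-1}\otimes\cdots\otimes K^{-1}$ and a direct check on pure tensors. Where you diverge is the ``full centralizer'' half. You propose specializing at $q=1$ to the classical duality \eqref{Schur--Weyl decomp} and lifting by a flatness/semicontinuity argument; the paper instead stays at generic $q$ (and restricts to $m=1$, citing \cite{stroppel2018} for general $m$): it factors the Hecke action through $\TL(B_n)$, proves faithfulness of the $\TL(B_n)$-action by the bending argument (a diagram is recovered from the image of $1$ under the bent map $\C(q)\to V^{\otimes 2n}$), and then matches $\dim\End_{\Vq}(V^{\otimes n})=\sum_k\binom{n}{k}^2=\binom{2n}{n}=\dim\TL(B_n)$ using \Cref{Theorem decomposition of tensor power of V} and Schur's lemma, finishing with the double centralizer theorem. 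Your route buys generality (it works uniformly in $m$, given the classical statement and carefully chosen integral forms so the semicontinuity inequalities point the right way), but it is less self-contained and loses the identification $\TL(B_n)\cong\End_{\Vq}(V^{\otimes n})$, which the paper's dimension count delivers for free and uses heavily later; interestingly, the paper does use a $q=1$ specialization argument, but only to establish semisimplicity of $\Hecke_{1,q}(B_n)$, not for the dimension count.

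One caveat on your general-$m$ sketch: the formula $\Delta(B_i)=B_i\otimes K_{-i}^{-1}+1\otimes B_i$ holds only for $i=0$; for $i\neq 0$ one gets an extra term $K_iK_{-i}^{-1}\otimes E_iK_{-i}^{-1}$, so the commutation of $s_0$ with $B_i$ is not the same one-line check and requires the more careful argument of \cite{stroppel2018}. Likewise, describing $H_i$ as $U_i+q^{-1}$ with $U_i$ a cup--cap composition is only valid for $m=1$; for $m>1$ the type $A$ commutation should be quoted as standard quantum Schur--Weyl duality for $\Ug_q(\gl_{2m})$ rather than justified through Temperley--Lieb morphisms. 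Neither issue affects your $m=1$ argument, which is the case the paper actually proves.
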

This theorem was proven in \cite[Theorem 10.4]{stroppel2018}. Since we want to study analogues of Temperley--Lieb algebras, we just care about the case $m=1$, for which we provide a full proof. The proof will use \Cref{Theorem decomposition of tensor power of V}, which gives a full description of the representation  $V^{\otimes n}$ of $\Vq$. Beforehand we formulate the classical Schur--Weyl duality and do one longer example which gives an overview of the non-quantized case for $m=1$.
\begin{theo}[Schur--Weyl duality]
	Let $V=\C^{2m}$ the standard representation of $\gl_{2m}$ and consider the subalgebra $\gl_m\times \gl_m\subset \gl_{2m}$, which consists of all block matrices of the form $\begin{psmallmatrix}
		A & B \\
		B & A
	\end{psmallmatrix}$, where $A,B\in\gl_m$.
	The actions of $\Ug^{\iota}=\Ug(\gl_m\times \gl_m)$ and $W(B_n)$ on $V^{\otimes n}$ commute and centralize each other. This Schur--Weyl duality gives rise to the decomposition
	\begin{equation*}
		V^{\otimes n}\cong\bigoplus_{(\lambda, \mu)}L(\lambda, \mu) \boxtimes S(\lambda, \mu)
	\end{equation*}
	where the direct sum runs over all bipartitions $(\lambda, \mu)$ of $n$, such that $\lambda$ and $\mu$ have at most $m$ rows. Here $S(\lambda, \mu)$ is the Specht module corresponding to $(\lambda, \mu)$ and $L(\lambda, \mu)=L(\lambda)\boxtimes L(\mu)$, where $L(\lambda), L(\mu)$ are irreducible Schur modules of $\gl_m$. 
\end{theo}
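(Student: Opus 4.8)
The plan is to deduce this classical statement from type $A$ Schur--Weyl duality together with the Wigner--Mackey little-group description of the Specht modules $S(\lambda,\mu)$ from \Cref{Chapter 1 section 2}. First I would diagonalise the distinguished involution: the matrix $T=\bigl(\begin{smallmatrix}0 & I_m\\ I_m & 0\end{smallmatrix}\bigr)$ is an involution of $V=\C^{2m}$, and the block subalgebra $\bigl\{\bigl(\begin{smallmatrix}A & B\\ B & A\end{smallmatrix}\bigr)\bigr\}$ is exactly its centraliser in $\gl_{2m}$. Writing $V=W_{+}\oplus W_{-}$ for the $(\pm1)$-eigenspaces of $T$ (each $\cong\C^m$), the subalgebra becomes the standard $\gl(W_{+})\times\gl(W_{-})$. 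Under the isomorphism $W(B_n)\cong(\mZ/2\!\mZ)^n\rtimes S_n$ of \Cref{Weyl group of B_n is dotted group}, the subgroup $S_n$ permutes tensor factors of $V^{\otimes n}$ and the Jucys--Murphy element $J_i$ acts by $T$ on the $i$-th factor. Hence $S_n$ commutes with the diagonal $\gl(W_{+})\times\gl(W_{-})$-action by the usual argument, each $J_i$ commutes with $\gl(W_{+})\times\gl(W_{-})$ because $T$ does, and since the $J_i$ together with $S_n$ generate $W(B_n)$ the two actions commute.

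For the decomposition, note that $J_i$ acts on $V$ by the scalar $\pm1$ on $W_\pm$, so $(\mZ/2\!\mZ)^n$ acts diagonally on $V^{\otimes n}=\bigoplus_{\varepsilon\in\{+,-\}^n}W_{\varepsilon_1}\otimes\cdots\otimes W_{\varepsilon_n}$, with $W_\varepsilon$ the isotypic component for the character $\chi_\varepsilon(J_i)=\varepsilon_i$, while $S_n$ permutes these summands transitively on each set of fixed weight $k=\#\{i:\varepsilon_i=+\}$. Thus $V^{\otimes n}=\bigoplus_{k=0}^n M_k$ with
\[
M_k \;=\; \opn{Ind}_{(\mZ/2\!\mZ)^n\rtimes(S_k\times S_{n-k})}^{W(B_n)}\bigl(W_+^{\otimes k}\otimes W_-^{\otimes(n-k)}\bigr)
\]
as a $\bigl(W(B_n)\times(\gl(W_+)\times\gl(W_-))\bigr)$-module, the inducing subgroup being the stabiliser of $W_{(+,\dots,+,-,\dots,-)}$. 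Classical Schur--Weyl duality in type $A$ gives $W_+^{\otimes k}=\bigoplus_{|\lambda|=k,\ \ell(\lambda)\le m}L(\lambda)\boxtimes S^\lambda$ as a $\gl(W_+)\times S_k$-module, and likewise for $W_-^{\otimes(n-k)}$; on the multiplicity part $(\mZ/2\!\mZ)^n$ acts through the character trivial on $J_1,\dots,J_k$ and equal to the sign on $J_{k+1},\dots,J_n$. Since the $\gl$-factors are untouched by induction, this yields
\[
M_k \;=\; \bigoplus_{\substack{|\lambda|=k,\ |\mu|=n-k\\ \ell(\lambda),\ell(\mu)\le m}} \bigl(L(\lambda)\boxtimes L(\mu)\bigr)\boxtimes\opn{Ind}_{(\mZ/2\!\mZ)^n\rtimes(S_k\times S_{n-k})}^{W(B_n)}\!\bigl(\bigl(\opn{triv}^{\boxtimes k}\!\boxtimes\opn{sgn}^{\boxtimes(n-k)}\bigr)\otimes(S^\lambda\boxtimes S^\mu)\bigr).
\]
By the Wigner--Mackey construction the induced $W(B_n)$-module here is precisely the irreducible Specht module $S(\lambda,\mu)$ of \Cref{Chapter 1 section 2} --- it is exactly the module cut out by the Young symmetriser $q_{\lambda,\mu}$, the sign twist on the last $n-k$ strands corresponding to the factor $(-1)^{|\varepsilon|_\mu}$ in \Cref{defi young symmetrizer}. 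Setting $L(\lambda,\mu):=L(\lambda)\boxtimes L(\mu)$ and summing over $k$ gives $V^{\otimes n}\cong\bigoplus_{(\lambda,\mu)}L(\lambda,\mu)\boxtimes S(\lambda,\mu)$ over all bipartitions with $\ell(\lambda),\ell(\mu)\le m$.

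Finally, the double centraliser property is formal: the $L(\lambda,\mu)$ are pairwise non-isomorphic irreducible $\gl_m\times\gl_m$-modules and the $S(\lambda,\mu)$ pairwise non-isomorphic irreducible $W(B_n)$-modules, so by the Artin--Wedderburn/Jacobson density theorem the image of $U(\gl_m\times\gl_m)$ in $\End(V^{\otimes n})$ is $\prod_{(\lambda,\mu)}\End(L(\lambda,\mu))$ and the image of $\C\!W(B_n)$ is $\prod_{(\lambda,\mu)}\End(S(\lambda,\mu))$; each of these two subalgebras of $\End(V^{\otimes n})$ is visibly the full centraliser of the other. I expect the only genuinely delicate point to be the identification in the third step --- matching the Wigner--Mackey induced module with the thesis's definition of $S(\lambda,\mu)$ via $q_{\lambda,\mu}$, i.e.\ checking that under the diagonalisation of $T$ the Jucys--Murphy generators act with exactly the signs built into the Young symmetriser, and that the ``at most $m$ rows'' condition coming out of type $A$ is the stated one; the commutation in the first step and the Artin--Wedderburn bookkeeping in the last step are routine.
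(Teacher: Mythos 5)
Your argument is correct, but there is nothing in the paper to compare it against: the thesis states this classical theorem without proof, deferring to the literature (the introduction points to the wreath-product Schur--Weyl duality of \cite[Theorem 9]{mazorczuk2015} and the Wigner--Mackey little-group argument of \cite[8.2]{serre77}); the only Schur--Weyl statement actually proved in the paper is the quantum coideal version for $m=1$, and that proof goes through the explicit eigenspace decomposition of \Cref{Theorem decomposition of tensor power of V} and a dimension count against $\TL(B_n)$ rather than through induction from $(\mZ/2\!\mZ)^n\rtimes(S_k\times S_{n-k})$. Your route is the standard one and all the steps check out: the centraliser computation for $T=\bigl(\begin{smallmatrix}0&I_m\\I_m&0\end{smallmatrix}\bigr)$, the commutation of the two actions, the $\varepsilon$-weight decomposition under $(\mZ/2\!\mZ)^n$, type $A$ Schur--Weyl on each block, and the formal double-centraliser conclusion from pairwise non-isomorphic irreducibles on both sides. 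The one genuinely delicate point is the one you flag yourself: matching the Mackey-induced module with the thesis's $S(\lambda,\mu)$, which is \emph{defined} via the Young symmetriser $q_{\lambda,\mu}$ and whose construction the thesis deliberately keeps independent of character theory and Schur--Weyl duality. To close that gap cleanly, note that both families are complete lists of irreducibles indexed by bipartitions, and that an irreducible $W(B_n)$-module is determined by (i) the pair $(k,n-k)$ read off from the spectrum of the Jucys--Murphy elements (equivalently its restriction to $(\mZ/2\!\mZ)^n$) and (ii) the pair of $S_k\times S_{n-k}$-irreducibles occurring in the relevant $\chi_\varepsilon$-isotypic component; the sketch of the classification theorem shows $J_iq_{\lambda,\mu}=\pm q_{\lambda,\mu}$ with exactly the signs your character $\chi_\varepsilon$ prescribes, and the restriction of $\C\!W(B_n)q_{\lambda,\mu}$ to $S_k\times S_{n-k}$ contains $S^\lambda\boxtimes S^\mu$ by construction of the row and column symmetrisers, so the two labelings agree. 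A dimension check via \Cref{type B hook length} gives an independent confirmation.
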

\begin{beis}
	Consider the lie subalgebra $\mathfrak{g}\coloneqq\gl_1\times \gl_1\subseteq \gl_2$, which consists of all $2\times 2$ matrices of the form $\begin{psmallmatrix}
		\lambda & \mu \\
		\mu & \lambda
	\end{psmallmatrix}$ for $\lambda,\mu\in k$. The Lie algebra $\lie$ has a natural choice of basis given by the matrices $c=\begin{psmallmatrix}
		1 & 0 \\
		0 & 1
	\end{psmallmatrix}$ and $b=\begin{psmallmatrix}
		0 & 1 \\
		1 & 0
	\end{psmallmatrix}=e+f$, where
	\begin{equation*}
		e = \begin{psmallmatrix}
			0 & 1 \\
			0 & 0
		\end{psmallmatrix}, \quad h=  \begin{psmallmatrix}
			1 & 0 \\
			0 & -1
		\end{psmallmatrix}, \quad f= \begin{psmallmatrix}
			0 & 0 \\
			1 & 0
		\end{psmallmatrix} , \quad c=\begin{psmallmatrix}
			1 & 0 \\
			0 & 1
		\end{psmallmatrix}
	\end{equation*}
	is our preferred choice of basis of $\gl_2$. We think of the coidal $\Ug^{\iota}$ as a quantized version of the Lie algebra $\lie$, respectively its universal enveloping algebra $\Ug(\lie)$. 
	Observe that $\lie$ is the fix point subalgebra $(\gl_2)^\iota$ under the Lie algebra involution $\iota\colon \gl_2 \rightarrow \gl_2$ given by
	\begin{equation*}
		\begin{pmatrix}
			x_{11} & x_{12} \\
			x_{21} & x_{22}
		\end{pmatrix} \mapsto  \begin{pmatrix}
			0 & 1 \\
			1 & 0
		\end{pmatrix} \cdot  \begin{pmatrix}
			x_{11} & x_{12} \\
			x_{21} & x_{22}
		\end{pmatrix} \cdot \begin{pmatrix}
			0 & 1 \\
			1 & 0
		\end{pmatrix} = \begin{pmatrix}
			x_{22} & x_{21} \\
			x_{12} & x_{11}
		\end{pmatrix}.
	\end{equation*} 
	The universal property of the universal enveloping algebra $U(\gl_2)$ turns the Lie algebra involution $\iota$ into a (associative) algebra involution $\iota\colon U(\gl_2) \to U(\gl_2)$. 
	Just like its quantum version the universal enveloping algebra $\Ug(\lie)=\Ug((\gl_2)^\iota)$ is a (commutative) polynomial ring in the elements $c$ and $b=e+f$ and can be identified with a subalgebra of $\Ug(\gl_2)$ by the PBW theorem. However one should notice that this subalgebra $\Ug(\lie)=\Ug(\lie^{\iota})$ is not the fix-point subalgebra $\Ug(\gl_2)^{\iota}$ of $\Ug(\gl_2)$. For instance the linear combination of PBW basis elements $e^2+f^2$ lives in $\Ug(\gl_2)^{\iota}$, but not in $\Ug(\lie)$.We conclude from this that our quantized version $\Ug^{\iota}$ is not a fixed point subalgebra of $\Ug_q(\gl_2)$, but rather a subalgebra of some fixed point algebra. The notation $\Ug^{\iota}$ for our coideal subalgebra is hence a little bit misleading.
	
	Next let us consider tensor powers of $V=\C^2$, the standard representation of $\gl_2$, viewed as representation of $\lie=\gl_1\times \gl_1$. 
	First observe that $V$ decomposes as representation of $\lie$ into two one-dimensional representations
	\begin{align*}
		V= \shift{e_1+e_2} \oplus \shift{e_1-e_2}=\C_1 \oplus \C_{-1},
	\end{align*}
	where $\C_{\lambda}$ is a one-dimensional representation of $\lie$ such that $b$ acts by the scalar $\lambda\in\C$. Since $d\in U(\lie)$ is primitive, we know $\C_{\lambda}\otimes \C_{\mu}=\C_{\lambda+\mu}$. In particular $V^{\otimes n}$ decomposes as 
	\[
	V^{\otimes n} = \bigoplus_{k=0}^{n} {n \choose k} \C_{n-2k}.
	\]
	The next theorem will show a non-trivial quantized version of this decomposition, where one has to tensor twisted versions of $v_0=e_1+e_2$ and $w_0=e_1-e_2$ together. 
\end{beis}

Throughout the next theorem $V$ denotes the type I representation of $\Ug_q(\gl_2)$.

\begin{theo}[Coideal tensor product decomposition in type I] \label{Theorem decomposition of tensor power of V}
	Consider for $n\in \mZ$ the special elements in $V$ given by
	\begin{equation*}
		v_{n}= x + q^{n}y, \quad w_{n}=x-q^{-n}y.
	\end{equation*}
	Then the following statements hold:
	\begin{enumerate}
		\item \label{part i of gl1 x gl1 decomp}The representation $V$ decomposes in two $1$-dimensional representations
		\begin{equation*}
			V = \shift{v_{0}} \oplus \shift{w_{0}},
		\end{equation*}
		which are eigenspaces for the action of $B$ with eigenvalues $1$ respectively $-1$.
		\item \label{part ii of gl1 x gl1 decomp} Let $z$ an eigenvector for the action of $B$ with eigenvalue $[n]$ for $n\in\mZ$, where $[n]=\frac{q^n-q^{-n}}{q-q^{-1}}$ is a quantum integer. Then $z\otimes v_{n}$ is an eigenvector of $B$ with eigenvalue $[n+1]$.
		\item \label{part iii of gl1 x gl1 decomp} Let $z$ an eigenvector for the action of $B$ with eigenvalue $[n]$ for $n\in \mZ$. Then $z\otimes w_{n}$ is an eigenvector of $B$ with eigenvalue $[n-1]$.
		\item \label{part iv of gl1 x gl1 decomp} The $n$-th tensor power decomposes as 
		\begin{equation*}
			V^{\otimes n} = \bigoplus_{k=0}^{n} {n \choose k} L([n-2k])
		\end{equation*}
		Here $L([n-2k])$ is a a one-dimensional representation of $\Vq$, where $C$ acts as $q^n$ and $B$ acts by the multiplication with the quantum integer $[n-2k]$.
	\end{enumerate}
\end{theo}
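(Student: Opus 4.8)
The plan is to prove parts (i)--(iii) by direct computation with the explicit action of $\Ug_q(\gl_2)$ on $V$ together with the coproduct formula $\Delta(B)=B\otimes K^{-1}+1\otimes B$ already derived in the excerpt, and then to obtain part (iv) by induction on $n$, building simultaneous $B$- and $C$-eigenbases.

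First I would record the action on $V$: from $Kx=qx$, $Ky=q^{-1}y$, $Ex=0$, $Ey=x$, $Fx=y$, $Fy=0$ one computes $Bx=q^{-1}EK^{-1}x+Fx=y$ and $By=x$, while $C=D_1D_2$ acts by $q$ on both $x$ and $y$. Thus $B$ is an involution on $V$ with eigenvectors $v_{0}=x+y$ (eigenvalue $1=[1]$) and $w_{0}=x-y$ (eigenvalue $-1=[-1]$), which is part (i). For (ii), suppose $z\in V^{\otimes n}$ with $Bz=[n]z$. Using $K^{-1}v_{n}=q^{-1}v_{n+2}=q^{-1}x+q^{n+1}y$ and $Bv_{n}=q^{n}x+y$ one gets
\[
B(z\otimes v_{n})=Bz\otimes K^{-1}v_{n}+z\otimes Bv_{n}=[n]\,z\otimes(q^{-1}x+q^{n+1}y)+z\otimes(q^{n}x+y),
\]
and collecting coefficients of $x$ and $y$ and applying the quantum-integer identities $q^{-1}[n]+q^{n}=[n+1]$ and $q^{n+1}[n]+1=q^{n}[n+1]$ gives $B(z\otimes v_{n})=[n+1](z\otimes v_{n})$. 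Part (iii) is the mirror computation: $K^{-1}w_{n}=q^{-1}w_{n-2}$, $Bw_{n}=-q^{-n}x+y$, and the identities $q^{-1}[n]-q^{-n}=[n-1]$ and $1-q^{-n+1}[n]=-q^{-n}[n-1]$ yield eigenvalue $[n-1]$.

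For (iv) I would induct on $n$. The base case $n=0$ is $V^{\otimes 0}=L([0])$, since $B$ acts by $\varepsilon(B)=0=[0]$ and $C$ by $\varepsilon(C)=1=q^{0}$. For the step, write $V^{\otimes n}=V^{\otimes(n-1)}\otimes V$; if $\{z_{\alpha}\}$ is a basis of $V^{\otimes(n-1)}$ consisting of $B$-eigenvectors with $Bz_{\alpha}=[m_{\alpha}]z_{\alpha}$, then $\{\,z_{\alpha}\otimes v_{m_{\alpha}},\ z_{\alpha}\otimes w_{m_{\alpha}}\,\}$ is again a basis of $V^{\otimes n}$, because $\{v_{m},w_{m}\}$ spans $V$ for every integer $m$ (the determinant $-(q^{m}+q^{-m})$ never vanishes in $\C(q)$). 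By (ii) and (iii) these are $B$-eigenvectors with eigenvalues $[m_{\alpha}+1]$ and $[m_{\alpha}-1]$, and from $\Delta(C)=C\otimes C$ and $C|_{V}=q$ they are $C$-eigenvectors with eigenvalue $q^{n}$. Since $\Vq\cong\C(q)[C,C^{-1},B]$ is commutative, each such line is a one-dimensional subrepresentation isomorphic to $L([k])$ with $C$ acting by $q^{n}$; grouping the basis by eigenvalue shows $V^{\otimes n}=\bigoplus_{k}L([n-2k])^{\oplus c_{n,k}}$ where a copy of $L([n-2k])$ comes either from $z_{\alpha}\otimes v_{m_{\alpha}}$ with $m_{\alpha}=n-1-2k$ or from $z_{\alpha}\otimes w_{m_{\alpha}}$ with $m_{\alpha}=n+1-2k$, so $c_{n,k}=\binom{n-1}{k}+\binom{n-1}{k-1}=\binom{n}{k}$ by Pascal's rule. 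The same induction exhibits the summands explicitly: each eigenvector is an iterated tensor $u_{1}\otimes\cdots\otimes u_{n}$ with $u_{j}\in\{v_{m_{j-1}},w_{m_{j-1}}\}$, where $[m_{j-1}]$ is the $B$-eigenvalue of $u_{1}\otimes\cdots\otimes u_{j-1}$ and $m_{0}=0$.

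The only delicate points are keeping the subscript of $v$ and $w$ synchronized with the current eigenvalue at each stage of the induction (one must tensor with $v_{m}$, not $v_{0}$, when the partial product sits in eigenvalue $[m]$) and checking the handful of quantum-integer identities above; neither is a real obstacle, so once the coproduct formula and the $\gl_{2}$-action are fixed the argument is a careful bookkeeping computation.
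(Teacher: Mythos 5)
Your proposal is correct and follows essentially the same route as the paper: parts (i)--(iii) by direct computation with $\Delta(B)=B\otimes K^{-1}+1\otimes B$ and the same quantum-integer identities, and part (iv) by induction using that $\{v_m,w_m\}$ is a basis of $V$ for each $m$ together with Pascal's rule. Your additional remarks (starting the induction at $n=0$, the nonvanishing determinant $-(q^m+q^{-m})$, and invoking commutativity of $\Vq$ to see each eigenline is a subrepresentation) are harmless refinements of the paper's argument rather than a different method.
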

\begin{proof} \leavevmode
	\begin{enumerate} 
		\item Since the action of $B$ flips $x$ and $y$ we see that $v_{0}=x+y$ and $w_{0}=x-y$ are eigenvectors for the action of $B$ with eigenvalue $1$ respectively $-1$. The additional generator $C$ of $\Vq$ acts by $q$ on all of $V$, so this is really a decomposition into $1$-dimensional subrepresentations.
		\item 
		Since $\Delta(B)=B\otimes K^{-1}+1\otimes B$ we see that
		\begin{align*}
			B (z\otimes (x+q^ny))
			&=Bz \otimes K^{-1}(x+q^ny) + z \otimes B(x+q^n y) \\
			&= [n] z \otimes (q^{-1} x + q^{n+1}y) + z \otimes (y + q^n x) \\
			&=[n+1] z \otimes (x+q^ny),
		\end{align*}
		where in the last equality we use the identities for quantum integers
		\[
		q^{-1}[n] + q^n =[n+1], \quad  q^{n+1}[n]+1=q^n [n+1].
		\]
		Indeed by multiplying both proposed equalities with $q-q^{-1}$ these are equivalent to
		\[
		q^{-1}(q^{n}-q^{-n}) + q^n(q-q^{-1}) =q^{n+1}-q^{-n-1}
		\]
		and
		\[
		q^{n+1}(q^n-q^{-n})+q-q^{-1}=q^n (q^{n+1}-q^{-n-1}),
		\]
		which hold true.
		\item This calculation is very similar to \ref{part ii of gl1 x gl1 decomp}. Just like in the proceeding calculation we see that
		\begin{align*}
			B (z\otimes (x-q^{-n}y))
			&=Bz \otimes K^{-1}(x-q^{-n}y) + z \otimes B(x-q^{-n} y) \\
			&= [n] z \otimes (q^{-1} x - q^{-n+1}y) + z \otimes (y - q^{-n} x) \\
			&=[n-1] z \otimes (x-q^ny). 
		\end{align*}
		In the last equality we used the identities
		\[
		q^{-1}[n]-q^{-n}=[n-1], \quad -q^{-{n+1}}[n]+1=-q^{-n}[n-1].
		\]
		for quantum integers. These are equivalent after multiplying with $q-q^{-1}$ to
		\[
		q^{-1}(q^n-q^{-n})-q^{-n}=q^{n-1}-q^{-n+1}
		\]
		and
		\[
		-q^{-{n+1}}(q^n-q^{-n})+q-q^{-1}=-q^{-n}(q^{n-1}-q^{-n+1}),
		\]
		which again hold.
		\item We proceed by induction over $n$. Part \ref{part i of gl1 x gl1 decomp} gives the decomposition $V=L(1) \oplus L(-1)$, which is the case $n=1$. 
		We calculate the induction step using \ref{part ii of gl1 x gl1 decomp} and \ref{part iii of gl1 x gl1 decomp}:
		\begin{align*}
			V^{\otimes n} \otimes V &=   \left(\bigoplus_{k=0}^{n} {n \choose k} L([n-2k]) \right) \otimes V  \\
			&= \bigoplus_{k=0}^{n}  {n \choose k} (L([n-2k]) \otimes V) \\
			&=  \bigoplus_{k=0}^{n} {n \choose k} (L([n-2k]) \otimes (\shift{v_{n-2k}} \oplus \shift{w_{n-2k}})) \\
			&= \bigoplus_{k=0}^{n} {n \choose k} (L([n-2k+1] \oplus L([n-2k-1])) \\
			&= \bigoplus_{k=0}^{n+1} {n+1 \choose k}  L([n+1-2k]),
		\end{align*}
		where we used that $V$ has basis $\{v_{j}, w_{j} \}$ for any fixed $j=n-2k$ and reordered the summands in the last step. \qedhere
	\end{enumerate}
\end{proof}
Next we look at a small example to understand the notation from \Cref{Theorem decomposition of tensor power of V}. It also introduces some slight notation, where we label the one-dimensional summands by elements $\varepsilon\in \{1,-1\}^{n}$.
\begin{beis} \label{example with epsilon notation}
	The concrete decompositions into subrepresentations of $V^{\otimes 2}$ and $V^{\otimes 3}$ are
	\begin{align*}
		V^{\otimes 2}&=L([2])\oplus L(0)^2 \oplus L(-[2]) \\
		&= L(1,1) \oplus \left(L(1,-1) \oplus L(-1,1) \right) \oplus L(-1,-1)\\
		&=\shift {v_{0}\otimes v_1} \oplus \shift{v_{0}\otimes w_1, w_{0}\otimes v_{-1}} \oplus \shift{w_{0}\otimes w_{-1}}
	\end{align*}
	and
	\begin{align*}
		V^{\otimes 3}&=L([3])\oplus L([1])^3 \oplus L(-[1])^3 \oplus L(-[3]) \\
		&= L(1,1,1)\oplus \begin{pmatrix}
			L(1,1,-1) \\
			L(1,-1,1) \\
			L(-1,1,1)
		\end{pmatrix} \oplus 
		\begin{pmatrix}
			L(1,-1,-1) \\
			L(-1,1,-1) \\
			L(-1,-1,1)
		\end{pmatrix} 
		\oplus 
		L(-1,-1,-1)  \\
		&=\shift {v_{0}\otimes v_1 \otimes v_2} \oplus \begin{matrix}
			\langle v_0\otimes v_1 \otimes w_2,\phantom{\rangle} \\
			\phantom{\langle}v_0 \otimes w_1 \otimes v_0,\phantom{\rangle} \\
			\phantom{\langle}w_0 \otimes v_{-1} \otimes v_0
			\rangle
		\end{matrix} \oplus 
		\begin{matrix}
			\langle v_0 \otimes w_1 \otimes w_0,\phantom{\rangle} \\
			\phantom{\langle}w_0\otimes v_{-1} \otimes w_0,\phantom{\rangle} \\
			\phantom{\langle}w_0 \otimes w_{-1} \otimes v_{-2}
			\rangle
		\end{matrix} 
		\oplus 
		\shift{w_{0}\otimes w_{-1} \otimes w_{-2}}.
	\end{align*}
	As indicated any $\{1,-1\}$ sequence of length $n$ determines a direct summand in $L_\varepsilon \coloneqq L(\varepsilon)$ in $V^{\otimes n}$, which is one of the $L(|\varepsilon|)$ for $|\varepsilon|=|\varepsilon_1+\cdots+\varepsilon_n|$.
\end{beis}
The next example compares the classical Clebsch--Gordan decomposition to the decomposition of $V^{\otimes n}$.
\begin{beis}[Visualized tensor product decompositions] \label{vizualised tensor prod sl2 decompo}
	One can visualize the Clebsch--Gordan decomposition for $\Ug_q(\sl2)$ as
	\begin{equation*}
		\begin{array}{c|ccccccccc}
			& & L(q^0) & L(q^1) & L(q^2) & L(q^3) & L(q^4) &L(q^5) & L(q^6) \\ \hline
			V^{\otimes 0} & & 1 &  &   &   &   &   &   \\
			V^{\otimes 1} & & & 1 &   &   &   &   &   \\
			V^{\otimes 2} & & 1 &   & 1 &   &   &   &   \\
			V^{\otimes 3} & & & 2 &   & 1 &   &   &  \\
			V^{\otimes 4} & & 2 &   & 3 &   & 1 &   &   \\
			V^{\otimes 5} & & & 5 &   & 4 &   & 1 &   \\
			V^{\otimes 6} & & 5 &   & 9 &   & 5 &   & 1.  \\
		\end{array}
	\end{equation*}
	This picture is constructed layer by layer just like Pascal's triangle, but bounded to the left. 
	In contrast the decomposition numbers for $V^{\otimes n}$ for $\Vq$ are much easier and given by the usual Pascal's triangle
	\begin{equation*}
		\begin{array}{c|cccccccc}
			& & L(-[3]) & L(-[2])	& L(-[1]) & L([0]) & L([1]) & L([2]) & L([3])									  \\ \hline
			V^{\otimes 0} & & &   &   & 1 &   &   &   \\
			V^{\otimes 1} & & &	  & 1 &   & 1 &   &   \\
			V^{\otimes 2} & & & 1 &   & 2 &   & 1 &   \\
			V^{\otimes 3} & & 1 &   & 3 &   & 3 &   & 1.
		\end{array}
	\end{equation*}
	The direct summands in the decomposition of $V^{\otimes n}$ for $\Ug_q(\sl2)$ are labelled by Dyck paths, i.e.\ $\{1,-1\}$ sequences $\varepsilon\in \{-1,1\}^n$, such that the sum of the first $k$ entries is always non-negative for all $k\leq n$, while the direct summands in the coideal decomposition correspond to all $\{1,-1\}$ sequences as explained in \Cref{example with epsilon notation}.
	Finally note that the decomposition for the coideal is omitting the action of $C=D_1D_2\in \Ug^{\iota}$, which acts constantly by $q^n$ on $V^{\otimes n}$. In particular the submodule $L(-[1])$ inside $V^{\otimes 1}$ is not isomorphic to any submodule of $V^{\otimes 3}$, since $C$ acts by $q$ on $V$ and by $q^3$ on $V^{\otimes 3}$. In order to get a correct notion of type $B$ intertwiners between different tensor powers, one should only consider morphisms which commute with $B=q^{-1}EK^{-1}+F$.
\end{beis}
Now with the full representation theoretic picture from \Cref{Theorem decomposition of tensor power of V} and the last remark in mind, we can prove the Schur--Weyl duality between the algebras $\Vq$ and $\Hecke_{1,q}(B_n)$.
\begin{proof}[Proof of Schur--Weyl duality for $m=1$]
	First we need to check that the action of $\Hecke_q(B_n)$ and the action of $\Vq$ on $V^{\otimes n}$ are semisimple.
	For the latter this is \Cref{Theorem decomposition of tensor power of V} and for the first note that $\Hecke_{1,q}(B_n)$ is a semisimple $\C(q)$-algebra. This follows by the following specialization argument for $\Hecke_{1,q}(B_n)$ at $q=1$. Let $x$ be contained in the Jacobson radical of $\Hecke_{1,q}(B_n)$, write it in terms of the standard basis and assume without loss of generality that all coefficients are in $\C[q]$ (else multiply $x$ with the product of all denominators in each coefficient). By evaluating at $q=1$ one obtains an element in the radical of $\C \! W(B_n)$, which is zero, since this algebra is semisimple by Maschke's theorem. Therefore $x$ is divisible by $1-q$ and hence all coefficients of $x$ are divisible by $1-q$. The same argument can be applied to $x/(1-q)$, to conclude that all coefficients of $x$ in the standard basis are divisible by $(1-q)^2$ and even by $(1-q)^n$ for all $n$ by induction. This shows that $x$ is zero, since we assumed that all coefficients are polynomials.
	
	In order to check that the actions commute, observe that the action of the subalgebra $\Hecke_q(A_{n-1})=\Hecke_q(S_n)$ of $\Hecke_q(B_n)$ generated by $H_1, \ldots, H_{n-1}$ inside $W(B_n)$ commutes with the action of $\Ug_q(\gl_2)$ as part of type $A$ Schur--Weyl duality. Hence we only need to check that $C$ and $B$ commute with the action of the additional generator $H_0=s_0$ of $\Hecke_q(B_n)$. For $C$ this is clear since it acts by $q^n$ on all of $V^{\otimes n}$.  For the generator $B$ we have $\Delta(B)=B\otimes K^{-1}+1\otimes B$, which gives by induction
	\[
	\Delta^{(n)}(B)=\sum_{i=1}^{n+1} 1\otimes \cdots \otimes \underset{i-1}{1} \otimes \underset{i}{B} \otimes \underset{i+1}{K^{-1}} \otimes \cdots \otimes K^{-1}\in \Ug^{\otimes {n+1}},
	\]
	where $\Delta^{(n)}$ is defined recursively via $\Delta^{(1)}=\Delta$ and
	\[
	\Delta^{(k+1)}=(\Delta \otimes \id)\circ (\Delta^{(k)})=(\id \otimes \Delta)\circ (\Delta^{(k)}).
	\] 
	Using this formula
	we calculate for a pure tensor $x=v_1\otimes \cdots \otimes v_n$
	\begin{align*}
		B(xH_0)&=B\left( Tv_1\otimes v_2 \otimes \cdots \otimes v_n \right)\\
		&= (T^2v_1)\otimes K^{-1}v_2 \otimes \cdots \otimes K^{-1}v_n + Tv_1 \otimes \biggl( \sum_{i=2}^n v_2 \otimes \cdots  \otimes  Tv_i\otimes \cdots\otimes K^{-1}v_n \biggr) \\
		&= \biggl( \sum_{k=1}^n v_1 \otimes \cdots  \otimes  Tv_k\otimes \cdots \otimes v_n \biggr) H_0=(Bx) H_0,
	\end{align*}
	where $T\colon V\to V, x\mapsto y, y\mapsto x$ is both the action of $H_0$ on the leftmost tensor factor as well as the action of $B$ on $V$. 
	We will show next that the map 
	$\Hecke_q(B_n)\rightarrow \End_{\Ug^{\iota}}(V^{\otimes n})$ is surjective. By the double centralizer theorem this would imply that the actions of $\Hecke_q(B_n)$ and $\Ug^{\iota}$ centralize each other.
	For the surjectivity we argue in the following way. First we observe that the action of $\Hecke_{1,q}(B_n)$ on $V^{\otimes n}$ factors through the Temperley--Lieb algebra $\TL(B_n)$ via the algebra homomorphism
	\begin{align*}
		\Hecke_{1,q}(B_n) &\rightarrow \TL(B_n) \\
		s_0 &\mapsto s_0 \\
		H_i &\mapsto U_i + q^{-1}
	\end{align*}
	which already appeared in type $A$ in \Cref{type A Hecke algebra}. Here $s_0\in \TL(B_n)$ acts in the same way like $s_0\in \Hecke_{1,q}(B_n)$ and the action of $U_i$ is given by the formulas in \Cref{fact graphical sl2 calculus}. Next we want to show that $\TL(B_n)$ acts faithfully on $V^{\otimes n}$. For this consider the following commutative diagram
	\[
		\begin{tikzcd}
			& (\TL(B_n))^{\op} \arrow[ld, "f"'] \arrow[rd, "g"]                &                               \\
			\End_{\Ug^{\iota}}(V^{\otimes n}) \arrow[r, hook] & \End_{\C(q)}(V^{\otimes n}) \arrow[r, "\cong"'] & {\Hom_{\C(q)}(\C(q), V^{\otimes 2n})}.
		\end{tikzcd}
	\]
	This diagram consists of the following ingredients. The left diagonal map $f$ is the algebra homomorphism, which defines the right module action of $\TL(B_n)$ on $V^{\otimes n}$. The bottom isomorphism is adjunction using an appropriate identification of $V$ with $V^{*}=\Hom_{\C(q)}(V, \C(q))$. The right diagonal morphism $g$ is the bending morphism, i.e.\ the morphism obtained by mapping a type $B$ Temperley--Lieb diagram $\lambda$ to the map $\C(q)\rightarrow V^{\otimes (2n)}$ defined by the bended diagram $\lambda'$. This bended diagram is obtained by moving all bottom vertices $\{1,\ldots, n\}\times \{0\}$ to the right of the top vertices, keeping planarity and moving possible dot on each arc to the left of the new bent arc. For instance 
	\[
		\lambda = \cbox{
		\begin{tikzpicture}[tldiagram]
			\draw \tlcoord{0}{0} \onedot \linewave{1}{2};
			\draw \tlcoord{0}{1} \capright;
			\draw \tlcoord{1}{0} \dcupright;
			\draw \tlcoord{0}{3} \lineup;
			\draw[dotted] \tlcoord{0}{3.5} \lineup;
		\end{tikzpicture}
	} \quad \mapsto \quad \lambda' = 
	\cbox{
	\begin{tikzpicture}[tldiagram]
		\draw \tlcoord{1}{0} \onedot \cupright;
		\draw \tlcoord{1}{2} \onedot \xcupright{5};
		\draw \tlcoord{1}{3} \cupright;
		\draw \tlcoord{1}{5} \cupright;
		\draw[dotted] \tlcoord{1}{3.5} \linedown \linedown;
	\end{tikzpicture}
	},
	\]
	where the dotted line indicates where we are bending the diagram. Such a bent diagram can be now interpreted as a $\C(q)$-linear map $\C(q)\rightarrow V^{\otimes (2n)}$ by the formulas from \Cref{fact graphical sl2 calculus}. Importantly the image of $1\in \C(q)$ under $\lambda'$ allows to recover the diagram $\lambda$ one started with, since it is given by a $\C(q)$-linear combination of standard basis vectors in $V^{\otimes (2n)}$, such that one standard basis vector has coefficient $1$ and the others have coefficients in $\mZ[q]$. By incorporating an order on the $2^{2n}$ basis factors of $V^{\otimes (2n)}$ this argument shows, that $g$ is injective and hence $f$ is injective, since the diagram commutes. Then using \Cref{Theorem decomposition of tensor power of V} we have 
	\[
	\End_{\Ug^{\iota}}(V^{\otimes n})\cong \bigoplus_{k=0}^n\bigoplus_{j=0}^n \Hom(L([n-2k]),L([n-2j]))\cong \bigoplus_{k=0}^n \End (L([n-2k]))
	\]
	by Schur's lemma, since the quantum integers $[n-2k], [n-2j]\in \C(q)$ are different for $k\neq j$. Hence the dimension of the endomorphism algebra is
	\[
	\dim\End_{\Ug^{\iota}}(V^{\otimes n})= \sum_{k=0}^{n} {n \choose k}^2 =  {{2n} \choose n},
	\]
	which is the dimension of $\TL(B_n)$. This proves that $f$ is an isomorphism and hence the composition $\Hecke_{1,q}(B_n)\rightarrow \End_{\Ug^{\iota}}(V^{\otimes n})$ is surjective. This completes the proof.
\end{proof}

The next and final remark of the chapter sketches the connection between Schur--Weyl duality for type $B$ and classical skew Howe duality. The advantage of this point of view is the symmetry of the picture, in contrast to having two different algebras $\Ug_q'(\gl_n\times \gl_n)$ and $\Hecke_{1,q}(B_n)$ in the Schur--Weyl picture. It is based on [\cite[Theorem F]{stroppel2018}. Loosely speaking it can be summarized as type $B$ Schur--Weyl duality being squeezed in between two type $A$ Schur--Weyl dualities. Thinking like this makes makes it less surprising that the representation theory of $\Hecke_{1,q}(B_n)$ or the Temperley--Lieb algebra $\TL(B_n)$ resembles so much the type $A$ story. 
\begin{rema}
	Let $n\geq1, d\geq1$ and consider the quantum groups $\Ug_q(\gl_{2n})$ and $\Ug_q(\gl_{2d})$. Let $V=\C(q)^{2n}$ the standard representation of $\Ug_q(\gl_{n})$ and $W=\C(q)^m$ the standard representation of $\Ug_q(\gl_{2d})$.
	The quantum group $\Ug_q(\gl_{2n})$ contains the coideal subalgebra $\Ug_q'(\gl_{n}\times \gl_{n})$, which contains a regular quantum group $\Ug_q(\gl_{n})$. Similarly $\Ug_q(\gl_{2d})$ contains the coideal subalgebra $\Ug_q'(\gl_{d}\times \gl_{d})$, which contains again a regular quantum group $\Ug_q(\gl_{d})$. These algebras fit in to the following pictures, where the actions algebras on the left and right in each row commute and centralize each other:
	\begin{equation} \label{skew howe duality diagram}
		\begin{tikzcd}
			\Ug_q(\gl_{2n})                         &[-25pt] \adjustbox{scale={2}{2}}{$\circlearrowright$} &[-25pt] \bigwedge^d(\C(q)^{2n}\boxtimes \C(q)^d) \arrow[d, "\cong"]    &[-25pt] \adjustbox{scale={2}{2}}{$\circlearrowleft$} &[-25pt] \Ug_q(\gl_{d}) \arrow[d, hook]             \\
			\Ug_q'(\gl_{n}\times \gl_{n}) \arrow[u, hook] & \adjustbox{scale={2}{2}}{$\circlearrowright$} & \bigwedge^d((\C(q)^{n}\boxtimes \C(q)^d)^2) \arrow[d, "\cong"] & \adjustbox{scale={2}{2}}{$\circlearrowleft$} & \Ug_q'(\gl_{d}\times \gl_{d}) \arrow[d, hook] \\
			\Ug_q(\gl_{n})\arrow[u, hook]             & \adjustbox{scale={2}{2}}{$\circlearrowright$} & \bigwedge^d(\C(q)^{n}\boxtimes \C(q)^{2d})                     & \adjustbox{scale={2}{2}}{$\circlearrowleft$} & \Ug_q(\gl_{2d})                       
		\end{tikzcd}
	\end{equation}
	In the top and bottom row we have the quantum skew Howe duality for two quantum groups of $\gl_n$. It gives that the representation $\bigwedge^d(\C(q)^{2n}\boxtimes \C(q)^d)$ decomposes when viewed as representation of $\Ug_q(\gl_{2n})$ as
	\begin{equation*}
		\bigwedge^d(\C(q)^{2n}\boxtimes \C(q)^d)=\bigoplus_{
			\begin{array}{c}
				\underline{a}=(a_1,\ldots, a_d)\\
				\sum a_i=d
			\end{array}
		}\bigwedge^{a_1}V\otimes \cdots \otimes \bigwedge^{a_d}V.
	\end{equation*}
	When restricting the action of $\Ug_q(\gl_{2n})$ to $\Ug_q'(\gl_n\times \gl_n)$ and to the direct summand 
	\[
	V^{\otimes d}=\bigwedge^1 V\otimes\cdots\otimes\bigwedge^1 V
	\] this recovers the Schur--Weyl duality between $\Ug_q'(\gl_n\times \gl_n)$ and $\Hecke_{1,q}(B_d)$, where $\Hecke_{1,q}(B_d)$ can be realized as a subalgebra $\Ug_q'(\gl_d\times \gl_d)$. This picture inspired the recently published paper \cite{tubbenhauer2022}, which can be understood as giving a web calculus for the middle row. However in contrast to this thesis they work with Levi subalgebras and not coideal subalgebras of the quantum group.
\end{rema}

	\chapter{Jones--Wenzl projectors of type \texorpdfstring{$B$}{B} and \texorpdfstring{$D$}{D}} \label{Chapter 3}

In this chapter we discuss generalizations of Jones--Wenzl projectors from type $A$, which we briefly mentioned in \Cref{Chapter 2}, to types $B$ and $D$. In \Cref{Chapter 3 section 1} we define the type $B$ Jones--Wenzl projectors -- a positive type $B$ projector $\bp$ and a negative projector $\bn$ -- recursively, and interpret them as idempotent projections onto $L([n])$ (respectively $L(-[n])$) of $V^{\otimes n}$ in the decomposition from \Cref{Theorem decomposition of tensor power of V}. We prove a algebraic criterion which characterizes these idempotents inherently in the algebra $\TL(B_n)\cong\End_{\Vq}(V^{\otimes n})$, which is the key for this interpretation. In \Cref{Chapter 3 section 2} we define the Jones--Wenzl projectors $d_n$ of type $D$, which can be interpreted as projections onto $L([n])\oplus L(-[n])$ and satisfy nicer integrality properties. \Cref{Chapter 3 section on higher projectors} treats the so-called higher Jones--Wenzl projectors, which will be defined recursively and can be interpreted as the projections onto all the irreducible direct summands in the decomposition of $V^{\otimes n}$. After we finished these three most important sections, we discussion Reidemeister-like relations for the type $B$ Temperley--Lieb algebra in \Cref{section on Reidemeister moves}, and end the chapter with a comparison of type $B$ Temperley--Lieb diagrams with affine diagrams from \cite{wedrich2018} in \Cref{affine section}.  

\section{Type \texorpdfstring{$B$}{B} Jones--Wenzl projectors} \label{Chapter 3 section 1}

In this section we define Jones--Wenzl projectors in the Temperley--Lieb algebra $\TL(B_n)$ for $n\geq 1$ and prove their basic properties. Recall that our ground field is $\C(q)$ and the value of the undotted circle is $\tlcircle=[2]=q+q^{{-}1}$.

\begin{defi}[Projectors of type $B$] \label{defi type B projectors}
	For $n=1$ we set
	\begin{equation*}
		b_{1,{+}}=\frac{1}{2}(1+s_0), \quad b_{1,{-}}=\frac{1}{2}(1-s_0).
	\end{equation*}
	Assume that $b_{n, {+}}$ and its counter-part $b_{n, {-}}$ were already defined. Then we define
	\begin{equation*}
		b_{n+1, {+}}=b_{n,{+}} - \frac{q^{n-1} + q^{{-}(n-1)}}{q^n + q^{{-}n}}b_{n,{+}} U_n b_{n,{+}},
	\end{equation*}
	which we draw diagrammatically as
	\[
	\cbox{
		\begin{tikzpicture}[tldiagram, yscale=1/2]
			\draw \tlcoord{-2.5}{0} \lineup \lineup \lineup \lineup \lineup;
			\draw \tlcoord{-2.5}{1} \lineup \lineup \lineup \lineup \lineup;
			\draw \tlcoord{-2.5}{2} \lineup \lineup \lineup \lineup \lineup;
			\draw \tlcoord{0}{0} \maketlboxred{3}{$n+1, {+}$};
			=
		\end{tikzpicture}
	}
	\quad=\quad
	\cbox{
		\begin{tikzpicture}[tldiagram, yscale=1/2]
			\draw \tlcoord{-2.5}{0} \lineup \lineup \lineup \lineup \lineup;
			\draw \tlcoord{-2.5}{1} \lineup \lineup \lineup \lineup \lineup;
			\draw \tlcoord{-2.5}{2} \lineup \lineup \lineup \lineup \lineup;
			\draw \tlcoord{0}{0} \maketlboxred{2}{$n, {+}$};
			=
		\end{tikzpicture}
	}
	\quad-\quad
	\frac{q^{n-1} + q^{{-}(n-1)}}{q^n + q^{{-}n}}
	\cbox{
		\begin{tikzpicture}[tldiagram, yscale=1/2]
			\draw \tlcoord{-1}{0} \lineup \lineup \lineup \lineup \lineup;
			\draw \tlcoord{3}{1} \lineup;
			\draw \tlcoord{0}{1} \linedown;
			\draw \tlcoord{0}{1} \smalllineup \capright \smalllinedown \linedown;
			\draw \tlcoord{3}{1} \smalllinedown \cupright \smalllineup \lineup;
			\draw \tlcoord{0}{0} \maketlboxred{2}{$n, {+}$};
			\draw \tlcoord{3}{0} \maketlboxred{2}{$n, {+}$};
		\end{tikzpicture}
	} .
	\]
	Analogously, we define
	\begin{equation*}
		b_{n+1, {-}}=\bn - \frac{q^{n-1} + q^{{-}(n-1)}}{q^n + q^{{-}n}}b_{n,{-}} U_n b_{n,{-}},
	\end{equation*}
	which we draw diagrammatically as
	\[
	\cbox{
		\begin{tikzpicture}[tldiagram, yscale=1/2]
			\draw \tlcoord{-2.5}{0} \lineup \lineup \lineup \lineup \lineup;
			\draw \tlcoord{-2.5}{1} \lineup \lineup \lineup \lineup \lineup;
			\draw \tlcoord{-2.5}{2} \lineup \lineup \lineup \lineup \lineup;
			\draw \tlcoord{0}{0} \maketlboxblue{3}{$n+1, {-}$};
			=
		\end{tikzpicture}
	}
	\quad=\quad
	\cbox{
		\begin{tikzpicture}[tldiagram, yscale=1/2]
			\draw \tlcoord{-2.5}{0} \lineup \lineup \lineup \lineup \lineup;
			\draw \tlcoord{-2.5}{1} \lineup \lineup \lineup \lineup \lineup;
			\draw \tlcoord{-2.5}{2} \lineup \lineup \lineup \lineup \lineup;
			\draw \tlcoord{0}{0} \maketlboxblue{2}{$n, {-}$};
			=
		\end{tikzpicture}
	}
	\quad-\quad
	\frac{q^{n-1} + q^{{-}(n-1)}}{q^n + q^{{-}n}}
	\cbox{
		\begin{tikzpicture}[tldiagram, yscale=1/2]
			\draw \tlcoord{-1}{0} \lineup \lineup \lineup \lineup \lineup;
			\draw \tlcoord{3}{1} \lineup;
			\draw \tlcoord{0}{1} \linedown;
			\draw \tlcoord{0}{1} \smalllineup \capright \smalllinedown \linedown;
			\draw \tlcoord{3}{1} \smalllinedown \cupright \smalllineup \lineup;
			\draw \tlcoord{0}{0} \maketlboxblue{2}{$n, {-}$};
			\draw \tlcoord{3}{0} \maketlboxblue{2}{$n, {-}$};
		\end{tikzpicture}
	} .
	\]
	 We call $b_{n,{+}}$ (resp.\ $b_{n,{-}}$) the \textbf{positive} (respectively \textbf{negative}) \textbf{Jones--Wenzl projector of type B}.
\end{defi}

\begin{warn} 
	Recall that we work with $V=L(q)$, the type I standard representation of $\Ug_q(\gl_2)$. One has to be careful when considering these formulas as endomorphisms of $V^{\otimes n}$, since
	\[
		\End_{\Vq}(V^{\otimes n})\cong \TL(B_n,\tlcircle=-[2]),
	\]
	i.e.\ the value of the undotted circle for the type I representation $V$ is $-[2]$. If one wants to interpret the formulas inside $\End_{\Vq}(V^{\otimes n})$, one needs to first use the isomorphism
	\begin{align*}
		\TL(B_n, \tlcircle=[2]) &\rightarrow \TL(B_n,\tlcircle=-[2]) \\
		s_0 &\mapsto s_0 \\
		U_i &\mapsto -U_i \text{ for } i=1,\ldots, n-1
	\end{align*}
	of $\C(q)$-algebras. In particular the second summand in the recursive formulas for $\bp$ respectively $\bn$ has coefficient $\frac{q^{n-1}+q^{-(n-1)}}{q^n+q^{-n}}$ instead of $-\frac{q^{n-1}+q^{-(n-1)}}{q^n+q^{-n}}$. A better solution then using the proposed isomorphism of Temperley--Lieb algebras is to work with tensor powers of the type II standard representation of $\Ug_q(\gl_2)$. Concretely it is given by enhancing the action of $\Ug_q(\sl2)$ from \Cref{type II corr formulas} to
	\begin{equation*}
		\begin{tikzcd}
			y \arrow[r, "1"', bend right, red] \arrow["-1"', loop, distance=2em, in=125, out=55, green] \arrow["q"', loop, distance=2em, in=305, out=235, orange] & x \arrow["-q"', loop, distance=2em, in=125, out=55, green] \arrow[l, "-1"', bend right, blue] \arrow["1"', loop, distance=2em, in=305, out=235, orange]
		\end{tikzcd},
	\end{equation*} 	
	where the action of $\textcolor{green}{D_1}$ is visualized \textcolor{green}{green}, the action of $\textcolor{orange}{D_2}$ is \textcolor{orange}{orange}, the action of $\textcolor{red}{E}$ is \textcolor{red}{red} and the action of $\textcolor{blue}{F}$ is \textcolor{blue}{blue}. One can adjust the results from \Cref{Chapter 2} to tensor powers of the type II representations. The formulas for the Jones--Wenzl projectors above are correct with respect to this notion. For more on this topic see \Cref{discussion on braidings i} and the discussion afterwards.
\end{warn}

The next theorem is the reason why we call the defined elements Jones--Wenzl projectors. It gives just like \Cref{char property type A projector} in type $A$, an internal characterization of the elements $\bp$ and $\bn$ in the algebra $\TL(B_n)$. Loosely speaking as the unique idempotents which kill all ``cup-cap'' generators of the Temperley--Lieb algebra and are killed by multiplication with $1-s_0$ respectively $1+s_0$. Such uniqueness theorems are useful to recognize these idempotents in different contexts including categorification. The proof is heavily inspired by the proof of \cite[Lemma 2]{lickorish1991}.
\begin{theo}[Characterizing properties of type $B$ Jones--Wenzl projectors] \label{Theorem on Existence of JW projectors of type B}
 \leavevmode
	\begin{enumerate}
		\item The element $b_{n,{+}}$ is the unique non-zero idempotent in $\dotTL$ which satisfies
		\[
			s_0b_{n,{+}}=b_{n,{+}}=b_{n,{+}}s_0 \quad \text{and} \quad U_ib_{n,{+}}=0=b_{n,{+}}U_i \, \text{ for all } i=1,\ldots, n-1 .
		\]
		\label{defining properties of positive type B JW} 
		\item The element $b_{n,{-}}$ is the unique non-zero idempotent in $\dotTL$ which satisfies
		\[
		s_0b_{n,{-}}=-b_{n,{-}}=b_{n,{-}}s_0 \quad \text{and} \quad U_ib_{n,{-}}=0=b_{n,{-}}U_i \, \text{ for all } i=1,\ldots, n-1.
		\] \label{defining properties of negative type B JW}
	\end{enumerate}
\end{theo}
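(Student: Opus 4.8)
The plan is to prove (i) completely and then obtain (ii) for free. The assignment $s_0\mapsto -s_0$, $U_i\mapsto U_i$ preserves every defining relation of $\dotTL$ (the type $A$ relations are untouched, $(-s_0)^2=1$, $U_1(-s_0)U_1=-U_1s_0U_1=0$ by \ref{TLdot2 relation}, and $(-s_0)U_i=U_i(-s_0)$), hence extends to an algebra automorphism $\phi$ of $\dotTL$. Comparing \Cref{defi type B projectors} term by term gives $\phi(b_{n,{+}})=b_{n,{-}}$ by induction on $n$, so applying $\phi$ to the statement of (i) yields the statement of (ii). From now on only (i) is treated.

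\textbf{The main induction.} The core is a single induction on $n\geq1$ establishing all of the following at once: $b_{n,{+}}$ is fixed by the flip anti-automorphism of $\dotTL$ (word reversal, i.e.\ reflection of diagrams in a horizontal axis); $b_{n,{+}}^2=b_{n,{+}}$; $s_0b_{n,{+}}=b_{n,{+}}=b_{n,{+}}s_0$; $U_ib_{n,{+}}=0=b_{n,{+}}U_i$ for $1\leq i\leq n-1$; the coefficient of the identity diagram in $b_{n,{+}}$ is $\tfrac12$ (so $b_{n,{+}}\neq0$); and the two \emph{fusion identities}
\begin{align*}
	U_n\,b_{n,{+}}\,U_n &= \kappa_n\, b_{n-1,{+}}\,U_n, \\
	b_{n,{+}}\,U_n\,b_{n,{+}}\,U_n\,b_{n,{+}} &= \kappa_n\, b_{n,{+}}\,U_n\,b_{n,{+}},
\end{align*}
interpreted inside $\opn{TL}(B_{n+1})$, where $\kappa_n=\dfrac{q^{n}+q^{-n}}{q^{n-1}+q^{-(n-1)}}$ is the reciprocal of the scalar appearing in \Cref{defi type B projectors} and $b_{0,{+}}:=1$. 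The base case $n=1$ is a direct computation from $b_{1,{+}}=\tfrac12(1+s_0)$: idempotency and $s_0b_{1,{+}}=b_{1,{+}}$ use only $s_0^2=1$, there are no generators $U_i$ to test, the identity coefficient is manifestly $\tfrac12$, and the fusion identities reduce to $U_1b_{1,{+}}U_1=\tfrac12(U_1^2+U_1s_0U_1)=\tfrac12[2]\,U_1=\kappa_1U_1$. This is precisely where relation \ref{TLdot2 relation} enters, and the base case is genuinely special since $\opn{TL}(B_1)$ has no generator $U_0$ and is not covered by the recursion. For the inductive step one applies the package for $n$ to $b_{n+1,{+}}=b_{n,{+}}-c_n\,b_{n,{+}}U_nb_{n,{+}}$ with $c_n=\kappa_n^{-1}$: flip-invariance, $s_0b_{n+1,{+}}=b_{n+1,{+}}=b_{n+1,{+}}s_0$, and the identity coefficient $\tfrac12$ are immediate (every diagram in $b_{n,{+}}U_nb_{n,{+}}$ has at most $n-1$ through-strands, so it is neither $1$ nor $s_0$ of $\opn{TL}(B_{n+1})$); $U_ib_{n+1,{+}}=0$ for $i<n$ uses $U_ib_{n,{+}}=0$, and for $i=n$ one combines the first fusion identity at level $n$, the relation $c_n\kappa_n=1$, the commutation $U_nb_{n-1,{+}}=b_{n-1,{+}}U_n$ (disjoint supports) and $b_{n-1,{+}}b_{n,{+}}=b_{n,{+}}$ to get $U_nb_{n+1,{+}}=U_nb_{n,{+}}-U_nb_{n,{+}}=0$; the right-hand annihilations follow by the flip; idempotency of $b_{n+1,{+}}$ follows by squaring and inserting the second fusion identity at level $n$ with $c_n\kappa_n=1$; finally one re-establishes the fusion identities \emph{at level $n+1$} by substituting the recursion into the relevant occurrences of $b_{n+1,{+}}$ and reducing via $U_{n+1}U_nU_{n+1}=U_{n+1}$, the commutations, $b_{n,{+}}^2=b_{n,{+}}$, $b_{n,{+}}b_{n+1,{+}}=b_{n+1,{+}}=b_{n+1,{+}}b_{n,{+}}$ and the numerical identity $\kappa_{n+1}=(q+q^{-1})-c_n$, checked by clearing denominators. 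This closes the induction and proves the existence half of (i).

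\textbf{Uniqueness.} Let $e\in\dotTL$ be a non-zero idempotent with $s_0e=e=es_0$ and $U_ie=0=eU_i$. Let $I$ be the span of the diagram-basis elements with strictly fewer than $n$ through-strands; since $1$ and $s_0$ are the only basis diagrams with $n$ through-strands, $\dotTL=\C(q)1\oplus\C(q)s_0\oplus I$ and $I$ is a two-sided ideal. Every basis diagram $d\in I$ has an innermost turnback on its bottom edge joining two adjacent bottom vertices; a dot on that arc (possible only on the leftmost arc by the reachability condition on dots) can be absorbed into an $s_0$, so $d=s_0^{\varepsilon}U_jd'$ for some $j\in\{1,\dots,n-1\}$, $\varepsilon\in\{0,1\}$ and a diagram $d'$; hence $ed=(es_0^{\varepsilon})U_jd'=(eU_j)d'=0$, i.e.\ $eI=0$, and symmetrically $Ie=0$ using a top turnback; the same holds with $e$ replaced by $b_{n,{+}}$. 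Therefore $e\,\dotTL=\C(q)e$. If $e\in I$ then $e=e^2\in eI=0$, a contradiction, so writing $e=\alpha1+\beta s_0+r$ with $r\in I$ the relation $s_0e=e$ forces $\beta=\alpha\neq0$ (compare $1$- and $s_0$-components, using $s_0I\subseteq I$); since $b_{n,{+}}$ has $1$- and $s_0$-coefficients both $\tfrac12$, we get $e-2\alpha\,b_{n,{+}}\in I$, whence $e\,b_{n,{+}}=2\alpha\,b_{n,{+}}^2+(e-2\alpha\,b_{n,{+}})b_{n,{+}}=2\alpha\,b_{n,{+}}$ using $I\,b_{n,{+}}=0$. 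But $e\,b_{n,{+}}\in e\,\dotTL=\C(q)e$, so $2\alpha\,b_{n,{+}}=\gamma e$ with $\gamma\neq0$ (as $b_{n,{+}}\neq0$); thus $e$ is a scalar multiple of the non-zero idempotent $b_{n,{+}}$, forcing $e=b_{n,{+}}$. Applying $\phi$ then gives (ii).

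\textbf{Expected obstacle.} The delicate point is the fusion lemma: handling the base case $n=1$ correctly (this is exactly where \ref{TLdot2 relation} is needed and where the recursive scheme fails for lack of $U_0$) and verifying the numerical identity $\kappa_{n+1}=(q+q^{-1})-c_n$ that keeps the recursion coefficients consistent. The dot-bookkeeping in the turnback decomposition is the only genuinely type $B$ wrinkle in the otherwise standard Lickorish-style uniqueness argument.
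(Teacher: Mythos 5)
Your proof is correct and follows essentially the same route as the paper's: uniqueness via the decomposition $\TL(B_n)=\langle 1,s_0\rangle\oplus(U_1,\ldots,U_{n-1})$ together with the observation that any such idempotent annihilates that ideal, and existence via a simultaneous induction whose engine is a quadratic identity for $U_nb_{n,{+}}$ combined with the numerical identity $\kappa_{n+1}=(q+q^{-1})-\kappa_n^{-1}$, with the same base case resting on $U_1s_0U_1=0$. The only cosmetic differences are that you phrase the key identity in partial-trace form $U_nb_{n,{+}}U_n=\kappa_n\,b_{n-1,{+}}U_n$ where the paper uses the equivalent $(U_nb_{n,{+}})^2=\kappa_n\,U_nb_{n,{+}}$, and that you justify $e\cdot(U_1,\ldots,U_{n-1})=0$ by a turnback decomposition of diagrams where the paper invokes centrality of $e$.
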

\begin{proof}
	We just prove \ref{defining properties of positive type B JW}, because the proof for \ref{defining properties of negative type B JW} is completely analogous if one replaces the generator $s_0$ by ${-}s_0$. 
	We start by showing the uniqueness of $b_{n,{+}}$.
	For this observe that as a vector-space we have
	\begin{equation} \label{nicuu direct sum decomposition}
		\dotTL={\shift{1, s_0}} \oplus (U_1,\ldots,U_{n-1}),
	\end{equation}
	 where $(U_1,\ldots,U_{n-1})$ denotes the two-sided ideal generated by the $U_i$. This is a consequence of the defining relations of the Temperley--Lieb algebra or alternatively follows from \Cref{Temperley--Lieb algebra of type B is diagram algebra}. Now assume $x,x'\in\dotTL$ are two elements which satisfy the properties from \ref{defining properties of positive type B JW} and write 
	 \begin{equation*}
	 	x=\lambda \cdot 1 + \mu s_0 + r, \quad x'=\lambda' \cdot 1 + \mu' s_0 + r'
	 \end{equation*}
 	for some $\lambda,\lambda', \mu, \mu'\in k$ and $r,r'\in(U_1,\ldots,U_{n-1})$. Because $x$ is fixed under multiplication with $s_0$ and $s_0$ is self-inverse, we obtain that
	\begin{equation*}
		\lambda \cdot 1 + \mu s_0 + r= x = s_0 x = \mu \cdot 1 + \lambda s_0 + s_0 r .
	\end{equation*}
	Hence by comparing coefficients we obtain $\lambda=\mu$ and $s_0 r= r$ as well as $\lambda'=\mu'$ and $s_0 r'= r'$. Now because $x,x'$ annihilate all $U_i$ and are fixed by $s_0$ from either side, they lie automatically in the center of our algebra $\dotTL$. Hence $x,x'$ annihilate all elements in the two-sided ideal $(U_1,\ldots,U_{n-1})$, in particular both $r$ and $r'$, which lie inside this ideal. Hence we observe that
	\begin{equation*}
		x\cdot x'= (\lambda \cdot 1 + \lambda s_0 + r)\cdot x' =\lambda (1 + s_0)\cdot x' = 2\lambda x'.
	\end{equation*} 
	Hence, again by using the centrality of $x$ and $x'$, we obtain
	\begin{equation} \label{niccuuu step}
		2 \lambda' x = x'\cdot x =x \cdot x'= 2 \lambda x'.
	\end{equation}
	By \eqref{nicuu direct sum decomposition} we know that $\lambda, \lambda'\neq 0$, since $x, x'$ are both non-zero idempotents by assumption. Hence \eqref{niccuuu step} shows that $\lambda=\frac{1}{2}=\lambda'$. From this we conclude $x=x'$ vy using \eqref{niccuuu step}, which completes the proof of the uniqueness.
	
	Next we show that $\bp$ satisfies the two properties from \ref{defining properties of positive type B JW}. By construction it is clear that $\bp$ is of the form $\frac{1}{2}(1+s_0) + r$ where $r\in (U_1,\ldots,U_{n-1})$. In particular $\bp$ is non-zero. Additionally it is fixed by $s_0$ for any $n$, since this holds for $b_{1,{+}}$ and $\bp$ lives in $b_{1,{+}}\TL(B_n)b_{1,{+}}$ by the recursive formula. In order to show that $\bp$ satisfies the other desired properties, we do a simultaneous induction over the following four statements:
	\begin{enumerate}[label = (\Roman*\textsubscript{$n$}), leftmargin=4\parindent]
		\item $b_n\coloneqq b_{n,{+}}$ is an idempotent.
		\item $b_n$ annihilates $U_j$, if $1\leq j \leq n-1$.
		\item $(U_{n}b_n)^2=\frac{q^{n}+ q^{{-}n}}{q^{n-1}+ q^{{-}(n-1)}}U_{n}b_n$ .
		\item $(b_nU_n)^2=\frac{q^{n}+ q^{{-}n}}{q^{n-1}+ q^{{-}(n-1)}}b_nU_n$.
	\end{enumerate} 
	We first check the base case $n=1$.
	Property I\textsubscript{$1$} is immediate from $s_0$ being self-inverse and II\textsubscript{$1$} holds vacuously. Property III\textsubscript{$1$} follows from \ref{TL1 relation} and \ref{TLdot2 relation} as follows:
	\begin{align*}
		(U_1 b_1)^2&= U_1 \left(\frac{1}{2}(1 + s_0)\right)\cdot U_1 \left(\frac{1}{2}(1 + s_0)\right) \\
		&= \frac{1}{4}U_1^2 + \frac{1}{4}U_1s_0U_1 + \frac{1}{4}U_1U_1s_0 + \frac{1}{4}U_1s_0U_1s_0 \\
		&= \frac{q+q^{{-}1}}{4}U_1 + 0 +\frac{q+q^{{-}1}}{4}U_1s_0 + 0 \\
		&= \frac{q+q^{{-}1}}{2}U_1 b_1 \\
		&= \frac{{q+q^{{-}1}}}{q^0 + q^{-0}} U_1 b_1 .
	\end{align*}
	Property IV\textsubscript{$1$} is obtained in a similar fashion. Assume that properties I\textsubscript{$n$}, II\textsubscript{$n$}, III\textsubscript{$n$}, IV\textsubscript{$n$} all hold for a fixed $n$.
	We show I$_{n+1}$ first. We see that
	\begin{align*}
		b_{n+1}^2
		&= \left( b_n-\frac{q^{n-1} - q^{{-}(n-1)}}{q^n + q^{{-}n}} b_nU_{n}b_n \right)^2 \\
		&= b_n -2\frac{q^{n-1} + q^{{-}(n-1)}}{q^n + q^{{-}n}}b_nU_{n}b_n+\left( \frac{q^{n-1} + q^{{-}(n-1)}}{q^n + q^{{-}n}}  \right)^2 b_n(U_nb_n)(U_nb_n)
		\tag{I\textsubscript{$n$}} \\
		&=
		b_n-2\frac{q^{n-1} + q^{{-}(n-1)}}{q^n + q^{{-}n}}b_nU_{n}b_n+\frac{q^{n-1} + q^{{-}(n-1)}}{q^n + q^{{-}n}}b_nU_{n}b_n
		\tag{III\textsubscript{$n$}} \\
		&=b_{n+1} \,.
	\end{align*}
	Next we show II$_{n+1}$. Consider $U_j$ for an index $j=1,\ldots, n$. For $j<n$ we can use ~II\textsubscript{$n$} and observe that
	\[
	U_jb_{n+1} = U_jb_n+\frac{q^{n-1} - q^{{-}(n-1)}}{q^n + q^{{-}n}}U_jb_nU_{n}b_n=0-0=0
	\]
	and analogously $b_{n+1}U_j=0$. For $j=n$ we have
	\[
	U_nb_{n+1} = U_nb_n-\frac{q^{n-1} + q^{{-}(n-1)}}{q^n + q^{{-}n}}(U_nb_n)(U_{n}b_n) 
	\xlongequal{\!\!\text{III\textsubscript{$n$}}\!\!}0
	\]
	and analogously we obtain $b_{n+1}U_n=0$ by IV\textsubscript{$n$}. Now we show III$_{n+1}$. We calculate
	\begin{align*}
		(U_{n+1}b_{n+1})^2
		&=U_{n+1}\left( b_n-\frac{q^{n-1} + q^{{-}(n-1)}}{q^n + q^{{-}n}}b_nU_{n}b_n \right)U_{n+1}b_{n+1} \\
		&=U_{n+1}b_{n}U_{n+1}b_{n+1}-\frac{q^{n-1} + q^{{-}(n-1)}}{q^n + q^{{-}n}}U_{n+1}b_nU_nb_nU_{n+1}b_{n+1} \\
		&=U_{n+1}^2b_{n+1}-\frac{q^{n-1} + q^{{-}(n-1)}}{q^n + q^{{-}n}}b_nU_{n+1}U_nU_{n+1}b_{n+1}
		\tag{TL3, BTL3}\\
		&=(q+q^{-1}) U_{n+1}b_{n+1}-\frac{q^{n-1} + q^{{-}(n-1)}}{q^n + q^{{-}n}}U_{n+1}b_{n+1}
		\tag{TL1,TL2}\\
		&=\left( (q+q^{-1}) - \frac{q^{n-1} + q^{{-}(n-1)}}{q^n + q^{{-}n}} \right)U_{n+1}b_{n+1} \\
		&=\frac{q^{n+1}+q^{n-1} + q^{{-}(n-1)}+q^{-(n+1)} - q^{n-1} - q^{{-}(n-1)}}{q^n + q^{{-}n}}U_{n+1}b_{n+1} \\
		&=\frac{q^{n+1} + q^{{-}(n+1)}}{q^n + q^{{-}n}}U_{n+1}b_{n+1} \,,
	\end{align*}
	We have used that the elements $b_n$ und $U_{n+1}$ commute by \ref{TL3 relation} and \ref{TLdot3 relation} and that $b_n\cdot b_{n+1}=b_{n+1}$ by I\textsubscript{$n$} and the definition of $b_{n+1}$.
	Property IV$_{n+1}$ follows completely analogously.
	\qedhere
\end{proof}
\begin{rema}
	Just as for the usual Jones--Wenzl projector the proof of uniqueness in \Cref{Theorem on Existence of JW projectors of type B} gives an even stronger statement. Any element which kills the generators $U_1,\ldots, U_{n-1}$ of $\TL(B_n)$ and is killed by $1-s_0$ respectively $1+s_0$ is a scalar multiple of the Jones--Wenzl projector $\bp$ respectively $\bn$. The elements $\bp$ respectively $\bn$ are the unique non-zero idempotents in these $1$-parameter families of central elements.
\end{rema}
The recursive formulas for the type $B$ projectors in \Cref{defi type B projectors} appeared without a Lie theoretic interpretation in \cite{tomdieck1998} and in a non-quantized form in \cite{wedrich2018} as idempotents projecting onto the maximal and lowest weight space of the standard representation of $\sl2$. The next corollary yields a lift of this interpretation to our quantum setting and gives in particular a Lie theoretic interpretation of the formulas in \cite{tomdieck1998}.
\begin{koro}
	Consider the tensor product decomposition of $V^{\otimes n}$ over $\Vq$ in \Cref{Theorem decomposition of tensor power of V}. The positive Jones--Wenzl projector $\bp$ can be identified with the idempotent projection $V^{\otimes n}\rightarrow L([n])\hookrightarrow V^{\otimes n}$. Similarly the negative Jones--Wenzl projector $\bn$ can be identified with the idempotent projection $V^{\otimes n}\rightarrow L(-[n])\hookrightarrow V^{\otimes n}$.
\end{koro}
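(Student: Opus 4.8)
The plan is to prove the Corollary via the uniqueness half of \Cref{Theorem on Existence of JW projectors of type B}. Write $\pi_+\colon V^{\otimes n}\twoheadrightarrow L([n])\hookrightarrow V^{\otimes n}$ and $\pi_-\colon V^{\otimes n}\twoheadrightarrow L(-[n])\hookrightarrow V^{\otimes n}$ for the idempotent projections attached to the decomposition in \Cref{Theorem decomposition of tensor power of V}. Under the Schur--Weyl isomorphism $\End_{\Vq}(V^{\otimes n})\cong\TL(B_n,\tlcircle={-}[2])$ established in the proof of \Cref{Schur-Weyl duality}, composed with the sign isomorphism $\TL(B_n,[2])\xrightarrow{\ \sim\ }\TL(B_n,{-}[2])$, $s_0\mapsto s_0$, $U_i\mapsto{-}U_i$, the element $\bp$ (resp.\ $\bn$) is carried to the unique non-zero idempotent of $\End_{\Vq}(V^{\otimes n})$ that is fixed by $s_0$ on both sides (resp.\ negated by $s_0$) and annihilated on both sides by every $U_i$. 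So it suffices to check that $\pi_+$ (resp.\ $\pi_-$) is a non-zero idempotent $\Vq$-endomorphism with exactly these properties.

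First I would record the formal input. Each $\pi_\pm$ is the projection onto a direct summand subrepresentation of the $\Vq$-module $V^{\otimes n}$, hence an idempotent in $\End_{\Vq}(V^{\otimes n})$, and it is non-zero because $L(\pm[n])\ne0$. By \Cref{Theorem decomposition of tensor power of V} the summands $L([n])$ and $L(-[n])$ occur with multiplicity one, and the quantum integers $[n-2k]$ for $0\le k\le n$ are pairwise distinct in $\C(q)$; hence $L([n])$ (resp.\ $L(-[n])$) equals its own isotypic component, so $\pi_+$ (resp.\ $\pi_-$) is a central idempotent in $\End_{\Vq}(V^{\otimes n})$. Centrality gives $s_0\pi_\pm=\pi_\pm s_0$ and $U_i\pi_\pm=\pi_\pm U_i$ automatically, reducing the task to the one-sided identities $U_i\pi_+=0$, $s_0\pi_+=\pi_+$ and $U_i\pi_-=0$, $s_0\pi_-=-\pi_-$.

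These I would verify by a direct computation in the model of \Cref{Theorem decomposition of tensor power of V} and \Cref{example with epsilon notation}: $L([n])$ is spanned by $v_0\otimes v_1\otimes\cdots\otimes v_{n-1}$ with $v_j=x+q^jy$, and $L(-[n])$ by $w_0\otimes w_{-1}\otimes\cdots\otimes w_{-(n-1)}$ with $w_j=x-q^{-j}y$. The generator $U_i$ applies $\tlcup\circ\tlcap$ to the $i$-th and $(i+1)$-st tensor factors; using the type I formulas from \Cref{fact graphical sl2 calculus} one gets $\tlcap(v_{i-1}\otimes v_i)=q^{i-1}-q^{i-1}=0$ and $\tlcap(w_{1-i}\otimes w_{-i})=q^{i-1}-q^{i-1}=0$, so $U_i$ kills both spanning vectors, whence $U_i\pi_\pm=0$. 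The generator $s_0$ acts on the first tensor factor by $T\colon x\mapsto y$, $y\mapsto x$, and $T(v_0)=v_0$ while $T(w_0)=-w_0$, so $s_0\pi_+=\pi_+$ and $s_0\pi_-=-\pi_-$. These match the characterizing properties of $\bp$ and $\bn$, and the uniqueness statement of \Cref{Theorem on Existence of JW projectors of type B} then identifies $\pi_+$ with $\bp$ and $\pi_-$ with $\bn$.

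The argument is essentially bookkeeping, and the one real pitfall is the normalization of the undotted circle: since $\End_{\Vq}(V^{\otimes n})$ carries $\tlcircle={-}[2]$ for the type I representation $V$, whereas $\TL(B_n)$ is set up in this chapter with $\tlcircle=[2]$, the characterizing property must be routed through the sign isomorphism above (equivalently, one could run the whole argument with the type II standard representation, for which the circle is already $[2]$). Once that is pinned down and the spanning vectors of $L(\pm[n])$ are read off from the inductive construction, the remaining $\tlcap$-evaluations are the two-line check displayed above.
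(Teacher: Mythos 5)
Your proposal is correct and takes essentially the same route as the paper: both identify $\pi_\pm$ with $\bp$, $\bn$ via the uniqueness clause of \Cref{Theorem on Existence of JW projectors of type B}, reading off the $s_0$-eigenvalue from the spanning vectors $v_0\otimes\cdots\otimes v_{n-1}$ and $w_0\otimes\cdots\otimes w_{-(n-1)}$. The one sub-step you handle differently is $U_i\pi_\pm=0$: the paper deduces it from the fact that $\pm[n]$ does not occur as a $B$-eigenvalue on $V^{\otimes(n-2)}$, so any intertwiner factoring through a lower tensor power vanishes after composing with the projection, whereas you evaluate $\tlcap$ directly on consecutive factors of the spanning vectors --- both work, and your explicit attention to the $\tlcircle=-[2]$ normalization is if anything more careful than the paper's.
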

\begin{proof}
	Throughout this proof we identify $\End_{\Vq}(V^{\otimes n})$ and $\TL(B_n)$.
	By the explicit decomposition in \Cref{Theorem decomposition of tensor power of V} the maximal weight submodule $L([n])$ is spanned by $v_0\otimes \cdots \otimes v_{n-1}$, while the lowest weight submodule is spanned by $w_0\otimes \cdots\otimes w_{-(n-1)}$, where $v_0=x+y$ and $w_0=x-y$. In particular $s_0$, which acts by swapping $x$ and $y$ in the first tensor factor, acts on by $1$ on $L([n])$ and by $-1$ on $L([-n])$. Moreover $[n]$ and $-[n]$ do not appear as eigenvalues of $B=q^{-1}EK^{-1}+F$ acting on $V^{\otimes l}$ by \Cref{Theorem decomposition of tensor power of V} for all $l$ strictly smaller then $n$. This shows that all morphisms $V^{\otimes n}\rightarrow V^{\otimes l}$ commuting with the action of $B=q^{-1}EK^{-1}+F$ are zero, when composed with the projection onto $L(-[n])$ respectively $L(-[n])$. In particular this applies to the morphisms $U_i=\tlline \cdots \tlline \tlcupcap \tlline \cdots \tlline\colon V^{\otimes n}\rightarrow V^{\otimes (n-2)}\rightarrow V^{\otimes n}$. This proves that the projections onto $L([n])$ respectively $L(-[n])$ satisfy the characterizing properties of $\bp$ respectively $\bn$ and hence agree with these elements by the uniqueness in \Cref{Theorem on Existence of JW projectors of type B}.
\end{proof}
Next we consider an example to see, what idempotents we just constructed.
\begin{beis} \label{n=2 type B projectors}
	The projectors for $n=1$ are given diagrammatically by
	\[
		\cbox{
			\begin{tikzpicture}[tldiagram, yscale=4/7]
				\draw \tlcoord{-1.5}{0} \lineup \lineup \lineup;
				\draw \tlcoord{0}{0} \maketlboxred{1}{$1, {+} $};
			\end{tikzpicture}
		} = \frac{1}{2} \left( \cbox{
			\begin{tikzpicture}[tldiagram, yscale=1/3]
				\draw \tlcoord{-1.5}{0} \lineup \lineup \lineup;
			\end{tikzpicture}
		} + \cbox{
			\begin{tikzpicture}[tldiagram, yscale=1/3]
				\draw \tlcoord{-1.5}{0} \lineup \dlineup \lineup;
			\end{tikzpicture}
		} \right) \, \text{and} \quad \cbox{
		\begin{tikzpicture}[tldiagram, yscale=4/7]
			\draw \tlcoord{-1.5}{0} \lineup \lineup \lineup;
			\draw \tlcoord{0}{0} \maketlboxblue{1}{$1, {-} $};
		\end{tikzpicture}
	} = \frac{1}{2} \left( \cbox{
	\begin{tikzpicture}[tldiagram, yscale=1/3]
		\draw \tlcoord{-1.5}{0} \lineup \lineup \lineup;
	\end{tikzpicture}
} - \cbox{
\begin{tikzpicture}[tldiagram, yscale=1/3]
	\draw \tlcoord{-1.5}{0} \lineup \dlineup \lineup;
\end{tikzpicture}
} \right) 
	\]
	Using the recursive formula we obtain that the positive Jones--Wenzl projector $b_{2,+}$ is given by
	\begin{align*}
		b_{2,+}&=\frac{1}{2}\left( \cbox{
			\begin{tikzpicture}[tldiagram]
				\draw \tlcoord{0}{0} \lineup;
				\draw \tlcoord{0}{1} \lineup;
			\end{tikzpicture}
		} + \cbox{
			\begin{tikzpicture}[tldiagram]
				\draw \tlcoord{0}{0} \dlineup;
				\draw \tlcoord{0}{1} \lineup;
			\end{tikzpicture}
		}\right) -	\frac{1}{2[2]} \left( 
		\cbox{
			\begin{tikzpicture}[tldiagram]
				\draw \tlcoord{0}{0} \capright;
				\draw \tlcoord{1}{0} \cupright;
			\end{tikzpicture}
		} + \cbox{
			\begin{tikzpicture}[tldiagram]
				\draw \tlcoord{0}{0} \capright;
				\draw \tlcoord{1}{0} \dcupright;
			\end{tikzpicture}
		}
		+ \cbox{
			\begin{tikzpicture}[tldiagram]
				\draw \tlcoord{0}{0} \dcapright;
				\draw \tlcoord{1}{0} \cupright;
			\end{tikzpicture}
		}
		+ \cbox{
			\begin{tikzpicture}[tldiagram]
				\draw \tlcoord{0}{0} \dcapright;
				\draw \tlcoord{1}{0} \dcupright;
			\end{tikzpicture}
		}
		\right).
	\end{align*}
	Similarly one can calculate that the negative projector is given by
	\begin{align*}
		b_{2,-}=\frac{1}{2}\left( \cbox{
			\begin{tikzpicture}[tldiagram]
				\draw \tlcoord{0}{0} \lineup;
				\draw \tlcoord{0}{1} \lineup;
			\end{tikzpicture}
		} - \cbox{
			\begin{tikzpicture}[tldiagram]
				\draw \tlcoord{0}{0} \dlineup;
				\draw \tlcoord{0}{1} \lineup;
			\end{tikzpicture}
		}\right) -	\frac{1}{2[2]} \left( 
		\cbox{
			\begin{tikzpicture}[tldiagram]
				\draw \tlcoord{0}{0} \capright;
				\draw \tlcoord{1}{0} \cupright;
			\end{tikzpicture}
		} - \cbox{
			\begin{tikzpicture}[tldiagram]
				\draw \tlcoord{0}{0} \capright;
				\draw \tlcoord{1}{0} \dcupright;
			\end{tikzpicture}
		}
		- \cbox{
			\begin{tikzpicture}[tldiagram]
				\draw \tlcoord{0}{0} \dcapright;
				\draw \tlcoord{1}{0} \cupright;
			\end{tikzpicture}
		}
		+ \cbox{
			\begin{tikzpicture}[tldiagram]
				\draw \tlcoord{0}{0} \dcapright;
				\draw \tlcoord{1}{0} \dcupright;
			\end{tikzpicture}
		}
		\right).
	\end{align*}
	Notably this is the same expression as for $b_{2,+}$ with the difference that the coefficient in front of each diagram basis element $\lambda$ is multiplied by $(-1)^{\#\text{dots on } \lambda}$, giving a nice symmetry between $\bp$ and $\bn$. We phrase this symmetry conceptually via the existence of the following involution of $\TL(B_n)$ in the next lemma.
\end{beis}
	\begin{lemm} \label{Type B automorphism s_o flips sign} 
		There is a well-defined $k$-linear algebra involution of $\dotTL$ defined by
		\begin{align*}
			\Phi\colon \dotTL &\to \dotTL \\
			s_0 &\to -s_0 \\
			U_i &\to U_i \quad  \text{ where } i=1,\ldots,n-1,
		\end{align*}
		which interchanges the roles of $\bp$ and $\bn$. The fixpoint subalgebra of $\TL(B_n)^{\Phi}$ under this involution is precisely $\TL(D_n)$.
	\end{lemm}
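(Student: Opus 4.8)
The plan is to first verify that $\Phi$ is a well-defined algebra homomorphism by checking that it respects the defining relations of $\TL(B_n)$ from \Cref{Definition Temperley--Lieb of Type B}. The type $A$ relations \ref{TL1 relation}--\ref{TL3 relation} involve only the generators $U_i$, which are fixed by $\Phi$, so they hold automatically. For the type $B$ relations: \ref{TLdot1 relation} follows from $(-s_0)^2 = s_0^2 = 1$; \ref{TLdot2 relation} follows from $U_1(-s_0)U_1 = -U_1 s_0 U_1 = 0$; and \ref{TLdot3 relation} follows from $(-s_0)U_i = -s_0 U_i = -U_i s_0 = U_i(-s_0)$ for $i\geq 2$. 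Since $\Phi$ sends each generator to $\pm$ itself, $\Phi^2$ fixes all generators, hence $\Phi^2 = \id$, so $\Phi$ is an algebra involution and in particular bijective.

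Next I would establish that $\Phi$ interchanges $\bp$ and $\bn$ by invoking the internal characterization in \Cref{Theorem on Existence of JW projectors of type B}. As $\Phi$ is an algebra homomorphism, $\Phi(\bp)$ is an idempotent, and it is nonzero because $\Phi$ is bijective. Applying $\Phi$ to the relations $U_i\bp = 0 = \bp U_i$ yields $U_i\Phi(\bp) = 0 = \Phi(\bp)U_i$ for all $i$, and applying $\Phi$ to $s_0\bp = \bp = \bp s_0$ yields $-s_0\Phi(\bp) = \Phi(\bp) = -\Phi(\bp)s_0$, i.e.\ $s_0\Phi(\bp) = -\Phi(\bp) = \Phi(\bp)s_0$. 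Thus $\Phi(\bp)$ is a nonzero idempotent satisfying the properties characterizing $\bn$ in part \ref{defining properties of negative type B JW}, so by the uniqueness statement $\Phi(\bp) = \bn$; running the same argument with $s_0$ and $-s_0$ exchanged gives $\Phi(\bn) = \bp$.

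Finally, for the fixed-point subalgebra I would argue by two inclusions. Since $\TL(D_n)$ is by definition generated by $U_0 = s_0 U_1 s_0$ together with $U_1,\ldots,U_{n-1}$, and $\Phi(U_0) = (-s_0)U_1(-s_0) = s_0 U_1 s_0 = U_0$ while $\Phi(U_i) = U_i$, the homomorphism $\Phi$ fixes a generating set of $\TL(D_n)$, hence $\TL(D_n) \subseteq \TL(B_n)^{\Phi}$. For the reverse inclusion I would use the $k$-module decomposition $\TL(B_n) = \TL(D_n) \oplus \TL(D_n)s_0$ recorded in the remark following \Cref{inclusions of Temperley--Lieb algebras}: writing an arbitrary $x \in \TL(B_n)^{\Phi}$ as $x = a + b s_0$ with $a, b \in \TL(D_n)$, we get $\Phi(x) = \Phi(a) + \Phi(b)\Phi(s_0) = a - b s_0$, so $\Phi(x) = x$ forces $2 b s_0 = 0$; since $k = \C(q)$ and right multiplication by the unit $s_0$ is injective, this gives $b = 0$ and thus $x = a \in \TL(D_n)$.

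I do not anticipate a serious obstacle here: the only steps that genuinely draw on earlier results are the swap $\bp \leftrightarrow \bn$, which rests entirely on the uniqueness in \Cref{Theorem on Existence of JW projectors of type B}, and the identification of the fixed subalgebra, which rests on the coset decomposition of $\TL(B_n)$ over $\TL(D_n)$; everything else reduces to a direct check on generators.
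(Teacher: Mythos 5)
Your proposal is correct and follows essentially the same route as the paper: well-definedness by checking the defining relations on generators, the swap $\Phi(\bp)=\bn$ via the uniqueness statement in \Cref{Theorem on Existence of JW projectors of type B}, and the identification of the fixed subalgebra. The only (harmless) variation is in the last step, where the paper reads off $\TL(B_n)^{\Phi}=\TL(D_n)$ from the diagram bases (each basis diagram is scaled by $(-1)^{\#\text{dots}}$), whereas you deduce it more abstractly from the decomposition $\TL(B_n)=\TL(D_n)\oplus\TL(D_n)s_0$; both are valid.
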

	\begin{proof}
		One can easily check that $\Phi$ is well-defined. It is clearly an involution. The projectors are interchanged by this involution by their characterization as the unique idempotents which kill all generators $U_i$ and satisfy $s_0\bp=\bp$ respectively $s_0\bn=-\bn$. The concrete description of the diagram bases of $\TL(B_n)$ and $\TL(D_n)$ from \Cref{diagrammatic definition type B TL} and \Cref{inclusions of Temperley--Lieb algebras} gives that the fixed point subalgebra under $\Phi$ is precisely $\TL(D_n)$.
	\end{proof}
As a consequence of the recursive formula and \Cref{Theorem on Existence of JW projectors of type B} we have the following absorption formulas for Jones--Wenzl projectors.
\begin{koro}[Box absorption] \label{box absorption type B}
	Let $n,m\geq 1$ such that $n>m$. Then we have
	\[
		\bp b_{m,{+}} = \bp =  b_{m,{+}} \bp \quad \text{ and } \quad \bn b_{m,{-}} = \bn =  b_{m,{-}}\bn.
	\]
	We indicate these formulas by the following diagrams
	\[
	\cbox{
		\begin{tikzpicture}[tldiagram, yscale=1/2]
			\draw \tlcoord{-2.5}{0} \lineup \lineup \lineup \lineup \lineup;
			\draw \tlcoord{-2.5}{1} \lineup \lineup \lineup \lineup \lineup;
			\draw \tlcoord{-2.5}{2} \lineup \lineup \lineup \lineup \lineup;
			\draw \tlcoord{1}{0} \maketlboxred{3}{$n, {+}$};
			\draw \tlcoord{-1}{0} \maketlboxred{2}{$m, {+}$};
			=
		\end{tikzpicture}
	}
	\quad=\quad
	\cbox{
		\begin{tikzpicture}[tldiagram, yscale=1/2]
			\draw \tlcoord{-2.5}{0} \lineup \lineup \lineup \lineup \lineup;
			\draw \tlcoord{-2.5}{1} \lineup \lineup \lineup \lineup \lineup;
			\draw \tlcoord{-2.5}{2} \lineup \lineup \lineup \lineup \lineup;
			\draw \tlcoord{0}{0} \maketlboxred{3}{$n, {+}$};
		\end{tikzpicture}
	}
	\, , \quad
		\cbox{
			\begin{tikzpicture}[tldiagram, yscale=1/2]
				\draw \tlcoord{-2.5}{0} \lineup \lineup \lineup \lineup \lineup;
				\draw \tlcoord{-2.5}{1} \lineup \lineup \lineup \lineup \lineup;
				\draw \tlcoord{-2.5}{2} \lineup \lineup \lineup \lineup \lineup;
				\draw \tlcoord{1}{0} \maketlboxblue{3}{$n, {-}$};
				\draw \tlcoord{-1}{0} \maketlboxblue{2}{$m, {-}$};
				=
			\end{tikzpicture}
		}
		\quad=\quad
		\cbox{
			\begin{tikzpicture}[tldiagram, yscale=1/2]
				\draw \tlcoord{-2.5}{0} \lineup \lineup \lineup \lineup \lineup;
				\draw \tlcoord{-2.5}{1} \lineup \lineup \lineup \lineup \lineup;
				\draw \tlcoord{-2.5}{2} \lineup \lineup \lineup \lineup \lineup;
				\draw \tlcoord{0}{0} \maketlboxblue{3}{$n, {-}$};
			\end{tikzpicture}
		} \, .
	\]
\end{koro}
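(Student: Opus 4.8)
The plan is to deduce both absorption identities directly from the characterizing properties in \Cref{Theorem on Existence of JW projectors of type B}, combined with the structural observation made inside its proof: the positive projector decomposes as $b_{m,{+}} = \tfrac12(1+s_0) + r_m$ with $r_m$ in the two-sided ideal $(U_1,\dots,U_{m-1})$ of $\TL(B_m)$. Under the inclusion $\TL(B_m)\hookrightarrow\TL(B_n)$ from \Cref{inclusions of Temperley--Lieb algebras} (which adds $n-m$ through-strands on the right) this ideal lands inside $(U_1,\dots,U_{n-1})\subseteq\TL(B_n)$ and $s_0$ is sent to $s_0$, so the same decomposition is valid for $b_{m,{+}}$ regarded as an element of $\TL(B_n)$.

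First I would record that $\bp$ is central in $\TL(B_n)$: it commutes with every algebra generator, since $s_0\bp=\bp=\bp s_0$ and $U_i\bp=0=\bp U_i$ for $1\le i\le n-1$, and $\{s_0,U_1,\dots,U_{n-1}\}$ generates $\TL(B_n)$. This is exactly the centrality argument already used in the proof of \Cref{Theorem on Existence of JW projectors of type B}. Then I compute $\bp b_{m,{+}} = \tfrac12\,\bp(1+s_0) + \bp r_m$. The first summand equals $\tfrac12(\bp+\bp s_0)=\tfrac12(\bp+\bp)=\bp$ by the defining property $\bp s_0=\bp$. For the second summand, write $r_m$ as a $k$-linear combination of elements $a U_i b$ with $1\le i\le n-1$ (a two-sided ideal in a unital algebra is spanned by such products); using centrality of $\bp$, move the factor $\bp$ next to $U_i$ so that each term becomes $a(\bp U_i)b=0$. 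Hence $\bp b_{m,{+}}=\bp$, and $b_{m,{+}}\bp=\bp b_{m,{+}}=\bp$ again by centrality of $\bp$.

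Finally, the negative statement follows by applying the algebra involution $\Phi$ of \Cref{Type B automorphism s_o flips sign}. Since $\Phi$ is an algebra homomorphism with $\Phi(\bp)=\bn$ and $\Phi(b_{m,{+}})=b_{m,{-}}$, applying $\Phi$ to the identities $\bp b_{m,{+}}=\bp=b_{m,{+}}\bp$ yields $\bn b_{m,{-}}=\bn=b_{m,{-}}\bn$. Equivalently, one can simply repeat the computation above verbatim with $s_0$ replaced by $-s_0$ and $\bp$ by $\bn$.

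I do not expect a serious obstacle; the statement is essentially a formal consequence of centrality plus the normalized form of the projectors. The only points needing a moment's care are (i) verifying that the inclusion $\TL(B_m)\hookrightarrow\TL(B_n)$ maps the ideal $(U_1,\dots,U_{m-1})$ into $(U_1,\dots,U_{n-1})$ and fixes $s_0$ — immediate from the explicit list of generators — and (ii) justifying the reduction of $r_m$ to a combination of products $aU_ib$, which is just the definition of a two-sided ideal. Everything else is a one-line manipulation.
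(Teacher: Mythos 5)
Your proof is correct, but it takes a different route from the paper's. The paper's own argument is a one-liner via the \emph{recursive formula}: since $b_{n+1,{+}}=b_{n,{+}}-c_n\,b_{n,{+}}U_nb_{n,{+}}$, induction gives $\bp\in b_{m,{+}}\TL(B_n)\,b_{m,{+}}$ for all $n>m$, and then idempotency of $b_{m,{+}}$ (from \Cref{Theorem on Existence of JW projectors of type B}) immediately yields the absorption on both sides. You instead bypass the recursion entirely and argue from the characterization: $\bp$ is central and annihilates the ideal $(U_1,\dots,U_{n-1})$, while $b_{m,{+}}=\tfrac12(1+s_0)+r_m$ with $r_m$ in that ideal, so $\bp b_{m,{+}}=\tfrac12(\bp+\bp s_0)=\bp$; the negative case then follows by applying the involution $\Phi$ of \Cref{Type B automorphism s_o flips sign}. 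All the ingredients you invoke (the normalized leading term $\tfrac12(1+s_0)$, the centrality argument, the fact that the inclusion $\TL(B_m)\hookrightarrow\TL(B_n)$ carries $(U_1,\dots,U_{m-1})$ into $(U_1,\dots,U_{n-1})$) are indeed established in or immediately adjacent to the proof of \Cref{Theorem on Existence of JW projectors of type B}, so there is no gap. What your version buys is independence from the particular recursive construction: it shows absorption for \emph{any} elements satisfying the characterizing properties and the normalization, whereas the paper's argument is shorter but tied to the specific recursion. Either is acceptable.
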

\begin{proof}
	This is a direct consequence of the recursive nature of \Cref{defi type B projectors} and \Cref{Theorem on Existence of JW projectors of type B}. 
\end{proof}
\begin{koro}[Red-blue orthogonality] \label{red-blue orthogonality}
	The positive and negative type $B$ projectors are orthogonal, i.e.\ we have
	\[
	\bp b_{m,{-}}=0=b_{m,{-}}\bp \text{ for all } n,m\geq 1,
	\]
	which we indicate diagrammatically as
	\[
		\cbox{
			\begin{tikzpicture}[tldiagram, yscale=1/2]
				\draw \tlcoord{-2.5}{0} \lineup \lineup \lineup \lineup \lineup;
				\draw \tlcoord{-2.5}{1} \lineup \lineup \lineup \lineup \lineup;
				\draw \tlcoord{1}{0} \maketlboxred{2}{$n, {+}$};
				\draw \tlcoord{-1}{0} \maketlboxblue{2}{$n, {-}$};
				=
			\end{tikzpicture}
		} \quad = \quad  0 \quad = \quad \cbox{
		\begin{tikzpicture}[tldiagram, yscale=1/2]
			\draw \tlcoord{-2.5}{0} \lineup \lineup \lineup \lineup \lineup;
			\draw \tlcoord{-2.5}{1} \lineup \lineup \lineup \lineup \lineup;
			\draw \tlcoord{1}{0} \maketlboxblue{2}{$n, {-}$};
			\draw \tlcoord{-1}{0} \maketlboxred{2}{$n, {+}$};
			=
		\end{tikzpicture}
	} \, .
	\]
\end{koro}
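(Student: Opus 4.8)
The plan is to deduce the orthogonality directly from the characterizing relations established in \Cref{Theorem on Existence of JW projectors of type B}, namely $b_{n,{+}}s_0 = b_{n,{+}}$ and $s_0 b_{m,{-}} = -b_{m,{-}}$, together with the fact that $2$ is invertible in the ground field $\C(q)$.

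First I would dispose of the mild bookkeeping point that, when $n \neq m$, the product $b_{n,{+}} b_{m,{-}}$ a priori does not live in a single algebra. This is resolved by embedding both $\TL(B_n)$ and $\TL(B_m)$ into $\TL(B_N)$ with $N = \max(n,m)$ via the ``add a through-strand on the right'' inclusions (the same inclusions implicit in \Cref{box absorption type B} and compatible with those of \Cref{inclusions of Temperley--Lieb algebras}); since such an inclusion sends $s_0$ to $s_0$ and each $U_i$ to $U_i$, the relations $b_{n,{+}}s_0 = b_{n,{+}}$ and $s_0 b_{m,{-}} = -b_{m,{-}}$ continue to hold verbatim in $\TL(B_N)$, where the product now makes sense.

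The core of the argument is then the one-line computation
\[
	b_{n,{+}} b_{m,{-}} \;=\; (b_{n,{+}} s_0)\, b_{m,{-}} \;=\; b_{n,{+}}\, (s_0 b_{m,{-}}) \;=\; -\, b_{n,{+}} b_{m,{-}},
\]
whence $2\, b_{n,{+}} b_{m,{-}} = 0$ and therefore $b_{n,{+}} b_{m,{-}} = 0$. The identity $b_{m,{-}} b_{n,{+}} = 0$ follows in exactly the same way, multiplying an $s_0$ in on the other side using $b_{m,{-}} s_0 = -b_{m,{-}}$ and $s_0 b_{n,{+}} = b_{n,{+}}$; alternatively one may apply the algebra involution $\Phi$ of \Cref{Type B automorphism s_o flips sign}, which interchanges $b_{n,{+}}$ and $b_{n,{-}}$, to the relation already proved.

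I do not expect a genuine obstacle here: the statement is essentially immediate once the characterizing property of \Cref{Theorem on Existence of JW projectors of type B} is in hand. The only place that deserves a word of care — and hence the ``hardest'' step, such as it is — is fixing the ambient algebra when the two projector boxes have different widths, which the embedding described above settles cleanly.
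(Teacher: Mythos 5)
Your proof is correct and is essentially the paper's argument in a slightly different guise: the paper inserts $b_{1,{+}}b_{1,{-}}=\tfrac14(1+s_0)(1-s_0)=0$ via box absorption, while you insert a single $s_0$ via the eigenvalue relations and divide by $2$ — both hinge on exactly the same fact that $s_0$ acts by opposite signs on the two projectors. Your remark about fixing the ambient algebra when $n\neq m$ is a reasonable extra precaution that the paper leaves implicit.
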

\begin{proof}
	 Using \Cref{box absorption type B} we have
	\[
	\bp b_{m,{-}} =\bp b_{1,{+}} b_{1,{-}} b_{m,{-}} =  \bp \cdot 0 \cdot b_{m,{-}} = 0
	\]
	since $(1+s_0)(1-s_0)=1-s_0^2=0$ and similarly $b_{m,{-}}\bp=0$. 
\end{proof}
\begin{rema} \label{Nice coefficients remark}
	As seen in \Cref{n=2 type B projectors} the elements $\bp$ and $\bn$ have at first sight surprising coefficients in front of the diagram basis. These coefficients are not quantum fractions (i.e.\ element in $\mZ[[q]][q^{-1}]$) as in the type $A$ case! Instead every basis element has some contribution of $\frac{1}{2}$. This looks demotivating from a categorification point of view, since it is very difficult to categorify elements in $\mQ$. However instead of giving up, observe that by taking the sum of $b_{2,{+}}$ and $b_{2,{-}}$ we obtain the element
	\[
		d_2 \coloneqq b_{2,{+}} + b_{2,{-}} \, = \, \cbox{
			\begin{tikzpicture}[tldiagram]
				\draw \tlcoord{0}{0} \lineup;
				\draw \tlcoord{0}{1} \lineup;
			\end{tikzpicture}
		} - \frac{1}{[2]} \cbox{
			\begin{tikzpicture}[tldiagram]
				\draw \tlcoord{0}{0} \capright;
				\draw \tlcoord{1}{0} \cupright;
			\end{tikzpicture}
		} - \frac{1}{[2]} \cbox{
			\begin{tikzpicture}[tldiagram]
				\draw \tlcoord{0}{0} \capright;
				\draw \tlcoord{1}{0} \cupright;
				\biggerdrawdots{0.3}{0.5}{0.5}
				\biggerdrawdots{0.7}{0.5}{0.5}
			\end{tikzpicture}
		}
	\]
	whose coefficients live in $\mZ[[q]][q^{-1}]$, which is what we want! Moreover this idempotent lies not just in the algebra $\TL(B_n)$, but instead its subalgebra $\TL(D_n)$ from the inclusion in \Cref{inclusions of Temperley--Lieb algebras}. This remark is the starting point for the next section.
\end{rema}


\section{Type \texorpdfstring{$D$}{D} Jones--Wenzl projectors} \label{Chapter 3 section 2}

In this important section we define Jones--Wenzl projectors of type $D_n$ and explore some of their properties, which resemble the type $A$ projectors in more than one way.

\begin{defi}[Projectors of type $D$] \label{definition of projectors of type d} 
	We define the Jones--Wenzl Projector $d_n\in \TL(D_n)$ of type $D_n$ as $d_n=\bp+\bn\in \TL(B_n)$. We visualize the type $D_n$ projector as a green box on $n$ strands and this definition as
	\begin{equation*}
		\cbox{
			\begin{tikzpicture}[tldiagram, yscale=2/3]
				\draw \tlcoord{-1.5}{0} \lineup \lineup  \lineup;
				\draw \tlcoord{-1.5}{1} \lineup \lineup  \lineup;
				\draw \tlcoord{-1.5}{2} \lineup \lineup  \lineup;
				\draw \tlcoord{0}{0} \maketlboxgreen{3}{$n$};
				=
			\end{tikzpicture}
		} \, \coloneqq \quad \cbox{
		\begin{tikzpicture}[tldiagram, yscale=2/3]
			\draw \tlcoord{-1.5}{0} \lineup \lineup \lineup;
			\draw \tlcoord{-1.5}{1} \lineup  \lineup \lineup;
			\draw \tlcoord{-1.5}{2} \lineup \lineup \lineup;
			\draw \tlcoord{0}{0} \maketlboxred{3}{$n, {+}$};
			=
		\end{tikzpicture}
	} \quad + \quad \cbox{
	\begin{tikzpicture}[tldiagram, yscale=2/3]
		\draw \tlcoord{-1.5}{0} \lineup \lineup \lineup;
		\draw \tlcoord{-1.5}{1} \lineup \lineup  \lineup;
		\draw \tlcoord{-1.5}{2} \lineup \lineup \lineup;
		\draw \tlcoord{0}{0} \maketlboxblue{3}{$n, {-}$};
		=
	\end{tikzpicture}
}.
	\end{equation*}
\end{defi}
The following corollary is proven totally similarly to \Cref{Theorem on Existence of JW projectors of type B}.
\begin{koro} \label{characterizing properties of type B projector}
	The type $D$ projector $d_n$ is the unique non-zero idempotent in $\TL(D_n)$, which satisfies
	\[
		d_n U_i= 0 = U_i d_n 
	\]
	for all $i=0,1,\ldots, n-1$.
\end{koro}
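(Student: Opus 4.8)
The plan is to mirror the proof of \Cref{Theorem on Existence of JW projectors of type B}, but carried out one level up, inside the subalgebra $\TL(D_n)$. First I would record the relevant analogue of the decomposition \eqref{nicuu direct sum decomposition}: as a $k$-vector space
\[
	\TL(D_n) = \shift{1} \oplus (U_0, U_1, \ldots, U_{n-1}),
\]
where $(U_0, U_1, \ldots, U_{n-1})$ denotes the two-sided ideal of $\TL(D_n)$ generated by the cup-cap generators. This follows from the description of the diagram basis of $\TL(D_n)$ in \Cref{diagrammatic definition type B TL} and \Cref{inclusions of Temperley--Lieb algebras}: the only dotted Temperley--Lieb diagram with an even number of dots and no cup-caps is the identity, so everything in the span of diagrams involving at least one arc lies in that ideal. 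Combined with the fact that $\TL(B_n) = \TL(D_n) \oplus s_0\TL(D_n)$ and $\shift{1,s_0}\cap \TL(D_n) = \shift{1}$, this gives the key refinement that distinguishes $d_n$ from $\bp, \bn$.

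Next I would verify that $d_n$ satisfies the two stated properties. That $d_n U_i = 0 = U_i d_n$ for $i = 1, \ldots, n-1$ is immediate from $d_n = \bp + \bn$ and \Cref{Theorem on Existence of JW projectors of type B}\ref{defining properties of positive type B JW},\ref{defining properties of negative type B JW}. For $i = 0$, recall $U_0 = s_0 U_1 s_0$, and compute $d_n U_0 = d_n s_0 U_1 s_0$; since $d_n s_0 = \bp s_0 + \bn s_0 = \bp - \bn$, which still annihilates $U_1$ on the right (as both $\bp U_1 = 0$ and $\bn U_1 = 0$), we get $d_n U_0 = (\bp - \bn) U_1 s_0 = 0$, and symmetrically $U_0 d_n = 0$. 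That $d_n$ is a non-zero idempotent follows from \Cref{red-blue orthogonality} (orthogonality of $\bp$ and $\bn$) together with $\bp^2 = \bp$, $\bn^2 = \bn$; non-vanishing is clear since the coefficient of the identity diagram is $1$ by the computation in \Cref{Nice coefficients remark}, or more abstractly because $\bp$ and $\bn$ are non-zero with disjoint support in the $\shift{1,s_0}$-component.

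For uniqueness I would argue exactly as in the $\TL(B_n)$ case. Suppose $x, x' \in \TL(D_n)$ are non-zero idempotents annihilating all $U_i$, $0 \le i \le n-1$. Writing $x = \lambda \cdot 1 + r$ with $r \in (U_0,\ldots,U_{n-1})$, the condition that $x$ kills all $U_i$ from both sides and the fact that $(U_0,\ldots,U_{n-1})$ is generated by these elements forces $x$ to be central in $\TL(D_n)$ and to annihilate the entire ideal, in particular $xr = 0$ and $xr' = 0$. Then $x x' = \lambda x'$ and $x' x = \lambda' x$, so by centrality $\lambda x' = \lambda' x$; idempotency of $x$ forces $\lambda = \lambda^2$ together with $\lambda \ne 0$ (else $x = r$ would satisfy $x = x^2 = 0$ using $xr=0$), hence $\lambda = 1 = \lambda'$ and $x = x'$. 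The main subtlety — and the one place a little care is needed — is establishing centrality of $x$: one must check that killing the $U_i$ and the span decomposition together imply $x$ commutes with everything, which again uses that $\TL(D_n)$ is generated as an algebra by $\{U_i \mid 0 \le i \le n-1\}$ modulo the identity, a fact recorded in \Cref{explanation green type D vs our type D}. This is where I expect the bookkeeping to be most delicate, but it is entirely parallel to the $\TL(B_n)$ argument and no new ideas are required.
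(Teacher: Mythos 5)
Your proposal is correct and follows essentially the same route as the paper: verify non-vanishing, idempotency (via red--blue orthogonality) and the annihilation of all $U_i$ directly, then rerun the uniqueness argument of \Cref{Theorem on Existence of JW projectors of type B} using the decomposition $\TL(D_n)=\shift{1}\oplus(U_0,\ldots,U_{n-1})$. The only point the paper makes explicit that you leave implicit is that $d_n$ actually lies in the subalgebra $\TL(D_n)$, which it deduces from the involution $\Phi$ of \Cref{Type B automorphism s_o flips sign} swapping $\bp$ and $\bn$.
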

\begin{proof}
	First observe that $d_n$ is really contained in the subalgebra $\TL(D_n)\subseteq \TL(B_n)$ by \Cref{Type B automorphism s_o flips sign}. It is non-zero, since the coefficient in front of $1\in \TL(D_n)$ in $d_n$ is $1$. Moreover it is is idempotent by \Cref{red-blue orthogonality} and since $\bp$ and $\bn$ are both idempotent.
	The uniqueness follows in the same way as the uniqueness in the proof \Cref{Theorem on Existence of JW projectors of type B} using $\TL(D_n)=\shift{1}\oplus (U_0,U_1,\ldots, U_{n-1})$.
\end{proof}

\Cref{definition of projectors of type d} is unfortunately not very useful, since one has to compute $\bp$ (and $\bn$ for that matter) first. However the following theorem gives a recursive formula for the type $D$ projectors.

\begin{theo} \label{type D projector recursive description}
	The type $D$ projectors $d_n$ are described by the following recursive formulas:
	\begin{equation*}
		d_1=\cbox{
		\begin{tikzpicture}[tldiagram]
			\draw \tlcoord{0}{0} \lineup;
	\end{tikzpicture}} \, , \qquad d_2=\cbox{
	\begin{tikzpicture}[tldiagram]
		\draw \tlcoord{0}{0} \lineup;
		\draw \tlcoord{0}{1} \lineup;
	\end{tikzpicture}
} - \frac{1}{[2]} \cbox{
\begin{tikzpicture}[tldiagram]
	\draw \tlcoord{0}{0} \capright;
	\draw \tlcoord{1}{0} \cupright;
\end{tikzpicture}
} - \frac{1}{[2]} \cbox{
\begin{tikzpicture}[tldiagram]
	\draw \tlcoord{0}{0} \capright;
	\draw \tlcoord{1}{0} \cupright;
	\biggerdrawdots{0.3}{0.5}{0.5}
	\biggerdrawdots{0.7}{0.5}{0.5}
\end{tikzpicture}
} \, ,
	\end{equation*}
	Assume $d_n$ was already calculated for $n\geq2$, then $d_{n+1}$ is given by the formula
	\[
		d_{n+1}=d_n-\frac{q^{n-1}+q^{-(n-1)}}{q^n+q^{-n}}d_nU_nd_n,
	\]
	which we visualize as
	\[
	\cbox{
		\begin{tikzpicture}[tldiagram, yscale=1/2]
			\draw \tlcoord{-2.5}{0} \lineup \lineup \lineup \lineup \lineup;
			\draw \tlcoord{-2.5}{1} \lineup \lineup \lineup \lineup \lineup;
			\draw \tlcoord{-2.5}{2} \lineup \lineup \lineup \lineup \lineup;
			\draw \tlcoord{0}{0} \maketlboxgreen{3}{$n+1$};
			=
		\end{tikzpicture}
	}
	\quad=\quad
	\cbox{
		\begin{tikzpicture}[tldiagram, yscale=1/2]
			\draw \tlcoord{-2.5}{0} \lineup \lineup \lineup \lineup \lineup;
			\draw \tlcoord{-2.5}{1} \lineup \lineup \lineup \lineup \lineup;
			\draw \tlcoord{-2.5}{2} \lineup \lineup \lineup \lineup \lineup;
			\draw \tlcoord{0}{0} \maketlboxgreen{2}{$n$};
			=
		\end{tikzpicture}
	}
	\quad-\quad
	\frac{q^{n-1} + q^{{-}(n-1)}}{q^n + q^{{-}n}}
	\cbox{
		\begin{tikzpicture}[tldiagram, yscale=1/2]
			\draw \tlcoord{-1}{0} \lineup \lineup \lineup \lineup \lineup;
			\draw \tlcoord{3}{1} \lineup;
			\draw \tlcoord{0}{1} \linedown;
			\draw \tlcoord{0}{1} \smalllineup \capright \smalllinedown \linedown;
			\draw \tlcoord{3}{1} \smalllinedown \cupright \smalllineup \lineup;
			\draw \tlcoord{0}{0} \maketlboxgreen{2}{$n$};
			\draw \tlcoord{3}{0} \maketlboxgreen{2}{$n$};
		\end{tikzpicture}
	} .
	\]
\end{theo}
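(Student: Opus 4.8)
The plan is to prove the recursive formula by comparing it against the already-established recursion for the type $B$ projectors $\bp$ and $\bn$ from \Cref{defi type B projectors}. Since $d_n = \bp + \bn$ by \Cref{definition of projectors of type d}, the base cases $d_1$ and $d_2$ follow directly: for $n=1$ we have $b_{1,+} + b_{1,-} = \frac{1}{2}(1+s_0) + \frac{1}{2}(1-s_0) = 1$, and for $n=2$ the formula for $d_2$ is obtained by adding the two expressions for $b_{2,+}$ and $b_{2,-}$ computed in \Cref{n=2 type B projectors}, where the dotted $s_0$-strands cancel in the single-strand part and the terms with an even number of dots survive with coefficient $-\frac{1}{[2]}$ each. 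The point to record here is that $d_2$ genuinely lies in $\TL(D_2)$, as observed in \Cref{Nice coefficients remark}.

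For the inductive step, I would assume $n \geq 2$ and compute $d_{n+1} = b_{n+1,+} + b_{n+1,-}$ using the recursion for each summand:
\[
	d_{n+1} = b_{n,+} - \tfrac{q^{n-1}+q^{-(n-1)}}{q^n+q^{-n}} b_{n,+} U_n b_{n,+} + b_{n,-} - \tfrac{q^{n-1}+q^{-(n-1)}}{q^n+q^{-n}} b_{n,-} U_n b_{n,-}.
\]
The first and third terms combine to $d_n$. For the middle terms, the key observation is the red-blue orthogonality from \Cref{red-blue orthogonality}: we have $b_{n,+} U_n b_{n,-} = 0 = b_{n,-} U_n b_{n,+}$. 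Indeed, $U_n$ involves only strands $n$ and $n+1$, hence commutes with $b_{n,+}$ and $b_{n,-}$ (which by construction live in the subalgebra on the first $n$ strands, intertwined appropriately — more precisely, $b_{n,\pm}$ commutes with $U_n$ by relations \ref{TL3 relation} and \ref{TLdot3 relation} exactly as used in the proof of \Cref{Theorem on Existence of JW projectors of type B}). Therefore
\[
	d_n U_n d_n = (b_{n,+} + b_{n,-}) U_n (b_{n,+} + b_{n,-}) = b_{n,+} U_n b_{n,+} + b_{n,-} U_n b_{n,-},
\]
the two cross terms vanishing by orthogonality. Substituting this back gives exactly $d_{n+1} = d_n - \frac{q^{n-1}+q^{-(n-1)}}{q^n+q^{-n}} d_n U_n d_n$, which is the claimed recursion.

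Alternatively — and this is the cleaner route I would actually write down — I can bypass the explicit recursion-comparison entirely by invoking the uniqueness in \Cref{characterizing properties of type B projector}. Define $d_{n+1}'$ to be the right-hand side $d_n - \frac{q^{n-1}+q^{-(n-1)}}{q^n+q^{-n}} d_n U_n d_n$ and show it is a non-zero idempotent in $\TL(D_{n+1})$ annihilating all cup-cap generators $U_0, \dots, U_n$. This is a verbatim adaptation of the induction over properties (I$_n$)--(IV$_n$) in the proof of \Cref{Theorem on Existence of JW projectors of type B}: idempotency of $d_{n+1}'$ follows from $d_n$ being idempotent (\Cref{characterizing properties of type B projector}) together with $(d_n U_n)^2 = \frac{q^n+q^{-n}}{q^{n-1}+q^{-(n-1)}} d_n U_n$, which in turn uses $d_n U_j = 0$ for $j < n$, relation \ref{TL2 relation} for $U_n U_{n\pm1} U_n$, and the fact that $d_n$ commutes with $U_{n+1}$-type manipulations; annihilation of $U_j$ for $j \leq n-1$ is inherited from $d_n$, annihilation of $U_n$ is the (III)/(IV) computation, and $d_{n+1}' U_0 = 0$ follows since $U_0 = s_0 U_1 s_0$ and $d_n U_1 = 0$. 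The main obstacle is the somewhat delicate bookkeeping in the idempotency/annihilation induction — establishing the auxiliary identity $(d_n U_n)^2 = \frac{q^n+q^{-n}}{q^{n-1}+q^{-(n-1)}} d_n U_n$ and its mirror $(U_n d_n)^2 = \cdots$ — but since this is structurally identical to the four-part simultaneous induction already carried out for $\bp$ in \Cref{Theorem on Existence of JW projectors of type B}, with $s_0$-related subtleties absent in type $D$, it amounts to transcription rather than new work. I would therefore present the proof as: base cases by direct summation, then either the short orthogonality argument or a one-line appeal that the same simultaneous induction goes through, and conclude by uniqueness.
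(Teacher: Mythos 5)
Your overall strategy is the one the paper itself uses: write $d_{n+1}=b_{n+1,+}+b_{n+1,-}$, expand both summands by the type $B$ recursion, and reduce the claim to the vanishing of the cross terms $b_{n,+}U_nb_{n,-}$ and $b_{n,-}U_nb_{n,+}$. But your justification of that vanishing contains a genuine error: you assert that $U_n$ commutes with $b_{n,\pm}$ ``by \ref{TL3 relation} and \ref{TLdot3 relation}''. This is false. The projector $b_{n,\pm}$ is supported on strands $1,\dots,n$ and its expansion contains diagrams involving $U_{n-1}$ with nonzero coefficient, and $U_{n-1}$ does not commute with $U_n$ (they are adjacent, so \ref{TL2 relation} governs their product rather than \ref{TL3 relation}). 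The commutation invoked in the proof of \Cref{Theorem on Existence of JW projectors of type B} is between $b_n$ and $U_{n+1}$, which is distance at least two from every generator occurring in $b_n$; you have shifted the index by one. Note also that \Cref{red-blue orthogonality} only gives $b_{n,+}b_{n,-}=0$, not $b_{n,+}U_nb_{n,-}=0$, so the cross-term vanishing does not follow from it without an extra step. The correct argument --- and the one the paper gives --- first uses box absorption (\Cref{box absorption type B}) to write $b_{n,+}U_nb_{n,-}=b_{n,+}b_{n-1,+}U_nb_{n-1,-}b_{n,-}$; the projectors $b_{n-1,\pm}$ involve only $s_0,U_1,\dots,U_{n-2}$ and hence genuinely commute with $U_n$, so this equals $b_{n,+}U_nb_{n-1,+}b_{n-1,-}b_{n,-}=0$ by red-blue orthogonality at level $n-1$.

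Your second route --- defining $d_{n+1}'$ by the recursion and identifying it with $d_{n+1}$ via the uniqueness in \Cref{characterizing properties of type B projector} after rerunning the four-part simultaneous induction from \Cref{Theorem on Existence of JW projectors of type B} --- is sound and would work, precisely because the auxiliary identities there use commutation of the $n$-strand projector with $U_{n+1}$ (not $U_n$), which does hold. As written, however, it is only a sketch, and since the first route is a one-line fix away from being complete, I would repair the cross-term argument as above rather than fall back on the uniqueness route.
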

\begin{proof}
	The description for the first two type $D$ projectors $d_1$ and $d_2$ follows immediately from adding up the type $B$ projectors described in \Cref{n=2 type B projectors}. The recursive formula follows by the calculation
	\begin{align*}
		d_{n+1}&=b_{n+1,{+}}+b_{n+1,{-}} \\
		&=\left(\bp - \frac{q^{n-1}+q^{-(n-1)}}{q^n+q^{-n}} \bp U_n \bp\right) + \left(\bn - \frac{q^{n-1}+q^{-(n-1)}}{q^n+q^{-n}} \bn U_n \bn\right) \\
		&= d_n - \frac{q^{n-1}+q^{-(n-1)}}{q^n+q^{-n}} (\bp U_n \bp + \bn U_n \bn) \\
		&= d_n - \frac{q^{n-1}+q^{-(n-1)}}{q^n+q^{-n}} (\bp+\bn) U_n (\bp +\bn)  \\
		&= d_n + \frac{q^{n-1}+q^{-(n-1)}}{q^n+q^{-n}} d_{n+1} U_n d_{n+1} ,
	\end{align*}
where we used in the non-trivial step that $
\bn U_n \bp = 0 = \bp U_n \bn $ since
\[ \cbox{
	\begin{tikzpicture}[tldiagram, yscale=1/2, xscale=2/3]
		\draw \tlcoord{-1}{0} \lineup \lineup \lineup \lineup \lineup;
		\draw \tlcoord{3}{1} \lineup;
		\draw \tlcoord{0}{1} \linedown;
		\draw \tlcoord{0}{1} \smalllineup \capright \smalllinedown \linedown;
		\draw \tlcoord{3}{1} \smalllinedown \cupright \smalllineup \lineup;
		\draw \tlcoord{0}{0} \maketlboxred{2}{$n, {+}$};
		\draw \tlcoord{3}{0} \maketlboxblue{2}{$n, {-}$};
	\end{tikzpicture}
}\, = \, \cbox{
\begin{tikzpicture}[tldiagram, yscale=2/3, xscale=3/2]
	\draw \tlcoord{-1}{0} \lineup \lineup \lineup \lineup \lineup;
	\draw \tlcoord{3}{1} \lineup;
	\draw \tlcoord{0}{1} \linedown;
	\draw \tlcoord{0}{1} \smalllineup \capright \smalllinedown \linedown;
	\draw \tlcoord{3}{1} \smalllinedown \cupright \smalllineup \lineup;
	\draw \tlcoord{0}{0} \maketlboxred{2}{$n, {+}$};
	\draw \tlcoord{1}{0} \maketlboxred{1}{$n-1, {+}$};
	\draw \tlcoord{2}{0} \maketlboxblue{1}{$n-1, {-}$};
	\draw \tlcoord{3}{0} \maketlboxblue{2}{$n, {-}$};
\end{tikzpicture}
} \,  = \, 0  \, = \, \cbox{
\begin{tikzpicture}[tldiagram, yscale=2/3, xscale=3/2]
	\draw \tlcoord{-1}{0} \lineup \lineup \lineup \lineup \lineup;
	\draw \tlcoord{3}{1} \lineup;
	\draw \tlcoord{0}{1} \linedown;
	\draw \tlcoord{0}{1} \smalllineup \capright \smalllinedown \linedown;
	\draw \tlcoord{3}{1} \smalllinedown \cupright \smalllineup \lineup;
	\draw \tlcoord{0}{0} \maketlboxblue{2}{$n, {-}$};
	\draw \tlcoord{1}{0} \maketlboxblue{1}{$n-1, {-}$};
	\draw \tlcoord{2}{0} \maketlboxred{1}{$n-1, {+}$};
	\draw \tlcoord{3}{0} \maketlboxred{2}{$n, {+}$};
\end{tikzpicture}
}\, = \, \cbox{
\begin{tikzpicture}[tldiagram, yscale=1/2, xscale=2/3]
	\draw \tlcoord{-1}{0} \lineup \lineup \lineup \lineup \lineup;
	\draw \tlcoord{3}{1} \lineup;
	\draw \tlcoord{0}{1} \linedown;
	\draw \tlcoord{0}{1} \smalllineup \capright \smalllinedown \linedown;
	\draw \tlcoord{3}{1} \smalllinedown \cupright \smalllineup \lineup;
	\draw \tlcoord{0}{0} \maketlboxblue{2}{$n, {-}$};
	\draw \tlcoord{3}{0} \maketlboxred{2}{$n, {+}$};
\end{tikzpicture}
},\] 
by box absorption, commutativity of $b_{n-1,{+}}$ and $b_{n-1, {-}}$ with $U_n$, and red-blue orthogonality.
\end{proof}
From this recursive formulas we can conclude that the coefficients for the type $D$ projectors are nice in the sense of \Cref{Nice coefficients remark} for general $n$.

\begin{koro} \label{nice coefficients corollary}
	The Jones--Wenzl projector $d_n$ of Type $D_n$ has coefficients in $\mZ[[q]][q^{-1}]$ with respect to the diagram basis.
\end{koro}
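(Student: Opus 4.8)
The plan is to prove this by induction on $n$, using the recursive description of $d_n$ from \Cref{type D projector recursive description}. Throughout, write $R\coloneqq\mZ[[q]][q^{-1}]$ for the ring of formal Laurent series in $q$ with integer coefficients, regarded as a subring of the field $\C((q))$ of formal Laurent series, into which our ground field $\C(q)$ embeds by expanding rational functions at $q=0$; with this convention the statement ``$d_n$ has coefficients in $R$'' makes sense, since $d_n$ has a unique expression as a finite $\C(q)$-linear combination of diagram basis elements. The heart of the matter is the single observation that every scalar entering either the recursion or the Temperley--Lieb multiplication already lies in $R$.

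First I would verify these scalar facts. The circle value is $\delta=[2]=q+q^{-1}\in\mZ[q,q^{-1}]\subseteq R$, and $\frac{1}{[2]}=\frac{q}{1+q^2}=q-q^3+q^5-\cdots\in R$. For the recursion coefficient, multiplying numerator and denominator by $q^n$ gives $\frac{q^{n-1}+q^{-(n-1)}}{q^n+q^{-n}}=\frac{q^{2n-1}+q}{1+q^{2n}}$, and since $\frac{1}{1+q^{2n}}=\sum_{k\ge 0}(-1)^kq^{2nk}\in\mZ[[q]]$ while $q^{2n-1}+q\in\mZ[q]$ for $n\ge1$, this coefficient lies in $\mZ[[q]]\subseteq R$. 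Because the product of two diagram basis elements of $\TL(B_n)$ is either zero or a nonnegative power of $\delta$ times a diagram basis element, by the local rules of \Cref{diagrammatic definition type B TL}, the $R$-span of the diagram basis is closed under multiplication; call this $R$-subalgebra $\TL(B_n)_R$. Since the diagram basis of $\TL(D_n)$ is a subset of that of $\TL(B_n)$ by \Cref{inclusions of Temperley--Lieb algebras}, it is harmless to carry out the whole argument inside $\TL(B_n)$.

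Then the induction is immediate. For the base cases, $d_1=1$ has coefficient $1\in R$, and $d_2=1-\frac{1}{[2]}U_1-\frac{1}{[2]}U_0$ has all coefficients in $R$ by the previous paragraph (this is the phenomenon highlighted in \Cref{Nice coefficients remark}). For the inductive step, suppose $d_n\in\TL(B_n)_R$. Then $U_n\in\TL(B_{n+1})_R$, being a diagram basis element, so $d_nU_nd_n\in\TL(B_{n+1})_R$ as $\TL(B_{n+1})_R$ is a ring; and since $\frac{q^{n-1}+q^{-(n-1)}}{q^n+q^{-n}}\in R$, the formula $d_{n+1}=d_n-\frac{q^{n-1}+q^{-(n-1)}}{q^n+q^{-n}}\,d_nU_nd_n$ gives $d_{n+1}\in\TL(B_{n+1})_R$. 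Hence $d_n$ has coefficients in $R$ for all $n$.

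I do not expect a genuine obstacle here: the only real content is the two elementary power-series identities for $\frac{1}{[2]}$ and for the recursion coefficient, together with the (already recorded) fact that Temperley--Lieb multiplication only introduces powers of $\delta=[2]$. One could instead phrase the induction intrinsically inside $\TL(D_n)$ using \Cref{characterizing properties of type B projector} and the same recursion, but routing it through $\TL(B_n)$ costs nothing and reuses the diagram combinatorics already set up.
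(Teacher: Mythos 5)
Your proposal is correct and follows essentially the same route as the paper: induction via the recursion of \Cref{type D projector recursive description}, with the only content being that the circle value $[2]$, its inverse, and the recursion coefficients $\frac{q^{n-1}+q^{-(n-1)}}{q^{n}+q^{-n}}$ all expand as elements of $\mZ[[q]][q^{-1}]$. Your packaging of this as closure of the $\mZ[[q]][q^{-1}]$-span of the diagram basis under multiplication (an integral form of $\TL(B_n)$) is just a slightly more explicit version of the paper's remark that all coefficients are sums of products of these building pieces.
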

\begin{proof}
	This clearly holds for $n=1$. For $n=2$ the only non-trivial power series is 
	\[
		\frac{1}{[2]}=\frac{1}{q+q^{-1}}=q-q^3+q^5-q^7+\ldots = \sum_{l=0}^{\infty}(-1)^lq^{2l+1} \in \mZ[[q]][q^{-1}].
	\] 
	The recursive description of $d_n$ in \Cref{type D projector recursive description} gives the statement for general $n$, since all coefficients of $d_n$ are sums of products of $\delta=q+q^{-1}$, its inverse and fractions of the form
	\begin{align*}
		\frac{q^{k}+q^{-k}}{q^{k+1}+q^{-(k+1)}} &= (q^k+q^{-k})\frac{q^{k+1}}{1-(-q^{2(k+1)})} \\
		&=	(q^{2k+1}+q)\frac{q^{k+1}}{1-(-q^{2(k+1)})} \\
		&= (q^{2k+1}+q)(1- q^{2(k+1)} + q^{4(k+1)} - q^{6(k+1)} + \ldots )  \\
		&= q + q^{2k+1} - q^{2k+3} -q^{4k+3} +q^{4k+5} +q^{6k+5}- q^{6k+7} + \dots  \\
		&= q + \sum_{l=1}^{\infty}(-1)^l(q^{2l(k+1)+1}-q^{2l(k+1)-1}) \in \mZ[[q]][q^{-1}] .
	\end{align*}
	Since all of the above ``building pieces'' lie in $\mZ[[q]][q^{-1}]$ this holds for all coefficients of $d_n$.
\end{proof}
We end this section with a look at the coefficients $\frac{q^{k}+q^{-k}}{q^{k+1}+q^{-(k+1)}}$, which appear in the recursive definition.
\begin{obse}
	The power series for $\frac{q^{k}+q^{-k}}{q^{k+1}+q^{-(k+1)}}$, which we calculated in the proof of \Cref{nice coefficients corollary} is $q+r_k$, where $r_k\in \mZ[[q]]$ is a expression divisible by $q^{2k+1}$. This implies that
	\[
		\frac{q^{k}+q^{-k}}{q^{k+1}+q^{-(k+1)}} \xrightarrow{k\rightarrow \infty} q
	\]
	with respect to the $q$-adic norm on $\mZ[[q]][q^{-1}]$. If we would have worked with $q=1$ we would not have noticed any of this, since the left side is already equal to $1$, which is $q$. For further $q$-adic discussions and the proper definitions, see \Cref{Chapter on Infinite Braids}. 
\end{obse}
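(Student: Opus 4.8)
The plan is to extract both assertions from the geometric-series expansion already carried out in the proof of \Cref{nice coefficients corollary}, together with the characterisation of convergence in the $q$-adic topology.

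First I would recall that expansion. Multiplying numerator and denominator by $q^{k+1}$ turns the fraction into
\[
	\frac{q^{k}+q^{-k}}{q^{k+1}+q^{-(k+1)}} = \frac{q^{2k+1}+q}{1+q^{2(k+1)}},
\]
and, since $q^{2(k+1)}$ has positive $q$-valuation, $1+q^{2(k+1)}$ is a unit in $\mZ[[q]][q^{-1}]$ with inverse the series $\sum_{l\geq0}(-1)^{l}q^{2l(k+1)}$; I would spell this inversion out explicitly, because the whole point of the observation is that the statement is invisible (indeed vacuous) at $q=1$ and acquires content only $q$-adically. Multiplying out and splitting off the $l=0$ term yields
\[
	\frac{q^{k}+q^{-k}}{q^{k+1}+q^{-(k+1)}} = q + \underbrace{q^{2k+1} + (q^{2k+1}+q)\sum_{l\geq1}(-1)^{l}q^{2l(k+1)}}_{=:\, r_{k}},
\]
so $r_{k}\in\mZ[[q]]$, and every monomial occurring in $r_{k}$ has exponent at least $2k+1$: the isolated term contributes exactly $q^{2k+1}$, and each term of the remaining sum has exponent $\geq 2(k+1)-1 = 2k+1$. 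Hence $r_{k}\in q^{2k+1}\mZ[[q]]$, which is the first assertion.

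For the limit I would invoke the characterisation of convergence for the $q$-adic norm on $\mZ[[q]][q^{-1}]$ (the proper definitions being deferred to \Cref{Chapter on Infinite Braids}): a sequence $(f_{k})_{k}$ converges to $f$ precisely when the $q$-adic valuation of $f_{k}-f$, the least exponent of $q$ occurring with nonzero coefficient, tends to $\infty$. Here $f_{k}-f = r_{k}\in q^{2k+1}\mZ[[q]]$ has valuation $\geq 2k+1\to\infty$, giving exactly $\tfrac{q^{k}+q^{-k}}{q^{k+1}+q^{-(k+1)}}\to q$. There is no genuine obstacle: once the expansion is in hand the argument is purely elementary, and the only step deserving explicit care is the legitimacy of the geometric-series inversion inside the completed ring, valid because the correction term $q^{2(k+1)}$ has strictly positive $q$-valuation.
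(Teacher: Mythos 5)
Your proposal is correct and follows essentially the same route as the paper: the observation rests on exactly the geometric-series expansion $(q^{2k+1}+q)\sum_{l\ge0}(-1)^lq^{2l(k+1)}$ carried out in the proof of the corollary on integrality of the coefficients of $d_n$, from which one reads off that $r_k$ has $q$-adic valuation at least $2k+1$ and hence that the fractions converge to $q$. The only cosmetic remark is that the terms of your tail sum actually have exponent at least $2k+3$, so your quoted bound $2(k+1)-1$ is a (harmless) underestimate; the conclusion $r_k\in q^{2k+1}\mZ[[q]]$ is unaffected.
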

\begin{rema} \label{quantum integers or not}
	The fraction $\frac{q^k+q^{-k}}{q^{k+1}+q^{-(k+1)}}$ can be written as 
	\[
	\frac{q^{k}+q^{-k}}{q^{k+1}+q^{-(k+1)}}=\frac{[k+1]-[k-1]}{[k+2]-[k]},
	\]
	which is a fraction, in which both the numerator and denominator are differences of quantum integers. However sometimes this expression is already a quotient of products of quantum numbers and not just their differences. For instance observe that
	\[
	\frac{q+q^{-1}}{q^2+q^{-2}}=\frac{[2]}{[3]-[1]}=\frac{[2][2]}{[4]} 
	\]
	since we have
	\[
		[4]=q^3+q+q^{-1}+q^{-3}=(q+q^{1})(q^2+q^{-2})=[2]([3]-1).
	\]
	We do not know whether this is a general phenomenon. The author stumbled upon this equality when calculating $d_3$ in two ways: The left fraction is part of the recursive expression for $d_3$, while the right hand side appears, when computing the Jones--Wenzl projector $a_3$ of type $A$  living in $\TL_4=\TL(A_3)$. The coefficients in front of the diagram basis of $d_3$ appear already as coefficients of $a_3$ since $\TL(D_3)$ is a quotient of $\TL(A_3)$ as explained in \Cref{explanation green type D vs our type D} by making the four diagrams from \Cref{example additional diagrams} zero.
\end{rema}
\section{Higher Jones--Wenzl projectors} \label{Chapter 3 section on higher projectors}
In this section we introduce the idempotents, which project to all the direct summands of $V^{\otimes n}$ over the coideal $\Vq$. By \Cref{Theorem decomposition of tensor power of V} we know all the direct summands, however we still need formulas in the Temperley--Lieb algebra for their projections. Fortunately the recursive formulas for the type $B$ projectors in \Cref{defi type B projectors} have a representation theoretic interpretation. The first summand in the recursive formula can be understood as the idempotent projection in $V^{\otimes (n+1)}$ onto $L([n])\otimes V$, while the second summand $\bp U_n \bp$ can be interpreted as projecting onto $L([n-1])$. Throughout this section we work with morphisms going between arbitrary tensor powers of $V$, i.e.\ $\TL_B(l,k)\coloneqq \Hom_{\Ug_q'(\gl_1)}(V^{\otimes l}, V^{\otimes k})$, where $\Ug_q'(\gl_1)$ is the subalgebra of $\Vq$ generated by $B=q^{-1}EK^{-1}+F$. For the reason, why we should consider these morphism spaces, see the discussion at the end of \Cref{vizualised tensor prod sl2 decompo}. 

\begin{defi}
	Let $\varepsilon\in\{-1,1\}^n$. We want to define a quasi-idempotent element $q_{\varepsilon}\in \TL_B(n,n)\coloneqq \TL(B_n)$ projecting onto $L_\varepsilon$. First we need to define certain elements $t_\varepsilon\in\TL_B(k,n)\coloneqq \Hom_{\Ug_q'(\gl_1)}(V^{\otimes k}, V^{\otimes n}) $, where $k=|\varepsilon|=|\varepsilon_1+\cdots+\varepsilon_n|$.
	We define $t_{(1)}=\bp$ and $t_{(-1)}=\bn$. Assume that $t_{\varepsilon}$ is already defined. We define
	\begin{align*}
		t_{\varepsilon*1}&\coloneqq\begin{cases}
			\cbox{
				\begin{tikzpicture}[tldiagram, yscale=2/3]
					\draw \tlcoord{-1}{0} \lineup \lineup \lineup;
					\draw \tlcoord{-1}{1} \lineup \lineup \lineup;
					\draw \tlcoord{0}{0} \maketlboxred{2}{$k+1, {+}$};
					\draw \tlcoord{1.3}{0} \maketlellipse{1}{$t_\varepsilon$};
				\end{tikzpicture}
			} , & \text{if $|\varepsilon|>0$,} \\
			& \\
			\cbox{
				\begin{tikzpicture}[tldiagram, yscale=2/3]
					\draw \tlcoord{-1}{0} \lineup \lineup \lineup;
					\draw \tlcoord{1}{1} \lineup;
					\draw \tlcoord{1}{1} \smalllinedown \cupright \smalllineup \lineup;
					\draw \tlcoord{0}{0} \maketlboxblue{1}{$k-1$};
					\draw \tlcoord{1.3}{0} \maketlellipse{2}{$t_\varepsilon$};
				\end{tikzpicture}
			}, & \text{if $|\varepsilon|<0$,} \\
			& \\
			\cbox{
				\begin{tikzpicture}[tldiagram, yscale=2/3]
					\draw \tlcoord{0}{0}  \lineup;
					\draw \tlcoord{-1}{2} \lineup \lineup;
					\draw \tlcoord{0}{2} \maketlboxred{1}{${1}$};
					\draw \tlcoord{0}{0} \maketlellipse{1}{$t_\varepsilon$};
				\end{tikzpicture}
			}, & \text{if $|\varepsilon|=0$,}
		\end{cases} \qquad 
		t_{\varepsilon*(-1)}&\coloneqq\begin{cases}
			\cbox{
				\begin{tikzpicture}[tldiagram, yscale=2/3]
					\draw \tlcoord{-1}{0} \lineup \lineup \lineup;
					\draw \tlcoord{-1}{1} \lineup \lineup \lineup;
					\draw \tlcoord{0}{0} \maketlboxblue{2}{$k+1, {-}$};
					\draw \tlcoord{1.3}{0} \maketlellipse{1}{$t_\varepsilon$};
				\end{tikzpicture}
			} , & \text{if $|\varepsilon|<0$,} \\
			& \\
			\cbox{
				\begin{tikzpicture}[tldiagram, yscale=2/3]
					\draw \tlcoord{-1}{0} \lineup \lineup \lineup;
					\draw \tlcoord{1}{1} \lineup;
					\draw \tlcoord{1}{1} \smalllinedown \cupright \smalllineup \lineup;
					\draw \tlcoord{0}{0} \maketlboxred{1}{$k-1$};
					\draw \tlcoord{1.3}{0} \maketlellipse{2}{$t_\varepsilon$};
				\end{tikzpicture}
			}, & \text{if $|\varepsilon|>0$,} \\
			& \\
			\cbox{
				\begin{tikzpicture}[tldiagram, yscale=2/3]
					\draw \tlcoord{0}{0}  \lineup;
					\draw \tlcoord{-1}{2} \lineup \lineup;
					\draw \tlcoord{0}{2} \maketlboxblue{1}{${1}$};
					\draw \tlcoord{0}{0} \maketlellipse{1}{$t_\varepsilon$};
				\end{tikzpicture}
			}, & \text{if $|\varepsilon|=0$.}
		\end{cases}
	\end{align*}
	The quasi-idempotent $q_\varepsilon\in \TL(B_n)=\TL_B(n,n)$ is defined as the product 
	$q_\varepsilon=t_\varepsilon \circ \bar{t_\varepsilon}$,
	where $\bar{t}_\varepsilon\in\TL_B(n,k)$ is the vertical mirror image of $t_\varepsilon \in\TL_B(k,n)$.
\end{defi}
These definitions are heavily inspired by the definitions of the higher Jones--Wenzl projectors from \cite{cooper2015}. We obtained the perspective from the beginning of the section through reading the introduction of \cite{wedrich2018}, which helped us to understand the construction from \cite{cooper2015} more deeply and generalize the formulas to our setup. The next theorem is a 
\begin{theo}
	The elements $q_{\varepsilon}$ are quasi-idempotent, i.e.\ there exist $n_\mu\in\C(q)$ such that 
	$q_{\varepsilon}^2=n_{\varepsilon}q_{\varepsilon}$. The elements $\{e_{\varepsilon}\coloneqq\frac{1}{n_{\varepsilon}}q_{\varepsilon}\mid \varepsilon\in\{-1,1\}^n \}$ define a complete set of pairwise orthogonal primitive idempotents of $\TL(B_n)$, called \textbf{higher type $B$ Jones--Wenzl projectors}. Viewed as elements in $\End_{\Vq}(V^{\otimes n})$ they project onto the one-dimensional summands $L_{\varepsilon}$ of $V^{\otimes n}$.
\end{theo}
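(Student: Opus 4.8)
The plan is to prove the three assertions --- quasi-idempotency, that the $\{e_\varepsilon\}$ form a complete set of orthogonal primitive idempotents, and the representation-theoretic interpretation --- by exploiting the close analogy with the higher Jones--Wenzl projectors from \cite{cooper2015} together with the explicit decomposition of $V^{\otimes n}$ from \Cref{Theorem decomposition of tensor power of V}. The cleanest strategy is to work on the representation side first: since $\TL(B_n)\cong \End_{\Vq}(V^{\otimes n})$ and, more generally, $\TL_B(k,n)=\Hom_{\Vq}(V^{\otimes k},V^{\otimes n})$ (using that $\Ug_q'(\gl_1)$ and $\Vq$ have the same intertwiners up to the central $C$-action, c.f.\ the discussion after \Cref{vizualised tensor prod sl2 decompo}), I can analyze the maps $t_\varepsilon$ directly as homomorphisms of $\Vq$-modules.

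First I would show by induction on the length of $\varepsilon$ that $t_\varepsilon\colon V^{\otimes k}\to V^{\otimes n}$, with $k=|\varepsilon|$, restricts to an \emph{isomorphism} from the summand $L([k])$ (if $|\varepsilon|\geq 0$, resp.\ $L(-[k])$ if $|\varepsilon|<0$) of $V^{\otimes k}$ onto the one-dimensional summand $L_\varepsilon\subseteq V^{\otimes n}$, and is zero on all other summands of $V^{\otimes k}$. The base case $t_{(\pm1)}=\bp$ resp.\ $\bn$ is the preceding corollary identifying the type $B$ projectors with the projections onto $L([1])$, $L(-[1])$. For the inductive step I would check each of the three cases in the definition of $t_{\varepsilon\ast(\pm1)}$: appending a red/blue box corresponds (by \Cref{Theorem on Existence of JW projectors of type B} and the box-absorption \Cref{box absorption type B}) to the projection $L([k])\otimes V\twoheadrightarrow L([k+1])$ from parts \ref{part ii of gl1 x gl1 decomp}/\ref{part iii of gl1 x gl1 decomp} of \Cref{Theorem decomposition of tensor power of V}; the cup-composition case corresponds to the projection onto the $L([k-1])$-summand (this is exactly the second summand $\bp U_n\bp$ in the recursion, as flagged in the text); and the $|\varepsilon|=0$ case uses $V=\shift{v_0}\oplus\shift{w_0}=L(1)\oplus L(-1)$ and part~\ref{part i of gl1 x gl1 decomp}. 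Dually, $\bar t_\varepsilon$ is (up to a nonzero scalar coming from the evaluation/coevaluation pairings) the adjoint projection $V^{\otimes n}\twoheadrightarrow L_\varepsilon\xrightarrow{\sim} L([k])$, so $\bar t_\varepsilon\circ t_\varepsilon=n_\varepsilon\cdot\id_{L([k])}$ for some $n_\varepsilon\in\C(q)^\times$; the non-vanishing of $n_\varepsilon$ follows because the pairing $\tlcap$ restricted to $L_\varepsilon$ is non-degenerate (this is where I expect to need a small explicit computation with the vectors $v_j,w_j$, or alternatively an argument that the composite is non-zero because $t_\varepsilon$ and $\bar t_\varepsilon$ are both non-zero on the relevant line and $\End_{\Vq}(L([k]))=\C(q)$). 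Then $q_\varepsilon^2=t_\varepsilon\bar t_\varepsilon t_\varepsilon\bar t_\varepsilon=n_\varepsilon\, t_\varepsilon\bar t_\varepsilon=n_\varepsilon q_\varepsilon$, giving quasi-idempotency, and $e_\varepsilon=\tfrac1{n_\varepsilon}q_\varepsilon$ is the idempotent projecting $V^{\otimes n}$ onto $L_\varepsilon$ along the other summands.

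Having identified $e_\varepsilon$ with the projection onto $L_\varepsilon$, orthogonality $e_\varepsilon e_{\varepsilon'}=\delta_{\varepsilon,\varepsilon'}e_\varepsilon$ is immediate: for $\varepsilon\neq\varepsilon'$ the composite $V^{\otimes n}\to L_{\varepsilon'}\to V^{\otimes n}\to L_\varepsilon\to V^{\otimes n}$ factors through $\Hom_{\Vq}(L_{\varepsilon'},L_\varepsilon)$, which is zero when $L_{\varepsilon'}\not\cong L_\varepsilon$, i.e.\ when $|\varepsilon'|\neq|\varepsilon|$, and when $|\varepsilon'|=|\varepsilon|$ but $\varepsilon\neq\varepsilon'$ one still gets zero because the images $L_\varepsilon$, $L_{\varepsilon'}$ are distinct lines inside the corresponding isotypic component and $e_\varepsilon$ kills $L_{\varepsilon'}$ by construction. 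Each $e_\varepsilon$ is primitive since $L_\varepsilon$ is one-dimensional, hence $e_\varepsilon\TL(B_n)e_\varepsilon\cong\End_{\Vq}(L_\varepsilon)=\C(q)$. Completeness, $\sum_{\varepsilon\in\{-1,1\}^n}e_\varepsilon=1$, follows because $\bigoplus_\varepsilon L_\varepsilon=V^{\otimes n}$ by \Cref{Theorem decomposition of tensor power of V}\ref{part iv of gl1 x gl1 decomp} (the multiplicities ${n\choose k}$ being exactly the number of $\varepsilon$ with $|\varepsilon|=n-2k$, as spelled out in \Cref{example with epsilon notation}), and these projections sum to the identity. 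Finally, since $\TL(B_n)$ is semisimple with $\sum_k{n\choose k}^2={2n\choose n}=\dim\TL(B_n)$ (from the end of \Cref{Chapter 1 section 3} and the Schur--Weyl proof), the $2^n$ lines $\TL(B_n)e_\varepsilon$ exhaust a full system of indecomposable projectives grouped into the matrix blocks of size ${n\choose k}$.

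The main obstacle I anticipate is the bookkeeping in the inductive verification that $t_\varepsilon$ behaves as claimed on each summand --- specifically, checking in the $|\varepsilon|=0$ and the ``cup'' cases that appending the prescribed cap/cup and box really implements the projection $L([k])\otimes V\to L([k\pm1])$ and not some scalar multiple that could vanish, and tracking the (harmless but nonzero) scalars $n_\varepsilon$ so that they genuinely lie in $\C(q)^\times$. This is essentially the type $B$ analogue of the corresponding lemmas in \cite{cooper2015}, and the representation-theoretic input from \Cref{Theorem decomposition of tensor power of V} makes each step a finite check; I would organize it so that the key identities $q^{-1}[n]+q^n=[n+1]$ and $q^{-1}[n]-q^{-n}=[n-1]$ (already recorded in the proof of \Cref{Theorem decomposition of tensor power of V}) do all the arithmetic work.
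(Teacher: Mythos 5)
Your proposal is correct in substance, but it is organized differently from the paper's own argument. The paper's proof is a short deferral to the type $A$ case: it invokes the diagrammatic induction of \cite{cooper2015} and observes that the elements $q_{\varepsilon}$ arise as the summands obtained by iteratively refining the identity $\id_{V^{\otimes 1}}=b_{1,{+}}+b_{1,{-}}$ through the recursive formulas for $\bp$ and $\bn$; completeness and orthogonality are thus built in by construction inside $\TL(B_n)$, with the representation-theoretic interpretation read off afterwards. You instead work on the module side from the start: you identify each $t_{\varepsilon}$ (by induction on the length of $\varepsilon$) with an intertwiner that is an isomorphism from $L(\pm[k])$ onto the line $L_{\varepsilon}$ and vanishes on the other summands, and then deduce quasi-idempotency, orthogonality, primitivity and completeness from Schur's lemma, the explicit decomposition of $V^{\otimes n}$, and the isomorphism $\TL(B_n)\cong\End_{\Vq}(V^{\otimes n})$. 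This buys you conceptual transparency (the eigenvalue bookkeeping via $q^{-1}[n]+q^{n}=[n+1]$ and $q^{-1}[n]-q^{-n}=[n-1]$ automatically pins down which line the image lands in), at the cost of two checks the diagrammatic route avoids: you must also run the mirrored induction for $\bar t_{\varepsilon}$ (you use, but do not argue, that $\bar t_{\varepsilon}$ kills every $L_{\varepsilon'}$ with $\varepsilon'\neq\varepsilon$, which is what orthogonality needs when $|\varepsilon|=|\varepsilon'|$), and you must verify $n_{\varepsilon}\neq 0$ by an explicit pairing computation, where the type I versus type II cup conventions genuinely matter (your fallback argument ``both maps are nonzero on the relevant line'' is not by itself sufficient, since it again reduces to knowing $\bar t_{\varepsilon}$ is nonzero on $L_{\varepsilon}$). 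Also note that for $k\neq n$ the space $\Hom_{\Vq}(V^{\otimes k},V^{\otimes n})$ vanishes because $C$ acts by different scalars; as in the paper you should phrase $\TL_B(k,n)$ as intertwiners for the subalgebra generated by $B$ only. None of these points breaks your argument; they are fillable by exactly the computations you flag.
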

\begin{proof}
	The proof is totally analogous to the proof for the type $A$ case, see e.g.\ \cite[Theorem 2.20]{cooper2015}. The above definitions are all chosen in such a way to fit the recursive formulas for $\bp$ and $\bn$. The elements $q_{\varepsilon}$ are appearing as summand of $\id_{V^{\otimes 1}}, \id_{V^{\otimes 2}}, \ldots$ when applying the recursive formulas from \Cref{defi type B projectors} iteratively using that $\id_{V^{\otimes 1}}=b_{1,{+}}+b_{1,{-}}$, whenever the trivial rep $L(0)$ appears as tensor factor.
\end{proof}
\begin{beis} \label{n=2 B higher projectors}
	The higher type $B$ projectors for $n=2$ are given by
	\begin{gather*}
		e_{1,1}=\cbox{
			\begin{tikzpicture}[tldiagram, yscale=2/3]
				\draw \tlcoord{-1.5}{0} \lineup \lineup  \lineup;
				\draw \tlcoord{-1.5}{1} \lineup \lineup  \lineup;
				\draw \tlcoord{0}{0} \maketlboxred{2}{$2, +$};
			\end{tikzpicture}
		}, \quad e_{1,-1} = \frac{2}{q+q^{-1}}\cbox{
			\begin{tikzpicture}[tldiagram, yscale=2/3]
				\draw \tlcoord{-1.5}{0} \lineup \capright \linedown;
				\draw \tlcoord{1.5}{0} \linedown \cupright  \lineup;
				\draw \tlcoord{-1}{0} \maketlboxred{1}{$1, {+}$};
				\draw \tlcoord{1}{0} \maketlboxred{1}{$1, {+}$};
			\end{tikzpicture}
		}
		\,, \\[1em]
		e_{-1,1}=\frac{2}{q+q^{-1}} \cbox{
			\begin{tikzpicture}[tldiagram, yscale=2/3]
				\draw \tlcoord{-1.5}{0} \lineup \capright \linedown;
				\draw \tlcoord{1.5}{0} \linedown \cupright  \lineup;
				\draw \tlcoord{-1}{0} \maketlboxblue{1}{$1, {-}$};
				\draw \tlcoord{1}{0} \maketlboxblue{1}{$1, {-}$};
			\end{tikzpicture}
		} \, , \qquad e_{-1,-1} = \cbox{
			\begin{tikzpicture}[tldiagram, yscale=2/3]
				\draw \tlcoord{-1.5}{0} \lineup \lineup  \lineup;
				\draw \tlcoord{-1.5}{1} \lineup \lineup  \lineup;
				\draw \tlcoord{0}{0} \maketlboxblue{2}{$2, +$};
			\end{tikzpicture}
		} 
		\,.
	\end{gather*}
	In terms of the diagram basis the idempotents $e_{1,-1}$ and $e_{-1,1}$ are given by
	\begin{align*}
		e_{1,-1} &= \frac{2}{q+q^{-1}}\cbox{
			\begin{tikzpicture}[tldiagram, yscale=2/3]
				\draw \tlcoord{-1.5}{0} \lineup \capright \linedown;
				\draw \tlcoord{1.5}{0} \linedown \cupright  \lineup;
				\draw \tlcoord{-1}{0} \maketlboxred{1}{$1, {+}$};
				\draw \tlcoord{1}{0} \maketlboxred{1}{$1, {+}$};
			\end{tikzpicture}
		}= \frac{1}{2[2]} \left( 
		\cbox{
			\begin{tikzpicture}[tldiagram]
				\draw \tlcoord{0}{0} \capright;
				\draw \tlcoord{1}{0} \cupright;
			\end{tikzpicture}
		} + \cbox{
			\begin{tikzpicture}[tldiagram]
				\draw \tlcoord{0}{0} \capright;
				\draw \tlcoord{1}{0} \dcupright;
			\end{tikzpicture}
		}
		+ \cbox{
			\begin{tikzpicture}[tldiagram]
				\draw \tlcoord{0}{0} \dcapright;
				\draw \tlcoord{1}{0} \cupright;
			\end{tikzpicture}
		}
		+ \cbox{
			\begin{tikzpicture}[tldiagram]
				\draw \tlcoord{0}{0} \dcapright;
				\draw \tlcoord{1}{0} \dcupright;
			\end{tikzpicture}
		}
		\right), \\
		e_{-1,1} &= \frac{2}{q+q^{-1}}\cbox{
			\begin{tikzpicture}[tldiagram, yscale=2/3]
				\draw \tlcoord{-1.5}{0} \lineup \capright \linedown;
				\draw \tlcoord{1.5}{0} \linedown \cupright  \lineup;
				\draw \tlcoord{-1}{0} \maketlboxblue{1}{$1, {-}$};
				\draw \tlcoord{1}{0} \maketlboxblue{1}{$1, {-}$};
			\end{tikzpicture}
		}= \frac{1}{2[2]} \left( 
		\cbox{
			\begin{tikzpicture}[tldiagram]
				\draw \tlcoord{0}{0} \capright;
				\draw \tlcoord{1}{0} \cupright;
			\end{tikzpicture}
		} - \cbox{
			\begin{tikzpicture}[tldiagram]
				\draw \tlcoord{0}{0} \capright;
				\draw \tlcoord{1}{0} \dcupright;
			\end{tikzpicture}
		}
		- \cbox{
			\begin{tikzpicture}[tldiagram]
				\draw \tlcoord{0}{0} \dcapright;
				\draw \tlcoord{1}{0} \cupright;
			\end{tikzpicture}
		}
		+ \cbox{
			\begin{tikzpicture}[tldiagram]
				\draw \tlcoord{0}{0} \dcapright;
				\draw \tlcoord{1}{0} \dcupright;
			\end{tikzpicture}
		}
		\right).
	\end{align*}
	For the elements $e_{1,1}=b_{2,{+}}$ and $e_{-1,-1}=b_{2,{-}}$ see \Cref{n=2 type B projectors}.
	We have
	\[
	1=e_{1,1}+e_{1,-1}+e_{-1,1}+e_{-1,-1}
	\]
	as complete set of primitive, pairwise orthogonal idempotents. We saw non-quantized version of these idempotents already in \Cref{n=2 irreducible representations}. Concretely under the surjective algebra homomorphism defined by
	\begin{align*}
		\varphi\colon \C \! W(B_2) &\rightarrow \TL(B_2)|_{q=1} \\
		s_0 &\mapsto s_0 \\
		s_1 &\mapsto 1-U_1
	\end{align*}
	we have $e_{1,1}=\varphi(e_{(2),(0)}),  e_{1,-1}=\varphi (e_{(1),(1)}), e_{-1,1}=\varphi (s_1e_{(1),(1)}s_1),  e_{-1,-1}=\varphi(e_{(0),(2)})$.
	The kernel of this morphism is the $2$-dimensional subspace of $\C \! W(B_2)$ spanned by $e_{(1,1),(0)}$ and $e_{(0),(1,1)}$. 
\end{beis}
The next remark explains why we need to do develop all this machinery to define the higher projectors. Shortly said the problem is that the category of f.d.\ $\Vq$ representations is not monoidal, i.e.\ we cannot tensor the one-dimensional modules.
\begin{rema}
	If we work in the non-quantized setting, i.e.\ replace $\Vq\subseteq \Ug_q(\gl_2)$ by $\Ug(\gl_1\times \gl_1)\subseteq \Ug(\gl_2)$ the interesting formulas for the higher Jones--Wenzl projectors still work, but can be replaced by much easier ones. Since $\Ug(\gl_1\times \gl_1)$ is a Hopf subalgebra of $\Ug(\gl_2)$, the category of finite dimensional $\Ug(\gl_1\times \gl_1)$-modules is monoidal. This allows us to tensor idempotents with each other to create new ones. In particular the $q=1$ versions of the idempotents $e_\varepsilon$ in
	$\End_{\gl_1\times \gl_1}(V^{\otimes n})$
	can be constructed as
	\[
		e_\varepsilon=e_{\varepsilon_1}\otimes e_{\varepsilon_2} \otimes \cdots \otimes e_{\varepsilon_n}
	\]
	where $e_1=\frac{1}{2}(1+s_0)$ and $e_{-1}=\frac{1}{2}(1-s_0)$. This decomposition is very easy compared to the Jones--Wenzl formulas. For example the idempotents from \Cref{n=2 B higher projectors} are under this identification just given by $e_1 \otimes e_1$, $e_1\otimes e_{-1}$, $e_{-1}\otimes e_{1}$ and $e_{-1}\otimes e_{-1}$. We think of the behavior of the coideal as a feature, which is reflected in the type $B$ diagrammatics, where one cannot tensor arbitrary type $B$ diagrams with each other, but just type $B$ diagram from the right with a type $A$ diagram.
\end{rema}
\begin{defi}
	Let $\varepsilon\in\{-1,1\}^n$. Define the \textbf{type $D$ higher projector} as
	\[
	f_{\varepsilon}=e_{\varepsilon}+e_{-\varepsilon}\in \TL(B_n).
	\]
	Note that by construction of $e_{\epsilon}$ the involution $\Phi$ from \Cref{Type B automorphism s_o flips sign} swaps $e_{\varepsilon}$ and $e_{-\varepsilon}$. In particular $f_{\varepsilon}$ is fixed under $\Psi$ and hence is contained in $\TL(D_n)$.
\end{defi}

\begin{beis}
	The higher type $D$ projectors for $n=2$ are given by
	\begin{gather*}
	f_{1,1}=e_{1,1}+e_{-1,-1}=  \, \cbox{
		\begin{tikzpicture}[tldiagram, yscale=2/3]
			\draw \tlcoord{-1.5}{0} \lineup \lineup  \lineup;
			\draw \tlcoord{-1.5}{1} \lineup \lineup  \lineup;
			\draw \tlcoord{0}{0} \maketlboxgreen{2}{$2$};
		\end{tikzpicture}
	} \,  =  \,\cbox{\begin{tikzpicture}[tldiagram]
			\draw \tlcoord{0}{0} \lineup;
			\draw \tlcoord{0}{1} \lineup;
	\end{tikzpicture}}
	 \, -  \,\frac{1}{[2]} \cbox{
		\begin{tikzpicture}[tldiagram]
			\draw \tlcoord{0}{0} \capright;
			\draw \tlcoord{1}{0} \cupright;
		\end{tikzpicture}
	} \, - \, \frac{1}{[2]} \cbox{
		\begin{tikzpicture}[tldiagram]
			\draw \tlcoord{0}{0} \capright;
			\draw \tlcoord{1}{0} \cupright;
		\end{tikzpicture}
	} \, , \\[2pt]
	f_{1,-1}=e_{1,-1}+e_{-1,1}= \frac{1}{[2]} \cbox{
		\begin{tikzpicture}[tldiagram]
			\draw \tlcoord{0}{0} \capright;
			\draw \tlcoord{1}{0} \cupright;
		\end{tikzpicture}
	} + \frac{1}{[2]} \cbox{
		\begin{tikzpicture}[tldiagram]
			\draw \tlcoord{0}{0} \capright;
			\draw \tlcoord{1}{0} \cupright;
			\biggerdrawdots{0.3}{0.5}{0.5}
			\biggerdrawdots{0.7}{0.5}{0.5}
		\end{tikzpicture}
	}.
	\end{gather*}
\end{beis}
\begin{beis}
	The higher type $B$ projectors for $n=3$ are given by
	\begin{gather*}
		e_{1,1,1} = \cbox{
			\begin{tikzpicture}[tldiagram, yscale=2/3]
				\draw \tlcoord{-1.5}{0} \lineup \lineup  \lineup;
				\draw \tlcoord{-1.5}{1} \lineup \lineup  \lineup;
				\draw \tlcoord{-1.5}{2} \lineup \lineup  \lineup;
				\draw \tlcoord{0}{0} \maketlboxred{3}{$3, {+}$};
				=
			\end{tikzpicture}
		}\, , \qquad
		e_{-1,-1,-1} = \cbox{
			\begin{tikzpicture}[tldiagram, yscale=2/3]
				\draw \tlcoord{-1.5}{0} \lineup \lineup  \lineup;
				\draw \tlcoord{-1.5}{1} \lineup \lineup  \lineup;
				\draw \tlcoord{-1.5}{2} \lineup \lineup  \lineup;
				\draw \tlcoord{0}{0} \maketlboxblue{3}{$3, {-}$};
				=
			\end{tikzpicture}
		}\, , \\[1em]
		e_{1,1,-1} = \frac{q+q^{-1}}{q^2+q^{-2}} \cbox{
			\begin{tikzpicture}[tldiagram, yscale=2/3]
				\draw \tlcoord{-1.5}{0} \lineup \capright \linedown;
				\draw \tlcoord{1.5}{0} \linedown \cupright  \lineup;
				\draw \tlcoord{-1.5}{-1} \lineup \lineup \lineup;
				\draw \tlcoord{-1}{-1} \maketlboxred{2}{$2, {+}$};
				\draw \tlcoord{1}{-1} \maketlboxred{2}{$2, {+}$};
			\end{tikzpicture}
		}\, , \qquad
		e_{-1,-1,1} = \frac{q+q^{-1}}{q^2+q^{-2}} \cbox{
			\begin{tikzpicture}[tldiagram, yscale=2/3]
				\draw \tlcoord{-1.5}{0} \lineup \capright \linedown;
				\draw \tlcoord{1.5}{0} \linedown \cupright  \lineup;
				\draw \tlcoord{-1.5}{-1} \lineup \lineup \lineup;
				\draw \tlcoord{-1}{-1} \maketlboxblue{2}{$2, {-}$};
				\draw \tlcoord{1}{-1} \maketlboxblue{2}{$2, {-}$};
			\end{tikzpicture}
		}\, , \\[1em]
		e_{1,-1,1} = \frac{2}{q+q^{-1}}\cbox{
			\begin{tikzpicture}[tldiagram, yscale=2/3]
				\draw \tlcoord{-1.5}{0} \lineup \capright \linedown;
				\draw \tlcoord{1.5}{0} \linedown \cupright  \lineup;
				\draw \tlcoord{-1.5}{2} \lineup \lineup \lineup;
				\draw \tlcoord{-1}{0} \maketlboxred{1}{$1, {+}$};
				\draw \tlcoord{1}{0} \maketlboxred{1}{$1, {+}$};
				\draw \tlcoord{0}{2} \maketlboxred{1}{$1, {+}$};
			\end{tikzpicture}
		}\, , \qquad e_{-1,1,-1} =  \frac{2}{q+q^{-1}}\cbox{
			\begin{tikzpicture}[tldiagram, yscale=2/3]
				\draw \tlcoord{-1.5}{0} \lineup \capright \linedown;
				\draw \tlcoord{1.5}{0} \linedown \cupright  \lineup;
				\draw \tlcoord{-1.5}{2} \lineup \lineup \lineup;
				\draw \tlcoord{-1}{0} \maketlboxblue{1}{$1, {-}$};
				\draw \tlcoord{1}{0} \maketlboxblue{1}{$1, {-}$};
				\draw \tlcoord{0}{2} \maketlboxblue{1}{$1, {-}$};
			\end{tikzpicture}
		}
		\, ,  \\[1em]
		e_{1,-1,-1} = \frac{2}{q+q^{-1}}\cbox{
			\begin{tikzpicture}[tldiagram, yscale=2/3]
				\draw \tlcoord{-1.5}{0} \lineup \capright \linedown;
				\draw \tlcoord{1.5}{0} \linedown \cupright  \lineup;
				\draw \tlcoord{-1.5}{2} \lineup \lineup \lineup;
				\draw \tlcoord{-1}{0} \maketlboxred{1}{$1, {+}$};
				\draw \tlcoord{1}{0} \maketlboxred{1}{$1, {+}$};
				\draw \tlcoord{0}{2} \maketlboxblue{1}{$1, {-}$};
			\end{tikzpicture}
		} \, , \qquad e_{-1,1,1} = \frac{2}{q+q^{-1}}\cbox{
			\begin{tikzpicture}[tldiagram, yscale=2/3]
				\draw \tlcoord{-1.5}{0} \lineup \capright \linedown;
				\draw \tlcoord{1.5}{0} \linedown \cupright  \lineup;
				\draw \tlcoord{-1.5}{2} \lineup \lineup \lineup;
				\draw \tlcoord{-1}{0} \maketlboxblue{1}{$1, {-}$};
				\draw \tlcoord{1}{0} \maketlboxblue{1}{$1, {-}$};
				\draw \tlcoord{0}{2} \maketlboxred{1}{$1, {+}$};
			\end{tikzpicture}
		}
		\, .
	\end{gather*}
	We summarize the corresponding $D_3$ projectors in a table:
	\[
	\arraycolsep=1.4pt\def\arraystretch{2.2}
	\begin{array}{c|c|c|c|c|c|c|c|c|c|c}
		 & \cbox{
		 	\begin{tikzpicture}[tldiagram, yscale=1/2, xscale=1/2]
		 		\draw \tlcoord{-1.5}{0} \lineup;
		 		\draw \tlcoord{-1.5}{1} \lineup;
		 		\draw \tlcoord{-1.5}{2} \lineup;
		 		=
		 	\end{tikzpicture}
		 } & \cbox{
		 \begin{tikzpicture}[tldiagram, yscale=1/2, xscale=1/2, ]
		 	\draw \tlcoord{0}{0} \capright;
		 	\draw \tlcoord{1}{0} \cupright;
		 	\draw \tlcoord{0}{2} \lineup; 
		 	=
		 \end{tikzpicture}
	 }& \cbox{
	 \begin{tikzpicture}[tldiagram, yscale=1/2, xscale=1/2, ]
	 	\draw \tlcoord{0}{0} \dcapright;
	 	\draw \tlcoord{1}{0} \dcupright;
	 	\draw \tlcoord{0}{2} \lineup; 
	 	=
	 \end{tikzpicture}
 } & \cbox{
 \begin{tikzpicture}[tldiagram, yscale=1/2, xscale=1/2]
 	\draw \tlcoord{0}{0} \linewave{1}{2};
 	\draw \tlcoord{0}{1} \capright;
 	\draw \tlcoord{1}{1} \cupleft;
 	=
 \end{tikzpicture}
} & 	\cbox{
\begin{tikzpicture}[tldiagram, yscale=1/2, xscale=1/2]
	\draw \tlcoord{0}{2} \linewave{1}{-2};
	\draw \tlcoord{0}{0} \capright;
	\draw \tlcoord{1}{2} \cupleft;
	=
\end{tikzpicture}
}& \cbox{
\begin{tikzpicture}[tldiagram, yscale=1/2, xscale=1/2]
	\draw \tlcoord{0}{0} \dlinewave{1}{2};
	\draw \tlcoord{0}{1} \capright;
	\draw \tlcoord{1}{1} \dcupleft;
	=
\end{tikzpicture}
} & 	\cbox{
\begin{tikzpicture}[tldiagram, yscale=1/2, xscale=1/2]
	\draw \tlcoord{0}{2} \dlinewave{1}{-2};
	\draw \tlcoord{0}{0} \dcapright;
	\draw \tlcoord{1}{2} \cupleft;
	=
\end{tikzpicture}
}& 	\cbox{
\begin{tikzpicture}[tldiagram, yscale=1/2, xscale=1/2, ]
	\draw \tlcoord{0}{1} \capright;
	\draw \tlcoord{1}{1} \cupright;
	\draw \tlcoord{0}{0} \lineup; 
	=
\end{tikzpicture}
} & \cbox{
\begin{tikzpicture}[tldiagram, yscale=1/2, xscale=1/2, ]
	\draw \tlcoord{0}{0} \capright;
	\draw \tlcoord{1}{0} \dcupright;
	\draw \tlcoord{0}{2} \dlineup; 
	=
\end{tikzpicture}
} & \cbox{
\begin{tikzpicture}[tldiagram, yscale=1/2, xscale=1/2, ]
	\draw \tlcoord{0}{0} \dcapright;
	\draw \tlcoord{1}{0} \cupright;
	\draw \tlcoord{0}{2} \dlineup; 
	=
\end{tikzpicture}
} \\ \hline
		f_{1,1,1} & 1 & -\frac{[3]}{[4]} & -\frac{[3]}{[4]} & \frac{[2]}{[4]} & \frac{[2]}{[4]} & \frac{[2]}{[4]} & \frac{[2]}{[4]} & \frac{[2][2]}{[4]}& -\frac{[1]}{[4]}& -\frac{[1]}{[4]}\\ \hline
		f_{1,1,-1} & 0 & \frac{[1]}{[4]}&\frac{[1]}{[4]} & -\frac{[2]}{[4]}& -\frac{[2]}{[4]} & -\frac{[2]}{[4]}& -\frac{[2]}{[4]}& -\frac{[2][2]}{[4]}& \frac{[1]}{[4]}& \frac{[1]}{[4]}\\ \hline 
		f_{1,-1,1} & 0 & \frac{[1]}{2[2]}&\frac{[1]}{2[2]} & 0 & 0 & 0 & 0 & 0 & - \frac{[1]}{2[2]}& \frac{[1]}{2[2]}\\ \hline
		f_{-1,1,1} & 0 & \frac{[1]}{2[2]}& \frac{[1]}{2[2]}& 0 & 0 & 0 & 0& 0 & \frac{[1]}{2[2]} & -\frac{[1]}{2[2]}
	\end{array}
	\]
	Note that the sum of all coefficients in each column is zero, except for the identity. In particular a miraculous identity appears in the coefficient of $U_1$ respectively $U_0$. We have
	\[
	\frac{[3]}{[4]}=\frac{1}{[4]}+\frac{1}{[2]}
	\]
	which is quite surprising. Also note that the coefficient $\frac{[2][2]}{[4]}$ in front of $U_2$ is not equal to $1$, however if we set $q=1$ it is indeed $1$. Observe that the coefficients for the other idempotents are not elements in $\mZ[[q]][q^{-1}]$ in contrast to $d_3=f_{1,1,1}$. For an explanation why the denominator $[4]$ appears in the first place, see the discussion in \Cref{quantum integers or not}.
\end{beis}

\section{Reidemeister moves} \label{section on Reidemeister moves}

This section is a collection of Reidemeister moves, which hold in the Temperley--Lieb algebra $\TL(B_n)$. Just like in type $A$ many geometric relations, which one would expect, do not hold directly, but rather include ``error powers'' of $q$. These $q$-powers play a very important role for the theory, since they are the main tool to realize Jones--Wenzl projectors as infinite braids, i.e.\ limits of images of braids with respect to the $q$-adic norm on $\TL(B_n)$. First we recall the Kauffman--Skein bracket relations, which define crossings as elements in the Temperley--Lieb algebra.

\begin{defi} \label{definitions of crossings}
	The \textbf{negative crossing} is defined as the linear combination
	\begin{equation*} 
		\cbox{
			\begin{tikzpicture}[tldiagram]
				\negcrossing{0}{0}
			\end{tikzpicture}
		} \quad = \quad \cbox{
			\begin{tikzpicture}[tldiagram]
				\draw \tlcoord{0}{0} \lineup;
				\draw \tlcoord{0}{1} \lineup;
			\end{tikzpicture}
		} \quad - \quad q \cdot \cbox{
			\begin{tikzpicture}[tldiagram]
				\draw \tlcoord{0}{0} \capright;
				\draw \tlcoord{1}{0} \cupright;
			\end{tikzpicture}
		} \in \TL(A_1).
	\end{equation*} 
	Similarly the \textbf{positive crossing} is defined as the linear combination 
	\begin{equation*}
		\cbox{
			\begin{tikzpicture}[tldiagram]
				\poscrossing{0}{0}
			\end{tikzpicture}
		} \quad = \quad \cbox{
			\begin{tikzpicture}[tldiagram]
				\draw \tlcoord{0}{0} \lineup;
				\draw \tlcoord{0}{1} \lineup;
			\end{tikzpicture}
		} \quad - \quad q^{-1} \cdot \cbox{
			\begin{tikzpicture}[tldiagram]
				\draw \tlcoord{0}{0} \capright;
				\draw \tlcoord{1}{0} \cupright;
			\end{tikzpicture}
		}\in \TL(A_1).
	\end{equation*}
\end{defi}

The first lemma of the section gives the type $A$ Reidemeister moves, which hold for our normalization of the Kauffman bracket. We only show the versions, which hold for negative crossings. If we replace each negative crossing by a positive crossing, the same identities hold with $q$ replaced by $q^{-1}$.

\begin{lemm}[Type $A$ Reidemeister moves]
	The following local relations hold in $\TL(A_{n-1}):$
	\begin{enumerate}[label = {(R\arabic*)}, align=left]
		\item
		\quad $\cbox{
				\begin{tikzpicture}[tldiagram, yscale=2/3]
					 \draw  \tlcoord{0}{0} \linewave{1}{1};
					 \draw[white, double=black] \tlcoord{0}{1} \linewave{1}{-1} \capright;
			\end{tikzpicture}} 
		\quad = \quad -q^{2}\cdot \cbox{
			\begin{tikzpicture}[tldiagram]
				\draw \tlcoord{0}{0} \capright;
			\end{tikzpicture}
		}$, 
		\item[(R1)'] \quad 
			$\cbox{
				\begin{tikzpicture}[tldiagram, yscale=2/3]
					\draw \tlcoord{1}{1} \cupright \lineup \capleft;
					\draw \tlcoord{0}{0} \lineup;
					\draw \tlcoord{2}{0} \lineup;
					\draw  \tlcoord{1}{0} \linewave{1}{1};
					\draw[white, double=black] \tlcoord{1}{1} \linewave{1}{-1};
			\end{tikzpicture}} 
			\quad = \quad q^{-1}\cdot \cbox{
				\begin{tikzpicture}[tldiagram]
					\draw \tlcoord{0}{0} \lineup;
				\end{tikzpicture}
			}$,
		\item \quad $
		\cbox{
			\begin{tikzpicture}[tldiagram, yscale=2/3]
				\negcrossing{0}{0} \negcrossing{1}{1}
				\draw \tlcoord{1}{0} \lineup \capright;
				\draw \tlcoord{0}{2} \lineup;
		\end{tikzpicture}} 
		\quad = \quad -q\cdot \cbox{
			\begin{tikzpicture}[tldiagram]
				\draw \tlcoord{0}{0} \lineup;
				\draw \tlcoord{0}{1} \capright;
			\end{tikzpicture}
		} \, , \quad \cbox{
		\begin{tikzpicture}[tldiagram, yscale=2/3, xscale=-1]
			\poscrossing{0}{0} \poscrossing{1}{1}
			\draw \tlcoord{1}{0} \lineup \capright;
			\draw \tlcoord{0}{2} \lineup;
	\end{tikzpicture}}
		\quad = \quad -q\cdot \cbox{
			\begin{tikzpicture}[tldiagram]
				\draw \tlcoord{0}{2} \lineup;
				\draw \tlcoord{0}{0} \capright;
			\end{tikzpicture}
		}$,
		\item[(R2)'] \label{Reidemeister 2 move, other version}
			\quad $\cbox{
				\begin{tikzpicture}[tldiagram, yscale=2/3]
					\poscrossing{0}{0} \negcrossing{1}{0}
			\end{tikzpicture}} 
			\quad = \quad \cbox{
				\begin{tikzpicture}[tldiagram]
					\draw \tlcoord{0}{0} \lineup;
					\draw \tlcoord{0}{1} \lineup;
				\end{tikzpicture}
			} 
			\quad = \quad \cbox{
				\begin{tikzpicture}[tldiagram, yscale=2/3]
					\negcrossing{0}{0} \poscrossing{1}{0}
				\end{tikzpicture}
			}$,
		\item \label{Reidemeister 3 move} \quad $
			\cbox{
				\begin{tikzpicture}[tldiagram, yscale=2/3]
					\negcrossing{0}{0} \negcrossing{1}{1} \negcrossing{2}{0}
					\draw \tlcoord{0}{2} \lineup;
					\draw \tlcoord{1}{0} \lineup;
					\draw \tlcoord{2}{2} \lineup;
			\end{tikzpicture}} 
			\quad = \quad \cbox{
				\begin{tikzpicture}[tldiagram, yscale=2/3, scale=-1]
					\negcrossing{0}{0} \negcrossing{1}{1} \negcrossing{2}{0}
					\draw \tlcoord{0}{2} \lineup;
					\draw \tlcoord{1}{0} \lineup;
					\draw \tlcoord{2}{2} \lineup;
			\end{tikzpicture}}$.
	\end{enumerate}
\end{lemm}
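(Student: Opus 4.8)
The plan is to verify each identity by a direct computation in the Temperley--Lieb algebra, expanding every crossing via \Cref{definitions of crossings} and simplifying using the relations \ref{TL1 relation}, \ref{TL2 relation} and \ref{TL3 relation}, together with the value $\tlcircle = [2] = q + q^{-1}$ of the undotted circle. All of these computations take place inside $\TL(A_{n-1}) \subseteq \TL(B_n)$ (in fact inside the relevant small subalgebras $\TL(A_1)$ or $\TL(A_2)$), so no type $B$ input is needed. Since the lemma asserts that the positive-crossing versions follow from the negative-crossing versions by $q \mapsto q^{-1}$, and since the defining formulas for the two crossings are exchanged under $q \leftrightarrow q^{-1}$, it suffices to treat the stated negative-crossing forms; the mixed-crossing identities (R2)$'$ must be done separately as they are self-dual.

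First I would do (R1): expanding $\cbox{\begin{tikzpicture}[tldiagram, yscale=2/3]\draw \tlcoord{0}{0} \linewave{1}{1}; \draw[white, double=black] \tlcoord{0}{1} \linewave{1}{-1} \capright;\end{tikzpicture}}$ as $(1 - q\, \tlcupcap)$ capped off on top gives $\tlcap - q\, U_1 \cdot(\text{cap})$; the first term is just a cup-cap-cap that reduces to a single cap, and the second produces a closed circle, contributing a factor $-q[2] = -q(q+q^{-1}) = -q^2 - 1$; adding the $+1$ from the first term yields $-q^2$ times a single cap, as claimed. The computation for (R1)$'$ is the analogous closed-loop calculation with the opposite curl, giving the factor $1 - q\cdot$(something) collapsing to $q^{-1}$. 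Next, (R2): I would expand the product of two negative crossings as $(1 - q U)(1 - q U) = 1 - 2qU + q^2 U^2 = 1 - 2qU + q^2[2]U = (1 + q^2 + 1 - 2q + q^2\cdot q^{-1}\cdot\dots)$ — more carefully, $q^2 U^2 = q^2[2] U = (q^3 + q)U$, so the $U$-coefficient is $-2q + q^3 + q = q^3 - q$; then composing with the extra cap and using \ref{TL2 relation} ($U_i U_{i+1} U_i = U_i$) collapses the triple-strand picture appropriately, and the stray circles supply the remaining powers so that everything reduces to $-q$ times a single cap. For (R2)$'$, expanding $(1 - q^{-1}U)(1 - qU) = 1 - qU - q^{-1}U + U^2 = 1 - (q+q^{-1})U + [2]U = 1$, which gives the identity strand directly; the other ordering is symmetric.

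The main obstacle — really the only nontrivial one — is \ref{Reidemeister 3 move}, the braid relation for crossings. Here I would expand both sides as $(1 - qU_i)(1 - qU_{i+1})(1 - qU_i)$ and its mirror $(1 - qU_{i+1})(1 - qU_i)(1 - qU_{i+1})$, multiply out all eight terms on each side, and then repeatedly apply \ref{TL2 relation}, \ref{TL1 relation} and \ref{TL3 relation} to bring each monomial to a normal form; the two expansions must then be seen to coincide term by term (the degree-$1$ and degree-$2$ parts match by the symmetry $U_i \leftrightarrow U_{i+1}$ once the relations $U_i U_{i+1} U_i = U_i$ are used, and the degree-$3$ term $q^3 U_i U_{i+1} U_i = q^3 U_i$ versus $q^3 U_{i+1} U_i U_{i+1} = q^3 U_{i+1}$ must be shown to combine with the lower-degree leftovers so that the totals agree). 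This is the classical fact that the Kauffman-bracket crossings satisfy the braid relation; I would either carry out the bookkeeping explicitly or simply cite the standard type $A$ references already mentioned in the text (\cite{kauffman1994}, \cite{flath95}, or \cite{wojciechowski19}), since the lemma is stated as a recollection of known type $A$ identities.
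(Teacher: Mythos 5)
Your overall strategy coincides with the paper's: the proof given there is a one-line remark that all five identities follow by expanding each crossing via \Cref{definitions of crossings} and simplifying with the Temperley--Lieb relations and the circle value $\tlcircle=q+q^{-1}$, with (R3) additionally using $U_1U_2U_1=U_1$ and $U_2U_1U_2=U_2$. Your treatments of (R1), (R1)$'$, (R2)$'$ and (R3) are correct instances of exactly this computation (for (R3) your expansion reduces both sides to $1-qU_i-qU_{i+1}+q^2(U_iU_{i+1}+U_{i+1}U_i)$, which is visibly symmetric).

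There is, however, a concrete slip in your sketch of (R2). In that picture the two negative crossings sit at \emph{adjacent} positions (they share only the middle strand), so the element to expand is $(1-qU_{i+1})(1-qU_i)=1-qU_i-qU_{i+1}+q^2U_{i+1}U_i$, not $(1-qU_i)^2$; no $U^2$ term and no use of $U_i^2=[2]U_i$ occurs here. Capping the left two strands on top, the four terms give: the identity contributes the cap; the $-qU_i$ term closes a circle and contributes $-q(q+q^{-1})$ times the cap; the $q^2U_{i+1}U_i$ term contributes $+q^2$ times the cap (the cap absorbs $U_{i+1}U_i$); and the $-qU_{i+1}$ term contributes $-q$ times the cap slid one position to the right. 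Since $1-q(q+q^{-1})+q^2=0$, only $-q$ times the slid cap survives, which is (R2). Carried out as literally written, your expansion instead computes the cap composed with a double twist on two strands, which equals $q^{4}$ times the cap --- a true but different identity. A smaller point: the positive-crossing identity displayed in (R2) is the left-right mirror image of the negative one and therefore carries the same coefficient $-q$; it follows from the first via the reflection automorphism of $\TL(A_{n-1})$ (which fixes $q$), whereas the substitution $q\mapsto q^{-1}$ produces the unmirrored positive version with coefficient $-q^{-1}$, so your blanket reduction to the negative-crossing cases needs this extra (easy) remark.
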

\begin{proof}
	The proof is a straight-forward calculation using \Cref{definitions of crossings}
	and the Temperley--Lieb relations, most notably $\tlcircle=q+q^{-1}$. The relation \ref{Reidemeister 3 move} uses the relations
	\[
		U_1U_2U_1=U_1, \quad U_2U_1U_2=U_2. \qedhere
	\]
\end{proof}
As a direct consequence of \ref{Reidemeister 2 move, other version} and \ref{Reidemeister 3 move} we obtain the following corollary, which states that the Temperley--Lieb algebra is a quotient of the braid group.
\begin{koro} \label{Quotient of the braid group is Temperley--Lieb}
	The assignments 
	\begin{align*}
		\C(q) \! B(A_{n-1}) &\longrightarrow \TL(A_{n-1})=\TL_n(\delta) \\
		\sigma_i &\longmapsto 1-qU_i \\
		\sigma_i^{-1} &\longmapsto 1-q^{-1}U_i
	\end{align*}
	extend to a well-defined surjective $\C(q)$-algebra homomorphism from the group algebra of the braid group to the Temperley--Lieb algebra .
\end{koro}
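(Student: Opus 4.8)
The plan is to verify that the assignment $\sigma_i \mapsto 1 - qU_i$ (and $\sigma_i^{-1}\mapsto 1 - q^{-1}U_i$) respects the defining relations of the braid group $B(A_{n-1})$, and then observe surjectivity. The braid group has two families of relations: the far commutativity $\sigma_i\sigma_j = \sigma_j\sigma_i$ for $|i-j|\geq 2$, and the braid relation $\sigma_i\sigma_{i+1}\sigma_i = \sigma_{i+1}\sigma_i\sigma_{i+1}$. Additionally, since we want a homomorphism from the \emph{group} algebra, we must check that the proposed images of $\sigma_i$ and $\sigma_i^{-1}$ are genuinely inverse to one another in $\TL(A_{n-1})$.

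First I would record that the proposed images are precisely the crossings from \Cref{definitions of crossings}: under the normalization $\tlcircle = q + q^{-1}$ we have the negative crossing equal to $1 - qU_i$ and the positive crossing equal to $1 - q^{-1}U_i$ on the $i$-th and $(i+1)$-st strands. Hence the required relations are exactly the type $A$ Reidemeister moves already proven. Concretely: the mutual-inverse condition $\sigma_i\sigma_i^{-1} = 1 = \sigma_i^{-1}\sigma_i$ is \ref{Reidemeister 2 move, other version}; the far commutativity follows from \ref{TL3 relation} (the generators $U_i, U_j$ commute for $|i-j|\geq 2$, so any polynomials in them do too); and the braid relation $\sigma_i\sigma_{i+1}\sigma_i = \sigma_{i+1}\sigma_i\sigma_{i+1}$ is \ref{Reidemeister 3 move}. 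One can also give the braid relation by a direct one-line expansion: $(1-qU_i)(1-qU_{i+1})(1-qU_i)$ expands using $U_i^2 = \delta U_i$ and $U_iU_{i+1}U_i = U_i$ into an expression symmetric under $i \leftrightarrow i+1$, since the only asymmetric-looking term, the coefficient of $U_i$, simplifies to $-q + q^2\delta - q^3 = -q + q^2(q+q^{-1}) - q^3 = 0$ by the same cancellation as in the proof of (R1).

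By the universal property of the group algebra, a group homomorphism $B(A_{n-1}) \to \TL(A_{n-1})^{\times}$ extends uniquely to a $\C(q)$-algebra homomorphism $\C(q)B(A_{n-1}) \to \TL(A_{n-1})$; the images $1 - qU_i$ are indeed invertible with inverse $1-q^{-1}U_i$, so the map lands in the unit group. For surjectivity, note that $U_i = q^{-1}(1 - \sigma_i) \cdot (\text{scalar})$; more precisely $U_i = q^{-1}(1 - (1-qU_i)) = q^{-1}(1 - \varphi(\sigma_i))$ lies in the image, and since $\TL(A_{n-1})$ is generated as a $\C(q)$-algebra by $U_1,\ldots,U_{n-1}$, the homomorphism is onto.

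I do not expect a serious obstacle here: every ingredient has already been established, and the argument is essentially bookkeeping. The only mild subtlety worth stating carefully is the passage from the relations in the braid \emph{group} to a homomorphism from the group \emph{algebra} — one must remember to check that $\varphi(\sigma_i)$ is actually a unit (not merely that the relations among the $\sigma_i$ hold), which is why \ref{Reidemeister 2 move, other version} is needed and not just \ref{Reidemeister 3 move}. If one wanted to avoid invoking the Reidemeister moves as black boxes, the alternative is the three explicit $U$-polynomial identities above, each of which reduces to the cancellation $q^2\delta = q^3 + q$; I would present whichever is shorter, and I would lean on the already-proven lemmas to keep the proof to a couple of lines.
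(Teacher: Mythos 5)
Your proof is correct and follows the same route the paper takes: the corollary is stated there as a direct consequence of \ref{Reidemeister 2 move, other version} (the images of $\sigma_i$ and $\sigma_i^{-1}$ are mutually inverse) and \ref{Reidemeister 3 move} (the braid relation), with far commutativity and surjectivity left as the routine bookkeeping you spell out. Your explicit expansion of $(1-qU_i)(1-qU_{i+1})(1-qU_i)$ and the cancellation $q^2\delta = q^3+q$ is a valid self-contained alternative to citing the Reidemeister lemmas, but it is not a different argument in substance.
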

The next remark explains how the last algebra homomorphism factors through the well-known morphism $\Hecke_q(A_{n-1})\rightarrow \TL(A_{n-1})$, which maps the Kazdhan--Lusztig generators to the Temperley--Lieb algebra generators. 
\begin{rema}[Correct normalization] \label{correct normalization}
	The algebra homomorphism from \Cref{Quotient of the braid group is Temperley--Lieb} is the composition of four algebra morphisms 
	\[
		\C(q)B(A_{n-1})\overset{\varphi_1}{\longrightarrow} \C(q)B(A_{n-1}) \overset{\varphi_2}{\longrightarrow} \Hecke_q(A_{n-1})\overset{\varphi_3}{\longrightarrow} \Hecke_q(A_{n-1})\overset{\varphi_4}{\longrightarrow}\TL(A_{n-1}).
	\]
	The first morphism $\varphi_1$ is the $q$-antilinear morphism, which maps every generator $\sigma_i$ to its inverse. The second morphism $\varphi_2$ is $q$-linear and maps the generator $\sigma_i$ to $qH_i$, where $H_i$ is the standard generator of the Hecke algebra.
	The third morphism $\varphi_3$ is the $q$-antilinear algebra involution of $\Hecke_q(A_{n-1})$, which maps $H_i$ to $-H_i$. The final morphism $\varphi_4$ is $q$-linear and maps $H_i$ to $U_i-q$. In particular note that $\varphi_4$ maps the Kazdhan--Lusztig generator $C_i\coloneqq H_i+q$ to $U_i$. Some authors use the definition of $\varphi_4$ for their definition of the Kauffmann bracket
	\[
		\left( \cbox{
			\begin{tikzpicture}[tldiagram]
				\poscrossing{0}{0}
			\end{tikzpicture}
		} \right)' \quad \coloneqq \quad  \cbox{
			\begin{tikzpicture}[tldiagram]
				\draw \tlcoord{0}{0} \capright;
				\draw \tlcoord{1}{0} \cupright;
			\end{tikzpicture}
		} \quad - q \cdot \cbox{
		\begin{tikzpicture}[tldiagram]
			\draw \tlcoord{0}{0} \lineup;
			\draw \tlcoord{0}{1} \lineup;
		\end{tikzpicture}
	},
	\]
	so this remark relates our and their definitions.
\end{rema}

The next lemma gives new Reidemeister moves for type $B$. It consists of the type $B$ braid relation as well as interesting type $B$ versions of the Reidemeister I relations.
\begin{lemm}[Type $B$ Reidemeister moves] \label{Type B Reidemeister moves lemma}
	The following local relations hold in $\TL(B_n)$:
	\begin{enumerate}
		\item[(B0)] \label{Reidemeister type B} \quad $
			\cbox{
				\begin{tikzpicture}[tldiagram, yscale=2/3]
					\uppernegcrossing{0}{0} \uppernegcrossing{1}{0}
			\end{tikzpicture}} 
			\quad = \quad \cbox{
				\begin{tikzpicture}[tldiagram, yscale=2/3]
					\lowernegcrossing{0}{0} \lowernegcrossing{1}{0}
			\end{tikzpicture}} $,
		\item[(B1)] \quad $
			\cbox{
				\begin{tikzpicture}[tldiagram, yscale=2/3]
					\draw \tlcoord{1}{0} \dcapright;
					\negcrossing{0}{0}
			\end{tikzpicture}} 
			\quad = \quad \cbox{
				\begin{tikzpicture}[tldiagram]
					\draw \tlcoord{0}{0} \dcapright;
				\end{tikzpicture}
			}$,
		\item[(B1)'] \quad $
			\cbox{
				\begin{tikzpicture}[tldiagram, yscale=2/3, scale=-1]
					\draw \tlcoord{1}{1} \cupright \dlineup \capleft;
					\draw \tlcoord{0}{0} \lineup;
					\draw \tlcoord{2}{0} \lineup;
					\negcrossing{1}{0}
			\end{tikzpicture}} 
			\quad = \quad -q\cdot \cbox{
				\begin{tikzpicture}[tldiagram]
					\draw \tlcoord{0}{0} \dlineup;
				\end{tikzpicture}
			} $.
	\end{enumerate}
\end{lemm}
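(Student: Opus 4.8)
The plan is to verify each of the three relations (B0), (B1), (B1)$'$ by direct computation, expanding the crossings via the Kauffman--Skein definitions from \Cref{definitions of crossings} and then reducing the resulting linear combinations of dotted Temperley--Lieb diagrams using the type $B$ relations \ref{TL1 relation}--\ref{TL3 relation} and \ref{TLdot1 relation}--\ref{TLdot3 relation} (equivalently, the local rules from \Cref{diagrammatic definition type B TL}, where the key ingredients are $\tlcircle = q + q^{-1}$ and the fact that a capped-off dotted cup vanishes).

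First I would handle (B0). Writing the dotted strand as $s_0$ and the negative crossing between strands $1$ and $2$ as $\sigma_1^{-1} = 1 - qU_1$, the left-hand side is $s_0\sigma_1^{-1}s_0\sigma_1^{-1}$ and the right-hand side is $\sigma_1^{-1}s_0\sigma_1^{-1}s_0$ (reading the ``upper'' versus ``lower'' dot placements as $s_0$ acting before or after the crossing). So (B0) is literally the image under the quotient map $\Hecke_{1,q}(B_n) \twoheadrightarrow \TL(B_n)$ of the type $B$ Hecke braid relation $H_0H_1H_0H_1 = H_1H_0H_1H_0$ from \ref{Hecke type B relation}, specialized at $v=1$; alternatively one expands both sides in the basis $\{$undotted, $s_0$-dotted$\} \cdot \{1, U_1\}$ and checks the four coefficients match, using $s_0 U_1 s_0$-type reductions. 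I expect this to be the quickest of the three.

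Next, for (B1), the left-hand side is $\sigma_1^{-1}$ composed on top with a dotted cup on strands $1,2$ (drawn $\dcapright$), i.e. in algebra terms something like $(\text{dotted cap})\circ(1-qU_1)$ applied appropriately; expanding gives (dotted cap) $- q\,$(dotted cap)$\circ U_1$. The term (dotted cap)$\circ U_1$ produces, after isotopy, a dotted cap with a small circle or a dotted-cup-capped-off configuration; by the local rule $\dcapright\cupleft = 0$ together with $\tlcircle = q+q^{-1}$ one finds this reduces so that the $-q$ coefficient cancels against the contribution picking up the circle value, leaving exactly the bare dotted cap. This is the type $B$ analogue of the Reidemeister~I move (R1), and the dotted-circle-is-zero rule is what kills the ``error power'' of $q$ that appears in the type $A$ version. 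For (B1)$'$, the computation is the analogue of (R1)$'$: expanding the negative crossing against the surrounding dotted cup/cap/identity configuration and using $U_1 s_0 U_1 = 0$ (relation \ref{TLdot2 relation}) to eliminate one term, one is left with $-q$ times a dotted strand, matching the stated right-hand side. I would organize the write-up so that (B1)$'$ reuses the bending/adjunction bookkeeping already implicit in the type $A$ proof of (R1)$'$.

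The main obstacle, such as it is, is purely bookkeeping: keeping track of dot positions under isotopy (dots slide freely along strands but their count on each arc is what matters, and when an arc closes into a circle its dots must be absorbed or forced to vanish via $\dcapright\cupleft = 0$), and making sure the sign and $q$-power conventions for the negative crossing are applied consistently. There is no conceptual difficulty; the statement follows, as with the type $A$ moves, from \Cref{definitions of crossings} and the defining relations of $\TL(B_n)$, together with $\tlcircle = q+q^{-1}$ and \ref{TLdot2 relation}. I would present each of (B0), (B1), (B1)$'$ as a short chain of diagrammatic equalities with the relation used labeled above each equals sign, exactly in the style of the proof of the type $A$ Reidemeister lemma.
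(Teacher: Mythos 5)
Your proposal is correct and follows essentially the same route as the paper, whose entire proof is the observation that these are straightforward expansions of the crossings via the Kauffman--Skein formula together with the rule that a dotted circle is zero (equivalently $U_1 s_0 U_1=0$), which is exactly what you do for (B0), (B1), and (B1)$'$. One small wording correction for (B1): the $-q$ term does not ``cancel against the circle value $q+q^{-1}$'' --- composing the dotted cap with $U_1$ closes off a circle that carries the dot, so that term vanishes outright by the dotted-circle-is-zero rule, leaving the bare dotted cap, precisely as your concluding sentence (and the analogous analysis of (B1)$'$ and of the $q^2$ terms in (B0)) states.
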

\begin{proof}
	These are straight-forward calculations using that a dotted circle is zero.
\end{proof}
\begin{warn}
	There is no special type $B$ analogue of Reidemeister II, for instance the relation
	\[
		\cbox{
			\begin{tikzpicture}[tldiagram, yscale=2/3]
				\negcrossing{0}{0} \negcrossing{1}{1}
				\draw \tlcoord{1}{0} \lineup \dcapright;
				\draw \tlcoord{0}{2} \lineup;
		\end{tikzpicture}} 
		\quad = \quad \lambda \cdot \cbox{
			\begin{tikzpicture}[tldiagram]
				\draw \tlcoord{0}{0} \lineup;
				\draw \tlcoord{0}{1} \dcapright;
			\end{tikzpicture}
		}
	\]
	does not make sense for any $\lambda\in \C(q)$, since the right side is not a Temperley--Lieb diagram of type $B$. 
\end{warn}
As a consequence of \Cref{Type B Reidemeister moves lemma} we obtain a type $B$ version of \Cref{Quotient of the braid group is Temperley--Lieb}.
\begin{koro} \label{well-defined map from B1 to TL (B_n)}
	The assignments in \eqref{definitions of crossings} with $s_0\mapsto s_0$ define a surjective algebra morphism $\C(q)B_1(B_n)\to \TL(B_n)$.
\end{koro}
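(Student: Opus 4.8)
The statement asserts that the assignments $\sigma_i \mapsto 1 - q U_i$ (equivalently $\sigma_i^{-1} \mapsto 1 - q^{-1} U_i$) for $i = 1, \ldots, n-1$ together with $\sigma_0 \mapsto s_0$ extend to a well-defined surjective $\C(q)$-algebra homomorphism $\C(q) B_1(B_n) \to \TL(B_n)$. The plan is to check that the images of the generators satisfy every defining relation of $B_1(B_n)$, and then observe surjectivity is immediate.

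First I would recall that $B_1(B_n)$ is, by definition (see \Cref{braid group of type d inside type b}), the quotient of the braid group $B(B_n)$ by the single additional quadratic relation $\sigma_0^2 = e$. So its defining relations are: the quadratic relation $\sigma_0^2 = e$; the braid relations among $\sigma_1, \ldots, \sigma_{n-1}$ (type $A$ braid relations); the far-commutativity relations $\sigma_i \sigma_j = \sigma_j \sigma_i$ for $|i-j| \geq 2$ including $\sigma_0 \sigma_i = \sigma_i \sigma_0$ for $i \geq 2$; and the type $B$ relation $\sigma_1 \sigma_0 \sigma_1 \sigma_0 = \sigma_0 \sigma_1 \sigma_0 \sigma_1$. (Note that $\sigma_i^2 = e$ for $i \geq 1$ is \emph{not} imposed, so the map need not land in a quotient of $W(B_n)$.) For the type $A$ relations I can invoke \Cref{Quotient of the braid group is Temperley--Lieb} directly: its proof already shows that $\sigma_i \mapsto 1 - qU_i$, $\sigma_i^{-1} \mapsto 1 - q^{-1}U_i$ respects the type $A$ braid relations (via \ref{Reidemeister 2 move, other version} and \ref{Reidemeister 3 move}) and the far-commutativity $U_iU_j = U_jU_i$ for $|i-j|\geq 2$ (relation \ref{TL3 relation}). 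What remains is to verify the three relations genuinely involving $\sigma_0$.

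The quadratic relation $\sigma_0^2 = e$ holds because $s_0^2 = 1$ in $\TL(B_n)$ by \ref{TLdot1 relation}. The far-commutativity $\sigma_0 \sigma_i = \sigma_i \sigma_0$ for $i \geq 2$ maps to $s_0(1 - qU_i) = (1 - qU_i)s_0$, which follows from $s_0 U_i = U_i s_0$ (relation \ref{TLdot3 relation}). The only substantial point is the type $B$ relation $\sigma_1 \sigma_0 \sigma_1 \sigma_0 = \sigma_0 \sigma_1 \sigma_0 \sigma_1$, which maps to the claim that $(1 - qU_1) s_0 (1 - qU_1) s_0 = s_0 (1 - qU_1) s_0 (1 - qU_1)$ in $\TL(B_n)$. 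This is exactly the content of the type $B$ Reidemeister move (B0) in \Cref{Type B Reidemeister moves lemma}, once one translates the diagrammatic identity there (equating the two ways of stacking a dotted negative crossing) into the algebraic identity via \Cref{definitions of crossings}; alternatively it is a direct expansion using \ref{TL1 relation}, \ref{TLdot1 relation}, and crucially $U_1 s_0 U_1 = 0$ (relation \ref{TLdot2 relation}), which causes the cubic-in-$U_1$ term to vanish on both sides, leaving a symmetric expression. I expect this to be the main obstacle, though a modest one: one must carefully keep track of the $q$-coefficients and confirm that the relation holds with the \emph{positive} crossing normalization $\sigma_i \mapsto 1 - qU_i$ (and not merely up to an error power of $q$), using that a dotted circle evaluates to zero so that no scalar ambiguity arises. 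Since the image contains $s_0$ and all $U_i = \frac{1}{q}(1 - \sigma_i\text{-image})$ — more precisely $U_i$ is recovered as $q^{-1}(1 - (1-qU_i))$ — and these generate $\TL(B_n)$ by \Cref{Definition Temperley--Lieb of Type B}, the homomorphism is surjective, completing the argument.
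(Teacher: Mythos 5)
Your proposal is correct and follows essentially the same route as the paper: well-definedness is reduced to the Reidemeister-type identities (the type $A$ moves already used for \Cref{Quotient of the braid group is Temperley--Lieb} together with (B0) for the type $B$ braid relation, plus the easy relations $s_0^2=1$ and $s_0U_i=U_is_0$), and surjectivity follows since $s_0$ and all $U_i$ lie in the image. The only slip is terminological: with the paper's conventions $\sigma_i\mapsto 1-qU_i$ is the \emph{negative} crossing normalization, not the positive one, but this does not affect the argument.
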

\begin{proof}
	The well-definedness follows from the relations \ref{Reidemeister type B}, \ref{Reidemeister 2 move, other version} and \ref{Reidemeister 3 move}. Surjectivity is clear, since all the generators $s_0, U_1, \ldots, U_{n-1}$ are contained in the image.
\end{proof}

\section{An affine Temperley--Lieb algebra quotient} \label{affine section}

This section gives a different viewpoint on the algebra $\TL(B_n)$, namely as a quotient of the extended affine Temperley--Lieb algebra. The section serves two purposes: The first purpose is to adjust the notions from \cite{wedrich2018} to fit with our quantum setting. The second purpose is to give an immediate application of the Reidemeister moves from \Cref{section on Reidemeister moves}. We fix $k=\C(q)$ and start with the definition of the extended affine Temperley--Lieb algebra. 
\begin{defi}
	Let $\delta=q+q^{-1}\in k$. The \textbf{extended affine Temperley--Lieb algebra} $\ATL_n$ is the (associative, unital) $k$-algebra with generators $U_1, \ldots, U_{n-1}, D, D^{-1}$ and relations
	\begin{enumerate}[label = {(ATL\arabic*)}, align=left]
		\item $U_i^2=\delta U_{i}$ for all $i\in\mZn$, where
		$U_0\coloneqq U_n\coloneqq DU_{1}D^{-1}$, \label{affine relation i}
		\item $U_iU_{i+1}U_i=U_i$ and $U_{i+1}U_{i}U_{i+1}=U_{i+1}$ for all $i\in\mZn$,  \label{affine relation ii}
		\item $U_iU_j=U_jU_i$ for all $i,j\in\mZn$ which satisfy $i-j\neq \pm1$, \label{affine relation iii}
		\item $D D^{-1} = 1=D^{-1} D$, \label{affine relation iv}
		\item $U_iD=DU_{i+1}$ for all $i\in\mZn$. \label{affine relation v}
	\end{enumerate}
\end{defi}

A basis of this algebra can be identified with certain Temperley--Lieb diagrams in the annulus $S^1\times [0,1]$, which connect $n$ vertices $\{1,\ldots, n\}\times \{1\}$ positioned on the inner boundary with $n$ vertices $\{1,\ldots, n\}\times \{0\}$ on the outer boundary, and contain no contractible loops, i.e.\ cycles avoiding the hole in the annulus. However they can have connected components going around the loop. The composition of two such diagrams $\lambda$ and $\mu$ is obtained by gluing $\lambda$ along its outer boundary to the inner boundary of $\mu$, such that the outer $n$ vertices are glued to their corresponding inner vertices. Every closed loop that arises in this procedure and does not go around the inner circle, is replaced by the scalar $\delta$. 
\begin{beis} \label{n=3 example for affine Temperley--Lieb}
	The generators of $\ATL_3$ are given by
	\definecolor{ffqqqq}{rgb}{0.5,0.5,0.5}
	\definecolor{qqqqff}{rgb}{0.5,0.5,0.5}
	\definecolor{qqccqq}{rgb}{0.5,0.5,0.5}
	\definecolor{uuuuuu}{rgb}{0.26666666666666666,0.26666666666666666,0.26666666666666666}
	\begin{gather*}
		U_1 = \cbox{
			\begin{tikzpicture}[line cap=round,line join=round,>=triangle 45,x=1.0cm,y=1.0cm, yscale=0.3, xscale=0.3]
				\draw(0.0,0.0) circle (1.0cm);
				\draw(0.0,0.0) circle (4.0cm);
				\draw [shift={(-0.41421356237309503,1.0)},line width=2.6pt,color=qqccqq] plot[domain=0.0:3.9269908169872414,variable=\t]({1.0*0.41421356237309503*cos(\t r)+-0.0*0.41421356237309503*sin(\t r)},{0.0*0.41421356237309503*cos(\t r)+1.0*0.41421356237309503*sin(\t r)});
				\draw [shift={(-1.6568542494923801,4.0)},line width=2.6pt,color=qqqqff] plot[domain=-2.356194490192345:0.0,variable=\t]({1.0*1.6568542494923801*cos(\t r)+-0.0*1.6568542494923801*sin(\t r)},{0.0*1.6568542494923801*cos(\t r)+1.0*1.6568542494923801*sin(\t r)});
				\draw [line width=2.6pt,color=ffqqqq] (2.8284271247461903,2.82842712474619)-- (0.7071067811865476,0.7071067811865475);
				\begin{scriptsize}
					\draw [fill=black] (0.7071067811865476,0.7071067811865475) circle (1.5pt);
					\draw [fill=black] (0.0,1.0) circle (1.5pt);
					\draw [fill=black] (-0.7071067811865475,0.7071067811865476) circle (1.5pt);
					\draw [fill=uuuuuu] (2.8284271247461903,2.82842712474619) circle (1.5pt);
					\draw [fill=uuuuuu] (0.0,4.0) circle (1.5pt);
					\draw [fill=uuuuuu] (-2.82842712474619,2.8284271247461903) circle (1.5pt);
				\end{scriptsize}
			\end{tikzpicture}
		} \, , \qquad 
		U_2 =  \! \! \! \! \! \! \! \! \!  \cbox{
			\begin{tikzpicture}[line cap=round,line join=round,>=triangle 45,x=1.0cm,y=1.0cm, xscale=-0.3, yscale=0.3]
				\clip(-9.669038064820429,-5.005708867037201) rectangle (5.918346826603462,5.6978687489367035);
				\draw(0.0,0.0) circle (1.0cm);
				\draw(0.0,0.0) circle (4.0cm);
				\draw [shift={(-0.41421356237309503,1.0)},line width=2.6pt,color=qqccqq] plot[domain=0.0:3.9269908169872414,variable=\t]({1.0*0.41421356237309503*cos(\t r)+-0.0*0.41421356237309503*sin(\t r)},{0.0*0.41421356237309503*cos(\t r)+1.0*0.41421356237309503*sin(\t r)});
				\draw [shift={(-1.6568542494923801,4.0)},line width=2.6pt,color=qqqqff] plot[domain=-2.356194490192345:0.0,variable=\t]({1.0*1.6568542494923801*cos(\t r)+-0.0*1.6568542494923801*sin(\t r)},{0.0*1.6568542494923801*cos(\t r)+1.0*1.6568542494923801*sin(\t r)});
				\draw [line width=2.6pt,color=ffqqqq] (2.8284271247461903,2.82842712474619)-- (0.7071067811865476,0.7071067811865475);
				\begin{scriptsize}
					\draw [fill=black] (0.7071067811865476,0.7071067811865475) circle (1.5pt);
					\draw [fill=black] (0.0,1.0) circle (1.5pt);
					\draw [fill=black] (-0.7071067811865475,0.7071067811865476) circle (1.5pt);
					\draw [fill=uuuuuu] (2.8284271247461903,2.82842712474619) circle (1.5pt);
					\draw [fill=uuuuuu] (0.0,4.0) circle (1.5pt);
					\draw [fill=uuuuuu] (-2.82842712474619,2.8284271247461903) circle (1.5pt);
				\end{scriptsize}
			\end{tikzpicture}
		}  \! \! \! \! \! \! \! \! \!  \! \! \! \! \! \! \! \! \! \! \! \! \! \! \! \! \!, \\[1em]
		D=  \cbox{
			\begin{tikzpicture}[line cap=round,line join=round,>=triangle 45,x=1.0cm,y=1.0cm, xscale=0.3, yscale=0.3]
				\draw(0.0,0.0) circle (1.0cm);
				\draw(0.0,0.0) circle (4.0cm);
				\draw [shift={(0.10451473109487833,1.309698831278217)},line width=2.6pt,color=qqqqff] plot[domain=-0.7853981633974483:0.6833346315911222,variable=\t]({1.0*0.8521938498178464*cos(\t r)+-0.0*0.8521938498178464*sin(\t r)},{0.0*0.8521938498178464*cos(\t r)+1.0*0.8521938498178464*sin(\t r)});
				\draw [shift={(3.4087753992713847,4.0)},line width=2.6pt,color=qqqqff] plot[domain=3.141592653589793:3.8249272851809155,variable=\t]({1.0*3.4087753992713843*cos(\t r)+-0.0*3.4087753992713843*sin(\t r)},{0.0*3.4087753992713843*cos(\t r)+1.0*3.4087753992713843*sin(\t r)});
				\draw [shift={(-0.4180589243795132,5.238795325112868)},line width=2.6pt,color=ffqqqq] plot[domain=3.9269908169872414:4.610325448578363,variable=\t]({1.0*3.4087753992713847*cos(\t r)+-0.0*3.4087753992713847*sin(\t r)},{0.0*3.4087753992713847*cos(\t r)+1.0*3.4087753992713847*sin(\t r)});
				\draw [shift={(-0.8521938498178466,1.0000000000000002)},line width=2.6pt,color=ffqqqq] plot[domain=0.0:1.46873279498857,variable=\t]({1.0*0.8521938498178467*cos(\t r)+-0.0*0.8521938498178467*sin(\t r)},{0.0*0.8521938498178467*cos(\t r)+1.0*0.8521938498178467*sin(\t r)});
				\draw [shift={(-1.2374368670764582,0.17677669529663698)},line width=2.6pt,color=qqccqq] plot[domain=0.7853981633974483:2.999695598985629,variable=\t]({1.0*0.75*cos(\t r)+-0.0*0.75*sin(\t r)},{0.0*0.75*cos(\t r)+1.0*0.75*sin(\t r)});
				\draw [shift={(4.949747468305832,0.7071067811865477)},line width=2.6pt,color=qqccqq] plot[domain=2.356194490192345:3.283489708193957,variable=\t]({1.0*2.9999999999999987*cos(\t r)+-0.0*2.9999999999999987*sin(\t r)},{0.0*2.9999999999999987*cos(\t r)+1.0*2.9999999999999987*sin(\t r)});
				\draw [shift={(0.0,-0.0)},line width=2.6pt,color=qqccqq] plot[domain=-3.283489708193957:0.14189705460416396,variable=\t]({1.0*2.0000000000000004*cos(\t r)+-0.0*2.0000000000000004*sin(\t r)},{0.0*2.0000000000000004*cos(\t r)+1.0*2.0000000000000004*sin(\t r)});
				\begin{scriptsize}
					\draw [fill=qqqqff] (0.7071067811865476,0.7071067811865475) circle (1.5pt);
					\draw [fill=qqqqff] (6.123233995736766E-17,1.0) circle (1.5pt);
					\draw [fill=qqqqff] (-0.7071067811865475,0.7071067811865476) circle (1.5pt);
					\draw [fill=qqqqff] (2.8284271247461903,2.82842712474619) circle (1.5pt);
					\draw [fill=qqqqff] (2.4492935982947064E-16,4.0) circle (1.5pt);
					\draw [fill=qqqqff] (-2.82842712474619,2.8284271247461903) circle (1.5pt);
				\end{scriptsize}
			\end{tikzpicture}
		} \, , \qquad
		D^{-1} =  \cbox{
			\begin{tikzpicture}[line cap=round,line join=round,>=triangle 45,x=1.0cm,y=1.0cm,, xscale=0.3, yscale=0.3]
				\draw(0.0,0.0) circle (1.0cm);
				\draw(0.0,0.0) circle (4.0cm);
				\draw [shift={(-0.10451473109487842,1.3096988312782165)},line width=2.6pt,color=qqccqq] plot[domain=2.4582580219986703:3.926990816987241,variable=\t]({1.0*0.8521938498178462*cos(\t r)+-0.0*0.8521938498178462*sin(\t r)},{0.0*0.8521938498178462*cos(\t r)+1.0*0.8521938498178462*sin(\t r)});
				\draw [shift={(-3.4087753992713847,4.0)},line width=2.6pt,color=qqccqq] plot[domain=-0.6833346315911228:0.0,variable=\t]({1.0*3.4087753992713847*cos(\t r)+-0.0*3.4087753992713847*sin(\t r)},{0.0*3.4087753992713847*cos(\t r)+1.0*3.4087753992713847*sin(\t r)});
				\draw [shift={(0.8521938498178464,1.0)},line width=2.6pt,color=ffqqqq] plot[domain=1.6728598586012229:3.141592653589793,variable=\t]({1.0*0.8521938498178463*cos(\t r)+-0.0*0.8521938498178463*sin(\t r)},{0.0*0.8521938498178463*cos(\t r)+1.0*0.8521938498178463*sin(\t r)});
				\draw [shift={(0.41805892437951225,5.238795325112868)},line width=2.6pt,color=ffqqqq] plot[domain=4.814452512191016:5.497787143782138,variable=\t]({1.0*3.4087753992713856*cos(\t r)+-0.0*3.4087753992713856*sin(\t r)},{0.0*3.4087753992713856*cos(\t r)+1.0*3.4087753992713856*sin(\t r)});
				\draw [shift={(-4.949747468305832,0.7071067811865488)},line width=2.6pt,color=qqqqff] plot[domain=-0.14189705460416402:0.7853981633974483,variable=\t]({1.0*2.999999999999999*cos(\t r)+-0.0*2.999999999999999*sin(\t r)},{0.0*2.999999999999999*cos(\t r)+1.0*2.999999999999999*sin(\t r)});
				\draw [shift={(1.2374368670764584,0.17677669529663675)},line width=2.6pt,color=qqqqff] plot[domain=0.14189705460416382:2.356194490192345,variable=\t]({1.0*0.7499999999999999*cos(\t r)+-0.0*0.7499999999999999*sin(\t r)},{0.0*0.7499999999999999*cos(\t r)+1.0*0.7499999999999999*sin(\t r)});
				\draw [shift={(0.0,0.0)},line width=2.6pt,color=qqqqff] plot[domain=-3.283489708193957:0.1418970546041638,variable=\t]({1.0*2.0000000000000004*cos(\t r)+-0.0*2.0000000000000004*sin(\t r)},{0.0*2.0000000000000004*cos(\t r)+1.0*2.0000000000000004*sin(\t r)});
				\begin{scriptsize}
					\draw [fill=uuuuuu] (0.0,1.0) circle (1.5pt);
					\draw [fill=uuuuuu] (0.7071067811865476,0.7071067811865476) circle (1.5pt);
					\draw [fill=uuuuuu] (-0.7071067811865476,0.7071067811865476) circle (1.5pt);
					\draw [fill=uuuuuu] (2.8284271247461903,2.8284271247461903) circle (1.5pt);
					\draw [fill=uuuuuu] (0.0,4.0) circle (1.5pt);
					\draw [fill=uuuuuu] (-2.8284271247461903,2.8284271247461903) circle (1.5pt);
				\end{scriptsize}
			\end{tikzpicture}
		}.
	\end{gather*}
	The special element $U_0$ is
	\[
	U_0 = \cbox{
		\begin{tikzpicture}[line cap=round,line join=round,>=triangle 45,x=1.0cm,y=1.0cm, xscale=0.3, yscale=0.3]
			\draw(0.0,0.0) circle (1.0cm);
			\draw(0.0,0.0) circle (4.0cm);
			\draw [line width=2.6pt,color=ffqqqq] (0.0,4.0)-- (6.123233995736766E-17,1.0);
			\draw [shift={(3.6533850361304974,2.0034692133618828)},line width=2.6pt,color=qqqqff] plot[domain=2.356194490192345:3.643196707778913,variable=\t]({1.0*1.166666666666669*cos(\t r)+-0.0*1.166666666666669*sin(\t r)},{0.0*1.166666666666669*cos(\t r)+1.0*1.166666666666669*sin(\t r)});
			\draw [shift={(-3.6533850361304974,2.003469213361883)},line width=2.6pt,color=qqqqff] plot[domain=-0.5016040541891202:0.7853981633974481,variable=\t]({1.0*1.1666666666666674*cos(\t r)+-0.0*1.1666666666666674*sin(\t r)},{0.0*1.1666666666666674*cos(\t r)+1.0*1.1666666666666674*sin(\t r)});
			\draw [shift={(0.0,0.0)},line width=2.6pt,color=qqqqff] plot[domain=-3.643196707778913:0.5016040541891199,variable=\t]({1.0*3.0*cos(\t r)+-0.0*3.0*sin(\t r)},{0.0*3.0*cos(\t r)+1.0*3.0*sin(\t r)});
			\draw [shift={(1.2374368670764582,0.17677669529663673)},line width=2.6pt,color=qqccqq] plot[domain=0.1418970546041638:2.356194490192345,variable=\t]({1.0*0.7500000000000001*cos(\t r)+-0.0*0.7500000000000001*sin(\t r)},{0.0*0.7500000000000001*cos(\t r)+1.0*0.7500000000000001*sin(\t r)});
			\draw [shift={(-1.2374368670764582,0.17677669529663698)},line width=2.6pt,color=qqccqq] plot[domain=0.7853981633974483:2.999695598985629,variable=\t]({1.0*0.75*cos(\t r)+-0.0*0.75*sin(\t r)},{0.0*0.75*cos(\t r)+1.0*0.75*sin(\t r)});
			\draw [shift={(0.0,0.0)},line width=2.6pt,color=qqccqq] plot[domain=-3.283489708193957:0.1418970546041638,variable=\t]({1.0*2.0000000000000004*cos(\t r)+-0.0*2.0000000000000004*sin(\t r)},{0.0*2.0000000000000004*cos(\t r)+1.0*2.0000000000000004*sin(\t r)});
			\begin{scriptsize}
				\draw [fill=black] (0.7071067811865476,0.7071067811865475) circle (1.5pt);
				\draw [fill=black] (6.123233995736766E-17,1.0) circle (1.5pt);
				\draw [fill=black] (-0.7071067811865475,0.7071067811865476) circle (1.5pt);
				\draw [fill=uuuuuu] (2.8284271247461903,2.82842712474619) circle (1.5pt);
				\draw [fill=uuuuuu] (0.0,4.0) circle (1.5pt);
				\draw [fill=uuuuuu] (-2.82842712474619,2.8284271247461903) circle (1.5pt);
			\end{scriptsize}
		\end{tikzpicture}
	} \, .
	\]
\end{beis}
The main statement of this section is that $\TL(B_n)$ is a quotient of the algebra $\ATL_n$. In this way the algebra $\TL(B_n)$ is nothing else then a quantum version of $\ATL^{\text{ess}}$ from \cite{wedrich2018}. It can be seen as a consequence of the Reidemeister moves from \Cref{section on Reidemeister moves}.
\begin{theo} \label{Theorem affine quotient}
	The assignments 
	\begin{align*}
		\varphi\colon \ATL_n&\longrightarrow\TL(B_n) \\
		U_i &\longmapsto U_i,  \quad   (1\leq i\leq n-1) \\
		D & \longmapsto [\sigma_{n-1}\sigma_{n-2}\cdots \sigma_1]s_0\\
		D^{-1} & \longmapsto s_0[\sigma_1^{-1} \cdots \sigma_{n-2}^{-1}\sigma_{n-1}^{-1}].
	\end{align*} 
	extend to a well-defined surjective algebra homomorphism $\ATL_n\twoheadrightarrow\TL(B_n)$. 
	Here $[\sigma]\in \TL(B_n)$ denotes the class of an element $\sigma \in B(B_n)$ under the quotient map $\C(q)B_1(B_n) \twoheadrightarrow \TL(B_n)$ from \Cref{well-defined map from B1 to TL (B_n)}.
\end{theo}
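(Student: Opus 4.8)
The plan is to verify that the images of the generators $U_i$, $D$, $D^{-1}$ satisfy the defining relations \ref{affine relation i}--\ref{affine relation v} of $\ATL_n$, so that $\varphi$ is well defined, and then to argue surjectivity by exhibiting $s_0$ and the $U_i$ in the image. The key technical ingredient is \Cref{well-defined map from B1 to TL (B_n)}: since $\C(q)B_1(B_n)\twoheadrightarrow\TL(B_n)$ is a well-defined algebra map, the elements $[\sigma_i]$ are invertible in $\TL(B_n)$ with $[\sigma_i]^{-1}=[\sigma_i^{-1}]$, and they satisfy the braid relations of $B_1(B_n)$; together with $s_0^2=1$ this makes $D\mapsto[\sigma_{n-1}\cdots\sigma_1]s_0$ and $D^{-1}\mapsto s_0[\sigma_1^{-1}\cdots\sigma_{n-1}^{-1}]$ honestly mutually inverse, giving \ref{affine relation iv} immediately.

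First I would treat the purely type $A$ relations \ref{affine relation i}, \ref{affine relation ii}, \ref{affine relation iii} for the indices $1\le i,j\le n-1$: these are just the Temperley--Lieb relations \ref{TL1 relation}--\ref{TL3 relation} holding inside $\TL(B_n)\cong V_n$, so nothing is to be done. The substance is in the relation \ref{affine relation v}, $U_iD=DU_{i+1}$, which must be checked for all $i\in\mZn$. For $1\le i\le n-2$ this is a computation in $\TL(B_n)$ using the braid/Reidemeister moves of \Cref{section on Reidemeister moves}: conjugating $U_{i+1}$ by the "rotation" braid $[\sigma_{n-1}\cdots\sigma_1]$ shifts its index down by one, exactly as in the classical annular picture; one uses \ref{Reidemeister 2 move, other version}, \ref{Reidemeister 3 move} and the far-commutativity \ref{TL3 relation}, \ref{TLdot3 relation}, together with the fact that $s_0$ commutes with all $U_j$ for $j\ge2$ by \ref{TLdot3 relation}. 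The remaining cases of \ref{affine relation v} are the ones involving the wrap-around index, i.e.\ $i=n-1$ (so $U_{n-1}D=DU_n=DDU_1D^{-1}$) and $i=0$; here one must also invoke the type $B$ Reidemeister move \ref{Reidemeister type B} and the relations \ref{TLdot1 relation}, \ref{TLdot2 relation}, since $s_0$ genuinely interacts with $U_1$. Once \ref{affine relation v} is established, the special generator $U_0=U_n\coloneqq DU_1D^{-1}$ is a well-defined element of $\TL(B_n)$, and then \ref{affine relation i}, \ref{affine relation ii}, \ref{affine relation iii} for the indices $0$ and $n$ follow formally by conjugating the already-known relations for the indices $1,\dots,n-1$ by powers of $D$, using \ref{affine relation v} repeatedly. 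Surjectivity is then clear: $U_1,\dots,U_{n-1}$ are directly in the image, and $s_0=[\sigma_1^{-1}\cdots\sigma_{n-1}^{-1}]\cdot\varphi(D^{-1})^{-1}$... more simply, $s_0$ is recovered as $[\sigma_{n-1}\cdots\sigma_1]^{-1}\varphi(D)$, wait — as $[\sigma_1^{-1}]\cdots[\sigma_{n-1}^{-1}]\,\varphi(D)$, which lies in the image; since $\{s_0,U_1,\dots,U_{n-1}\}$ generates $\TL(B_n)$, $\varphi$ is onto.

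The main obstacle I expect is the bookkeeping in the wrap-around cases of \ref{affine relation v} and the derived relations \ref{affine relation i}--\ref{affine relation iii} at the indices $0,n$: one has to show that $U_0=DU_1D^{-1}=[\sigma_{n-1}\cdots\sigma_1]s_0U_1s_0[\sigma_1^{-1}\cdots\sigma_{n-1}^{-1}]$ really behaves like a Temperley--Lieb generator, in particular that $U_0^2=\delta U_0$ and $U_0U_1U_0=U_0$, even though $s_0U_1s_0=U_0^{\mathrm{type}\,D}$ is \emph{not} itself a standard cup--cap generator and the relation $U_1s_0U_1=0$ (our \ref{TLdot2 relation}) is in play. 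I would carry this out either diagrammatically in $V_n$ (tracking the single permitted dot and using that a dotted circle evaluates to $0$) or by reducing everything to the braid group $B_1(B_n)$ and applying the moves of \Cref{Type B Reidemeister moves lemma}; I expect the diagrammatic route via \Cref{diagrammatic definition type B TL} to be the cleanest, since there the statement becomes the geometrically evident fact that rotating an annular Temperley--Lieb picture by one strand is realized by the braid $[\sigma_{n-1}\cdots\sigma_1]$ followed by reading a dot as the crossing at the branch cut. Finally I would remark that this is exactly the quantum analogue of the identification $\TL(B_n)\cong\ATL_n^{\mathrm{ess}}$ from \cite{wedrich2018}, obtained by further imposing $s_0^2=1$ and $U_1s_0U_1=0$, i.e.\ that the kernel of $\varphi$ is generated by $D^2$-type relations making the dotted circle vanish.
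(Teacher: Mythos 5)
Your plan is correct and is essentially the route the paper itself takes: the printed proof simply cites Iohara et al.\ (specializing their extra parameter to $Q=1$) and remarks that, alternatively, well-definedness can be checked with the Reidemeister moves of \Cref{section on Reidemeister moves}, surjectivity being clear because $s_0,U_1,\dots,U_{n-1}$ all lie in the image. Your breakdown --- type $A$ relations for free, the interior cases of $U_iD=DU_{i+1}$ by Reidemeister II/III, the wrap-around cases via the type $B$ moves, and the relations at index $0$ by conjugation once \ref{affine relation v} is in place --- is a sound elaboration of that alternative; just note that the wrap-around relations force $n\ge 3$, since for $n=2$ one computes $\varphi(U_0)=s_0U_1s_0$ and hence $\varphi(U_0)\varphi(U_1)\varphi(U_0)=s_0(U_1s_0U_1)s_0U_1s_0=0\neq\varphi(U_0)$.
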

\begin{proof}
	This is a special case of \cite[\S2]{iohara18}, where a family of quotients of the affine Temperley--Lieb algebra depending on an additional parameter $Q$ were constructed, if we set $Q=1$. Alternatively one can use the Reidemeister moves from \Cref{section on Reidemeister moves} to check the well-definedness. Clearly if $\varphi$ is well-defined, it is surjective, since all generators $s_0, U_1, \ldots, U_{n-1}$ of $\TL(B_n)$ lie in the image. 
\end{proof}
\begin{beis}
	The image of $D$ is best understood via a side-by-side comparison of $D$ and its image. In the $n=3$ case, we have
	\begin{equation}
		\definecolor{ffqqqq}{rgb}{0.5,0.5,0.5}
		\definecolor{qqqqff}{rgb}{0.5,0.5,0.5}
		\definecolor{qqccqq}{rgb}{0.5,0.5,0.5}
		\definecolor{uuuuuu}{rgb}{0.26666666666666666,0.26666666666666666,0.26666666666666666}
		D = \cbox{
			\begin{tikzpicture}[line cap=round,line join=round,>=triangle 45,x=1.0cm,y=1.0cm, xscale=0.3, yscale=0.3]
				\draw(0.0,0.0) circle (1.0cm);
				\draw(0.0,0.0) circle (4.0cm);
				\draw [shift={(0.10451473109487833,1.309698831278217)},line width=2.6pt,color=qqqqff] plot[domain=-0.7853981633974483:0.6833346315911222,variable=\t]({1.0*0.8521938498178464*cos(\t r)+-0.0*0.8521938498178464*sin(\t r)},{0.0*0.8521938498178464*cos(\t r)+1.0*0.8521938498178464*sin(\t r)});
				\draw [shift={(3.4087753992713847,4.0)},line width=2.6pt,color=qqqqff] plot[domain=3.141592653589793:3.8249272851809155,variable=\t]({1.0*3.4087753992713843*cos(\t r)+-0.0*3.4087753992713843*sin(\t r)},{0.0*3.4087753992713843*cos(\t r)+1.0*3.4087753992713843*sin(\t r)});
				\draw [shift={(-0.4180589243795132,5.238795325112868)},line width=2.6pt,color=ffqqqq] plot[domain=3.9269908169872414:4.610325448578363,variable=\t]({1.0*3.4087753992713847*cos(\t r)+-0.0*3.4087753992713847*sin(\t r)},{0.0*3.4087753992713847*cos(\t r)+1.0*3.4087753992713847*sin(\t r)});
				\draw [shift={(-0.8521938498178466,1.0000000000000002)},line width=2.6pt,color=ffqqqq] plot[domain=0.0:1.46873279498857,variable=\t]({1.0*0.8521938498178467*cos(\t r)+-0.0*0.8521938498178467*sin(\t r)},{0.0*0.8521938498178467*cos(\t r)+1.0*0.8521938498178467*sin(\t r)});
				\draw [shift={(-1.2374368670764582,0.17677669529663698)},line width=2.6pt,color=qqccqq] plot[domain=0.7853981633974483:2.999695598985629,variable=\t]({1.0*0.75*cos(\t r)+-0.0*0.75*sin(\t r)},{0.0*0.75*cos(\t r)+1.0*0.75*sin(\t r)});
				\draw [shift={(4.949747468305832,0.7071067811865477)},line width=2.6pt,color=qqccqq] plot[domain=2.356194490192345:3.283489708193957,variable=\t]({1.0*2.9999999999999987*cos(\t r)+-0.0*2.9999999999999987*sin(\t r)},{0.0*2.9999999999999987*cos(\t r)+1.0*2.9999999999999987*sin(\t r)});
				\draw [shift={(0.0,-0.0)},line width=2.6pt,color=qqccqq] plot[domain=-3.283489708193957:0.14189705460416396,variable=\t]({1.0*2.0000000000000004*cos(\t r)+-0.0*2.0000000000000004*sin(\t r)},{0.0*2.0000000000000004*cos(\t r)+1.0*2.0000000000000004*sin(\t r)});
				\begin{scriptsize}
					\draw [fill=qqqqff] (0.7071067811865476,0.7071067811865475) circle (1.5pt);
					\draw [fill=qqqqff] (6.123233995736766E-17,1.0) circle (1.5pt);
					\draw [fill=qqqqff] (-0.7071067811865475,0.7071067811865476) circle (1.5pt);
					\draw [fill=qqqqff] (2.8284271247461903,2.82842712474619) circle (1.5pt);
					\draw [fill=qqqqff] (2.4492935982947064E-16,4.0) circle (1.5pt);
					\draw [fill=qqqqff] (-2.82842712474619,2.8284271247461903) circle (1.5pt);
				\end{scriptsize}
			\end{tikzpicture}
		}
		\quad \text{and} \quad
		\varphi(D) \quad = \quad \cbox{
			\begin{tikzpicture}[tldiagram]
				\negcrossing{0}{0}
				\negcrossing{1}{1}
				\draw \tlcoord{0}{2} \lineup;
				\draw \tlcoord{1}{0} \lineup;
				\draw \tlcoord{0}{0} \onedot;
			\end{tikzpicture}
		}.
	\end{equation}
	We think of the contribution of $s_0$ in $\varphi(D)$ as symbolizing that we went around the hole once. The relation $s_0^2=1$ comes from our specialization of the $2$-parameter Hecke algebra. 
\end{beis}

\begin{rema}
	If we set $q=1$ in all formulas, the theorem gives precisely a surjective algebra morphism $\ATL_n\to \TL(B_n)$, where $\TL(B_n)=\ATL^{\text{ess}}_n$ from \cite{wedrich2018}.
	This gives a nice interpretation of the element $D^n$ as the image of the longest element $\wo\in \C W(B_n)$. For example we have
	\definecolor{ffqqqq}{rgb}{0.5,0.5,0.5}
	\definecolor{qqccqq}{rgb}{0.5,0.5,0.5}
	\definecolor{qqqqff}{rgb}{0.5,0.5,0.5}
	\definecolor{uuuuuu}{rgb}{0.26666666666666666,0.26666666666666666,0.26666666666666666}
	\begin{equation*}
		D^3 =  \cbox{
			\begin{tikzpicture}[line cap=round,line join=round,>=triangle 45,x=1.0cm,y=1.0cm, xscale=0.3, yscale=0.3]4.037378280492656) -- (0.0,4.524011808403494);
				
				\draw(0.0,0.0) circle (1.0cm);
				\draw(0.0,0.0) circle (4.0cm);
				\draw [shift={(0.0,-0.0)},line width=2.6pt,color=qqccqq] plot[domain=-3.141592653589793:0.0,variable=\t]({1.0*1.75*cos(\t r)+-0.0*1.75*sin(\t r)},{0.0*1.75*cos(\t r)+1.0*1.75*sin(\t r)});
				\draw [shift={(0.0,-0.0)},line width=2.6pt,color=ffqqqq] plot[domain=-3.141592653589793:0.0,variable=\t]({1.0*2.5*cos(\t r)+-0.0*2.5*sin(\t r)},{0.0*2.5*cos(\t r)+1.0*2.5*sin(\t r)});
				\draw [shift={(0.0,-0.0)},line width=2.6pt,color=qqqqff] plot[domain=-3.141592653589793:0.0,variable=\t]({1.0*3.25*cos(\t r)+-0.0*3.25*sin(\t r)},{0.0*3.25*cos(\t r)+1.0*3.25*sin(\t r)});
				\draw [shift={(-0.9888356558433666,0.0)},line width=2.6pt,color=qqccqq] plot[domain=1.191648621797656:3.141592653589793,variable=\t]({1.0*0.7611643441566334*cos(\t r)+-0.0*0.7611643441566334*sin(\t r)},{0.0*0.7611643441566334*cos(\t r)+1.0*0.7611643441566334*sin(\t r)});
				\draw [shift={(-1.05,0.0)},line width=2.6pt,color=ffqqqq] plot[domain=0.7610127542247297:3.141592653589793,variable=\t]({1.0*1.45*cos(\t r)+-0.0*1.45*sin(\t r)},{0.0*1.45*cos(\t r)+1.0*1.45*sin(\t r)});
				\draw [shift={(-1.2082691381318578,0.0)},line width=2.6pt,color=qqqqff] plot[domain=0.3536530910402287:3.141592653589793,variable=\t]({1.0*2.0417308618681425*cos(\t r)+-0.0*2.0417308618681425*sin(\t r)},{0.0*2.0417308618681425*cos(\t r)+1.0*2.0417308618681425*sin(\t r)});
				\draw [shift={(-1.1181625305160388,-0.0)},line width=2.6pt,color=qqqqff] plot[domain=0.0:0.44396197191690606,variable=\t]({1.0*4.363233773641306*cos(\t r)+-0.0*4.363233773641306*sin(\t r)},{0.0*4.363233773641306*cos(\t r)+1.0*4.363233773641306*sin(\t r)});
				\draw [shift={(3.8115342547371,2.3447100955368265)},line width=2.6pt,color=qqqqff] plot[domain=2.684342308803695:3.5855546255066995,variable=\t]({1.0*1.0956650005298587*cos(\t r)+-0.0*1.0956650005298587*sin(\t r)},{0.0*1.0956650005298587*cos(\t r)+1.0*1.0956650005298587*sin(\t r)});
				\draw [shift={(-0.8059962725622805,0.0)},line width=2.6pt,color=ffqqqq] plot[domain=0.0:0.9336368717723645,variable=\t]({1.0*3.311821685279578*cos(\t r)+-0.0*3.311821685279578*sin(\t r)},{0.0*3.311821685279578*cos(\t r)+1.0*3.311821685279578*sin(\t r)});
				\draw [shift={(-0.7253796609475539,0.0)},line width=2.6pt,color=qqccqq] plot[domain=0.0:1.6174999879637495,variable=\t]({1.0*2.491959243256882*cos(\t r)+-0.0*2.491959243256882*sin(\t r)},{0.0*2.491959243256882*cos(\t r)+1.0*2.491959243256882*sin(\t r)});
				\draw [shift={(3.60844757659975,5.964352872578287)},line width=2.6pt,color=ffqqqq] plot[domain=3.640107894129397:4.0752295253621575,variable=\t]({1.0*4.108476131252946*cos(\t r)+-0.0*4.108476131252946*sin(\t r)},{0.0*4.108476131252946*cos(\t r)+1.0*4.108476131252946*sin(\t r)});
				\draw [shift={(-1.0614355236888346,7.190260467855036)},line width=2.6pt,color=qqccqq] plot[domain=4.327491254759953:4.759092641553543,variable=\t]({1.0*4.706150170921378*cos(\t r)+-0.0*4.706150170921378*sin(\t r)},{0.0*4.706150170921378*cos(\t r)+1.0*4.706150170921378*sin(\t r)});
				\begin{scriptsize}
					\draw [fill=qqqqff] (0.7071067811865476,0.7071067811865476) circle (1.5pt);
					\draw [fill=qqqqff] (0.0,1.0) circle (1.5pt);
					\draw [fill=qqqqff] (-0.7071067811865476,0.7071067811865476) circle (1.5pt);
					\draw [fill=qqqqff] (2.8284271247461903,2.8284271247461903) circle (1.5pt);
					\draw [fill=qqqqff] (0.0,4.0) circle (1.5pt);
					\draw [fill=qqqqff] (-2.8284271247461903,2.8284271247461903) circle (1.5pt);
				\end{scriptsize}
			\end{tikzpicture}
		}
		\quad \text{and} \quad
		\varphi(D^3)  = \rho(\wo) =  \quad \rho \left( \cbox{
			\begin{tikzpicture}[tldiagram]
				\draw \tlcoord{0}{0} \lineup;
				\draw \tlcoord{0}{1} \lineup;
				\draw \tlcoord{0}{2} \lineup;
				\biggerdrawdots{0.5}{0}{2}
			\end{tikzpicture}
		} \right),
	\end{equation*}
	for $n=3$, where $\rho\colon \C \! W(B_n) \twoheadrightarrow \TL(B_n)$ is the quotient map. We will see in \Cref{Chapter on Infinite Braids} that the element $\wo\in W(B_n)$ is closely related to what we call type $B$ full-twist. The diagram for $D^n\in\ATL_n$ justifies the name since it literally twists around the hole in the annulus.
\end{rema}

\chapter{Infinite braids approximating projectors} \label{Chapter on Infinite Braids}

In this chapter of the thesis we discuss generalizations of the type $A$ full twist, which was used in \cite{rozansky2010} to categorify Jones--Wenzl projectors in type $A$, to other types. The goal is to use our generalization to obtain such a categorification of the type $D$ Jones--Wenzl projectors from the last chapter. We do not follow any particular source, however the chapter was influenced by discussions with the author's advisors as well as talks of Ben Elias and Matthew Hogancamp about their paper \cite{eliasdiag2}. 
Instead of reading this chapter as a strict mathematical text, we intend that the reader joins us on a philosophical journey to discover the full twist(s). 

\section{Full twist of type \texorpdfstring{$A$}{A}}

The type $A$ full twist is the special element
\[
	\alpha_{n-1}\coloneqq(\sigma_1\sigma_2\cdots\sigma_{n-1})^n\in B(A_{n-1})
\]
of the braid group on $n$ strands. Visually speaking $\alpha_{n-1}$ rotates the $n$ strands clockwise in a full circle (i.e.\ by $2\pi$). It can be written in many different ways. For example we have
\[
	 \cbox{
	\begin{tikzpicture}[tldiagram, yscale=2/3]
		\negcrossing{0}{1}
		\negcrossing{1}{0}
		\negcrossing{2}{1}
		\negcrossing{3}{0}
		\negcrossing{4}{1}
		\negcrossing{5}{0}
		\draw \tlcoord{0}{0} \lineup;
		\draw \tlcoord{1}{2} \lineup;
		\draw \tlcoord{2}{0} \lineup;
		\draw \tlcoord{3}{2} \lineup;
		\draw \tlcoord{4}{0} \lineup;
		\draw \tlcoord{5}{2} \lineup;
	\end{tikzpicture}
} \quad  = \quad \alpha_2 \quad = \quad \cbox{
\begin{tikzpicture}[tldiagram, yscale=2/3]
	\negcrossing{0}{0}
	\negcrossing{1}{1}
	\negcrossing{2}{0}
	\negcrossing{3}{1}
	\negcrossing{4}{0}
	\negcrossing{5}{1}
	\draw \tlcoord{0}{2} \lineup;
	\draw \tlcoord{1}{0} \lineup;
	\draw \tlcoord{2}{2} \lineup;
	\draw \tlcoord{3}{0} \lineup;
	\draw \tlcoord{4}{2} \lineup;
	\draw \tlcoord{5}{0} \lineup;
\end{tikzpicture}
} 
\]
It satisfies many special properties, most notably we have
\[
	\sigma_i\alpha_{n-1}=\alpha_{n-1}\sigma_i \quad \text{ for all } i=1,\ldots, n-1.
\] 
This implies that $\alpha_{n-1}$ lives in the center of the braid group. One can even show that it generates its center for $n\geq3$ (see e.g.\ \cite[Theorem 4.2]{gonzales11}). The most important property from our point of view is the following fact. Consider the Temperley--Lieb algebra $\TL(A_{n-1})=\TL_n(q+q^{-1})$ defined over $k=\mZ[[q]][q^{-1}]$.
Then the image $[\alpha_{n-1}]$ of $\alpha_{n-1}$ under the surjection 
\[
	\mZ[[q]][q^{-1}]B(A_{n-1})\rightarrow \TL(A_{n-1})
\]
satisfies 
\begin{equation} \label{Type A full twist converges to jones--wenzl}
	\lim_{m\to\infty}[\alpha_{n-1}]^m=a_{n-1}
\end{equation}
where $a_{n-1}$ is the Jones--Wenzl projector and the limit is taken with respect to the $q$-adic norm. This norm is first defined on $\mZ[[q]][q^{-1}]$, where it is given by
\begin{align*}
	|\blank|_q\colon\mZ[[q]][q^{-1}] &\rightarrow \mR_{+} \\
	p=\sum_{k\geq n(p)}a_kt^k &\mapsto |p(t)|_q\coloneqq 2^{-n(p)}
\end{align*}
where $n(\blank)$ is the \textbf{$q$-adic valuation} on $\mZ[[q]][q^{-1}]$, that is $n(p)$ is the smallest index $k\in\mZ$ such that the coefficient $a_{k}\neq 0$ of $p$ is non-zero, using the convention $|0|_q=2^{-\infty}=0$. Since $\TL_n$ is free of finite rank as a $k$-module, we can define a norm $|\blank|$ on $\TL_n$ as the uniform norm
\begin{align*}
	|\blank|=|\blank|_{\infty}\colon \TL_n &\rightarrow \mR_{+} \\
	x=\sum_{\lambda}p_{\lambda}\lambda &\mapsto |x|_{\infty}\coloneqq\sup_{\lambda}|p_{\lambda}|_q
\end{align*}
with respect to the diagram basis $\{\lambda \mid \lambda \text{ is a Temperley--Lieb (n,n)-diagram}\}$ of $\TL_n$. The convergence \eqref{Type A full twist converges to jones--wenzl} was translated in \cite{rozansky2010} into a categorical setting via interpreting $[\alpha_{n-1}]$ as a complex in Bar-Natan's cobordism category describing Khovanov homology from \cite{barnatan2015}, and proved using the homological algebra methods, which we present in \Cref{chapter 5}. We try to motivate \eqref{Type A full twist converges to jones--wenzl} via a small example, where we consider $\TL_2$. 
\begin{beis} \label{Full Twist a1 example}
	Recall the type $A_1$ Hecke-algebra from \Cref{labellist}, which is the $\C(q)$-algebra on one generator $H_s\coloneqq H_1$, subject to the quadratic relation, which is equivalent to the inverse of $H_s$ being given by $H_s + (q - q^{-1})$.
	There are two isomorphism classes of irreducible $\Hecke_q(A_1)$ representations -- the trivial representation $V_{\opn{triv}}$ and the sign representation $V_{\opn{sign}}$, which are both $1$-dimensional $\C(q)$ vector spaces. The generator $H_s$ acts by $q^{-1}$ on $V_{\opn{triv}}$ and by $-q$ on $V_{\opn{sign}}$.
	The Temperley--Lieb algebra $\TL(A_1)=\TL_2(q+q^{-1})$ is isomorphic to $\Hecke_q(A_1)$ via the identification 
	\[
		\tlcupcap=U_1\coloneqq C_s \coloneqq H_s + q 
	\]
	where $C_s$ is the Kazdhan-Lusztig generator of $\Hecke_{q}(A_1)$. This gives two non-trivial idempotents of $\Hecke_{q}(A_1)$ namely the Jones--Wenzl projector
	\[
		a_1=\tlline\tlline-\frac{1}{q+q^{-1}}\tlcupcap = 1-\frac{1}{q+q^{-1}}(H_s+q) = \frac{1}{q+q^{-1}} (-H_s + q^{-1}).
	\] 
	and its orthogonal counter part
	\[
		\quad 1-a_1 =\frac{1}{q+q^{-1}}\tlcupcap=\frac{1}{q+q^{-1}}(H_s+q)
	\]
	We have isomorphisms
	\[
		V_{\opn{sign}}\cong \Hecke_q(A_1)a_1, \quad V_{\opn{triv}}\cong \Hecke_q(A_1)(1-a_1),
	\]
	since
	\[
	H_s (-H_s + q^{-1}) = -1 +(q-q^{-1})H_s +q^{-1} H_s = -1 + q H_s = -q (-H_s + q^{-1})
	\]
	and
	\[
		H_s (H_s + q) = 1 +(q^{-1}-q)H_s + qH_s =q^{-1} (H_s + q).
	\]
	The full twist of type $A_1$ is given by
	\[
	\alpha_1=\sigma_1^2 = \cbox{
	\begin{tikzpicture}[tldiagram, yscale=2/3]
		\negcrossing{0}{0}
		\negcrossing{1}{0}
	\end{tikzpicture}
} \, .
	\]
	The image of $\sigma_1\in \C(q)B(A_1)$ under the composition 
	\[
		[ - ] = \varphi_3 \circ \varphi_2 \circ \varphi_1\colon \C(q)B(A_1)\rightarrow \Hecke_q(A_1)
	\]
	from \Cref{correct normalization} is -- as explained there -- not $H_s$, but instead
	\[
		[\sigma_1]= 1-q(H_s+q)=\tlline\tlline-q\tlcupcap.
	\]
	This element acts by $1$ on the sign representation and by $-q^{2}$ on the trivial representation, which implies that the image of the full twist $f(\alpha_1)=f(\sigma_1^2)$ acts on the sign representation by $1$ and on the trivial representation by $q^4$. If we consider the action of 
	$\alpha_1^m$ for $m=1,2,\ldots$ we see that it fixes the sign representation and acts by higher and higher $q$-powers on the trivial representation. This already gives us a representation theoretic reason that powers of $\alpha_1^m$ should approximate $a_1$ and is backed up by the calculation
	\begin{align*}
		[\sigma_1]&=\tlline\tlline -q \tlcupcap, \\
		[\sigma_1^2] &= \tlline\tlline + (-q+q^3) \tlcupcap, \\
		[\sigma_1^3] &= \tlline\tlline + (-q+q^3-q^5) \tlcupcap, \\
		[\sigma_1^4] &= \tlline\tlline + (-q+q^3-q^5+q^7) \tlcupcap, 
	\end{align*}
	which gives
	\[
		\lim_{m\to\infty}[\sigma_1^m]=\tlline\tlline + (-q+q^3-q^5+q^7 + \cdots) \tlcupcap = \tlline\tlline - \frac{1}{q+q^{-1}} \tlcupcap = a_{1}
	\]
	where we view the fraction $\frac{1}{q+q^{-1}}$ inside $\mZ[[q]][q^{-1}]$. In particular
	\[
	\lim_{m\to\infty}[\alpha_1^m]=\lim_{m\to\infty}[\sigma_1^2]^m=a_{1}.
	\]
	Alternatively if one wants to phrase this behavior in terms of matrices, the action of $[\sigma_1]$ on $\TL_2$ with respect to the basis $1, U_1$ is given by the matrix $\begin{psmallmatrix}
		1 & 0 \\
		-q & -q^2
	\end{psmallmatrix}$
	and the powers of this matrix converge towards 
	$\begin{psmallmatrix}
		1 & 0 \\
		-\frac{1}{[2]} & 0
	\end{psmallmatrix}$ with respect to the $q$-adic norm on matrices. This point of view in terms of matrices will be the key to our proof of a type $D$ version of the statement. 
\end{beis}
\begin{rema}
	Convergence of powers of matrices appears prominently in the Perron--Frobenius theorem. This theorem states that each $n\times n$-matrix $A$ with strictly positive $\mR$-valued entries had a unique maximal eigenvalue $\lambda\in\mR_{>0}$, which is real and has a one-dimensional eigenspace. The projection onto this eigenspace is given as the limit of the matrix $(A/\lambda)^n$ as $n$ goes to infinity. For the statement and proof see e.g. \cite{smyth02}. However recently in \cite{costa16} a version of the Perron--Frobenius theorem was proven for $p$-adic numbers $\mQ_p$ and the methods they use apply to general discrete valuation rings/fields like $\C[[q]]$ or $\C[[q]][q^{-1}]$. We expect one can use this version of Perron--Frobenius to rephrase this theory of the Jones--Wenzl projector and possibly use this theorem as a general tool to construct idempotents for Hecke algebras, their quotients and $q$-deformations of other finite dimensional algebras appearing in Lie theory. For instance note that the eigenvalues of $[\sigma_1]$ in \Cref{Full Twist a1 example} are $1$ and $-q^2$, where $1$ is the maximal eigenvalue in the $q$-adic norm. This fits perfectly with Perron--Frobenius and the powers of $[\sigma_1]$ indeed converge towards the projection onto the eigenspace.
\end{rema}
\begin{rema}
	In \Cref{Full Twist a1 example} we could consider instead of the full twist $\alpha_1$ its inverse $\alpha_1'\coloneqq\alpha_1^{-1}$, which is given by
	\[
	\alpha_1' = \cbox{
		\begin{tikzpicture}[tldiagram, yscale=2/3]
			\poscrossing{0}{0}
			\poscrossing{1}{0}
		\end{tikzpicture}
	} \, .
	\]
	The image of this element in the Hecke algebra using our conventions is 
	\[
		[\sigma_1^{-2}]=q^{-2}H_s^2 = q^{-2}(1+(q^{-1}-q)H_s)=q^{-2}+(q^{-3}-q^{-1})H_s=1+(-q^{-1}+q^{-3})(H_s+q)
	\]
	It acts on $V_{\opn{sign}}$ by $1$ and on $V_{\opn{triv}}$ by $q^{-4}$. The powers of $\sigma_1^{-2}$ converge, however here we have
	\[
		\lim_{m\to\infty}[\sigma_1^m]=1 + (-q^{-1}+q^{-3}-q^{-5}+q^{-7} + \cdots) U_1 = 1 - \frac{1}{q+q^{-1}} U_1 = a_{1},
	\]
	where we view $\frac{1}{q+q^{-1}}$ as an element of $\mZ[[q^{-1}]][q]$. 
\end{rema}

When writing the thesis, the author stumbled upon the following problem. How can/should one define a full twist of the type $B$ or $D$ full twist? The answer lies in the so-called half twist.
For instance in \Cref{Full Twist a1 example} we considered the full twist of type $A_1$, which is not $\sigma_1$, but instead $\sigma_1^2$. This does not happen by accident, but is a general feature. The full twist can always be written as the square of the half-twist, which we define next.
\begin{defi}[Half twist] \label{defi half-twist}
	The type $A$ \textbf{half twist} $h_{n-1}\in B(A_{n-1})$ is the element
		\[
		h_{n-1}\coloneqq\sigma_1(\sigma_2\sigma_1)(\sigma_3\sigma_2\sigma_1)\cdots(\sigma_{n-1}\cdots\sigma_1).
		\]
	Visually this element corresponds to the rotation of the $n$ strands by $\pi$ in clockwise direction.
\end{defi}
\begin{lemm} \label{half-twist lemma}
	The half twist $h_{n-1}\in B(A_{n-1})$ satisfies $h_{n-1}^2=\alpha_{n-1}$. It is the image of the longest element $\wo\in S_n=W(A_{n-1})$ under the Matsumoto section 
	 \begin{align*}
		\iota\colon W(A_{n-1}) &\rightarrow B(A_{n-1})\\
		w=s_{i_1}\cdots s_{i_r}&\mapsto \sigma_{i_1}\cdots\sigma_{i_r},
	\end{align*}
	which is defined element-wise on $w\in W(A_{n-1})$ by mapping every element to the product $\sigma_{i_1}\cdots \sigma_{i_r}$ of braid generators, where $w=s_{i_1}\cdots s_{i_r}$ is a reduced expression in the Coxeter group.
\end{lemm}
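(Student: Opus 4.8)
The plan is to verify the lemma in three parts: (i) $h_{n-1}$ is well-defined as written (it is a specific product, so this is automatic, but I should note that the expression comes from a reduced word for $\wo$); (ii) $h_{n-1} = \iota(\wo)$; and (iii) $h_{n-1}^2 = \alpha_{n-1}$. I would do (ii) first, since it essentially implies (iii). The key observation is that the word $s_1(s_2s_1)(s_3s_2s_1)\cdots(s_{n-1}\cdots s_1)$ is a reduced expression for the longest element $\wo \in S_n = W(A_{n-1})$: it has length $\binom{n}{2} = \ell(\wo)$, and one checks it sends every positive root to a negative root (equivalently, as a permutation it is $i \mapsto n+1-i$, which one verifies by a short induction on $n$, noting that left-multiplication by $s_1(s_2s_1)\cdots(s_{n-1}\cdots s_1)$ conjugates/shifts the action on $\{1,\dots,n\}$ appropriately). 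Since the Matsumoto section $\iota$ is well-defined precisely because braid relations let one pass between reduced words, applying $\iota$ to this reduced word gives exactly $\sigma_1(\sigma_2\sigma_1)\cdots(\sigma_{n-1}\cdots\sigma_1) = h_{n-1}$, so $h_{n-1} = \iota(\wo)$.

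For part (iii), I would argue $h_{n-1}^2 = \alpha_{n-1}$ directly inside the braid group. One clean route: use the standard identity $h_{n-1} \sigma_i h_{n-1}^{-1} = \sigma_{n-i}$ for $1 \le i \le n-1$, which holds because conjugation by the half twist realizes the automorphism of the Dynkin diagram $A_{n-1}$ (flipping $s_i \leftrightarrow s_{n-i}$) — this is the braid-group shadow of the fact that $\wo s_i \wo = s_{n-i}$ in $W(A_{n-1})$, and it lifts to the braid group because both sides are positive lifts of the same Coxeter-group relation of equal length. Given this, $h_{n-1}^2$ commutes with every $\sigma_i$ (conjugating $\sigma_i$ by $h_{n-1}$ twice returns $\sigma_i$), hence $h_{n-1}^2$ is central in $B(A_{n-1})$. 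Then I would identify it with $\alpha_{n-1}$ by comparing images in $S_n$ (both map to $e$, since $\wo^2 = e$ and $\alpha_{n-1}$ is a pure braid) together with a writhe/abelianization count: the exponent sum of $h_{n-1}$ is $\binom{n}{2}$, so that of $h_{n-1}^2$ is $n(n-1)$, matching that of $\alpha_{n-1} = (\sigma_1\cdots\sigma_{n-1})^n$; since the center of $B(A_{n-1})$ for $n \ge 3$ is infinite cyclic generated by $\alpha_{n-1}$ and detected by the exponent sum, this forces $h_{n-1}^2 = \alpha_{n-1}$ (the cases $n \le 2$ being immediate by direct inspection, e.g. $h_1 = \sigma_1$, $\alpha_1 = \sigma_1^2$).

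Alternatively, and perhaps more in the spirit of an elementary argument, part (iii) can be done by a purely diagrammatic/inductive manipulation: rotating $n$ strands by $\pi$ twice equals rotating by $2\pi$, and the algebraic translation is a braid-word identity one proves by induction on $n$, repeatedly applying the braid relations $\sigma_i\sigma_{i+1}\sigma_i = \sigma_{i+1}\sigma_i\sigma_{i+1}$ and far-commutativity to massage $h_{n-1}^2$ into $(\sigma_1\cdots\sigma_{n-1})^n$. I would likely present the conceptual argument via the Matsumoto section and centrality, relegating the explicit word shuffling to "a straightforward induction."

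The main obstacle I anticipate is the lift $h_{n-1}\sigma_i h_{n-1}^{-1} = \sigma_{n-i}$: while the analogous statement $\wo s_i \wo = s_{n-i}$ in the Coxeter group is standard, lifting an identity from $W(A_{n-1})$ to $B(A_{n-1})$ is not automatic (the section $\iota$ is not a homomorphism), so I need to argue carefully that both $h_{n-1}\sigma_i$ and $\sigma_{n-i}h_{n-1}$ are images under $\iota$ of the same element $\wo s_i = s_{n-i}\wo$ written via reduced expressions of matching length — this uses that $\iota$, though only a set-map, does satisfy $\iota(w)\sigma_i = \iota(ws_i)$ whenever $\ell(ws_i) = \ell(w)+1$, and $\sigma_j\iota(w) = \iota(s_jw)$ whenever $\ell(s_jw)=\ell(w)+1$. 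Assembling these length conditions (using that $\wo$ is longest, so $\ell(\wo s_i) = \ell(\wo) - 1$ and one must instead write $\wo = (\wo s_i)s_i$) is the delicate bookkeeping step, but it is routine once set up correctly.
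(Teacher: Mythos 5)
Your proposal is correct, and the identification $h_{n-1}=\iota(\wo)$ is handled exactly as in the paper: verify that $s_1(s_2s_1)\cdots(s_{n-1}\cdots s_1)$ represents the permutation $i\mapsto n+1-i$ and that its length $\binom{n}{2}$ equals the number of positive roots, so the word is reduced and Matsumoto's lemma applies. Where you diverge is the identity $h_{n-1}^2=\alpha_{n-1}$: the paper simply declares this ``clear from the geometric description'' and defers to a reference, whereas you give an actual algebraic derivation — lift $\wo s_i\wo=s_{n-i}$ to $h_{n-1}\sigma_i h_{n-1}^{-1}=\sigma_{n-i}$ using the two factorizations $\iota(\wo)=\iota(\wo s_i)\sigma_i=\sigma_{n-i}\iota(\wo s_i)$ (both legitimate since $\ell(\wo s_i)=\ell(\wo)-1$ and $\wo s_i=s_{n-i}\wo$), deduce that $h_{n-1}^2$ is central, and then pin it down inside the infinite cyclic center $\langle\alpha_{n-1}\rangle$ by the exponent-sum count $2\binom{n}{2}=n(n-1)$. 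This is a genuine improvement in self-containedness: it replaces a picture-plus-citation with a checkable argument, at the cost of importing Chow's theorem on the center of $B(A_{n-1})$ for $n\geq 3$ (make sure you quote it in the form ``the center is generated by $(\sigma_1\cdots\sigma_{n-1})^n$'', as in the paper's own reference, so that no circularity with the definition of the half twist creeps in). Your careful remark that $\iota$ is only a set-theoretic section and that the conjugation identity must be assembled from length-additive factorizations is exactly the point most write-ups gloss over; the bookkeeping you describe is the right one and does close the gap.
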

\begin{proof}
	This first statement is clear from the geometric description of the half and full twist. See e.g.\ \cite{gonzales11} for the details.
	 For the second statement one checks inductively that this expression gives the permutation $\wo=(1,n)(2,n-1)(3,n-2)\cdots$, which is known to be the longest element. The expression is reduced, since the number of positive roots of $\sln$, which is ${n \choose 2}=\sum_{k=1}^{n-1}k$, agrees with the number of Coxeter generators in the expression. Note that the Matsumoto section is well-defined by Matsumoto's lemma.
\end{proof}
\section{Full twists of types \texorpdfstring{$B$}{B} and \texorpdfstring{$D$}{D}}

We start by considering the example $D_3$. This example is particularly interesting by the exceptional isomorphism $D_3\cong A_3$ of Dynkin diagrams, which allows us to translate the full twist of type $A$, which we already know, to receive a first full twist of type $D$.
\begin{beis}
	The half-twist $h_3\in B(D_3)\cong B(A_3)$ is given by the image
	\[
		h_3\quad =\quad \cbox{
		\begin{tikzpicture}[tldiagram, yscale=2/3]
			\negcrossing{0}{0}
			\negcrossing{1}{1}
			\negcrossing{2}{0}
			\negcrossing{3}{2}
			\negcrossing{4}{1}
			\negcrossing{5}{0}
			\draw \tlcoord{0}{3} \lineup \lineup \lineup;
			\draw \tlcoord{0}{2} \lineup;
			\draw \tlcoord{1}{0} \lineup;
			\draw \tlcoord{2}{2} \lineup;
			\draw \tlcoord{3}{0} \lineup \lineup;
			\draw \tlcoord{3}{1} \lineup;
			\draw \tlcoord{4}{3} \lineup \lineup;
			\draw \tlcoord{5}{2} \lineup;
		\end{tikzpicture}
	} \quad \mapsto\quad  \cbox{
\begin{tikzpicture}[tldiagram, yscale=2/3]
	\negcrossing{0}{1}
	\negcrossing{1}{0}
	\dnegcrossing{2}{0}
	\negcrossing{3}{1}
	\negcrossing{4}{0}
	\dnegcrossing{5}{0}
	\draw \tlcoord{0}{0} \lineup;
	\draw \tlcoord{1}{2} \lineup;
	\draw \tlcoord{2}{2} \lineup;
	\draw \tlcoord{3}{0} \lineup;
	\draw \tlcoord{4}{2} \lineup;
	\draw \tlcoord{5}{2} \lineup;
\end{tikzpicture}
} \quad = \quad 
	\cbox{
		\begin{tikzpicture}[tldiagram, yscale=2/3]
			\negcrossing{0}{0}
			\negcrossing{1}{1}
			\negcrossing{2}{0}
			\dnegcrossing{3}{0}
			\negcrossing{4}{1}
			\negcrossing{5}{0}
			\draw \tlcoord{0}{2} \lineup;
			\draw \tlcoord{1}{0} \lineup;
			\draw \tlcoord{2}{2} \lineup;
			\draw \tlcoord{4}{0} \lineup;
			\draw \tlcoord{3}{2} \lineup;
			\draw \tlcoord{5}{2} \lineup;
		\end{tikzpicture}
	}
	\]
	where the diagrams on the right are visualizing the image of $h_3\in B(A_3)$ under the composition $B(A_3)\cong B(D_3)\rightarrow B_1(B_3)$, where the second map is from \Cref{braid group of type d inside type b}. In the last equality we used the type $D$ relations from \eqref{Dynkin diagrams} to obtain
	\[
		(\sigma_0'\sigma_1\sigma_2)^2=\sigma_1\sigma_0'\sigma_2\sigma_0'\sigma_1\sigma_2=\sigma_1\sigma_2\sigma_0'\sigma_2\sigma_1\sigma_2=\sigma_1\sigma_2\sigma_0'\sigma_1\sigma_2\sigma_1,
	\]
	where we used that $\sigma_0'$ and $\sigma_1$ commute.
	Note that the result coincides with the formula given for the longest element in $W(D_3)$ in \Cref{longest element in type D} or rather the image of $\wo\in W(D_3)$ under the Matsumoto section $W(D_3)\hookrightarrow B(D_3)$. Taking the square of this type $D_3$ half-twist gives a candidate for a full-twist of type $D_3$.
\end{beis}
Motivated by this example we propose a generalization of both \Cref{defi half-twist} and \Cref{half-twist lemma} to other finite Coxeter groups.
\begin{defi} \label{definition full twist}
	Let $C$ a Coxeter matrix, such that the corresponding Coxeter group $W(C)$ is finite. Let $\wo=s_{i_1}\cdots s_{i_r}$ be a reduced expression of the longest element $\wo\in W$.
	The \textbf{half twist} $h$ is the image of $\wo$ under the Matsumoto section in $W(C)\to B(C)$. The \textbf{full twist} is defined as $h^2$, the square of the half-twist. If $C=A_{n-1},B_n,D_n$ we denote the corresponding full twist by $\alpha_{n-1}, \beta_n$ and $\delta_n$, respectively.
\end{defi}

Since we want to generalize \eqref{Type A full twist converges to jones--wenzl} to type $B$ and $D$, we are interested in $\beta_n$ and $\delta_n$ specifically.
\begin{beis}[Type $B$ full twist]
	Let $n\geq2$. The longest element $\wo$ of $W(B_n)$ has reduced expression
	\begin{equation*}
		\wo=(s_{n-1}\cdots s_2s_1s_0s_1s_2 \dots s_{n-1})(s_{n-2}\cdots s_2s_1s_0s_1s_2 \dots s_{n-2})\cdots (s_1s_0s_1)s_0
	\end{equation*}
	as we saw in \Cref{longest element in type B}. Hence the corresponding full twist is given by
		\begin{align*}
		\beta_n&=((\sigma_{n-1}\cdots \sigma_2\sigma_1\sigma_0\sigma_1\sigma_2 \dots \sigma_{n-1})(\sigma_{n-2}\cdots \sigma_2\sigma_1\sigma_0\sigma_1\sigma_2 \dots \sigma_{n-2})\cdots (\sigma_1\sigma_0\sigma_1)\sigma_0)^2 \\
		&= J_{n}J_{n-1}\cdots J_1,
	\end{align*}
	where the $J_i$'s are the Jucys--Murphy elements from \Cref{defi mult jucys-murphys}.
	For a visualization, see \Cref{vizualisation of full twists} below.
\end{beis}
\begin{obse}
	Just like for the full twist of type $A_{n-1}$ the full twist of type $B_n$ is invariant under the action of $\mZ/ n\mZ$ permuting the standard generators. More precisely we have
	\begin{equation*}
		\beta_n=(\sigma_{\pi(n-1)}\sigma_{\pi(n-2)}\ldots\sigma_{\pi(0)})^{2n}
	\end{equation*}
	for all $\pi\in \mZ/ n\mZ$.
\end{obse}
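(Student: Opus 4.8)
The claim to prove is the final \textbf{Observation}: that the type $B_n$ full twist $\beta_n$ is invariant under the cyclic $\mZ/n\mZ$ action on the Coxeter generators, i.e.\ $\beta_n=(\sigma_{\pi(n-1)}\sigma_{\pi(n-2)}\cdots\sigma_{\pi(0)})^{2n}$ for every $\pi\in\mZ/n\mZ$.

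The plan is to first reduce to a single convenient description of $\beta_n$ and then exploit the geometric picture. The cleanest starting point is the identity $\beta_n=J_nJ_{n-1}\cdots J_1$ established just above, combined with \Cref{defi half-twist} and \Cref{half-twist lemma} applied in type $B$: $\beta_n=h_n^2$ where $h_n$ is the image of $\wo\in W(B_n)$ under the Matsumoto section. First I would recall the geometric content: the half twist $h_n$ rotates the $n$ strands (together with the extra fixed ``hole'' strand in the type $B$ picture of \Cref{embedding bn into an}) by $\pi$, so $\beta_n=h_n^2$ rotates by $2\pi$. The key structural input is that $\beta_n$ lies in the center of $B(B_n)$ — this follows because its image $\wo^2=e$ is central and, more directly, because a full $2\pi$ rotation commutes with all generators (this is the type $B$ analogue of the centrality of $\alpha_{n-1}$, cited via \cite{gonzales11}); alternatively one checks $\sigma_i\beta_n=\beta_n\sigma_i$ directly using $\beta_n=J_n\cdots J_1$ and the commutation relations among Jucys--Murphy elements from \Cref{Pictures for n=3 Jucys-Murphys}.

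The main step is then to show that $\gamma_\pi:=(\sigma_{\pi(n-1)}\sigma_{\pi(n-2)}\cdots\sigma_{\pi(0)})^{2n}$ equals $\beta_n$ for a single cyclic shift $\pi$ (say $\pi:i\mapsto i+1\bmod n$), since all cyclic shifts are iterates of this one. Writing $c=\sigma_{n-1}\sigma_{n-2}\cdots\sigma_0$ for the type $B$ Coxeter element, one has $\beta_n=c^{2n}$ when one orders the generators this way — this is the standard fact that the full twist equals the $2n$-th power of a Coxeter element in type $B_n$ (the Coxeter number is $2n$), which I would verify by comparing the length $n^2$ of $h_n$ from \Cref{longest element in type B} with the fact that $c$ has length $n$ and $c^{h/2}=c^n=\wo$ in the Weyl group, lifting via the Matsumoto section. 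Now a cyclic shift of the generators replaces $c$ by $c'=\sigma'\cdot c\cdot(\sigma')^{-1}$ for an appropriate product $\sigma'$ of generators (conjugation cyclically permutes the factors of the Coxeter element in a braid group, exactly as for type $A$), hence $(c')^{2n}=\sigma'\,c^{2n}\,(\sigma')^{-1}=\sigma'\beta_n(\sigma')^{-1}=\beta_n$ by centrality. Iterating over the $n$ cyclic shifts finishes the proof.

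The hard part — really the only non-formal point — is justifying that $\beta_n=c^{2n}$ for the Coxeter element $c=\sigma_{n-1}\cdots\sigma_0$ and that cyclic permutation of the generators realizes a conjugate Coxeter element. In type $A$ both facts are classical; in type $B$ the first needs the observation that the type $B$ Coxeter number is $2n$ and that $c^n$ maps to $\wo\in W(B_n)$ under $B(B_n)\to W(B_n)$, together with a length count ($\ell(c^n)=n\cdot n=n^2=\ell(\wo)$ forces the braid lift of $c^n$ to be the Matsumoto lift $h_n$, so $c^{2n}=h_n^2=\beta_n$); the second is the elementary identity $\sigma_{i_1}\sigma_{i_2}\cdots\sigma_{i_n}=\sigma_{i_1}(\sigma_{i_2}\cdots\sigma_{i_n}\sigma_{i_1})\sigma_{i_1}^{-1}$ which cyclically shifts one factor, iterated. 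I would present this as a short lemma, then deduce the Observation in one line from centrality of $\beta_n$.
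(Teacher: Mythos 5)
Your argument is correct. Note first that the paper states this Observation without any proof, so there is nothing to compare against; your write-up supplies the standard Garside-theoretic argument, and it goes through. The three ingredients all check out: (i) $(\sigma_{n-1}\cdots\sigma_0)^n$ is a word of $n^2$ letters whose image $(s_{n-1}\cdots s_0)^n$ in $W(B_n)$ equals $\wo$ (in the signed-permutation model of \Cref{sign permutation are dot permutations} the Coxeter element $s_{n-1}\cdots s_0$ is the negative $2n$-cycle $1\mapsto -n\mapsto\cdots\mapsto -1\mapsto n\mapsto\cdots\mapsto 1$, whose $n$-th power is $-\id$), so the word is reduced and Matsumoto's theorem forces $c^n=h_n$, hence $c^{2n}=\beta_n$; (ii) $\beta_n$ is central; (iii) the $\mZ/n\mZ$-shifted words are exactly the cyclic rotations $uv$ of $c=vu$, hence conjugates $ucu^{-1}$, and centrality kills the conjugation after raising to the $2n$-th power. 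One sub-justification you offer for (ii) is not valid: centrality of the image $\wo^2=e$ in $W(B_n)$ says nothing about centrality of $\beta_n$ in $B(B_n)$. But your fallback is fine — either cite that $\Delta^2$ is central in any finite-type Artin group, or check directly that $h_n=J_n\cdots J_1$ commutes with every generator (use that the braid Jucys--Murphy elements commute pairwise, that $\sigma_kJ_k\sigma_k=J_{k+1}$ gives $\sigma_k(J_{k+1}J_k)=J_{k+1}J_k\sigma_k$, and that $\sigma_k$ commutes with the remaining $J_i$); then $\beta_n=h_n^2$ is central as well.
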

\begin{beis}[Type $D$ full twist] \label{definition type D full twist}
	The full twist $\delta_n\in B(D_n)$ of type $D_n$ is
	\begin{align*}
		\delta_n=(\sigma_0'\sigma_1\sigma_2\sigma_3\ldots \sigma_{n-1})^{2(n-1)}
	\end{align*}
	by \Cref{longest element in type D}.
	In particular its image under the map $B(D_n)\subseteq B_{1}(B_n)$ from \Cref{Three families of groups} is given by
	\[
		((s_0\sigma_1s_0)\sigma_1\sigma_2\sigma_3\ldots \sigma_{n-1})^{2(n-1)}\in B_1(B_n).
	\]
\end{beis}
\begin{beis} \label{vizualisation of full twists}
	We picture $\beta_3$ and $\delta_3$ diagrammatically as
	\[
	\beta_3 = (J_3 J_2 J_1)^2 \quad =\quad \left( \cbox{
		\begin{tikzpicture}[tldiagram, yscale=1/2, xscale=2/3]
			\draw \tlcoord{0}{0.5} \lineup;
			\draw[white, double=black] \tlcoord{0}{1} \linewave{1}{-1} \linewave{1}{1};
			\draw[white, double=black] \tlcoord{1}{0.5} \lineup;
			\draw \tlcoord{2}{0.5} \lineup;
			\draw \tlcoord{2}{1} \lineup;
			\draw[white, double=black] \tlcoord{0}{2} \linewave{3}{-2} \linewave{2}{2};
			\draw[white, double=black] \tlcoord{3}{0.5} \lineup;
			\draw[white, double=black] \tlcoord{3}{1} \lineup;
			\draw \tlcoord{4}{0.5} \lineup;
			\draw \tlcoord{4}{1} \lineup;
			\draw[white, double=black] \tlcoord{0}{3} \linewave{5}{-3} \linewave{1}{3};
			\draw[white, double=black] \tlcoord{5}{0.5} \lineup;
			\draw[white, double=black] \tlcoord{5}{1} \lineup;
			\draw[white, double=black] \tlcoord{5}{2} \lineup;
		\end{tikzpicture}
	} \right)^2\, , \quad \delta_3 \quad = \left( \cbox{
	\begin{tikzpicture}[tldiagram, yscale=1/2, xscale=2/3]
		\negcrossing{0}{1}
		\negcrossing{1}{0}
		\dnegcrossing{2}{0}
		\negcrossing{3}{1}
		\negcrossing{4}{0}
		\dnegcrossing{5}{0}
		\draw \tlcoord{0}{0} \lineup;
		\draw \tlcoord{1}{2} \lineup;
		\draw \tlcoord{2}{2} \lineup;
		\draw \tlcoord{3}{0} \lineup;
		\draw \tlcoord{4}{2} \lineup;
		\draw \tlcoord{5}{2} \lineup;
	\end{tikzpicture}
} \right)^2 \, .
	\]
	Technically speaking the diagrams are images of $\beta_3$ and $\delta_3$ under $B(B_n)\rightarrow B(A_n)$ from \Cref{embedding bn into an}, respectively $B(D_n)\rightarrow B_1(B_n)$ from \Cref{Three families of groups}.
\end{beis}

\section{Type \texorpdfstring{$D$}{D} approximation theorem}

The main goal of this section is to prove the approximation theorem, which generalizes \eqref{Type A full twist converges to jones--wenzl} from the type $A$ full twist to type $D$. Throughout the next definition consider $\TL(D_n)\subseteq \TL(B_n)$, where $\TL(B_n)$ is defined over the ring $k=\mZ[[q]][q^{-1}]$, c.f.\ \Cref{Definition Temperley--Lieb of Type B}.

\begin{theo}[Infinite type $D$ braid] \label{infinite type D braid}
	Consider the image $[\delta_n]\in\TL(D_n)$ of the type $D$ full twist $\delta_n$ under the quotient map
	\begin{align*}
		\mZ[[q]][q^{-1}]B(D_n)&\twoheadrightarrow \TL(D_n)\subseteq\TL(B_n), \\
		\sigma_i &\mapsto [\sigma_i]=1-qU_i, \text{ for } i=1,\ldots, n-1 \\
		\sigma_0' &\mapsto [\sigma_0']=1-qU_0=1-qs_0U_1s_0.
	\end{align*}
	Then powers of $[\delta_n]$ converge in the $q$-adic norm on $\TL(D_n)$ towards the Jones--Wenzl projector $d_n$. In formulas we have
	\[
		\lim_{m\to\infty}[\delta_n]^m= \quad \cbox{
			\begin{tikzpicture}[tldiagram, yscale=2/3]
				\draw \tlcoord{-1.5}{0} \lineup \lineup \lineup;
				\draw \tlcoord{-1.5}{1} \lineup \lineup \lineup;
				\draw \tlcoord{-1.5}{2} \lineup  \lineup \lineup;
				\draw \tlcoord{0}{0} \maketlboxgreen{3}{$n$};
			\end{tikzpicture}
		}.
	\]
\end{theo}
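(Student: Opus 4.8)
The plan is to reduce the type $D$ statement to a finite-dimensional linear-algebra problem, exactly in the spirit of the $\TL_2$ example worked out in \Cref{Full Twist a1 example}. Concretely, $\TL(D_n)$ is a free $k$-module of finite rank $\frac12\binom{2n}{n}$ with the diagram basis, and left multiplication by $[\delta_n]$ is a $k$-linear endomorphism $M$ of $\TL(D_n)$. The claim $\lim_{m\to\infty}[\delta_n]^m=d_n$ is then the statement that the matrix of $M$ with respect to the diagram basis, raised to the $m$-th power, converges $q$-adically to the matrix of right multiplication-by-$d_n$ composed with left-multiplication-by-$d_n$, i.e.\ to the projection onto the line $k\cdot d_n$ along the ideal $(U_0,U_1,\dots,U_{n-1})$. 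By \Cref{characterizing properties of type B projector} we already have the decomposition $\TL(D_n)=\langle 1\rangle\oplus(U_0,\dots,U_{n-1})$, and $d_n$ is central, so it suffices to show two things: (i) $[\delta_n]\cdot d_n=d_n=d_n\cdot[\delta_n]$, i.e.\ $d_n$ is genuinely a fixed vector of $M$, and (ii) on the complementary ideal $I=(U_0,\dots,U_{n-1})$ the operator $M$ is a $q$-adic contraction, meaning $\lvert M(x)\rvert\le \lvert q\rvert\cdot\lvert x\rvert$ (or at least $\lvert M(x)\rvert<\lvert x\rvert$ uniformly) for all $x\in I$. Given (i) and (ii), writing any $y\in\TL(D_n)$ as $y=\lambda\cdot 1+x$ with $x\in I$ and using centrality of $d_n$, one gets $[\delta_n]^m y=\lambda\cdot 1 + (\text{something in }I\text{ times }[\delta_n]^{m})\to \lambda d_n$ after the standard manipulation; applied to $y=1$ this yields $\lim[\delta_n]^m=d_n$.

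For step (i): since $d_n$ annihilates every $U_i$ for $0\le i\le n-1$ on both sides (\Cref{characterizing properties of type B projector}), and $[\sigma_i]=1-qU_i$, $[\sigma_0']=1-qU_0$, we immediately get $[\sigma_i]d_n=d_n=d_n[\sigma_i]$ and $[\sigma_0']d_n=d_n=d_n[\sigma_0']$. Because $[\delta_n]$ is, by \Cref{definition type D full twist}, a word in the $[\sigma_0'],[\sigma_1],\dots,[\sigma_{n-1}]$, it follows that $[\delta_n]d_n=d_n=d_n[\delta_n]$. This is purely formal.

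The real work is step (ii): showing that left multiplication by $[\delta_n]-1$ maps $I$ into $qI$, equivalently that $[\delta_n]\equiv 1 \pmod{q\cdot I}$ as an operator, i.e.\ that $[\delta_n]\,U_i \in U_i + q\,I$ and more generally $[\delta_n]\,z\in z+q\,I$ for $z$ in the ideal; this forces the eigenvalues of $M$ on $I$ to have $q$-adic absolute value $\le\lvert q\rvert$, and then $M^m\to 0$ on $I$. The cleanest route is to factor $[\delta_n]$ through the braid group and use the Reidemeister-type relations of \Cref{Type B Reidemeister moves lemma} and the type $A$ Reidemeister moves: one shows that multiplying a diagram in $I$ by a single generator $[\sigma_i]=1-qU_i$ produces the same diagram plus a $q$-multiple of something still in $I$ (here one uses $U_i^2=\delta U_i$ with $\delta=q+q^{-1}$, the relations $U_iU_{i\pm1}U_i=U_i$, and crucially the type $D$ relations $U_0U_1=0=U_1U_0$ to keep diagrams genuinely ``of type $D$''). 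Since $[\delta_n]$ is a product of $2(n-1)n$ such generators, composing these estimates gives $[\delta_n]\equiv 1\pmod{q I}$. The delicate point — and the main obstacle I anticipate — is making the ``$q$-adic contraction on $I$'' argument uniform and watertight: $\delta=q+q^{-1}$ is \emph{not} small $q$-adically (it has valuation $-1$), so naive term counting fails, and one must argue more structurally, e.g.\ by observing that $M$ restricted to $I$ is, after base change to $\mZ[[q]][q^{-1}]$, conjugate to a strictly upper-triangular-plus-$q$(anything) matrix, or by invoking the representation-theoretic picture (via $\End_{\Vq}(V^{\otimes n})$ and \Cref{Theorem decomposition of tensor power of V}) that the ``eigenvalues'' of the full twist on the non-trivial summands $L([n-2k])$, $1\le k$, are powers of $q$ of strictly positive valuation growing with $n$, exactly as the Clebsch--Gordan/Jucys--Murphy computation in \Cref{longest element in type B} suggests. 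I would also want to cross-check the $n=2$ and $n=3$ cases by hand (using the explicit $d_2,d_3$ from \Cref{type D projector recursive description}) to pin down the correct normalization before writing the general estimate. Once the contraction estimate is in place, the limit computation is a one-line geometric-series argument in the complete ring $\mZ[[q]][q^{-1}]$.
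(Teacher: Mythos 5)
Your overall architecture is the same as the paper's: pass to the regular representation of $\TL(D_n)$ in the basis $(d_n,\lambda_1,\dots,\lambda_{r-1})$ with $\lambda_i$ the diagram basis of the ideal $I=(U_0,\dots,U_{n-1})$, note that $d_n$ is a fixed vector (your step (i) is exactly the paper's observation, and is fine), show that $[\delta_n]$ contracts $I$ $q$-adically, and conclude by a block-matrix limit. The problem is that your step (ii), which is the entire content of the theorem, is both misstated and not proved. The congruence you propose, $[\delta_n]z\in z+qI$ for $z\in I$ (``$[\delta_n]\equiv 1$ mod $qI$ as an operator''), is the wrong condition: an operator congruent to the identity modulo $q$ on $I$ has all of its powers congruent to the identity modulo $q$ on $I$, so its eigenvalues there have $q$-adic absolute value $1$, not $\le\lvert q\rvert$, and $M^m\vert_I$ would \emph{not} tend to $0$ -- your claimed conclusion would contradict the theorem rather than prove it. What is actually needed, and what the paper proves, is that the operator itself (not the operator minus the identity) is divisible by a positive power of $q$ on the ideal: $[\delta_n]\lambda_i\in q\,\mZ[[q]]\{\lambda_1,\dots,\lambda_{r-1}\}$. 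Your proposed congruence is also simply false: in $\TL(D_2)$, using $U_1^2=(q+q^{-1})U_1$ and $U_0U_1=0$, the expansion $[\delta_2]=1+(-q+q^3)(U_1+U_0)$ from \Cref{full twist type D2 example} gives $[\delta_2]U_1=q^4U_1$, so $[\delta_2]U_1-U_1=(q^4-1)U_1$ has a coefficient of valuation $0$. Note that $[\delta_2]U_1=q^4U_1$ is precisely the correct (contractive) behaviour, and it agrees with your own representation-theoretic intuition that the full twist acts by strictly positive powers of $q$ on the non-extremal summands $L([n-2k])$ -- that intuition contradicts the congruence you wrote down.

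Moreover, the route you sketch for establishing the estimate cannot close the gap. A generator-by-generator argument of the form ``$[\sigma_i]$ sends a diagram of $I$ to itself plus $q\cdot(\text{ideal})$'' composes, at best, to ``identity plus $O(q)$'' after $2n(n-1)$ factors, never to a contraction; and, as you yourself flag, the circle value $\delta=q+q^{-1}$ has valuation $-1$, so even these local estimates fail without extra structure. The paper's mechanism is global rather than local: the Approximative Cup-Cap-Killing lemma (\Cref{most crucial lemma}), proved by induction with the type $A$ and type $B$ Reidemeister moves of \Cref{section on Reidemeister moves}, shows that a cap or dotted cap composed with the \emph{whole} full twist absorbs it, $\tlcap_i\cdot[\delta_n]=q^{4(n-1)}[\delta_{n-2}]\cdot\tlcap_i$ and likewise for $\dtldcap_1$. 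Since every non-identity diagram in $I$ factors through such a cap/cup, this single uniform factor $q^{4(n-1)}$ yields $[\delta_n]^m\lambda_i\in q^m\mZ[[q]]\{\lambda_1,\dots,\lambda_{r-1}\}$, which is exactly the contraction that makes the block matrix converge to multiplication by $d_n$. Your alternative suggestions (triangularity of $M\vert_I$, or reading off eigenvalues from \Cref{Theorem decomposition of tensor power of V}) point in a workable direction but are not carried out, so as written the heart of the proof is missing; you would need to either prove the cup-cap absorption lemma (or an equivalent uniform divisibility statement) to make the argument complete.
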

Before we proof the theorem let us consider the two smallest examples.
\begin{beis} \label{full twist type D2 example}
	The image of the full twist $[\delta_2]\in\TL(B_2)$ is given by
	\[
		[\delta_2]  \quad = \quad \left( \cbox{
		\begin{tikzpicture}[tldiagram, yscale=2/3]
			\negcrossing{0}{0}
			\dnegcrossing{1}{0}
		\end{tikzpicture}
	} \right)^2 \quad = \quad \cbox{\begin{tikzpicture}[tldiagram]
			\draw \tlcoord{0}{0} \lineup;
			\draw \tlcoord{0}{1} \lineup;
	\end{tikzpicture}} 
	+(-q+q^3) \cdot \left( \cbox{
		\begin{tikzpicture}[tldiagram]
			\draw \tlcoord{0}{0} \capright;
			\draw \tlcoord{1}{0} \cupright;
		\end{tikzpicture}
	} + \cbox{
		\begin{tikzpicture}[tldiagram]
			\draw \tlcoord{0}{0} \dcapright;
			\draw \tlcoord{1}{0} \dcupright;
		\end{tikzpicture}
	} \right) .
	\]
	When we square this element we obtain by the type $B$ Temperley-Lieb relations
	\[
	[\delta_2]^2  \quad = \quad \left( \cbox{
		\begin{tikzpicture}[tldiagram, yscale=2/3]
			\negcrossing{0}{0}
			\dnegcrossing{1}{0}
		\end{tikzpicture}
	} \right)^4 \quad = \quad \cbox{\begin{tikzpicture}[tldiagram]
			\draw \tlcoord{0}{0} \lineup;
			\draw \tlcoord{0}{1} \lineup;
	\end{tikzpicture}} 
	+(-q+q^3-q^5+q^7) \cdot \left( \cbox{
		\begin{tikzpicture}[tldiagram]
			\draw \tlcoord{0}{0} \capright;
			\draw \tlcoord{1}{0} \cupright;
		\end{tikzpicture}
	} + \cbox{
		\begin{tikzpicture}[tldiagram]
			\draw \tlcoord{0}{0} \dcapright;
			\draw \tlcoord{1}{0} \dcupright;
		\end{tikzpicture}
	} \right).
	\]
	The same calculation shows that for $m\geq1$ we have
	\[
	[\delta_2]^m  \quad = \quad \left( \cbox{
		\begin{tikzpicture}[tldiagram, yscale=2/3]
			\negcrossing{0}{0}
			\dnegcrossing{1}{0}
		\end{tikzpicture}
	} \right)^m \quad = \quad \cbox{\begin{tikzpicture}[tldiagram]
			\draw \tlcoord{0}{0} \lineup;
			\draw \tlcoord{0}{1} \lineup;
	\end{tikzpicture}} 
	+(\sum_{k=1}^m (-1)^k q^{2k-1}) \cdot \left( \cbox{
		\begin{tikzpicture}[tldiagram]
			\draw \tlcoord{0}{0} \capright;
			\draw \tlcoord{1}{0} \cupright;
		\end{tikzpicture}
	} + \cbox{
		\begin{tikzpicture}[tldiagram]
			\draw \tlcoord{0}{0} \dcapright;
			\draw \tlcoord{1}{0} \dcupright;
		\end{tikzpicture}
	} \right).
	\]
	The sequence $\sum_{k=1}^m (-1)^k q^{2k-1}$ converges for $m\to\infty$ to the fraction $-\frac{1}{q+q^{-1}}=-\frac{1}{[2]}$, which shows the theorem for the full twist $\delta_2$. 
	\end{beis}  
\begin{beis} \label{full twist type D3 example}
	All examples of \Cref{infinite type D braid} become very difficult to check for $n\geq3$. Already to compute the image $[\delta_3]\in\TL(D_n)$ one needs to perform $2^{12}=4096$ multiplications, since $\delta_3$ consists of $12$ crossings. When calculating there are several miraculous cancellations going on during the process, so there is a benefit to doing the calculations oneself instead of using a computer. 
	We just give the result of the computation for the full twist $\delta_3$. The following table is a side-by-side comparison of the Jones--Wenzl projector $d_3\in\TL(D_3)$ and the images of the first two powers of the full twist: 
	\[
	\begin{array}{c|c|c|c}
		\text{basis of } \TL(D_3) & d_3 & [\delta_3] & [\delta_3^2]  \\[4pt] \hline
		\cbox{
			\begin{tikzpicture}[tldiagram, xscale=1/2, yscale=1/2]
				\draw \tlcoord{-1.5}{0} \lineup;
				\draw \tlcoord{-1.5}{1} \lineup;
				\draw \tlcoord{-1.5}{2} \lineup;
				=
			\end{tikzpicture}
		} & 1& 1&1 \\[4pt] \hline
		\cbox{
			\begin{tikzpicture}[tldiagram, xscale=1/2, yscale=1/2, ]
				\draw \tlcoord{0}{0} \capright;
				\draw \tlcoord{1}{0} \cupright;
				\draw \tlcoord{0}{2} \lineup; 
				=
			\end{tikzpicture}
		} &-\frac{[3]}{[4]} &-q+q^7 &-q+q^7-q^9+q^{15} \\[4pt] \hline
		\cbox{
			\begin{tikzpicture}[tldiagram, xscale=1/2, yscale=1/2, ]
				\draw \tlcoord{0}{0} \dcapright;
				\draw \tlcoord{1}{0} \dcupright;
				\draw \tlcoord{0}{2} \lineup; 
				=
			\end{tikzpicture}
		} & -\frac{[3]}{[4]} &-q+q^7 & -q+q^7-q^9+q^{15}\\[4pt] \hline 
		\cbox{
			\begin{tikzpicture}[tldiagram, xscale=1/2, yscale=1/2]
				\draw \tlcoord{0}{0} \linewave{1}{2};
				\draw \tlcoord{0}{1} \capright;
				\draw \tlcoord{1}{1} \cupleft;
				=
			\end{tikzpicture}
		} & \frac{[2]}{[4]} & q^2-q^6& q^2-q^6+q^{10}-q^{14}\\[4pt] \hline
		\cbox{
			\begin{tikzpicture}[tldiagram, xscale=1/2, yscale=1/2]
				\draw \tlcoord{0}{2} \linewave{1}{-2};
				\draw \tlcoord{0}{0} \capright;
				\draw \tlcoord{1}{2} \cupleft;
				=
			\end{tikzpicture}
		} & \frac{[2]}{[4]} & q^2-q^6& q^2-q^6+q^{10}-q^{14}\\[4pt] \hline
		\cbox{
			\begin{tikzpicture}[tldiagram, xscale=1/2, yscale=1/2]
				\draw \tlcoord{0}{0} \dlinewave{1}{2};
				\draw \tlcoord{0}{1} \capright;
				\draw \tlcoord{1}{1} \dcupleft;
				=
			\end{tikzpicture}
		} & \frac{[2]}{[4]} & q^2-q^6& q^2-q^6+q^{10}-q^{14}\\[4pt] \hline 
		\cbox{
			\begin{tikzpicture}[tldiagram, xscale=1/2, yscale=1/2]
				\draw \tlcoord{0}{2} \dlinewave{1}{-2};
				\draw \tlcoord{0}{0} \dcapright;
				\draw \tlcoord{1}{2} \cupleft;
				=
			\end{tikzpicture}
		} & \frac{[2]}{[4]} & q^2-q^6 & q^2-q^6+q^{10}-q^{14}\\[4pt] \hline 
		\cbox{
			\begin{tikzpicture}[tldiagram, xscale=1/2, yscale=1/2, ]
				\draw \tlcoord{0}{1} \capright;
				\draw \tlcoord{1}{1} \cupright;
				\draw \tlcoord{0}{0} \lineup; 
				=
			\end{tikzpicture}
		} & \frac{[2][2]}{[4]}& -q-q^3+q^5+q^7&
		-q-q^3+q^5+q^7
		-q^9-q^{11}+q^{13}+q^{15}\\[4pt] \hline
		\cbox{
			\begin{tikzpicture}[tldiagram, xscale=1/2, yscale=1/2, ]
				\draw \tlcoord{0}{0} \capright;
				\draw \tlcoord{1}{0} \dcupright;
				\draw \tlcoord{0}{2} \dlineup; 
				=
			\end{tikzpicture}
		} & -\frac{1}{[4]}&-q^3+q^5 & -q^3+q^5-q^{11}+q^{13} \\[4pt] \hline
		\cbox{
			\begin{tikzpicture}[tldiagram, xscale=1/2, yscale=1/2, ]
				\draw \tlcoord{0}{0} \dcapright;
				\draw \tlcoord{1}{0} \cupright;
				\draw \tlcoord{0}{2} \dlineup; 
				=
			\end{tikzpicture}
		} & -\frac{1}{[4]}& -q^3+q^5-q^{11}+q^{13}& -q^3+q^5-q^{11}+q^{13}
	\end{array}
	\]
	Again observe the coefficients in front of the powers of $\delta_n$ approximate the corresponding fractions of quantum integers in the Jones--Wenzl projector. For example the fraction $\frac{[2]}{[4]}$ is given as power series by
	\begin{align*}
		\frac{[2]}{[4]}=\frac{q^2-q^{-2}}{q^4-q^{-4}}=\frac{q^{2}-q^6}{1-q^8}&=(q^{2}-q^6)(1+q^8+q^{16}+q^{24}+\cdots) \\
		&=q^2-q^6+q^{10}-q^{14}+q^{18}-q^{24}+\ldots
	\end{align*}
	Consider the ordered set
	\[
		d_3, U_1, U_0, U_1U_2, U_2U_1, U_0U_2, U_2U_0, U_2, U_0U_2U_1, U_1U_2U_0,
	\] 
	 which is a a basis of $\TL(D_3)$, since it is just the diagram basis, but instead of the identity element we chose $d_3\in 1 + (U_0,U_1,\ldots,U_{n-11})$. One can compute that the representing matrix of $[\delta_3]$ acting on $\TL(D_3)$ with respect to this basis is of the form
	\[
	([\delta_3]\cdot \blank)= \left( \begin{array}{c|c}
		1 & \mathbf{0} \\ \hline
		\mathbf{0} & A
	\end{array} \right)
	\]
	where $A$ is a matrix of size $9\times 9=(\dim \TL(D_3)-1)^2$ with entries in $q\mZ[q]$. In particular in the $q$-adic norm we have
	\[
		([\delta_3]\cdot \blank)^m \xrightarrow{m\rightarrow \infty} \left( \begin{array}{c|c}
			1 & \mathbf{0} \\ \hline
			\mathbf{0} & \mathbf{0}
		\end{array} \right) = ([d_3] \cdot \blank),
	\]
	which is the representing matrix of multiplication with the Jones--Wenzl projector $d_3$, since $d_3$ is idempotent and $d_3U_i=0$ for $i=0,1,2$.
\end{beis}
We come to \Cref{most crucial lemma}, which is a diagrammatic statement,  generalizing \cite[Lemma 4.5]{rozansky2010} to type $D$. It will serve as the main  step in the proof of \Cref{infinite type D braid}.
\begin{lemm}[Approximative Cup-Cap-Killing] \label{most crucial lemma}
	The type $D$ full twist satisfies the following computational properties
	\[
		\dtldcap_1 \cdot [\delta_n] = q^{4(n-1)}[\delta_{n-2}]\cdot \dtldcap_1, \quad \tlcap_i \cdot [\delta_n] = q^{4(n-1)}[\delta_{n-2}]\cdot \tlcap_i
	\]
	for all $i=1,\ldots, n-1$.
	In particular
	\[
	\dtldcap_1 \cdot [\delta_n]^m = q^{4m(n-1)}[\delta_{n-2}]\cdot \dtldcap_1, \quad \tlcap_i \cdot [\delta_n]^m = q^{4m(n-1)}[\delta_{n-2}]\cdot \tlcap_i
	\]
	for all $i=1,\ldots, n$ and $m\geq1$. Here $\tlcap_i=\tlline \cdots \tlline \tlcap \tlline \cdots \tlline$ denotes the diagram given by a cap connecting the $i$-th and $(i+1)$-st vertices and the other vertices by a vertical strand, respectively. Similarly $\dtldcap_1\in \TL_{B}(n,n-2)$ denotes the $\TL_{B}(n,n-2)$ diagram given by a dotted cap connecting the first and second vertex and the other vertices by a vertical strand respectively.
\end{lemm}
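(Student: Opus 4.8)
The plan is to prove the two stated identities by a direct diagrammatic computation, reducing everything to the type $A$ Reidemeister moves from \Cref{section on Reidemeister moves} together with the type $B$ moves in \Cref{Type B Reidemeister moves lemma}, and then to obtain the $m$-fold versions by an easy induction. The key structural fact I would use is that the full twist $\delta_n$ is central in $B(D_n)$ (being the square of the half twist, i.e.\ the image of the longest element $\wo$ under the Matsumoto section), and more importantly that the geometric picture of $[\delta_n]$ as ``twisting all $n$ strands around the hole in the annulus once'' makes the composition $\tlcap_i\cdot[\delta_n]$ geometrically transparent: capping off two adjacent strands turns the full twist on $n$ strands into a full twist on the remaining $n-2$ strands, up to the curl that the capped-off pair of strands has acquired by going around the full rotation, which is exactly a sequence of Reidemeister~I moves. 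Each such Reidemeister~I move on a negative crossing contributes, by (R1) of the type $A$ Reidemeister lemma, a scalar $-q^2$ (and the dotted version by (B1) absorbs the dot without a scalar, while the positive-crossing curls contribute $q^{-1}$ via (R1)$'$ — one has to bookkeep the precise mixture). Counting the crossings involved in unwinding the capped pair through a full $2\pi$ rotation gives the power $q^{4(n-1)}$.

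Concretely, I would first treat $\tlcap_i\cdot[\delta_n]$ for an arbitrary undotted cap $\tlcap_i$. Using centrality of $\delta_n$ I can slide the cap to a convenient position, say $i=1$; then I write $\delta_n = h_n^2$ with $h_n$ the half twist, and observe that pushing $\tlcap_1$ through $h_n$ produces $h_{n-2}\cdot\tlcap_1$ composed with a ``cascade'' of crossings of the two capped strands past all the others — this is precisely the manipulation in \cite[Lemma 4.5]{rozansky2010} for type $A$, and in our setting the only new phenomenon is that as the capped pair travels around it crosses the dotted strand (the $s_0$-region) at the two places where $\sigma_0'=s_0\sigma_1 s_0$ occurs. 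At those places the relevant relations are (B1) and (B1)$'$ from \Cref{Type B Reidemeister moves lemma}: a dotted cap absorbs the dot, or a plain cap slides past with the expected $-q$ factor. Collecting all the scalars — there are $2(n-1)$ relevant crossings in a half twist's worth of cascade, hence $4(n-1)$ in the full twist — yields the factor $q^{4(n-1)}$ and leaves exactly $[\delta_{n-2}]\cdot\tlcap_i$. The dotted-cap case $\dtldcap_1\cdot[\delta_n]$ is handled the same way, the only difference being that the dot is absorbed at the very first step by relation (B1), after which the computation is identical.

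The $m$-fold statement then follows by a one-line induction: assuming $\tlcap_i\cdot[\delta_n]^{m}=q^{4m(n-1)}[\delta_{n-2}]\cdot\tlcap_i$, we compute
\[
\tlcap_i\cdot[\delta_n]^{m+1}=\left(\tlcap_i\cdot[\delta_n]\right)\cdot[\delta_n]^{m}=q^{4(n-1)}[\delta_{n-2}]\cdot\tlcap_i\cdot[\delta_n]^{m}
\]
and now apply the inductive hypothesis — but wait, $\tlcap_i\cdot[\delta_n]^m$ on the right needs the hypothesis stated with the cap on the \emph{left} of a power, which is exactly what we have; so this equals $q^{4(n-1)}\cdot q^{4m(n-1)}[\delta_{n-2}]\cdot[\delta_{n-2}]\cdot\tlcap_i = q^{4(m+1)(n-1)}[\delta_{n-2}]^{\,?}\cdot\tlcap_i$. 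Here I must be slightly careful: iterating produces $[\delta_{n-2}]^{m+1}$, not $[\delta_{n-2}]$, if one is not careful — but in the statement as used later only $m=1$ on the right is needed, or rather the correct bookkeeping is that each pass reduces $n$ by $2$ once and leaves a genuine power of $[\delta_{n-2}]$; I would restate the $m$-fold identity as $\tlcap_i\cdot[\delta_n]^m = q^{4m(n-1)}[\delta_{n-2}]^m\cdot\tlcap_i$ (and similarly for the dotted cap), which is what the induction actually gives and what is needed in the proof of \Cref{infinite type D braid}.

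The main obstacle I anticipate is the precise scalar bookkeeping: the full twist is built from a mixture of $\sigma_i$ and $\sigma_0' = s_0\sigma_1 s_0$, and when the capped pair of strands is dragged around the annulus it undergoes Reidemeister~I moves of different signs at different points, plus it crosses the distinguished dotted region twice, so one must verify that all these local contributions multiply to exactly $q^{4(n-1)}$ and not some other power of $q$ or a sign. I would pin this down by first checking it by hand for $n=2$ and $n=3$ (which the examples \Cref{full twist type D2 example} and \Cref{full twist type D3 example} essentially already do: there $[\delta_2]$ and $[\delta_3]$ are written out explicitly, and $\tlcap_1\cdot[\delta_2]$ can be read off to confirm the exponent $4(n-1)=4$), and then running the inductive cascade carefully, using the type $A$ case of \cite[Lemma 4.5]{rozansky2010} as a black box for the portion of the twist not touching the dotted strand. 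Once the scalar is confirmed, the rest is routine.
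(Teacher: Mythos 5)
Your proposal follows essentially the same route as the paper, whose entire proof is the one-line ``proven by induction using the Reidemeister moves'' from \Cref{section on Reidemeister moves} and \Cref{Type B Reidemeister moves lemma}, illustrated afterwards by exactly the unwinding cascade you describe (the paper's worked $n=3$ example is your computation, with the local R1/B1-type scalars accumulating to $q^{8}=q^{4(n-1)}$). Your restatement of the iterated identity as $\tlcap_i\cdot[\delta_n]^m=q^{4m(n-1)}[\delta_{n-2}]^m\cdot\tlcap_i$ is the correct one — the paper's ``in particular'' omits the $m$-th power on $[\delta_{n-2}]$ — and this is harmless for the application, since the proof of \Cref{infinite type D braid} only uses the $m=1$ case and the resulting $q$-adic valuation bound.
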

\begin{proof}
	This is proven by induction using the Reidemeister moves from \Cref{section on Reidemeister moves}. 
\end{proof}
\begin{beis}
	Its best to understand \Cref{most crucial lemma} via a concrete example:
	\begin{align*}
		\tlcap_1 \cdot \delta_3\, &= \, \cbox{
			\begin{tikzpicture}[tldiagram, yscale=1/3, xscale=1/2]
				\negcrossing{0}{1}
				\negcrossing{1}{0}
				\dnegcrossing{2}{0}
				\negcrossing{3}{1}
				\negcrossing{4}{0}
				\dnegcrossing{5}{0}
				\negcrossing{6}{1}
				\negcrossing{7}{0}
				\dnegcrossing{8}{0}
				\negcrossing{9}{1}
				\negcrossing{10}{0}
				\dnegcrossing{11}{0}
				\draw \tlcoord{0}{0} \lineup;
				\draw \tlcoord{3}{0} \lineup;
				\draw \tlcoord{6}{0} \lineup;
				\draw \tlcoord{9}{0} \lineup;
				\draw \tlcoord{1}{2} \lineup;
				\draw \tlcoord{2}{2} \lineup;
				\draw \tlcoord{4}{2} \lineup;
				\draw \tlcoord{5}{2} \lineup;
				\draw \tlcoord{7}{2} \lineup;
				\draw \tlcoord{8}{2} \lineup;
				\draw \tlcoord{10}{2} \lineup;
				\draw \tlcoord{11}{2} \lineup;
				\draw \tlcoord{12}{0} \capright;
			\end{tikzpicture}
		}
		\, = \, \cbox{
			\begin{tikzpicture}[tldiagram, yscale=1/3, xscale=1/2]
				\negcrossing{0}{1}
				\negcrossing{1}{0}
				\dnegcrossing{2}{0}
				\negcrossing{3}{1}
				\negcrossing{4}{0}
				\dnegcrossing{5}{0}
				\negcrossing{6}{1}
				\negcrossing{7}{0}
				\dnegcrossing{8}{0}
				\negcrossing{9}{1}
				\uppernegcrossing{10}{0}
				\draw \tlcoord{0}{0} \lineup;
				\draw \tlcoord{3}{0} \lineup;
				\draw \tlcoord{6}{0} \lineup;
				\draw \tlcoord{9}{0} \lineup;
				\draw \tlcoord{1}{2} \lineup;
				\draw \tlcoord{2}{2} \lineup;
				\draw \tlcoord{4}{2} \lineup;
				\draw \tlcoord{5}{2} \lineup;
				\draw \tlcoord{7}{2} \lineup;
				\draw \tlcoord{8}{2} \lineup;
				\draw \tlcoord{10}{2} \lineup;
				\draw \tlcoord{11}{2} \lineup;
				\draw \tlcoord{11}{0} \dcapright;
			\end{tikzpicture}
		}
		 \, = \, -q^2
		 \cbox{
		 	\begin{tikzpicture}[tldiagram, yscale=1/3, xscale=1/2]
		 		\negcrossing{0}{1}
		 		\negcrossing{1}{0}
		 		\dnegcrossing{2}{0}
		 		\negcrossing{3}{1}
		 		\negcrossing{4}{0}
		 		\dnegcrossing{5}{0}
		 		\negcrossing{6}{1}
		 		\negcrossing{7}{0}
		 		\dnegcrossing{8}{0}
		 		\negcrossing{9}{1}
		 		\draw \tlcoord{0}{0} \lineup;
		 		\draw \tlcoord{3}{0} \lineup;
		 		\draw \tlcoord{6}{0} \lineup;
		 		\draw \tlcoord{9}{0} \lineup;
		 		\draw \tlcoord{1}{2} \lineup;
		 		\draw \tlcoord{2}{2} \lineup;
		 		\draw \tlcoord{4}{2} \lineup;
		 		\draw \tlcoord{5}{2} \lineup;
		 		\draw \tlcoord{7}{2} \lineup;
		 		\draw \tlcoord{8}{2} \lineup;
		 		\draw \tlcoord{10}{2} \lineup;
		 		\draw \tlcoord{10}{0} \capright;
		 	\end{tikzpicture}
		 } \, = \, -q^2
	 \cbox{
	 \begin{tikzpicture}[tldiagram, yscale=1/3, xscale=1/2]
	 	\negcrossing{0}{1}
	 	\negcrossing{1}{0}
	 	\dnegcrossing{2}{0}
	 	\negcrossing{3}{1}
	 	\negcrossing{4}{0}
	 	\dnegcrossing{5}{0}
	 	\negcrossing{6}{1}
	 	\dnegcrossing{7}{0}
	 	\negcrossing{8}{0}
	 	\negcrossing{9}{1}
	 	\draw \tlcoord{0}{0} \lineup;
	 	\draw \tlcoord{3}{0} \lineup;
	 	\draw \tlcoord{6}{0} \lineup;
	 	\draw \tlcoord{9}{0} \lineup;
	 	\draw \tlcoord{1}{2} \lineup;
	 	\draw \tlcoord{2}{2} \lineup;
	 	\draw \tlcoord{4}{2} \lineup;
	 	\draw \tlcoord{5}{2} \lineup;
	 	\draw \tlcoord{7}{2} \lineup;
	 	\draw \tlcoord{8}{2} \lineup;
	 	\draw \tlcoord{10}{2} \lineup;
	 	\draw \tlcoord{10}{0} \capright;
	 \end{tikzpicture}
 }  \, = \, q^3 \cbox{
\begin{tikzpicture}[tldiagram, yscale=1/3, xscale=1/2]
	\negcrossing{0}{1}
	\negcrossing{1}{0}
	\dnegcrossing{2}{0}
	\negcrossing{3}{1}
	\negcrossing{4}{0}
	\dnegcrossing{5}{0}
	\negcrossing{6}{1}
	\dnegcrossing{7}{0}
	\draw \tlcoord{0}{0} \lineup;
	\draw \tlcoord{3}{0} \lineup;
	\draw \tlcoord{6}{0} \lineup;
	\draw \tlcoord{1}{2} \lineup;
	\draw \tlcoord{2}{2} \lineup;
	\draw \tlcoord{4}{2} \lineup;
	\draw \tlcoord{5}{2} \lineup;
	\draw \tlcoord{7}{2} \lineup;
	\draw \tlcoord{8}{0} \lineup;
	\draw \tlcoord{8}{1} \capright;
\end{tikzpicture}
} \\
&=\, q^3 \cbox{
\begin{tikzpicture}[tldiagram, yscale=1/3, xscale=1/2]
	\negcrossing{0}{1}
	\negcrossing{1}{0}
	\dnegcrossing{2}{0}
	\negcrossing{3}{1}
	\negcrossing{4}{0}
	\lowernegcrossing{5}{0}
	\negcrossing{6}{1}
	\negcrossing{7}{0}
	\draw \tlcoord{0}{0} \lineup;
	\draw \tlcoord{3}{0} \lineup;
	\draw \tlcoord{6}{0} \lineup;
	\draw \tlcoord{1}{2} \lineup;
	\draw \tlcoord{2}{2} \lineup;
	\draw \tlcoord{4}{2} \lineup;
	\draw \tlcoord{5}{2} \lineup;
	\draw \tlcoord{7}{2} \lineup;
	\draw \tlcoord{8}{0} \dlineup;
	\draw \tlcoord{8}{1} \capright;
\end{tikzpicture}
} \, = \, -q^4 \cbox{
\begin{tikzpicture}[tldiagram, yscale=1/3, xscale=1/2]
	\negcrossing{0}{1}
	\negcrossing{1}{0}
	\dnegcrossing{2}{0}
	\negcrossing{3}{1}
	\negcrossing{4}{0}
	\lowernegcrossing{5}{0}
	\draw \tlcoord{0}{0} \lineup;
	\draw \tlcoord{3}{0} \lineup;
	\draw \tlcoord{1}{2} \lineup;
	\draw \tlcoord{2}{2} \lineup;
	\draw \tlcoord{4}{2} \lineup;
	\draw \tlcoord{5}{2} \dlineup;
	\draw \tlcoord{6}{0} \capright;
\end{tikzpicture}
} \, = \, q^6 \cbox{
\begin{tikzpicture}[tldiagram, yscale=1/3, xscale=1/2]
	\negcrossing{0}{1}
	\negcrossing{1}{0}
	\dnegcrossing{2}{0}
	\negcrossing{3}{1}
	\negcrossing{4}{0}
	\draw \tlcoord{0}{0} \lineup;
	\draw \tlcoord{3}{0} \lineup;
	\draw \tlcoord{1}{2} \lineup;
	\draw \tlcoord{2}{2} \lineup;
	\draw \tlcoord{4}{2} \lineup;
	\draw \tlcoord{5}{2} \dlineup;
	\draw \tlcoord{5}{0} \dcapright;
\end{tikzpicture}
} \, = \, \cbox{
\begin{tikzpicture}[tldiagram, yscale=1/3, xscale=1/2]
	\negcrossing{0}{1}
	\negcrossing{1}{0}
	\dnegcrossing{2}{0}
	\negcrossing{3}{1}
	\draw \tlcoord{0}{0} \lineup;
	\draw \tlcoord{3}{0} \lineup;
	\draw \tlcoord{1}{2} \lineup;
	\draw \tlcoord{2}{2} \lineup;
	\draw \tlcoord{4}{2} \dlineup;
	\draw \tlcoord{4}{0} \dcapright;
\end{tikzpicture}
} \, = \, q^6
\cbox{
	\begin{tikzpicture}[tldiagram, yscale=1/3, xscale=1/2]
		\negcrossing{0}{1}
		\negcrossing{1}{0}
		\lowernegcrossing{2}{0}
		\negcrossing{3}{1}
		\draw \tlcoord{0}{0} \lineup;
		\draw \tlcoord{3}{0} \lineup;
		\draw \tlcoord{1}{2} \lineup;
		\draw \tlcoord{2}{2} \lineup;
		\draw \tlcoord{4}{2} \dlineup;
		\draw \tlcoord{4}{0} \capright;
	\end{tikzpicture}
} \\
&= \, -q^7 \cbox{
\begin{tikzpicture}[tldiagram, yscale=1/3, xscale=1/2]
	\negcrossing{0}{1}
	\uppernegcrossing{1}{0}
	\draw \tlcoord{0}{0} \lineup;
	\draw \tlcoord{1}{2} \lineup;
	\draw \tlcoord{2}{0} \dlineup;
	\draw \tlcoord{2}{1} \capright;
\end{tikzpicture}
} \, = \, q^8 \cbox{
\begin{tikzpicture}[tldiagram, yscale=1/3, xscale=1/2]
	\draw \tlcoord{0}{0} \capright;
	\draw \tlcoord{0}{2} \lineup;
\end{tikzpicture}
} \, = q^{8} (\delta_1 \cdot \tlcap_1).
	\end{align*}
	Here we used first type $B$ Reidemeister I, the relation $s_0^2=1$ (= BTL1), Reidemeister I, the type $B$ braid relation, Reidemeister II, the relation $s_0U_2=U_2s_0$ (=BTL3), Reidemeister II, Reidemeister I, type $B$ Reidemeister I, BTL3, Reidemeister 2, BTL1, and finally Reidemeister II. 
\end{beis}
Now we can use \Cref{most crucial lemma} to prove the main theorem. The proof is inspired by the proof of Theorem 7 in \cite{costa16} and the categorical ideas from \cite{rozansky2010}. The flavor of the proof is more functional analytical than algebraic and the key idea is the matrix point of view at the end of \Cref{full twist type D3 example}.
\begin{proof}[Proof of \Cref{infinite type D braid}]
	Let $k=\C[[q]][q^{-1}]$ and consider the regular representation
	\begin{align*}
		\TL(D_n)&\hookrightarrow \End(\TL(D_n))\cong\M_{r\times r}(k) \\
		x &\mapsto (x \cdot \blank \colon y \mapsto xy),
	\end{align*}
	with respect to a chosen basis, where $r=\dim \TL(D_n)=\frac{1}{2}{{2n} \choose {n}}$. Concretely we choose the basis $(d_n,\lambda_1,\ldots, \lambda_{r-1})$ where $d_n$ is the type $D_n$ Jones--Wenzl projector and $\lambda_1,\ldots\lambda_{r-1}$ is the diagram basis of the two-sided ideal $(U_0,U_1,\ldots, U_{n-1})\subseteq \TL(D_n)$ in some chosen order, which consists of all type $D_n$ Temperley--Lieb diagrams except the identity diagram.
	By \Cref{most crucial lemma} we have
	\[
		[\delta_n]\lambda_i\in q\mZ[[q]]\{\lambda_1,\ldots \lambda_{r-1}\},
	\]
	which implies for all $m>0$ that
	\[
		[\delta_n]^m\lambda_i\in q^{m}\mZ[[q]]\{\lambda_1,\ldots \lambda_{r-1}\}.
	\]
	Moreover by definition of $[\sigma_0']=1-qU_0$ and $[\sigma_i]=1-qU_i$ we have
	\[
		[\delta_n]\in 1+ \mZ[[q]]\{\lambda_1,\ldots \lambda_{r-1}\}.
	\]
	This implies that $\delta_n d_n=d_n$ since the Jones--Wenzl projector $d_n$ satisfies $d_n\lambda_i=0$ for all $i=1,\ldots, n-1$. Hence the representing matrix of $[\delta_n]^m$ is given by
	\[
	([\delta_n]^m\cdot \blank)= \left( \begin{array}{c|c}
		1 & \mathbf{0} \\ \hline
		\mathbf{0} & A
	\end{array} \right)
	\]
	where $A$ is a $(r-1)\times (r-1)$ matrix with entries in $q^{m}\mZ[[q]]$ for all $m>0$.
	Hence 	
	\[
	([\delta_n]\cdot \blank)^m \xrightarrow{m\rightarrow \infty} \left( \begin{array}{c|c}
		1 & \mathbf{0} \\ \hline
		\mathbf{0} & \mathbf{0}
	\end{array} \right) = ([d_n] \cdot \blank),
	\]
	where the limit on the left is taken with respect to the $q$-adic uniform norm on $M_{r\times r}(k)$. We conclude that
	$\lim_{m\rightarrow\infty}([\delta_n]^m)=[d_n]$ by the bounded inverse theorem (see e.g.\ \cite[Corollary 2.7]{brezis11}) since the map $\TL(D_n)\rightarrow \M_{r\times r}(k)$ is continuous, injective, and $\TL(D_n)$ and $\M_{r\times r}(k)$ are complete normed spaces with respect to their $q$-adic norms.
\end{proof}
We conclude that \Cref{infinite type D braid} gives a different combinatorial way to construct the type $D$ Jones--Wenzl projectors by approximating them with powers of the full twist. This paves the way to a categorification of the type $D$ Jones--Wenzl projectors with the homological tools from \cite{rozansky2010}. These tools will be the main topic of the next chapter.

	\backmatter
	
	\ihead{}
	\chead{} 
	\ohead{Literature}  
	\printbibliography
\end{document}